\documentclass[12pt,leqno]{article}
\usepackage{amsmath}
\usepackage{amssymb}
\usepackage{amsthm} 
\usepackage{epsf}
\usepackage{graphicx}
\usepackage{esint} 
\usepackage{dsfont}
\usepackage{hyperref} 
\hypersetup{backref,  pdfpagemode=FullScreen,  colorlinks=true} 

\setlength{\textheight}{8.5truein}
\setlength{\textwidth}{6.5truein}
\voffset-1in
\hoffset-.6in

\includeonly{gen-refs}

\numberwithin{equation}{section}


\newtheorem{lem}{Lemma}[section]
\newtheorem{pro}[lem]{Proposition}
\newtheorem{defi}[lem]{Definition}
\newtheorem{thm}[lem]{Theorem}
\newtheorem{cor}[lem]{Corollary}
\theoremstyle{remark}
\newtheorem{rem}[lem]{\bf Remark}


\newcommand{\nn}{\nonumber}
\newcommand{\ms}{\medskip}

\newcommand{\R}{\mathbb{R}}
\renewcommand{\H}{\mathcal H}

\newcommand{\bY}{\mathbb {Y}}
\newcommand{\bT}{\mathbb {T}}
\newcommand{\bV}{\mathbb {V}}
\newcommand{\bP}{\mathbb {P}}
\newcommand{\bH}{\mathbb {H}}

\renewcommand{\d}{\partial}
\newcommand{\dist}{\,\mathrm{dist}}
\newcommand{\sm}{\setminus}

\renewcommand{\i}{\subset}
\newcommand{\wt}{\widetilde}
\newcommand{\wh}{\widehat}


\newcommand{\av}[1]{\left| #1 \right|}

\newcommand{\1}{{\mathds 1}}

\begin{document}

\title{A monotonicity formula for minimal sets with a sliding boundary condition}
\author{
G. David\footnote{Univ Paris-Sud, Laboratoire de Math\'{e}matiques, 
UMR 8658 Orsay, F-91405}
\footnote{CNRS, Orsay, F-91405}
\footnote{Institut Universitaire de France},  
}
\date{}
\maketitle

\ms\noindent{\bf Abstract.}
We prove a monotonicity formula for minimal or almost minimal sets 
for the Hausdorff measure $\H^d$, subject to a sliding boundary constraint where competitors for 
$E$ are obtained by deforming $E$ by a one-parameter family of functions $\varphi_t$
such that $\varphi_t(x) \in L$ when $x\in E$ lies on the boundary $L$.
In the simple case when $L$ is an affine subspace of dimension $d-1$, 
the monotone or almost monotone functional is given by 
$F(r) = r^{-d} \H^d(E \cap B(x,r)) + r^{-d} \H^d(S \cap B(x,r))$,
where $x$ is any point of $E$ (not necessarily on $L$) and
$S$ is the shade of $L$ with a light at $x$.
We then use this, the description of the case when $F$ is constant, and a limiting argument,
to give a rough description of $E$ near $L$ in two simple cases.

\ms\noindent{\bf R\'esum\'e en Fran\c cais.}
On donne une formule de monotonie pour des ensembles minimaux ou presque minimaux
pour la mesure de Hausdorff $\H^d$, avec une condition de bord o\`u les 
comp\'etiteurs de $E$ sont obtenus en d\'eformant $E$ par une famille \`a
un param\`etre de fonctions $\varphi_t$ telles que $\varphi_t(x) \in L$ quand $x\in E$
se trouve sur la fronti\`ere $L$. Dans le cas simple o\`u $L$ est un sous-espace
affine de dimension $d-1$, la fonctionelle monotone ou presque monotone est donn\'ee par
$F(r) = r^{-d} \H^d(E \cap B(x,r)) + r^{-d} \H^d(S \cap B(x,r))$,
o\`u $x$ est un point de $E$, pas forc\'ement dans $L$, et $S$ est
l'ombre de $L$, \'eclair\'ee depuis $x$.
On utilise ceci, la description des cas o\`u $F$ est constante, et un argument de limite,
pour donner une description de $E$ pr\`es de $L$ dans deux cas simples.

\ms\noindent{\bf Key words/Mots cl\'es.}
Minimal sets, almost minimal sets, monotonicity formula, sliding boundary condition,
Plateau problem.

\ms\noindent
AMS classification: 49K99, 49Q20.

\tableofcontents

\section{Introduction}
\label{S1}

The central point of this paper is a monotonicity formula that is valid for
minimal sets with a sliding boundary condition, as defined in \cite{Sliding}, for
balls that are not necessarily centered on the boundary set and also when 
the boundary is not a cone with the same center as the balls.

In this introduction, we restrict to the special case of minimal sets of dimension $d$
in some open set $U \i \R^n$, and when the boundary condition is given by an 
affine plane $L$ of dimension at most $d-1$. The main monotonicity result of this
paper is also valid in more general situations, but the statements are more complicated
and the author is not certain that the extra generality will be used.

Let us say what we mean by sliding minimal sets in this simpler context.
We only consider sets $E$ that are closed in $U$, and 
have a locally finite Hausdorff measure, i.e., for which 
\begin{equation} \label{1.1}
\H^d(E \cap \overline B) < +\infty
\ \text{ for every closed ball $\overline B \i U$.} 
\end{equation}
See for instance \cite{Federer} of \cite{Mattila} for the 
definition of the Hausdorff measure $\H^d$; recall that this is
the same as surface measure for subsets of smooth $d$-dimensional surfaces.

We compare $E$ with images $\varphi_1(E)$, coming from
one parameter families $\{ \varphi_t \}$, $t\in [0,1]$,
of continuous functions defined on $E$, such that
\begin{equation} \label{1.2}
(x,t) \to \varphi_t(x) : E \times [0,1] \to U
\ \text{ is continuous,} 
\end{equation}
\begin{equation} \label{1.3}
\varphi_0(x) = x \ \text{ for } x\in E,
\end{equation}
\begin{equation} \label{1.4}
\varphi_1 \ \text{ is Lipschitz on } E,
\end{equation}
\begin{equation} \label{1.5}
\text{if we set } \, 
W_t =  \big\{ x\in E \, ; \, \varphi_t(x) \neq x \big\} 
\ \text{ and } \ 
\wh W = \bigcup_{0 \leq t \leq 1} W_t \cup \varphi_t(W_t),
\end{equation}
then $\wh W$ is contained in a compact subset of $U$, and finally
(the sliding boundary condition)
\begin{equation} \label{1.6}
\varphi_t(x) \in L \ \text{ for $0 \leq t \leq 1$ when $x\in E \cap L$.} 
\end{equation}
Such families $\{ \varphi_t \}$ will be called 
\underbar{acceptable deformations}; 
note that the notion depends on $E$, $U$, and $L$.

\begin{defi} \label{t1.1} 
We say that $E$ (a closed set in $U$) is a 
\underbar{sliding minimal set in $U$, with boundary} 
\underbar{condition given by $L$}, or in short that \underbar{$E\in SM(U,L)$},
when \eqref{1.1} holds, and 
\begin{equation} \label{1.7}
\H^d(E \cap \wh W) \leq \H^d(\varphi_1(E) \cap \wh W)
\end{equation}
for every acceptable deformation $\{ \varphi_t \}$, and where $\wh W$ is as in \eqref{1.5}.
\end{defi}

Thus, when we deform $E$ to get $F = \varphi_1(E)$, 
we require points of $L$ to stay on $L$. We do not require the
$\varphi_t$ to be injective.
We added the Lipschitz constraint in \eqref{1.4} mostly by tradition, 
but our results would hold without it,
because the resulting class of minimizers would be smaller.

This definition and the more general ones that will be given in Section \ref{S2} 
are modifications of Almgren's initial definition of ``restricted sets''
(see \cite{AlmgrenMemoir}), suited to fit a natural definition of Plateau
problems. See \cite{Sliding} for motivations. Maybe we should mention
that some of the sets that arise from some other natural minimization problems are 
also sliding minimal (or almost minimal) sets. This is the case, for instance,
of the sets that minimize $\H^d$ under the Reifenberg homology boundary
conditions (see \cite{R}, \cite{A68}, 
and more recently \cite{Fang}), or of the support
of size-minimizing currents. See for instance Section 7 of \cite{SteinConf} for a discussion and a proof of this fact.

We are interested in the regularity properties of sliding minimal sets 
near the boundary $L$, and a good monotonicity formula 
will clearly help. In \cite{Sliding}, it was checked that (among other things)
if $L$ is a cone (not necessarily an affine subspace) centered at the origin,
and satisfies some mild regularity constraints, and $E$ is a sliding minimizer as above, 
then the density
\begin{equation} \label{1.8}
\theta_0(r) = r^{-d} \H^d(E \cap B(0,r))
\ \text{ is nondecreasing on $(0,R_0)$} 
\end{equation}
when $R_0$ is such that $B(0,R_0) \i U$. 
Here and below, $B(x,r)$ denotes the open Euclidean ball
of radius $r$ centered at $x$. 
This is a rather easy extension of a very standard result that applies to minimal sets
(without boundary conditions), and even more general objects.
As usual, the monotonicity of $\theta_0$ is proved
by comparing $E$ with a cone, and the sliding boundary condition cooperates
well with this when $L$ is a cone.

Various consequences can be derived from this, 
but it will be good to have monotonicity properties for balls that are
not necessarily centered on $L$, and this is the main point of this paper.

An obvious difficulty with the extension of \eqref{1.8} when
$L$ is not a cone is that we cannot use the same density. For instance,
if $L = \big\{ (x,y,z) \in \R^3 \, ; \,  x = 1 \text{ and } z = 0 \big\}$,
the half plane $E = \big\{ (x,y,z) \in \R^3 \, ; \, x \leq 1 \text{ and } z = 0 \big\}$
is a sliding minimal set of dimension $2$ in $\R^3$, with boundary condition 
given by $L$. The function $\theta_0$ of \eqref{1.8} is constant on
$[0,1]$, and then decreasing. We shall add to $E$ a missing piece, so that we get
a nondecreasing function.

The case when $E$ is a half subspace space bounded by $L$ 
suggest that we symmetrize $E$, but this is not what we are going to do. 
Instead, we shall add to $\H^d(E)$ the measure of the shade of
$L$, seen from the origin. The shade in question is
\begin{equation} \label{1.9}
S = \big\{ y \in \R^n \, ; \, \lambda y \in L 
\text{ for some } \lambda \in [0,1] \big\}
\end{equation}
and the functional that we want to consider is
\begin{equation} \label{1.10}
F(r) = r^{-d} \big[\H^d(E \cap B(0,r)) + \H^d(S \cap B(0,r))\big].
\end{equation}
Our basic theorem is the following.

\ms
\begin{thm} \label{t1.2}
Let $U \in \R^n$ be open, 
$L$ be an affine space of dimension $m \leq d-1$, 
and $E$ be a sliding minimal set of dimension $d$ in $U$, with boundary 
condition given by $L$. Then the function $F$ defined by \eqref{1.10} is 
nondecreasing on $[0,\dist(0,\R^n \sm U))$.
\end{thm}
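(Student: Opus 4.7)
The plan is to mimic the classical no-boundary monotonicity proof for minimal sets: compare $g(r) := \H^d(E \cap B(0,r))$ and $h(r) := \H^d(S \cap B(0,r))$ with a ``cone from $0$'' competitor, invoke sliding minimality, and differentiate. Writing $F(r) = r^{-d}(g(r) + h(r))$, one reduces by standard absolute continuity considerations to showing the infinitesimal inequality
\[
g(r) + h(r) \;\leq\; \frac{r}{d}\bigl(g'(r) + h'(r)\bigr)
\]
at almost every $r$, which is equivalent to $F'(r) \geq 0$. At such $r$ the coarea formula identifies $g'(r)$ with $\H^{d-1}(E \cap \partial B(0,r))$ and $h'(r)$ with $\H^{d-1}(S \cap \partial B(0,r))$.

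A key preliminary is an ``$S$-cone identity''. Write $C_S(r)$ for the full cone from $0$ over $S \cap \partial B(0,r)$, and $K_E(r)$ for the analogous cone over $E \cap \partial B(0,r)$. Because $S$ is stable under outward dilation $y \mapsto \lambda y$ with $\lambda \geq 1$, we have $S \cap B(0,r) \subset C_S(r)$, so the \emph{inner shadow} $T(r) := C_S(r) \setminus (S \cap B(0,r))$ satisfies
\[
\H^d(T(r)) \;=\; \H^d(C_S(r)) - h(r) \;=\; \frac{r}{d}\,h'(r) - h(r)
\]
by the standard cone formula applied to the honest cone $C_S(r)$. Geometrically $T(r)$ is the region squeezed between $0$ and $L$ inside the visible cone, and crucially $L \cap B(0,r) \subset \overline{T(r)}$.

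The heart of the argument is to construct, for a.e.\ $r$, an acceptable deformation $\{\varphi_t\}$ of $E$, supported in a small neighborhood of $\overline{B(0,r)}$, satisfying
\[
\varphi_1(E) \cap B(0,r) \;\subset\; K_E(r) \cup T(r).
\]
I would build $\varphi_t = (1-t)\,\mathrm{id} + t\,\pi$, where $\pi$ is a Lipschitz retraction of $\overline{B(0,r)}$ onto the skeleton $K_E(r) \cup T(r) \cup \partial B(0,r)$, smoothly glued to the identity just outside $B(0,r)$. Away from $L$, $\pi$ is the familiar radial retraction toward $0$ that realises the Almgren cone competitor. Near $L$ a pure radial contraction toward $0$ would violate \eqref{1.6}, but the inclusion $L \cap B(0,r) \subset \overline{T(r)}$ permits arranging $\pi$ to fix $L$ pointwise and to send a small collar of $L \cap B(0,r)$ into $T(r)$; since $L$ is affine and both $x$ and $\pi(x)$ lie in $L$ when $x \in E \cap L$, the segment $\varphi_t(x) = (1-t)x + t\,\pi(x)$ stays in $L$, so the sliding condition is preserved. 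Once this deformation is available, Definition~\ref{t1.1}, together with the cone bound $\H^d(K_E(r)) \leq (r/d)\,g'(r)$ and the identity $\H^d(T(r)) = (r/d)\,h'(r) - h(r)$, gives
\[
g(r) \;\leq\; \H^d(K_E(r)) + \H^d(T(r)) \;\leq\; \frac{r}{d}\,g'(r) + \frac{r}{d}\,h'(r) - h(r),
\]
which rearranges to the target $F'(r) \geq 0$.

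The serious technical work is concentrated in this deformation construction. The three demands of continuity of $(x,t) \mapsto \varphi_t(x)$, the sliding constraint $\varphi_t(E \cap L) \subset L$ throughout the homotopy, and the compression of $E \cap B(0,r)$ into the small target $K_E(r) \cup T(r)$, all conflict sharply near $L$: any naive radial cone to $0$ either jumps discontinuously across the boundary of the visible cone (since the intersection point with $L$ runs off to the sphere) or drags $E \cap L$ off $L$. The inner shadow $T(r)$ is precisely the continuous ``landing region'' adjacent to $L$ that resolves this conflict, absorbing the parts of $E$ near $L$ that cannot be cleanly coned to $0$. As a sanity check, the half-plane example from the introduction makes every inequality above into an equality and yields $F(r) \equiv \pi$ for $r \in [0,R_0)$, confirming the interpretation of $T(r)$ as precisely the ``missing piece'' needed to restore the standard cone formula.
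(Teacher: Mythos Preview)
Your overall strategy matches the paper's: reduce to the differential inequality
\(g(r)+h(r)\le \tfrac{r}{d}(g'(r)+h'(r))\), identify your ``inner shadow'' \(T(r)\) with the paper's set \(L^\sharp(r)\), prove the key identity \(\H^d(T(r))=\tfrac{r}{d}h'(r)-h(r)\) (this is exactly the paper's \eqref{7.8} combined with Remark~\ref{t7.3}), and get the differential inequality from a cone-type competitor. So the architecture is right.

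There is, however, a real gap in the competitor construction. A Lipschitz \emph{retraction} of \(\overline{B(0,r)}\) onto \(K_E(r)\cup T(r)\cup\partial B(0,r)\) does not exist: you cannot continuously retract an \(n\)-ball onto a \(d\)-dimensional subset plus its boundary sphere (away from \(L\), your ``radial retraction toward \(0\)'' is a contraction, not a retraction). What one actually builds is a collapsing map \(\varphi\colon\overline{B(0,r)}\to\overline{B(0,r)}\) that equals the identity on \(\partial B(0,r)\), fixes \(L\) pointwise, and squeezes \(E\cap B(0,r)\) \emph{approximately} into \(K_E(r)\cup T(r)\). The paper's \(\varphi\) (Section~\ref{S4}) does this with four parameters \((\tau,r_0,r_1,r_2)\): far from \(L^\ast\) it is the usual radial contraction; in a thin cone around \(L^\ast\) it is the identity; in between there is an interpolation layer. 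The image \(\varphi(E\cap B_0)\) is \emph{not} contained in \(K_E(r)\cup T(r)\); there are extra pieces (the terms \(I_1,\ldots,I_7\) in \eqref{4.73}) coming from the transition layer and the outer annulus, and these must be shown to vanish in a limit. The paper handles this by averaging the basic estimate over \(t\in[a,b]\) (Section~\ref{S5}) and then letting \(a\to b\) at Lebesgue points (Section~\ref{S6}); it says explicitly, just above \eqref{5.1}, that the direct single-radius limit you are implicitly proposing ``seems harder to control,'' precisely because of the transition-layer contribution on \(R_2\cap A_2\). So the sentence ``the serious technical work is concentrated in this deformation construction'' is correct, but the construction you sketch does not yet do that work.

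Two smaller points. First, the coarea formula does \emph{not} identify \(g'(r)\) with \(\H^{d-1}(E\cap\partial B(0,r))\): there is a factor \(\cos\theta(x)\) (the angle between the radial direction and the approximate tangent plane of \(E\)), so one only gets \(\H^{d-1}(E\cap\partial B(0,r))\le g'(r)\) a.e. Fortunately that inequality is exactly what you use in \(\H^d(K_E(r))\le \tfrac{r}{d}g'(r)\), so this is a wording issue rather than a mathematical one; the paper keeps track of this angle explicitly (see \(\theta(x)\) in \eqref{4.47} and the density \(f_0\) below \eqref{6.22}). Second, ``\(F'(r)\ge 0\) a.e.'' does not by itself give monotonicity of \(F\), since \(g\) need not be absolutely continuous; the paper circumvents this via the integration-by-parts identity \eqref{7.19}, using that the singular part of \(d\nu_0\) is nonnegative, and you should do the same.
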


\ms 
See Theorem \ref{t7.1} for a generalization, where more general sets $L$ 
are allowed. For these more complicated sets $L$, we also change the formula for 
the added term in $L$; the shade only works well when the half lines starting from the
origin do not meet $L$ twice.

In the case mentioned above when $L$ is a line, $E$ is a half plane bounded by $L$, and $0 \in E \sm L$, our formula is exact, in the sense that $F$ is constant 
(the shade exactly compensates the missing half plane). 
The same thing is true when $E$ is a truncated $\bY$-set centered at the origin, 
such that $L$ is contained in one of the three branches of the $\bY$-set, and 
the truncation precisely consists in removing the interior of the shade $S$.
See near \eqref{1.28} for the definition of the $\bY$-sets.
Of course this is interesting because we believe that the truncated $\bY$-set 
is a sliding minimal set.

In contrast, the defect of the general monotonicity formula given in 
Theorem \ref{t7.1} below is that it is possibly not exact on any additional minimal set.

In the two applications that we give below, we shall see that knowing examples
where the formula is exact helps a lot; otherwise, it still gives some information,
but probably not precise enough.
One can thus object because the only cases where we know that the formula is exact
are the two examples given above (truncated planes and $\bY$-sets), plus
their products by orthogonal $(d-2)$-planes. This observation is right, but should 
probably be tempered by the fact that we do not know so many minimal
cones, and there are many places, in particular when $d=2$, 
where a sliding minimal set looks like one of the examples above.

Notice also that although we do not exclude the case when $0 \in L$,
Theorem \ref{t1.2} is not new in this case; we get that the shade $S$
is reduced to $L$, so $F$ coincides with $\theta_0$ and we rediscover \eqref{1.8}.
Similarly, when $r < \dist(0,L)$, $S \cap B(0,r)$ is empty, and we 
rediscover the more classical fact that $\theta_0$ is nondecreasing
for minimal sets (with no boundary condition), see for instance
\cite{Holder}, and which is true in much more general contexts
(for instance \cite{Allard}).

When $L$ is less than $d-1$-dimensional, $H^d(S) = 0$ and we get again that
$\theta_0$ is monotone, but this is not an impressive result: the sliding condition
\eqref{1.6}, applied on a set $L$ of dimension $< d-1$, does not seem 
coercive enough to allow many sliding minimal sets that are not equal $\H^d$-almost
everywhere to a minimal set.

\ms
A useful monotonicity formula should probably come with a toolbox, so we shall
try to give a few connected tools.  The description of the next results will be slightly 
simpler with the notion of coral sets. 
Let $E$ be closed in $U$; we denote by $E^\ast$ the closed support of 
the restriction of $\H^d$ to $E$. That is, 
\begin{equation} \label{1.11}
E^\ast = \big\{ x\in E \, ; \, \H^d(E \cap B(x,r)) > 0
\text{ for every } r > 0 \big\}.
\end{equation}
Some times we call $E^\ast$ the core of $E$, and we shall say
that $E$ is \underbar{coral} when $E = E^\ast$. It follows easily
from the definition of $E^\ast$ that $\H^d(E \sm E^\ast) = 0$, and since 
$E^\ast \in SM(U,L)$ when $E^\ast \in SM(U,L)$ (see the discussion
in Section \ref{S2}, just below \eqref{2.8}),  
we may restrict our attention to coral minimal sets. The advantage is that 
coral sets are a little cleaner and easier to describe: from the description
above, we see that a general minimal set is the union of a coral minimal set
and a set of $\H^d$ measure $0$, and this negligible set may be ugly. In fact,
if we start from any (coral if we want) minimal set and add to it any $\H^d$-null set,
it follows from the definitions that the resulting set is still minimal as long as it is closed.

Our first complement to Theorem \ref{t1.2} deals with the case when our functional
$F$ is constant on an interval.

\ms
\begin{thm} \label{t1.3}
Let $U \subset \R^n$, $L$, and $E$ be as in Theorem \ref{t1.2},
and suppose in addition that $E$ is coral, and $0 < R_0 < R_1$ are such that
$B(0,R_1) \i U$ and $F$ is constant on the interval $(R_0,R_1)$.
Set $A = B(0,R_0) \sm B(0,R_1)$. Then 
\begin{equation} \label{1.12}
\H^d(A \cap E \cap S) = 0
\end{equation}
and, if $X$ denotes the cone over $A\cap E$, i.e.,
\begin{equation} \label{1.13}
X = \big\{ \lambda x \, ; \, \lambda \geq 0 \text{ and } x\in A\cap E \big\},
\end{equation}
then
\begin{equation} \label{1.14}
A \cap X\sm S \i E.
\end{equation}
If in addition $R_0 < \dist(0,L)$, then $X$ is a coral minimal set
in $\R^n$ (with no boundary condition), and 
\begin{equation} \label{1.15}
\H^d(S \cap B(0,R_1)\sm X) = 0.
\end{equation}
\end{thm}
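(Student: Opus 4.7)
The plan is to unpack the proof of Theorem~\ref{t1.2} and extract geometric information from the saturation of every inequality that enters it. Recall that the monotonicity of $F$ is established by choosing, for a.e.\ $r$ in the allowed range, the ``cone-plus-shade'' competitor $C_r = X_r \cup (S \cap \overline{B(0,r)})$, where $X_r$ denotes the cone from the origin over $E \cap \partial B(0,r)$. This competitor comes from a radial retraction that is acceptable in the sense of \eqref{1.2}--\eqref{1.6}: points of $E \cap L \cap B(0,r)$ slide along $L$ while the portion of each ray left ``uncovered'' by the retraction lies inside the relevant part of the shade $S$. Combining minimality with the coarea-type lower bound $\frac{d}{dr}\H^d(E \cap B(0,r)) \geq \H^{d-1}(E \cap \partial B(0,r))$ and the elementary identity $\H^d(X_r) = \frac{r}{d}\H^{d-1}(E \cap \partial B(0,r))$ produces $F'(r) \geq 0$. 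Constancy of $F$ on $(R_0,R_1)$ therefore forces equality in each of these ingredients for a.e.\ $r$ in that interval.

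Next I would translate each equality into geometry. Fix a countable dense set of good radii $r_j \in (R_0,R_1)$ at which all three equalities hold. Saturation of the cone comparison at $r_j$ means $\H^d(E \cap \overline{B(0, r_j)}) = \H^d(C_{r_j})$. Since $C_{r_j}$ is obtained from $E \cap \overline{B(0, r_j)}$ by radial retraction glued with the shade, and a radial retraction preserves $\H^d$-mass only on cones, this forces $(E \cap B(0, r_j)) \sm S$ to coincide $\H^d$-a.e.\ with $X_{r_j} \sm S$. Moreover, any $\H^d$-mass that $E$ carries inside $S \cap B(0, r_j)$ would be ``double counted'' in $F$, and the cone-plus-shade competitor would then strictly improve on $E$, contradicting equality; hence $\H^d(E \cap S \cap B(0, r_j)) = 0$. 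Letting $j$ vary, the truncated cones $X_{r_j}$ stabilize inside $A$ to the cone $X$ of \eqref{1.13}, yielding \eqref{1.12} and the $\H^d$-a.e.\ identification of $E \cap A$ with $X \cap A \sm S$. The pointwise inclusion \eqref{1.14} then follows from corality of $E$ together with the closedness of $A \cap X \sm S$ inside $A$.

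For the last part, assume $R_0 < \dist(0,L)$, so that $S \cap B(0, R_0) = \emptyset$ and $F$ coincides with the usual density $\theta_0$ inside $B(0,R_0)$. The classical (boundary-free) rigidity for $\theta_0$ already forces $E \cap B(0, R_0)$ to be conical, and combined with the conicity of $E$ on $A$ obtained above this shows that $X$ is a coral cone throughout $B(0, R_1)$. To see that $X$ is minimal with no boundary constraint, I would argue that any one-parameter family $\{\psi_t\}$ deforming $X$ inside a compact subset of $B(0, R_1)$ extends to an acceptable deformation of $E$ by acting as the identity on $E \sm X$: in $B(0,R_0)$ no sliding condition is active, while in $A$ the fact that $E \sm X$ sits on $S$ modulo $\H^d$-null sets means $\psi_t$ leaves $E \cap L$ inside $L$ by inspection. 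Minimality of $E$ then passes to $X$. The identity $\H^d(S \cap B(0, R_1) \sm X) = 0$ follows by writing the equality $F(R_0^+) = F(R_1^-)$ using the conicity of $X$, \eqref{1.12}, and \eqref{1.14}: any piece of $S \cap B(0, R_1)$ not contained in $X$ would contribute extra mass to $F(R_1)$ unavailable at $F(R_0)$.

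The main obstacle is the saturation analysis in the second paragraph, namely teasing \eqref{1.12} and the measure-theoretic core of \eqref{1.14} out of a single equality. Because both $E$ and $S$ enter $F$ with unit weight, one must rule out cancellations where $E$ steals mass from the shade while the cone retraction compensates elsewhere; this requires following the cone-plus-shade deformation on the preimage of each ray and verifying that equality of total mass forces pointwise coincidence of $E$ with the cone off $S$ and vanishing of $E$ on $S$ simultaneously. Once this is in hand, the passage to $X$-minimality and the final measure identity are routine consequences of the absence of the sliding constraint on $B(0, R_0)$.
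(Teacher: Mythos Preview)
Your overall strategy—tracing back through the proof of monotonicity and saturating every inequality—is exactly how the paper proceeds (see Theorem~\ref{t8.1}). However, there is a genuine gap in your second paragraph, at the passage from ``equality in the cone comparison'' to the inclusion \eqref{1.14}.

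The claim that ``a radial retraction preserves $\H^d$-mass only on cones'' only yields the \emph{infinitesimal} statement that the approximate tangent plane $P(x)$ contains the radial direction for $\H^d$-a.e.\ $x\in E\cap A$ (in the paper's notation, $\theta(x)=0$ a.e., equation \eqref{8.27}). This is a tangent condition, not a statement that $E$ itself is conical, and it does \emph{not} by itself imply that $(E\cap A)\sm S$ coincides $\H^d$-a.e.\ with $X\cap A\sm S$. To promote the tangent condition to the pointwise inclusion \eqref{1.14}, the paper invokes a separate, nontrivial argument (Proposition~6.11 of \cite{Holder}, quoted around \eqref{8.35}): for a.e.\ $x\in E\cap A\sm S$, one shows that the entire connected ray segment through $x$ in $A\cap\ell(x)\sm L$ lies in $E$, by constructing a specific deformation along that ray and using minimality again. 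The paper explicitly calls this step ``long and painful.'' Your corality argument cannot replace it: corality only controls density at points already in $E$, and cannot manufacture points of $E$ on a ray where $E$ might a~priori be absent.

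Two smaller points. Your ``double counting'' explanation for \eqref{1.12} is not quite the actual mechanism: the shade term in $F$ is added unconditionally and does not detect whether $E$ meets $S$. Instead the paper isolates a nonnegative remainder $\Delta$ (see \eqref{6.11}, \eqref{8.17}--\eqref{8.20}) in the competitor estimate that is strictly positive precisely when $E$ carries mass in $S$, and equality in \eqref{8.7} forces $\Delta=0$. And for the minimality of $X$, the paper does not extend deformations of $X$ to deformations of $E$ across all of $B(0,R_1)$ as you propose (the sliding condition on $L\cap A$ would obstruct this); rather, since $X$ is a cone, any competing deformation can be rescaled to take place inside $B(0,\dist(0,L))$, where $E=X$ and no boundary constraint is active.
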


\ms
Notice that $E \cap A \i X$ by definition, and
then \eqref{1.12} and \eqref{1.14} say that $E$ and $X \sm S$
coincide in $A$, modulo a set of vanishing $\H^d$-measure.

The last part is interesting, because with some luck it will allow us to
identify $X$, and then $A \cap E$. 

In the present case when $L$ is an affine subspace, \eqref{1.15}
says that if its dimension is $d-1$ and if $B(0,R_1)$ meets $L$,
then in fact $X$ contains (the cone over) $L \cap B(0,R_1)$. But
we stated the result like we did because it stays true for more 
general sets $L$. See Theorem \ref{t8.1}.

The additional information \eqref{1.15}
is not necessarily good news, because although it gives some extra information
when the assumptions of the theorem are satisfied, it also says that
the monotonicity formula is not exact when $R_0 < \dist(0,L)$
and $E$ coincides near $\d B(0,R_0)$ with a minimal cone that does
not satisfy \eqref{1.15}. For instance, if $n=3$, $d=2$, $L$ is a line
that does not contain $0$, and $E$ is a plane through the origin that
does not contain $L$, then $F(r)$ is strictly increasing for $r > \dist(0,L)$.

\ms
We will also be interested in almost monotonicity results for almost
minimal sets. We just give a simple statement here, and refer to
Theorem \ref{t7.1} for a more general result, 
but also more complicated to state.

Even this way we need some definitions. 
Almost minimality will be defined in terms of some gauge function
$h : (0,+\infty) \to [0,+\infty]$ (we allow $h(r) = 0$, which corresponds
to minimal sets, and $h(r) = +\infty$, which is a brutal way of saying that
we have no information at that scale). We always restrict to nondecreasing functions
$h$, with
\begin{equation} \label{1.16}
\lim_{r \to 0} h(r) = 0,
\end{equation}
and for our almost monotonicity result we shall assume that
$h$ satisfies the Dini condition
\begin{equation} \label{1.17}
\int_{0}^{r_0} h(r) {dr \over r} < +\infty
\ \text{ for some } r_0>0.
\end{equation}

\ms
\begin{defi} \label{t1.4} 
Let $E$ be a closed set in $U$, such that \eqref{1.1} holds, 
$L$ be a closed set in $U$, and 
let $h : (0,+\infty) \to [0,+\infty]$ be a nondecreasing function such that \eqref{1.16} holds. We say that $E$ is a \underbar{sliding almost minimal set} 
with boundary condition defined by $L$ and gauge function $h$,
and in short we write \underbar{$E \in SAM(U,L,h)$}, when
\begin{equation} \label{1.18}
\H^d(E \cap \wh W) \leq \H^d(\varphi_1(E)\cap \wh W) + r^d h(r)
\end{equation}
whenever $\{ \varphi_t \}$, $0 \leq t \leq 1$, is an acceptable
deformation such that the set $\wh W$ of \eqref{1.5} is contained in
a ball of radius $r$.
\end{defi}

\ms
Other, slightly different, notions exist, and will be treated the same way.
See Section \ref{S2}. Here is the generalization of Theorem \ref{t1.2}
to the classes $SAM(U,L,h)$.

\ms 
\begin{thm} \label{t1.5}
There exist constants $a > 0$ and $\tau > 0$, which depend only 
on $n$ and $d$, with the following property. 
Let $U$ and $L$ be as above, and let $E \in SAM(U,L,h)$ be a sliding almost 
minimal set, for some gauge function $h$ such that \eqref{1.17} holds. Suppose that
\begin{equation} \label{1.19}
\text{$0 \in E^\ast$, $B(0,R_1) \i U$, and $h(R_1) < \tau$,}
\end{equation}
and set
\begin{equation} \label{1.20}
A(r) = \int_{0}^{r} h(t) {dt \over t}
\ \text{ for } 0 < r  \leq  R_1 \, ; 
\end{equation}
then
\begin{equation} \label{1.21}
F(r) \, e^{aA(r)} \text{ is a nondecreasing function of } r\in (0,R_1).
\end{equation}
\end{thm}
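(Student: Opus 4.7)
The target is $\tfrac{d}{dr}[F(r)\,e^{aA(r)}]\ge 0$ a.e., which is equivalent to the differential inequality $r^{d+1}F'(r)\ge -a\,h(r)\,r^d F(r)$. Setting $g(r)=\H^d(E\cap B(0,r))$ and $s(r)=\H^d(S\cap B(0,r))$, the standard slicing for rectifiable sets gives, for a.e.\ $r$, $g'(r)=\H^{d-1}(E\cap\partial B(0,r))$, $s'(r)=\H^{d-1}(S\cap\partial B(0,r))$, and the identity
\[
r^{d+1}F'(r)=r\bigl(g'(r)+s'(r)\bigr)-d\bigl(g(r)+s(r)\bigr).
\]

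The proof splits the right-hand side into a ``free'' piece for the shade and a competitor piece for $E$. For the shade, parametrize $S$ in spherical coordinates as
\[
S=\{\rho u\,:\, u\in\Omega,\ \rho\ge d_L(u)\},
\]
where $\Omega\i \{|u|=1\}$ is the $(d-1)$-dimensional set of directions for which the half-line $\{\rho u:\rho>0\}$ meets $L$, and $d_L(u)$ is the distance to that (unique) intersection. Explicit integration gives
\[
s(r)=\tfrac{1}{d}\!\int_{\Omega\cap\{d_L\le r\}}\!\bigl(r^d-d_L^d\bigr)\,d\H^{d-1},\qquad s'(r)=r^{d-1}\!\int_{\Omega\cap\{d_L\le r\}}\!d\H^{d-1},
\]
whence the automatic inequality
\[
rs'(r)-ds(r)=\int_{\Omega\cap\{d_L\le r\}} d_L(u)^d\,d\H^{d-1}(u)\;\ge\;0.
\]

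The main step is to bound $g(r)$ using an acceptable deformation. For a.e.\ $r\in(0,R_1)$ I would construct $\{\varphi_t\}_{t\in[0,1]}$, supported in $\overline{B(0,r)}$, as follows: outside a shrinking tube around the cone $\{\lambda y:\lambda\in[0,1],\,y\in L\cap\overline{B(0,r)}\}$, use the classical radial retraction of $B(0,r)$ onto $\partial B(0,r)$; inside the tube, first project onto $L$ and then slide within $L$ out to $L\cap\partial B(0,r)$ (allowable because $L$ is affine, so \eqref{1.6} is preserved); patch continuously near the origin to the point $x_L\in L$ closest to $0$. A Jacobian calculation---which in the model case of $E$ a half-plane bounded by a line $L$ with $0\in E\sm L$ produces the ``cone from $x_L$ over $E\cap\partial B(0,r)$'' of area exactly $\tfrac{r}{d}g'(r)+(rs'(r)-ds(r))/d$---shows that, as the tube shrinks,
\[
\H^d\bigl(\varphi_1(E)\cap B(0,r)\bigr)\;\le\;\tfrac{r}{d}\bigl(g'(r)+s'(r)\bigr)-s(r).
\]
Applying \eqref{1.18} and combining with $rs'\ge ds$ yields
\[
r(g'+s')-d(g+s)\;\ge\;-Cd\,r^d\, h(r).
\]

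To upgrade this additive inequality to the multiplicative form \eqref{1.21}, I would invoke the standard lower-density estimate for sliding almost minimal sets: since $0\in E^\ast$ and $h(R_1)<\tau$ is small, there is $c_0>0$ with $F(r)\ge\theta_0(r)\ge c_0$ on $(0,R_1)$. Then $r^d h(r)\le c_0^{-1}h(r)(g+s)$, so
\[
F'(r)\;\ge\;-\frac{Cd}{c_0}\cdot\frac{h(r)}{r}\,F(r),
\]
which integrates (Gronwall) to $F(r)\,e^{aA(r)}$ nondecreasing with $a=Cd/c_0$. \emph{The main obstacle is the construction of the sliding cone competitor:} the naive radial retraction is not acceptable because rays from $0$ through points of $L$ leave $L$; one has to splice in an $L$-preserving slide on a shrinking tube and verify that the extra image measure of the interpolating sheath equals precisely the shade deficit $\tfrac{r}{d}s'(r)-s(r)$, and no more. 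This is where both the affine structure of $L$ and the particular choice of the shade $S$ are forced on us.
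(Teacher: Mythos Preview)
Your overall architecture is correct and matches the paper: reduce to the differential inequality $r(g'+s')-d(g+s)\ge -C\,r^d h(r)$ via a competitor, then use the lower Ahlfors bound $F(r)\ge c_0$ (from $0\in E^\ast$ and $h(R_1)<\tau$) to absorb the additive error into a multiplicative one and integrate. Your shade computation is right, and indeed $\tfrac{r}{d}s'(r)-s(r)=\H^d(L^\sharp(r))$ with $L^\sharp(r)=\{\lambda z:z\in L\cap\overline B(0,r),\ \lambda\in[0,1]\}$; this is exactly why $H(r)=s(r)$ solves the ODE \eqref{7.8} and is the paper's reason for choosing this correction (Remark~\ref{t7.3}). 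Your Gronwall step is the paper's \eqref{7.23}--\eqref{7.26} verbatim.

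The gap is entirely in the competitor, and your sketch does not survive scrutiny. First, there is no ``radial retraction of $B(0,r)$ onto $\partial B(0,r)$''; what you need away from $L$ is the cone deformation (contract an inner ball to $0$, stretch a thin annulus), whose image has measure $\tfrac{r}{d}g'(r)$. Second, your two descriptions of the map near $L$---``project onto $L$ then slide to $L\cap\partial B(0,r)$'' versus ``cone from $x_L$''---are mutually inconsistent, and neither is what the paper does. The paper cones from $0$ on the part of $E$ away from the cone $L^\ast$ over $L$ (see \eqref{4.46}--\eqref{4.54}), and near $L$ it simply \emph{keeps points of $E\cap L$ fixed} (so \eqref{1.6} is trivially preserved); the image then necessarily contains a set close to $L^\sharp(r)$, contributing the extra $\H^d(L^\sharp(r))$ that you correctly predicted. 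Third, the transition region between these two regimes is not harmless: the paper needs three auxiliary radii $r_0>r_1>r_2$, a tube width $\tau$, and a local Lipschitz retraction onto $L^\circ(r)$, then an average in $t\in[a,b]$ followed by two successive limits ($\tau\to 0$, then $a\to b$) to kill the interpolation errors and arrive at \eqref{6.24}. Your one-line ``as the tube shrinks'' hides precisely this work, and the specific map you wrote would not produce the claimed bound. In short: right target inequality, right endgame, but the competitor must be rebuilt along the lines of Sections~\ref{S4}--\ref{S6}.
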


\ms
See Theorem \ref{t7.1} for a more general version, where we
allow more general boundary sets $L$, and also slightly different definitions of 
almost minimality.

Notice that because of \eqref{1.17}, $e^{aA(r)}$ tends to $1$
when $r$ tends to $0$, so \eqref{1.21} can really be interpreted
as an almost monotonicity property. In particular, we get that
$\lim_{r \to 0} F(r)$ exists and is finite; it is positive because $E^\ast$ is
locally Ahlfors regular (see Section \ref{S2}).

See for instance Proposition 5.24 on page 101 of  \cite{Holder}
for its analogue when $L = \emptyset$, and Theorem 28.15
in \cite{Sliding} for the case of sliding almost minimal sets, but
with $\theta_0$ and balls centered on $L$.

\ms
The second important element of our toolbox says that when 
$E$ is as above and $F(r)$ varies very little on an interval,
$E$ looks a lot like a sliding minimal set $E_0$ for which $F$ is 
constant on a slightly smaller interval. Hence, in many cases,
Theorem \ref{t1.3} says that $E_0$ and $E$ look like truncated minimal cones.
Here is the simplest result that corresponds to an interval
$[0,r_1)$.

\ms
\begin{thm} \label{t1.6}
For each choice of $L$ and $r_1>0$ and each small $\tau > 0$,
we can find $\varepsilon > 0$, which depends only on $\tau$, $n$, $d$, $L$, 
and $r_1$, with the following property. 
Let $E \in SAM(U,L,h)$ be a coral sliding almost minimal set in the open set $U$,  
with boundary condition defined by $L$ and some nondecreasing gauge function $h$. Suppose that
\begin{equation} \label{1.22}
\text{ 
$B(0,r_1) \i U$ and $h(r_1) < \varepsilon$,}
\end{equation}
and 
\begin{equation} \label{1.23}
F(r_1) \leq \varepsilon + \inf_{0 < r < 10^{-3} r_1} F(r).
\end{equation}
Then there is a coral set $E_0 \in SM(B(0,r_1),L)$ (i.e., $E_0$ is sliding minimal 
in $B(0,r_1)$, with boundary condition defined by $L$), such that
\begin{eqnarray} \label{1.24}
\text{ the analogue of $F$ for the set $E_0$ is constant on } (0,r_1),
\end{eqnarray}
\begin{equation} \label{1.25}
\dist(y, E_0) \leq \tau r_1 \ \text{ for } y \in E\cap B(0,(1-\tau)r_1),
\end{equation} 
\begin{equation} \label{1.26}
\dist(y,E) \leq \tau r_1 \ \text{ for } y \in E_0 \cap B(0,(1-\tau)r_1),
\end{equation}
and
\begin{eqnarray} \label{1.27}
\av{\H^d(E \cap B(y,t)) - \H^d(E_0 \cap B(y,t))} \leq \tau r_1^d &&
\nonumber \\
&&\hskip-7cm
\ \text{ for all $y\in \R^n$ and $t>0$ such that }
B(y,t) \i B(0,(1-\tau)r_1).
\end{eqnarray}
\end{thm}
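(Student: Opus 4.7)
The plan is a compactness-and-contradiction argument of the type standard in regularity theory for (almost) minimal sets. Assume the conclusion fails for some $\tau > 0$. Then we can find sequences $\varepsilon_k \to 0$, open sets $U_k$ with $B(0,r_1) \subset U_k$, nondecreasing gauges $h_k$ with $h_k(r_1) < \varepsilon_k$, and coral sets $E_k \in SAM(U_k, L, h_k)$ satisfying
\[
F_k(r_1) \leq \varepsilon_k + \inf_{0 < r < 10^{-3} r_1} F_k(r),
\]
but such that no coral $E_0 \in SM(B(0,r_1), L)$ fulfills all of \eqref{1.24}--\eqref{1.27} with $E = E_k$. The goal is to exhibit such an $E_0$ as a limit of a subsequence of the $E_k$, contradicting this.

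First I would extract a subsequence (not relabelled) along which $E_k$ converges in local Hausdorff distance inside $B(0, r_1)$ to a closed set $E_\infty$. The almost monotonicity \eqref{1.21} combined with \eqref{1.23} bounds $F_k(r)$ uniformly on compact subintervals of $(0, r_1)$, which provides the local finiteness of $\H^d(E_k \cap \cdot)$ needed for compactness. The standard theory of limits of sliding almost minimal sets with gauges tending to $0$ (analogues for $SAM(\cdot,L,\cdot)$ of results from \cite{Sliding} and, in the boundary-free case, \cite{Holder}) then yields that $E_\infty \in SM(B(0,r_1), L)$, that the local Ahlfors regularity of coral almost minimal sets passes to $E_\infty$ (so, up to discarding a $\H^d$-null set, $E_\infty$ is coral), and that $\H^d(E_k \cap V) \to \H^d(E_\infty \cap V)$ for every open set $V$ compactly contained in $B(0, r_1)$.

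Next I would pass to the limit in the monotonicity formula. Since $h_k(r_1) \to 0$, the Dini integrals $A_k(r)$ tend to $0$, so by Theorem~\ref{t1.5} the pointwise limit $F_\infty(r)$ (which exists for all but countably many $r$, namely those at which $\H^d(E_\infty \cap \partial B(0,r)) = 0$) is nondecreasing on $(0, r_1)$. The hypothesis \eqref{1.23} then forces
\[
F_\infty(r_1^-) = \lim_{k \to \infty} F_k(r_1) \leq \liminf_{r \to 0^+} F_\infty(r),
\]
so $F_\infty$ is constant on $(0,r_1)$, which is precisely \eqref{1.24} for $E_0 := E_\infty$.

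Finally, \eqref{1.25} and \eqref{1.26} for $E_0 = E_\infty$ are immediate from Hausdorff convergence on $B(0, (1-\tau)r_1)$ for $k$ large enough (depending on $\tau$, $n$, $d$, $L$, $r_1$). The measure-closeness \eqref{1.27} follows from the local measure convergence of the previous step, applied to the parameter region $\{(y,t) : B(y,t) \subset B(0,(1-\tau)r_1)\}$, after using a finite covering and the uniform Ahlfors upper bound on the $E_k$ to handle those radii $t$ at which $\H^d(E_\infty \cap \partial B(y,t))$ is nonzero. This produces the required $E_0$ and closes the contradiction. The main technical obstacle is promoting the pointwise (in balls) measure convergence to the uniform bound of order $\tau r_1^d$ needed in \eqref{1.27}: this is a standard but delicate step combining Ahlfors regularity of $E_\infty$, the fact that $\H^d(E_\infty \cap \partial B(y,t)) = 0$ for a.e.\ $t$, and an approximation argument that must respect the sliding condition on $L$.
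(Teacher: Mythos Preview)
Your overall strategy matches the paper's: argue by contradiction and compactness, extract a convergent subsequence $E_k \to E_\infty$, show $E_\infty$ is sliding minimal in $B(0,r_1)$ with constant functional $F_\infty$, and then verify \eqref{1.25}--\eqref{1.27} for $E_0=E_\infty$ and $k$ large. The steps leading to \eqref{1.24}, \eqref{1.25}, \eqref{1.26} are handled essentially as you describe.

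The gap is in your treatment of \eqref{1.27}. You correctly flag the uniformity over all balls $B(y,t)\subset B(0,(1-\tau)r_1)$ as the delicate point, but the tools you propose --- Ahlfors regularity of $E_\infty$, the fact that $\H^d(E_\infty\cap\partial B(y,t))=0$ for almost every $t$, and a finite covering --- do not suffice. The sandwich/finite-cover argument needs the function $h(y,t)=\H^d(E_\infty\cap B(y,t))$ to be \emph{continuous} on the compact parameter set, which in turn requires $\H^d(E_\infty\cap\partial B(y,t))=0$ for \emph{every} ball, not merely for a.e.\ radius at each fixed center. Upper Ahlfors regularity does not control the measure in thin spherical shells (an Ahlfors-regular $d$-set can contain a piece of a round $(n-1)$-sphere and hence put positive $\H^d$-mass on $\partial B(y_0,t_0)$), so at a jump point of $h$ your approximation by nearby ``good'' pairs breaks down.

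The paper closes this gap by feeding the conclusion you already obtained back into the proof: since $F_\infty$ is constant on $(0,r_1)$, Theorem~\ref{t1.3} (the case of equality in the monotonicity formula) forces $E_\infty\cap B(0,r_1)$ to lie in a rectifiable cone $X$ of locally finite $\H^d$-measure. For such cones one has the elementary fact (Lemma~7.34 in \cite{Holder}) that $\H^d(X\cap\partial B)=0$ for \emph{every} ball $B$; hence the same holds for $E_\infty$, $h$ is continuous, and the uniform measure estimate follows by the finite-cover argument (this is packaged in the paper as Lemma~\ref{t9.2}). In short, the missing ingredient in your outline is to invoke Theorem~\ref{t1.3} on $E_\infty$, using \eqref{1.24}, before attacking \eqref{1.27}.
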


\ms
Notice that we do not need to assume \eqref{1.17} here.
In many cases, in particular if \eqref{1.17} holds, we know that
$\lim_{r \to 0} F(r)$ exists, and  we could just require that 
$F(r_1) \leq \varepsilon + \lim_{r \to 0} F(r)$ instead of \eqref{1.23}.
Finally observe that \eqref{1.27} does not say much when $t$ is much smaller 
than $r$, because the error term in \eqref{1.27} is $\tau r_1^d$, not $\tau t^d$.

See Theorem \ref{t9.1} for a generalization of this to more general boundary sets 
$L$, and Theorem \ref{t9.7} for the analogue of Theorems \ref{t1.6} and \ref{t9.1}
when $F$ is only assumed to be nearly constant on some interval $(r_0,r_1)$,
$r_0 > 0$.

Once we have Theorem \ref{t9.1}, we can use Theorem \ref{t1.3}
or Theorem \ref{t8.1} to get more information on $E_0$; we do not do this
in this introduction because the amount of information that we get depends on $L$,
in particular through $\dist(0,L)$ and $r_1$.

Theorems \ref{t1.6} and  \ref{t9.1} generalize
Proposition 7.24 in \cite{Holder} (a version without boundary set $L$)
and Proposition 30.19 in \cite{Sliding} (a version with the density
$\theta_0$ and balls centered on $L$). All these results are fairly easy to
obtain by compactness, because we have results that say that
limits of almost minimal sets, with a fixed gauge function $h$, are
almost minimal with the same gauge function.

It is unpleasant that, in both Theorems \ref{t1.6} and \ref{t9.1}, 
$\varepsilon$ depend on the specific choice of $L$ and $r_1$.
In the present case where we assume that $L$ is an affine subspace, 
the dependence is in terms of $r_1^{-1}\dist(0,L)$, and we can try to eliminate 
this distance from the compactness argument that leads to Theorem \ref{t1.6}.
Still we get three different regimes. When $r_1 < \dist(0,L)$, the boundary condition 
does not play a role and we can use results from \cite{Holder}. 
When $\dist(0,L)<< r_1$,
we shall find it more convenient in practice to reduce to the case when $0 \in L$,
and then use results from \cite{Sliding}. So we concentrate on the intermediate case when
$\dist(0,L) < r_1 < C \dist(0,L)$, with $C$ large, and then we get an approximation of $E$
by a set $E_0 = \overline{X_0 \sm S}$, where $X_0$ is a minimal cone
(without boundary condition) and $S$ is the shade of an affine subspace.
This is Corollary \ref{t9.3}, where we also include the case when $L$ is not exactly
an affine space, but is very close to one (in a bilipschitz way), to allow subvarieties as well.

\ms
We shall give two rather elementary applications of the results above.
Both concern the behavior of a sliding almost minimal set $E$, with a 
boundary condition defined by a $(d-1)$-dimensional space $L \i \R^n$, 
and in a small ball where $E$ looks simple enough. In both cases, we shall not
try to give an optimal result, but instead explain how the results of the first part
can be useful. A motivation for this type of results is that they provide 
first steps in the study of specific boundary behavior of minimal sets, 
subject to a Plateau condition.
See Figure 13.9.3 on page 134 of \cite{Mo}, or Figure 5.3 in \cite{LM}, 
for a list of conjectured behaviors for a
$2$-dimensional minimal set bounded by a curve.

We start with some notation which is common to the two results.
We are given a $(d-1)$-dimensional vector space $L$ in $\R^n$, and we shall use
some classes of minimal cones. First we denote by $\bP_0 = \bP_0(n,d)$ 
the set of $d$-planes through the origin, and by $\bP= \bP(n,d)$ the set 
of all affine $d$-planes. 

Next, $\bH(L)$ denotes the class of
closed half-$d$-planes bounded by $L$. That is, to get $H \in \bH(L)$ we pick
a $d$-plane $P$ that contains $L$, select one of the two connected components
of $P \sm L$, and let $H$ be the closure of this component.

For the sake of Corollary \ref{t1.8}, we also denote by $\bV(L)$ the set of
\underbar{cones of type $\bV$} bounded by $L$. These are the unions 
$V = H_1 \cup H_2$, where $H_1$ and $H_2$ are elements of $\bH(L)$ that
make an angle at least $2\pi/3$ along $L$. This last means that if
$v_i$ denotes the unit vector which lies in $H_i$ and is orthogonal to 
$L$, then $\langle v_1,v_2\rangle \leq -1/2$. We add this angle constraint
because it is easy to see that if it fails, then $V$ is not a sliding minimal set
with boundary condition given by $L$. We believe that the elements of $\bV(L)$
are sliding minimal sets, but we shall not check this here.

Let us also define the set $\bY_0 = \bY_0(n,d)$ of \underbar{minimal cones 
of type $\bY$} that are centered at $0$. 
We first say that $Y \in \bY_0(n,1)$ when
$Y$ is the union of three half lines that start from $0$, are contained in some 
plane $P=P(Y)$, and make $2\pi/3$ angles with each other at the origin. 
For $d > 1$, $\bY_0(n,d)$ is the set of products $Y \times W$, where 
$Y \in \bY_0(n,1)$ and $W$ is a vector space of dimension $(d-1)$ 
that is orthogonal to the $2$-plane $P(Y)$. Finally, we set
\begin{equation} \label{1.28}
\bY_x(n,d) = \big\{ x+Z \, ; \, Z\in \bY_0(n,d) \big\}
\end{equation}
(the cones of type $\bY$ centered at $x$). By $\bY$-set,
we usually mean an element of any $\bY_x(n,d)$.

We are almost ready for our first application, which tries to say that if 
$E$ is a coral sliding almost minimal set in $B(0,3)$, with a sufficiently small 
gauge function, and if it is close enough in $B(0,3)$ to 
a half plane $H \in \bH(L)$, then  
$E$ is H\"older-equivalent to $H$ in $B(0,1)$. The initial distance from $E$ to $H$
will be expressed in terms of the following very useful, dimensionless local version 
of the Hausdorff distance: when $E$ and $F$ are two closed sets, we set
\begin{equation} \label{1.29}
d_{x,r}(E,F) = {1 \over r} \sup_{y\in E \cap B(x,r)} \dist(y,F)
+ {1 \over r} \sup_{y\in F \cap B(x,r)} \dist(y,E),
\end{equation}
where by convention we set $\sup_{y\in E \cap B(x,r)} \dist(y,F) = 0$ when
$E \cap B(x,r) = \emptyset$, for instance.
As we shall see, we express the conclusion in terms of distances
$d_{x,r}$ and Reifenberg approximation condition; we shall explain why
after the statement.

\ms
\begin{cor} \label{t1.7}
For each small $\tau > 0$ we can find $\varepsilon > 0$,
which depend only on $\tau$, $n$ and $d$, with the following property. 
Let $L$ be a vector $(d-1)$-plane and let $E$ be a coral
sliding almost minimal set in $B(0,3)$, with boundary condition given by $L$ and 
a gauge function $h$ such that \eqref{1.17} holds. 
Suppose that 
\begin{equation} \label{1.30}
\int_0^{3} {h(t) dt \over t} < \varepsilon
\end{equation}
and 
\begin{equation} \label{1.31}
d_{0,3}(E,H) \leq \varepsilon
\ \text{ for some $H \in \bH(L)$.}
\end{equation}
Then 
\begin{equation} \label{1.32}
L \cap B(0,2) \i E,
\end{equation}
and for each $x\in E \cap B(0,2)$ and $0 < r < 1/2$, 
we can find a set $Z = Z(x,r)$ with the following properties:
\begin{equation} \label{1.33}
\text{if $r \leq \dist(x,L)/2$, then $Z(x,r)$ is a plane through $x$;}
\end{equation}
\begin{eqnarray} \label{1.34}
&\,&\text{if $\dist(x,L)/2 < r \leq \tau^{-1} \dist(x,L)$, then}
\nn\\
&\,&\hskip2cm
\text{$Z(x,r)$ is the element of $\bH(L)$ that contains $x$;}
\end{eqnarray}
\begin{equation} \label{1.35}
\text{if $r > \tau^{-1} \dist(x,L)$, 
then $Z(x,r) \in \bH(L)$;}
\end{equation}

\begin{equation} \label{1.36}
d_{x,r}(E,Z) \leq \tau,
\end{equation}
and 
\begin{eqnarray} \label{1.37}
&\,&\big|\H^d(E \cap B(y,t)) - \H^d(Z\cap B(y,t))\big| \leq \tau r^d
\nn\\
&\,&\hskip2cm
\text{ for $y\in \R^n$ and $t>0$ such that } B(y,t) \i B(0,(1-\tau)r).
\end{eqnarray}\end{cor}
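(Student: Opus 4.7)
The plan is to combine three ingredients from the earlier part of the paper: the almost-monotonicity of $F$ (Theorem~\ref{t1.5}), the approximation of $E$ by sliding minimal sets with constant $F$ (Theorem~\ref{t1.6}, together with its more flexible variants \ref{t9.1} and \ref{t9.7}), and the rigidity description of sets where $F$ is constant (Theorem~\ref{t1.3}). I would begin by exploiting \eqref{1.31}: since $E$ is $\varepsilon$-close to the half-plane $H$ in $B(0,3)$, the value $F(r_1)$ at some scale $r_1 \in (2,3)$ is within $O(\varepsilon)$ of the half-plane density, while Theorem~\ref{t1.5} together with \eqref{1.30} and the local Ahlfors regularity of $E^\ast$ provides a matching lower bound at small $r$. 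Hence \eqref{1.23} holds, and Theorem~\ref{t1.6} produces a coral $E_0 \in SM(B(0,r_1),L)$ with constant $F$ which approximates $E$ to within $\tau$ on $B(0,(1-\tau)r_1)$. By Theorem~\ref{t1.3}, $E_0$ agrees modulo a null set with a truncated cone from $0$, and its $O(\tau+\varepsilon)$-closeness to the rigid half-plane $H$ forces $E_0 = H$. This yields \eqref{1.32}: for each $p \in L \cap B(0,2)$, running the same argument at scales $r \to 0$ centered at $p$ produces uniform half-plane approximations of $E$ through (or arbitrarily close to) $p$, so closedness of $E$ gives $p \in E$.

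For the local statement at $(x,r)$ with $x \in E \cap B(0,2)$ and $0 < r < 1/2$, the strategy is the same, now applied in the ball $B(x,r)$: the analogue of $F$ centered at $x$ is nearly constant on $(0,r)$, with the upper bound at scale $r$ provided by the global closeness to $H$ and the lower bound at small scales by Theorem~\ref{t1.5} and \eqref{1.30}. When $r \le \dist(x,L)/2$, the ball $B(x,r)$ avoids $L$, the sliding condition plays no role, and the boundary-free approximation result (Proposition~7.24 of \cite{Holder}) produces a plane $Z(x,r)$ through $x$, yielding \eqref{1.33}. When $\dist(x,L)/2 < r$, I would apply Theorem~\ref{t1.6} to get some $E_0'$ and Theorem~\ref{t1.3} to describe it as the closure of a truncated cone from $x$ minus the shade of $L$ seen from $x$; the global closeness of $E$ to a single $H \in \bH(L)$ then forces the underlying cone to be an element of $\bH(L)$. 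In the intermediate range $\dist(x,L)/2 < r \le \tau^{-1}\dist(x,L)$, the point $x$ sits at definite distance from $L$ relative to $r$, and the half-plane approximant lies within $O(\tau r)$ of $x$; among the elements of $\bH(L)$ this pins down a unique choice, namely the one passing through $x$, giving \eqref{1.34}. In the large range $r > \tau^{-1}\dist(x,L)$, the center $x$ is effectively on $L$ at scale $r$ and \eqref{1.35} only asks for some element of $\bH(L)$, which is what the argument delivers. The estimates \eqref{1.36} and \eqref{1.37} are then read off from \eqref{1.25}--\eqref{1.27} in each case.

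The main obstacle is the rigidity step in the sliding regime: Theorem~\ref{t1.3} alone permits the underlying cone to be any coral minimal cone which contains the shade of $L$ via \eqref{1.15}, and in particular a $\bY$-set with spine on $L$ is not immediately excluded. Ruling out such non-half-plane cones requires using the global Hausdorff closeness of $E$ to the single half-plane $H$: if the local approximant had a genuine $\bY$ component, then in the overlap of the local and global approximations, $E$ would simultaneously have to be $\tau$-close to a configuration with an extra branch and $\varepsilon$-close to a single half-plane, a contradiction for $\varepsilon$ small enough in terms of $\tau$. Making this incompatibility quantitative uniformly across the three regimes of $r/\dist(x,L)$, and choosing $\varepsilon = \varepsilon(\tau,n,d)$ accordingly, is the principal technical task; once it is done, the three cases of the statement follow by reading off the unique element of $\bH(L)$ (or plane) compatible with the position of $x$.
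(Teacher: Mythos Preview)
Your architecture is right, but the mechanism that forces the approximating cone to be a plane (and hence the truncated set to lie in $\bH(L)$) is a \emph{density} argument, not global Hausdorff closeness to $H$. Once you know $F_x$ is nearly constant and equal to $\omega_d$ on $(0,r_2]$ (this is the content of Lemmas~\ref{t11.1}--\ref{t11.2}, which require a concrete competitor: project $E$ onto $H$), the approximating $E_0$ from Corollary~\ref{t9.3} has constant $F$ close to $\omega_d$, and then \eqref{1.15} in Theorem~\ref{t1.3} gives $d(X_0)=F_\infty\approx\omega_d$ for the underlying plain minimal cone. Lemma~\ref{t10.1} (a compactness gap: non-planar minimal cones have density strictly above $\omega_d$) now forces $X_0$ to be a plane, and \eqref{9.29} pins it to the plane through $L$; see Lemma~\ref{t11.3}. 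So the $\bY$-set you worry about is already excluded by density, with no appeal to the position of $E$ relative to $H$. Your proposed fix via global $\varepsilon$-closeness to $H$ cannot do this job at small $r$: that closeness gives a Hausdorff error of order $\varepsilon$ in absolute units, which at scale $r$ reads as $\varepsilon/r$ and is vacuous once $r\ll\varepsilon$.

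There is a second gap in the large regime $r>\tau^{-1}\dist(x,L)$. The uniform version of Theorem~\ref{t1.6} that one actually uses, Corollary~\ref{t9.3}, requires $\dist(x,L)/r\in[\tau,9/10]$; when $r\gg\dist(x,L)$ the lower bound fails and the $\varepsilon$ in Theorem~\ref{t9.1} degenerates with $\dist(x,L)/r$, so your intermediate-range argument does not transfer. The paper handles this regime differently (see the argument following \eqref{11.42}): recenter at the nearest point $z\in L$, use \eqref{12.22}-type bounds to show $\theta_z$ is close to $\omega_d/2$ on the relevant range, apply the boundary-centered approximation Proposition~30.3 of \cite{Sliding} to obtain a \emph{sliding} minimal cone $T$ centered at $z$ with density close to $\omega_d/2$, and then invoke Corollary~\ref{t10.2} (a sliding minimal cone with density $\le\omega_d/2$ is a half-plane) to get $T\in\bH(L)$. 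Your proposal is missing this recentering step and the use of Corollary~\ref{t10.2}.
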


\ms
Recall that coral is defined near \eqref{1.11}. We decided to work in $B(0,3)$
to simplify the statement, but by translation and dilation invariance we could easily
get a statement for balls $B(x_0,r_0)$, with $x_0 \in L$ and 
$\int_0^{3r_0} h(t) { dt \over t} < \varepsilon$ in \eqref{1.30}.

Notice that we do not assume that $0 \in E$ (or that $E \cap L \neq \emptyset$);
this comes as a conclusion.

We claim that the conclusion of Corollary \ref{t1.7} (even without \eqref{1.37})
probably implies that $E$ is H\"older-equivalent to $H$ in $B(0,1)$, as in the 
topological disk theorem of Reifenberg \cite{R}. 
More precisely, we claim that for each $\eta > 0$, we should find
$\tau> 0$ such that if the conclusion of Corollary \ref{t1.7}
holds (without \eqref{1.37}), then there is a H\"older homeomorphism $f$ of 
$\R^n$ such that
\begin{equation} \label{1.38}
f(x) = x \ \text{ for } x\in L \text{ and for } x\in \R^n \sm B(0, 2),
\end{equation}
\begin{equation} \label{1.39}
 |f(x)-x| \leq \eta   \ \text{ for } x\in \R^n,
\end{equation}
\begin{equation} \label{1.40}
(1-\eta)  |x-y|^{1+\eta} \leq |f(x)-f(y)| 
\leq (1+\eta) |x-y|^{1-\eta}
\ \text{ for } x, y \in B(0, 3),
\end{equation}
and 
\begin{equation} \label{1.41}
E \cap B(0, 1) \i f(H \cap B(0,1+\eta)) \i E.
\end{equation}
We shall not prove this claim here, and unfortunately do not know of a proof
in the literature. However, a small modification (mostly a simplification)
of the argument in \cite{DDT} should give this, and since this 
is not central to the present paper, we shall leave the claim with no proof
for the moment, and content ourselves with the conclusions of Corollary \ref{t1.7}.

The corollary probably stays true when $L$ is a flat enough smooth submanifold
of dimension $d-1$, but we shall not try to pursue this here; see
Remark \ref{t11.4} though.

The local H\"older regularity that we claim here is probably far from 
optimal; we could expect slightly better than $C^1$ regularity, 
the proof given below obviously does not give this, but at least it is fairly simple.
Here the important issue is near $L$; far from $L$, we can deduce additional regularity 
on $E$ from its flatness (given by Corollary \ref{t1.7}) and regularity results
from \cite{AlmgrenMemoir} or \cite{Allard}, but at this point we cannot exclude that
$E$ turns around $L$ infinitely many times at some places.

Corollary \ref{t1.7} should be the simplest from a series of
results that give a local description of $E$ when we know that
$E$ looks like a given minimal cone in a small ball centered on $L$.
The next case would be when $E$ is close to a $d$ plane through
$L$, or a set of $\bV(L)$. But new ingredients seem to be needed for this; 
the author will try to investigate the special case when $d=2$, 
for which we have more control on the geometry and the list of minimal cones.

\ms
For our second application, we work on $B(0,3)$ again to simplify the statement, 
assume that the coral almost minimal set $E \in SAM(B(0,3),L,h)$ is very close 
to a cone of type $\bV$ in $B(0,3)$ and that the gauge function $h$ 
is small enough, and get some constraints on the behavior of $E$ 
near its singular points (if they exist). 
Since we don't know whether all the plain minimal cones 
(i.e., with no boundary condition) of dimension $d$ in $\R^n$ that have a 
density at most $3\omega_d/2$ are necessarily 
cones of type $\bY$, we restrict to dimensions $n$ and $d$ for which we know 
that this is the case.

\ms
\begin{cor} \label{t1.8}
Suppose that $d=2$, or that $d=3$ and $n=4$. 
For each choice of $N > 1$ and  $\tau > 0$ we can find $\varepsilon > 0$,
which depends only on $N$, $\tau$ and $n$, with the following property. 
Let $L$ be a vector $(d-1)$-plane, and let $E$ be a coral sliding almost 
minimal set in $B(0,3)$, with sliding condition defined by $L$ and 
a gauge function $h$ such that \eqref{1.17} holds. 
Suppose that 
\begin{equation} \label{1.42}
\int_0^{3} {h(t) dt \over t} < \varepsilon
\end{equation}
and that we can find $V \in \bV(L)$ such that
\begin{equation} \label{1.43}
d_{0,3r}(E,V) \leq \varepsilon.
\end{equation}
Then for each $x\in E \cap B(0,1) \sm L$,
\begin{equation} \label{1.44}
\theta_x(0) := \lim_{r \to 0} r^{-d} \H^d(E \cap B(x,r))
\leq {3 \omega_d \over 2} + \tau.
\end{equation}
In addition, let $x\in E \cap B(0,1) \sm L$ be such that 
$\theta_x(0) > \omega_d$, and set $\delta(x) = \dist(x,L)$. 
Then $\delta(x) \leq N^{-1}$,
and if $Y$ denotes the cone of $\bY_x(n,d)$ (see the definition
\eqref{1.28}) that contains $L$, and 
\begin{equation} \label{1.45}
W = \overline{Y \sm S_x}, \ \text{ with } 
S_x = \big\{ y\in \R^n \, ; \, x + \lambda (y-x) \in L 
\text{ for some } \lambda \in [0,1] \big\},
\end{equation}
then 
\begin{equation} \label{1.46}
d_{x,2N\delta(x)}(W,E) \leq \tau
\end{equation}
and 
\begin{eqnarray} \label{1.47}
&\,&\big|\H^d(E \cap B(y,t)) - \H^d(W\cap B(y,t))\big| \leq \tau \delta(x)^d
\nn\\
&\,&\hskip2cm
\text{ for $y\in \R^n$ and $t>0$ such that } B(y,t) \i B(x,2N\delta(r)).
\end{eqnarray}\end{cor}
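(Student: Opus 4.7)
The plan is to combine the almost-monotonicity formula (Theorem \ref{t1.5}, or its more general version Theorem \ref{t7.1}), the approximation result of Theorem \ref{t1.6} (equivalently Theorem \ref{t9.1}), the description of cases of equality in Theorem \ref{t1.3} (or \ref{t8.1}), and the dimensional hypothesis of the corollary: any plain minimal cone of dimension $d$ in $\R^n$ of density at most $\frac{3}{2}\omega_d$ is a $d$-plane or a cone of type $\bY$.

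For the density bound \eqref{1.44}, fix $x\in E\cap B(0,1)\sm L$. Since $B(x,2)\i B(0,3)\i U$, Theorem \ref{t1.5} at $x$ gives $\theta_x(0)\leq e^{aA(2)}F_x^E(2)$, and \eqref{1.42} makes the exponential factor $1+O(\varepsilon)$. A direct geometric computation shows that $F_{x'}^V(r)\leq\frac{3}{2}\omega_d$ for every $r>0$ whenever $x'\in V$: the sheet $H_i$ of $V=H_1\cup H_2$ containing $x'$ lies in the same $d$-plane $P$ as the shade $S_{x'}$, so $(H_i\cap B(x',r))\cup(S_{x'}\cap B(x',r))\i P\cap B(x',r)$ contributes at most $\omega_d r^d$ to $F_{x'}^V(r) r^d$, while the remaining sheet lies in a different $d$-plane and contributes at most $\frac{1}{2}\omega_d r^d$. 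Taking $x'$ a closest point of $V$ to $x$, so $|x-x'|\leq C\varepsilon$ by \eqref{1.43}, and using this closeness together with the upper semicontinuity of $\H^d$ along sequences of almost minimal sets (from the compactness arguments underlying Theorem \ref{t1.6}) yields $F_x^E(2)\leq F_{x'}^V(2)+O(\varepsilon)\leq\frac{3}{2}\omega_d+O(\varepsilon)$, which gives \eqref{1.44}.

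Next I treat the case $\theta_x(0)>\omega_d$. The ball $B(x,\delta(x)/2)$ does not meet $L$, so $E$ is plain almost minimal there with small gauge, and its blow-up at $x$ is a plain minimal cone of density $\theta_x(0)\in(\omega_d,\frac{3}{2}\omega_d+\tau]$; the classification hypothesis then forces $\theta_x(0)=\frac{3}{2}\omega_d$ exactly. If $\delta(x)>1/N$, then $V\cap B(x,1/(2N))$ reduces to a piece of a single $d$-plane (the sheet of $V$ close to $x$), so by \eqref{1.43} $E$ is within $O(\varepsilon)$ of a plane in $B(x,1/(2N))$, and Allard's regularity theorem (applicable since this ball misses $L$ and the gauge is small) gives $\theta_x(0)=\omega_d$, contradicting $\theta_x(0)=\frac{3}{2}\omega_d$. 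Hence $\delta(x)\leq 1/N$. Set $r_1=2N\delta(x)\leq 2$ and apply Theorem \ref{t1.6} (or \ref{t9.1}) to $E$ at $x$ at scale $r_1$: the bound $h(r_1)<\varepsilon'$ follows from \eqref{1.42} and monotonicity of $h$; for the near-constancy hypothesis \eqref{1.23}, combine $F_x^E(r_1)\leq F_x^E(2)\leq\frac{3}{2}\omega_d+O(\varepsilon)$ (from the previous step) with $\inf_{r<10^{-3}r_1}F_x^E(r)\geq e^{-aA(r_1)}\theta_x(0)=\frac{3}{2}\omega_d-O(\varepsilon)$, so the oscillation of $F_x^E$ on $(0,r_1)$ is $O(\varepsilon)$ and below any fixed threshold once $\varepsilon$ is small.

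Theorem \ref{t1.6} then produces a coral $E_0\in SM(B(x,r_1),L)$ with $F_{x,E_0}$ constant on $(0,r_1)$ and satisfying \eqref{1.25}--\eqref{1.27}. Applying Theorem \ref{t1.3} on an annulus $A=B(x,R_1)\sm B(x,R_0)$ with $R_0<\delta(x)<R_1<r_1$, the cone $X_0$ over $E_0\cap A$ is a plain coral minimal cone in $\R^n$ whose density at $x$ equals the constant value $\frac{3}{2}\omega_d$ of $F_{x,E_0}$, and $E_0\cap A=X_0\sm S_x$ modulo an $\H^d$-null set, while $S_x\cap B(x,R_1)\i X_0$ modulo null by \eqref{1.15}. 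The classification hypothesis forces $X_0$ to be a cone of type $\bY$ centered at $x$, and the inclusion $S_x\i X_0$ forces one sheet of $X_0$ to contain $L$; this pins down the spine as the unique $(d-1)$-affine subspace through $x$ parallel to $L$, and in the dimensions considered identifies $X_0$ with the cone $Y$ of the statement. Letting $R_0\to 0$ and $R_1\to r_1$ identifies $E_0$ with $W=\overline{Y\sm S_x}$, and \eqref{1.46}--\eqref{1.47} follow from \eqref{1.25}--\eqref{1.27} after a harmless adjustment of $\tau$. The main obstacle in this plan is the verification of \eqref{1.23} at $x$: it requires simultaneously the upper bound on $F_x^E(r_1)$ from the global closeness of $E$ to $V$ and the lower bound on $\theta_x(0)$ from the classification of plain minimal cones, and it is at exactly this step that the dimensional restriction of the corollary is used in an essential way.
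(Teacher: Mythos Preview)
Your overall architecture matches the paper's proof in Section~\ref{S12}: bound $F_x$ from above by comparing with $V$, bound it from below at singular points via the classification of plain minimal cones, deduce that $F_x$ is nearly constant, and then feed this into an approximation theorem to identify $E$ near $x$ with a truncated $\bY$-cone. Your direct bound $F^V_{x'}(r)\leq\tfrac32\omega_d$ via the observation that $H_1\cup S_{x'}$ fills the $d$-plane through $x'$ and $L$, together with the projection argument showing $\H^d(H_2\cap B(x',r))\leq\tfrac12\omega_d r^d$ because $\pi_{P_2}(x')$ lies on the non-$H_2$ side of $L$ (this is exactly where the $2\pi/3$ angle condition enters), is correct and is cleaner than the paper's slicing argument in Lemma~\ref{t12.3}.

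There is however a genuine gap in the identification step. You assert that the cone $X_0$ coming out of Theorem~\ref{t1.3} has density \emph{exactly} $\tfrac32\omega_d$ and is therefore an honest $\bY$-cone. But Theorem~\ref{t1.6} used as a black box only gives $|\H^d(E\cap B)-\H^d(E_0\cap B)|\leq\tau r_1^d$ at scale $r_1$, so you only know $|d(X_0)-\tfrac32\omega_d|=O(\tau+\varepsilon)$, not equality. To force $d(X_0)\in\{\omega_d,\tfrac32\omega_d\}$ from this you would need a density gap \emph{above} $\tfrac32\omega_d$, i.e.\ the stronger statement \eqref{10.10}. That gap is available when $d=2$ (Lemma~14.2 of \cite{Holder}), so your argument goes through there; but for $d=3$, $n=4$ the paper explicitly notes that \eqref{10.10} is not known. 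The paper circumvents this: it uses Corollary~\ref{t9.3} to obtain $X_0$, then the soft Lemma~\ref{t10.3} (which needs only \eqref{10.9}) to say $X_0$ is merely \emph{close} to some $Y_0\in\bY_0$, and then spends Lemma~\ref{t12.6} checking that $\overline{X_0\sm S}$ is close to $\overline{Y_1\sm S}$ for a suitable $\bY$-cone $Y_1$ containing $L$. Your shortcut erases exactly this part, which is fine for $d=2$ but leaves the case $d=3$, $n=4$ unproved.

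A secondary imprecision (shared with the paper's phrasing): when $d=2$ and $n>3$, ``the'' cone of $\bY_x(n,d)$ containing $L$ is not unique, since the two remaining faces can rotate about the spine; one has to choose $Y$ close to the approximating cone, not invoke uniqueness.
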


\ms
Recall that $\omega_d = \H^d(\R^d \cap B(0,1))$ is the density of a $d$-plane.
The existence of the limit $\theta_x(0)$ in \eqref{1.44} is classical; for instance, it follows
from \eqref{1.42} and Proposition 5.24 in \cite{Holder}. Our condition that 
$\theta_x(0) > \omega_d$ is another way to say that $x$ is 
a singular point of $E$, and (because we may take $\tau$ arbitrarily small)
\eqref{1.44} just forbids certain types
of singularities, such as points of type $\bT$ when $d=2$ and $n=3$.

Our assumption on the dimensions is a little strange
and probably too conservative; we shall prove the result as soon as
$n$ and $d$ satisfy the assumption \eqref{10.9}, which says that all the plain
minimal cones with density at most ${3 \omega_d \over 2}$ are $d$-planes 
or elements of $\bY_0(n,d)$; see Proposition \ref{t12.1}.
The assumption \eqref{10.9} is officially satisfied when $d=2$ and 
when $d=3$ and $n=4$ (hence the statement above), but the author
believes that the proof of the case when $d=3$ and $n=4$ that was given
by Luu in \cite{Luu1} also works when $d=n-1$ and $n\leq 6$, hence
Corollary \ref{t1.8} should be valid in these dimensions as well, and maybe some 
other ones.

Corollary \ref{t1.8} gives a good description of $E$ in $B(x,2N\delta(r))$. 
The proof will also show that we have a similar description in balls $B(x,r)$, 
$\delta(x) < r < 2N\delta$ (see Lemma \ref{t12.5}), which we could even 
get form our main statement by taking $\tau$ even smaller, depending on $N$.
In Lemma \ref{t12.4}, we will also show that in the balls
$B(x,r)$, $r < \delta(x)$, $E$ is well approximated by a cone 
$Y(x,r) \in \bY_x(n,d)$.
If $d=2$ we can also use the description of 
$E$ in $B(x,2\delta(r))$, and the local regularity result of \cite{epi},
to show that $E$ is also $C^1$-equivalent to a cone of type
$\bY_x(n,d)$ in $B(x,\delta(x)/2)$, say. When $d=3$ and $n=4$
we can use the local regularity result of \cite{Luu1} instead, and we 
only get that $E$ is H\"older-equivalent to a cone of type $\bY_x(n,d)$
in $B(x,\delta(x)/2)$.

We can also deduce a reasonable description of $E$ in 
the part of $B(x,N\delta(r)/2)$ that lies closer to $L$
than the singular set of $Y$, because \eqref{1.46} allows us 
to apply Corollary \ref{t1.7} there (notice that near $L$,
$W$ coincides with a half $d$-plane).

Our proof will also gives some control on the shape of $E$ in
balls $B(x,r)$, $N\delta(r) \leq \delta(x) \leq {1\over 2}$, 
where it will be shown that $E$ looks a lot like a sliding minimal cone 
with boundary $L$ and density close to $\omega_d$; when $d=2$, for instance,
we believe that this should mean that $E$ is close to a set of $\bV(L)$
in these balls. See Proposition \ref{t12.7} and Remark \ref{t12.8}. 

But unfortunately, with the methods of this paper, it seems very hard 
to get a description of $E$ near the regular points of $E$, and in particular 
in balls that are far from the singular set of $E$ 
(i.e., the points $x\in E$ such that $d(x) > \omega_d$).
In such balls, we do not know that the functional $F(r)$ is nearly constant
(it may increase from $\omega_d$, its value for small $r$, to $3 \omega_d/3$,
its approximate value for $r=1/2$), and we don't know what $E$ looks like then.

An instance of the situation of Corollary \ref{t1.8} is when 
$n=3$, $d=2$, and $E$ looks a lot, in $B(0,3)$, like a plane that contains $L$.
In this case, the following possibility seems to be expected by specialists, as a typical
way for a soap film to leave a curve. The reader may first look at Figure 1.1, 
which we borrowed from J. Sullivan's site, and which explains how a soap film 
may leave a cylindrical boundary. Then Figure 1.2 
tries to show how the picture may be deformed when the inner radius of the 
cylinder gets smaller. In both cases $E$ has a singular point on the cylinder,
near which $E$ is composed ot three walls that are perpendicular to the cylinder
and make $120^\circ$ angles with each other.
The bottom curve that turns around the cylinder would become more and more
vertical, and the triangular wall would get thinner.
At the limit, we would get a large piece of $E$ that crosses $L$ 
tangentially at the origin, plus a thin triangular piece
that connects the upper part of $L$ to the main piece, and meets it along
a curve $\Gamma$ where $E$ has singularities of type $\bY$. 
See Figure 1.3 for a sketch of the limiting set $E$,
that would be sliding minimal, and Figure 1.4 for its sections by some vertical planes.
We also refer to \cite{B} for additional detail on the description.
The initial goal of the author, when starting this paper, was not to control
the curve $\Gamma$, as we can do 
(at least when $d=2$ and when $d=3$ and $n=4$)
as a consequence of Corollary~\ref{t1.8},
but to show that it does not exist, at least when $V$ is a plane 
or the angle between its two branches is larger than $2\pi/3$. 
But apparently this attempt fails. 
\vfill
\vskip-0.4cm
\includegraphics [width=3.8cm]{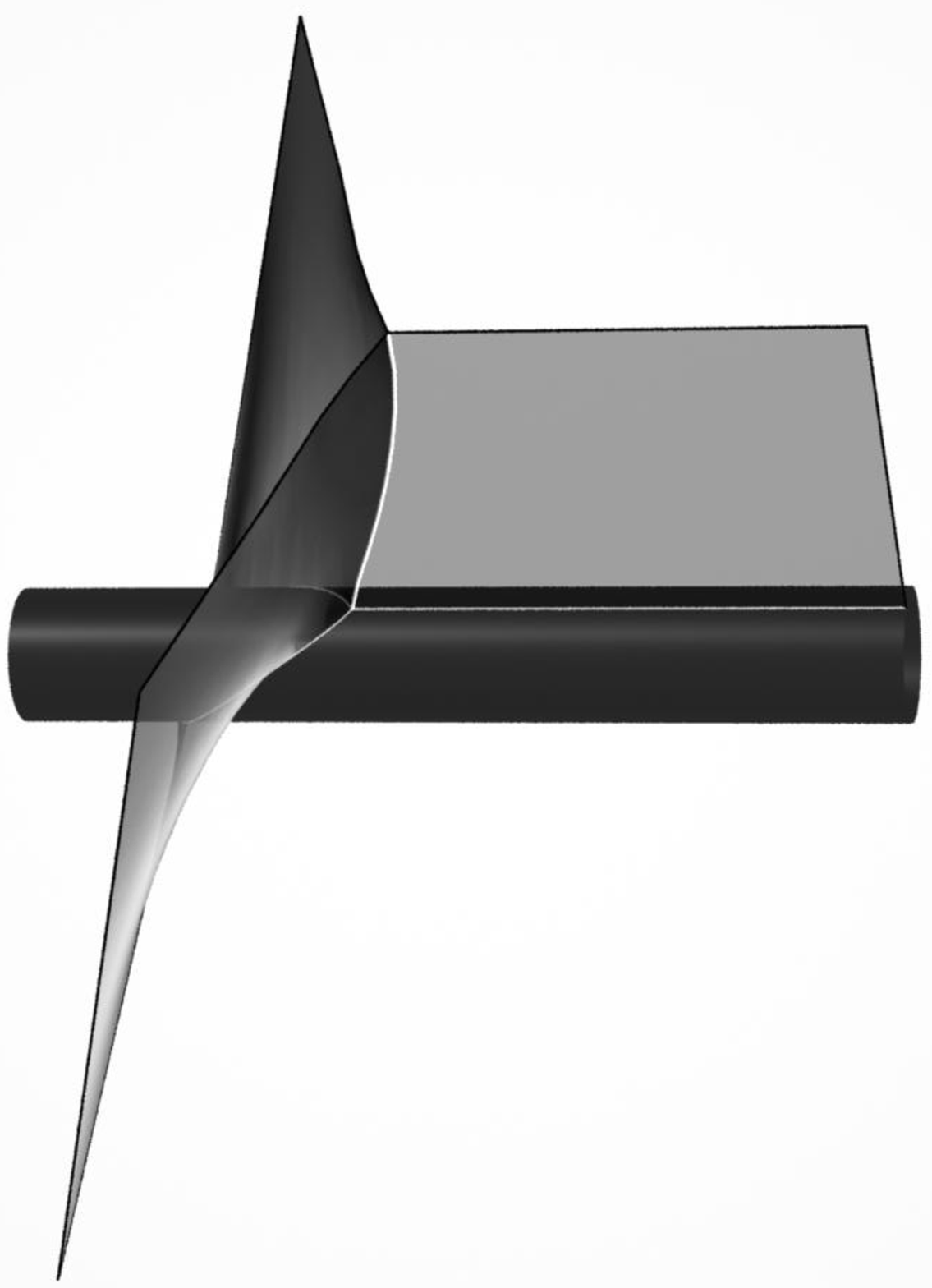}
\hskip1.3cm
\includegraphics[width=11cm]{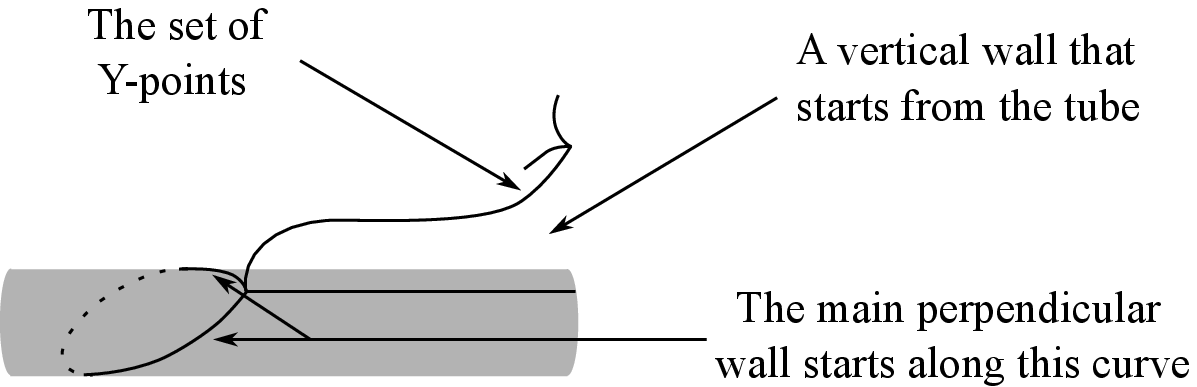}

{\bf Figure 1.1(left).} A soap film leaves a cylinder (Picture by J. Sullivan).
\par
{\bf Figure 1.2 (right).} The same picture with more tilt.

\vskip0.5cm
\centerline{
\includegraphics[width=13cm]{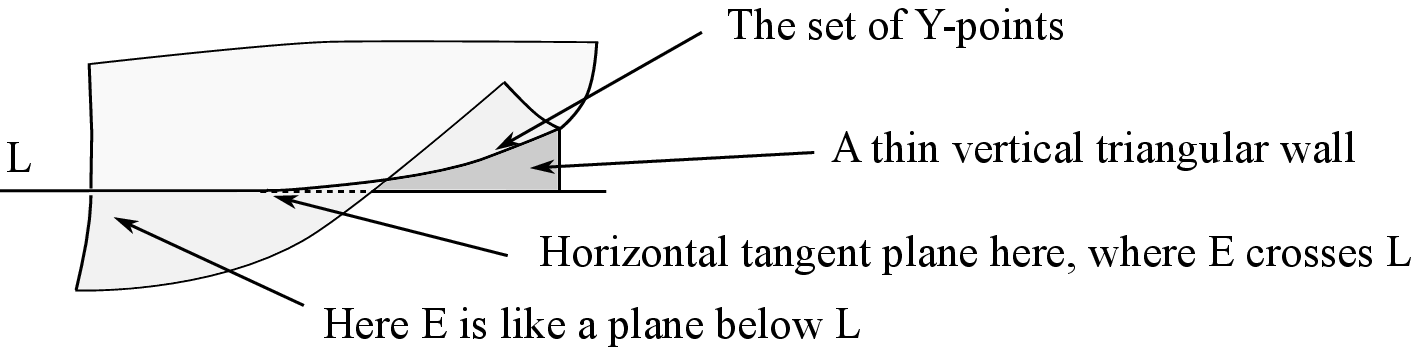}
}
\par
{\bf Figure 1.3.} The limiting set $E$.
\vfill
\ms\ms
\centerline{
\includegraphics[width=9cm]{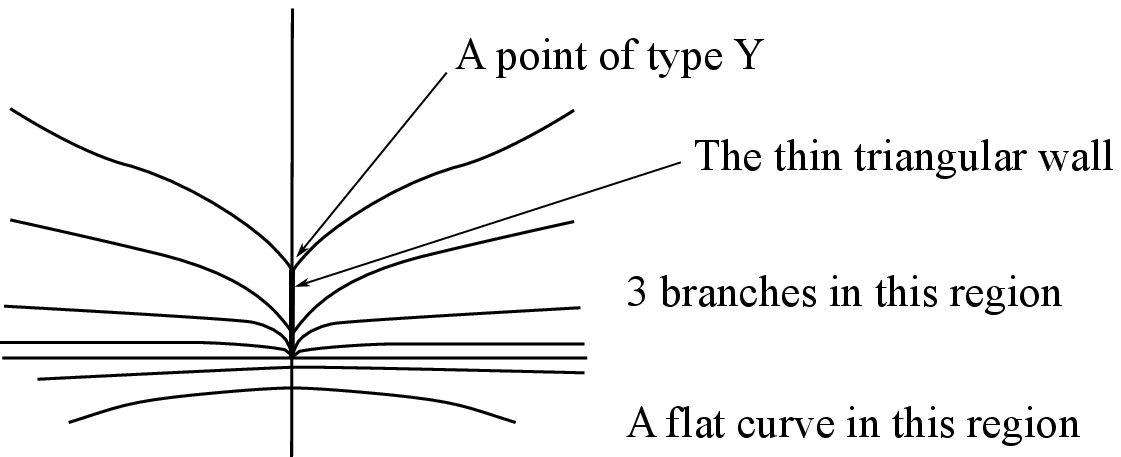}
}
\ms
{\bf Figure 1.4.} Sections of $E$ by vertical planes.

\ms
Another possible behavior of a soap film that may perhaps occur
is the one depicted by Figure 1.5. On the left, $E$ looks like
a set of $\bV(L)$; at the origin, $E$ has a blow-up limit $V_0 \in \bV(L)$, 
possibly with a different angle; on the left,
$E$ is composed of a thin triangular vertical wall, as before, plus
two surfaces that meet it along a singular set of points of type $\bY$.
When the two faces of $V_0$ make $120^\circ$ angles along $L$,
this picture is not shocking at all; the question concerns the case when
this angle is larger, which looks more surprising (why this apparent discontinuity
in angles?), but not more than the example of Figure 1.3. Notice however that in
this case, $E$ is still attached to $L$ on the left of the origin, so maybe it 
does not pull itself down as much.

\ms\ms
\centerline{
\includegraphics[width=13cm]{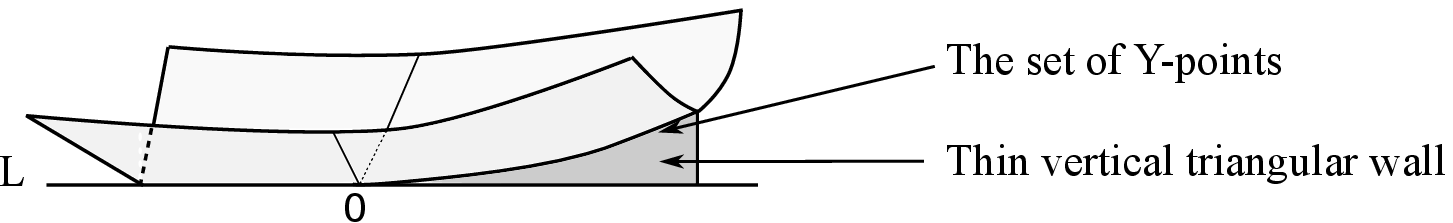}
}
\ms
{\bf Figure 1.5.} A minimal set $E$ with a blow-up limit of type $\bV$ at $0$.

\ms
Maybe we should also say that experiments with soap should not help much here, 
to decide whether the pictures above are realistic, because capillarity plays a 
strong role for thin wires.

\ms
The rest this paper will be devoted to the proof of the various results
mentioned above, often in more generality than in the statements above.

Section \ref{S2} mostly records notation and definitions for of sliding almost 
minimal sets, and rapidly recalls some results from \cite{Sliding}.

In Section \ref{S3} we give general conditions on the boundary sets that will 
allow us to prove a monotonicity property. These should never be a restriction
in practice; the main difficulty will be to find situations where the 
monotonicity property is useful.

Then we prove the monotonicity and near monotonicity properties,
in Sections \ref{S4}-\ref{S7}, with a comparison argument which looks
technical (because we think we need to be careful when taking some limits),
but whose main point is to compare $E$ with a deformation of $E$ in
$B(0,r)$ which is as close as
possible to the cone over $E \cap \d B(0,r)$. We need to add a piece
to that cone (for instance, the cone over $L \cap \overline B(0,r)$), 
because this way we can deform on the new set; this is why we end up adding
a term to the density $\theta_0(r)$ in the definition \eqref{1.10} of $F(r)$.

In Section \ref{S8} we show that for sliding minimal sets $E$, and when the
functional $F$ is constant on an interval, $E$ comes from a cone in the corresponding
annulus (we may have to remove a piece of the shade of $L$, for instance if 
$E$ is a cone of type $\bY$ that contains $L$, truncated by $L$, and this is
why we refer to Theorem \ref{8.1} for a more precise statement.

In Section \ref{S9} we deduce, from Theorem \ref{t8.1} and a compactness
argument, that $E$ is well approximated by truncated minimal cones when 
it is a sliding almost minimal set and the functional $F$ is almost constant
on an interval. See Theorem \ref{t9.1} for the statement with a fixed 
(but general) boundary and an approximation result in a ball,
and Theorem \ref{t9.7} for a variant of Theorem~\ref{t9.1} where $F$ is nearly
constant on an interval that does not start from the origin, and then the 
approximation only takes place in an annulus.

Corollary \ref{t9.3} is a more precise and uniform version of Theorem \ref{t9.1}
but where $L$ is assumed to be very close to affine subspace, and which is neither
too close nor to far from the origin.

In Section \ref{S10} we prepare the two applications and discuss a few simple
properties (true or to be assumed) of plain minimal cones that will be used
in Sections \ref{S11} and \ref{S12}.

We prove Corollary \ref{t1.7} (the case when $E$ looks like a half plane)
in Section \ref{S11} and Corollary \ref{t1.8} 
(the case when $E$ looks like a $\bV$-set) in Section \ref{S12}.

\ms
\noindent{\bf Acknowledgments.}
The author wishes to thank the Institut Universitaire de France, and the ANR
(programme blanc GEOMETRYA, ANR-12-BS01-0014) for their generous support,
M. Christ and the department of Mathematics of the University of California at Berkeley for
their hospitality during the conception stage of this paper, and J. Sullivan for allowing
him to use Figure 1.1 above. 

\section{Three types of sliding almost minimal sets} \label{S2}

We shall be working with more general sliding almost minimal sets
than described in the introduction. In this section, we describe a
set of assumptions that we import from \cite{Sliding}, and that should be
(more than) general enough for us here.
We do this because we do not necessarily want to restrict to
the case of a unique boundary set $L$ which is an affine subspace
of dimension at most $d-1$, yet did not decide of an optimally nice set of 
assumptions, and need some results from \cite{Sliding} anyway.

The (fairly weak) assumptions presented in this section will be satisfied
if there is a unique boundary $L$, $E$ is a coral sliding almost minimal set,
as described in Definition \ref{t1.4}, and there is a bilipschitz mapping
$\psi : U \to B(0,1) \i \R^n$ such $\psi(L)$ is the intersection with
$B(0,1)$ of a vector subspace of dimension at most $d-1$.

Also, we shall present two minor variants of the definition of sliding
almost minimal sets, relative to the way we do the accounting in \eqref{1.18}.
The reader that would only be interested in the case presented in the
introduction may skip the rest of this section, and will probably not be disturbed 
afterwards.

In \cite{Sliding} (and from now on), we are in fact given a finite collection of 
boundary pieces $L_j$, $0 \leq j \leq j_{max}$ (and not just one as above).
We say that 
\begin{equation} \label{2.1}
\text{the sets $L_j$ satisfy the \underbar{Lipschitz assumption}
in the domain $U$}
\end{equation}
when there is a constant $\lambda$ (a scale normalization)
and a bilipschitz mapping $\psi : \lambda U \to B(0,1)$, such that each of the
sets $\wt L_j = \psi(\lambda L_j) \i B(0,1)$ coincides in $B(0,1)$
with a finite union of (closed) dyadic cubes. 
We allow dyadic cubes of different dimensions in a single $L_j$. 
We refer to Section 2 of \cite{Sliding}, and in particular  
Definition 2.7 and above, for additional detail.

By convention, the first set $L_0$ in \cite{Sliding}
was a larger set than the others, which plays the role of a closed domain 
where things happen. See (1.1) in \cite{Sliding}.
This is a convenience, not a constraint, because we may take
$\wt L_0 = \R^n$ and $L_0 = U$ if there is no need for this.
Because of the constraint below, we shall not use this convenience here,
and use $L_0 = U$. Indeed, for our monotonicity formula to make sense,
we shall need to know that $\H^d(L_j) = 0$ for all the $j \geq 0$
that matter, so we shall always assume that
\begin{equation} \label{2.2}
L_0 = U \text{ and all the dyadic cubes that compose the $\wt L_j$,
$j \geq 1$, are of dimensions} <d.
\end{equation}
For many results in \cite{Sliding}, an additional technical assumption
(namely, (10.7) in \cite{Sliding}) is needed; the reader does not need
to worry, this condition, which concerns faces of dimensions larger
than $d$, is automatically satisfied because of \eqref{2.2}.

The author is aware that \eqref{2.1} is complicated; in particular, it
is not true that if the $L_j$ satisfy the Lipschitz assumption in $U$,
then their restriction to a smaller domain $V \i U$ also
satisfy the Lipschitz assumption. For one thing, $V$ may not be
bilipschitz-equivalent to a ball. This is not a major issue, because we
are interested in local properties, and we can always restrict first
to a domain $U$ for where the $L_j$ satisfy the Lipschitz assumption,
and then apply the desired results. Even when $U$ is a ball and $L$ is an
affine subspace of dimension $< d$, we may have to restrict to a slightly
smaller ball to make sure that \eqref{2.1} and \eqref{2.2} hold
(think about the case when $L$ is nearly tangent to $\d B$), but in the rest
of the paper we shall pretend that this has been done and use results where
 \eqref{2.1} and \eqref{2.2} hold without explaining again. Notice also that
 if $L$ does not meet ${1\over 2}B$, we may deduce information on $E\cap {1\over 2}B$
from results about plain almost minimal sets (with no boundary) anyway.

The definition of an \underbar{acceptable deformation} is the same as above, 
except that we replace the unique condition \eqref{1.6} with
\begin{equation} \label{2.3}
\varphi_t(x) \in L_j  \text{ for } 0 \leq t \leq 1
\ \text{ when } w\in E \cap L_j, 1 \leq j \leq j_{max}.
\end{equation}
We removed $j=0$, because \eqref{2.3} is a tautology for $j=0$,
by \eqref{2.2}. 

In \cite{Sliding} we have three slightly different notions of sliding almost
minimal sets (with the given sets $L_j$ and a given gauge function $h$),
which we recall now.

\ms
\begin{defi} \label{t2.1}
Let $E$ be closed in $U$ and satisfy \eqref{1.1}. For each acceptable 
deformation $\{ \varphi_t \}$, $0 \leq t \leq 1$ 
(with \eqref{1.6} replaced by \eqref{2.3}), set
\begin{equation} \label{2.4}
W_1 = \big\{ x\in E \, ; \, \varphi_1(x) \neq x \big\}.
\end{equation}
Also denote by $r$ the radius of a ball that contains the set
$\wh W$ of \eqref{1.5}. 
We say that $E$ is a (sliding) $A$-almost minimal set 
(and write $E \in SAM(U,L_j,h)$) when
\begin{equation} \label{2.5}
\H^d(W_1) \leq \H^d(\varphi_1(W_1)) + h(r) r^d
\end{equation}
for all choice of acceptable deformations $\{ \varphi_t \}$ and $r$
as above. 

We say that $E$ is $A'$-almost minimal (and write $E \in SA'M(U,L_j,h)$)
when instead
\begin{equation} \label{2.6}
\H^d(E \sm \varphi_1(E)) \leq \H^d(\varphi_1(E)\sm E) + h(r) r^d
\end{equation}
for $\{ \varphi_t \}$ and $r$ as above, and that
$E$ is $A_+$-almost minimal ($E \in SA_+M(U,L_j,h)$) when instead
\begin{equation} \label{2.7}
\H^d(W_1) \leq (1+h(r)) \H^d(\varphi_1(W_1)).
\end{equation}
Let us finally set 
$SA^\ast M(U,L_j,h) = SAM(U,L_j,h) \cup SA'M(U,L_j,h) \cup SA_+M(U,L_j,h)$
(where we don't care about which definition is taken).
\end{defi}

\ms
See Definition 20.2 in \cite{Sliding}. The definition \eqref{1.18}
that we gave above is the definition of an $A'$-almost minimal
set (notice that since $E$ and $\varphi_1(E)$ coincide on
$U \sm \wh W$, this set does not play any role in \eqref{2.6}, or
just refer to (20.7) in \cite{Sliding}).

It turns out that $A_+$-almost minimal implies 
$A$-almost minimal and $A'$-almost minimal
(but in a slightly smaller domain, and with a slightly larger gauge function);
see the comments below (20.7) in \cite{Sliding}. 
Moreover, the two notions of $A$-almost minimal and $A'$-almost minimal
are equivalent (with the same gauge function); see Proposition 20.9
in \cite{Sliding}. In the introduction we chose to give the $A'$-definition because
it looks simpler, but finally this does not matter.

When $h=0$, the three notions coincide, and yield the sliding minimal
sets defined in the introduction.

The main reason why we give all this notation is that we shall be using
some results from \cite{Sliding}, which we quote now. 
For our convenience, we shall always assume that 
\begin{equation} \label{2.8}
\text{$E$ is coral and $E \in SA^\ast M(U,L_j,h)$,}
\end{equation}
which means that $E$ is a coral (sliding) almost minimal set in $U$,
with boundary conditions given by the $L_j$ and the gauge function $h$,
and with any of the three definitions. In some rare occasions, we shall
need to specify. We like to ask $E$ to be coral, because this way we
don't need to worry about the fuzzy set $E \sm E^\ast$, 
with $E^\ast$ as in \eqref{1.11}, because $E = E^\ast$. 
We know that we don't lose anything, because 
Proposition~3.3 in \cite{Sliding} says that $E^\ast$ is almost minimal 
when $E$ is almost minimal, with the same gauge function. 
More precisely, that proposition is stated for
more general quasiminimal sets, but its proof works for almost minimal sets,
and we can also deduce the result for almost minimal sets by comparing
the definitions of quasiminimal sets (Definition 2.3 in \cite{Sliding}) and
almost minimal sets (Definition 20.2).

The first main property of $E$ that we shall use is its local Ahlfors regularity.
There exist constants $\eta_0 > 0$ and $C \geq 1$ (that depend on $U$,
and the $L_j$) such that 
\begin{equation} \label{2.9}
C^{-1} r^d \leq \H^d(E\cap B(x,r)) \leq C r^d
\end{equation}
when $x\in E$ and $r> 0$ are such that
\begin{equation} \label{2.10}
B(x,2r) \i U \ \text{ and } \  
h(2r) \leq \eta_0.
\end{equation}
This is Proposition 4.74 in \cite{Sliding}; since the proposition is stated
for (more general) quasiminimal sets, use the comment below (20.8) in \cite{Sliding},
or compare directly with Definition 2.3 in \cite{Sliding}.

We shall also use the fact that
\begin{equation} \label{2.11}
E \text{ is a rectifiable set of dimension $d$;} 
\end{equation}
see Theorem 5.16 in \cite{Sliding}, and observe that we can trivially reduce
to small balls $B(x,r)$ such that $h(2r)$ is as small as we want, because
the rectifiability of $E$ is a local property.

We shall later quote from \cite{Sliding} various limiting theorems,
but we shall only mention them when we need them.

\section{Specific assumptions on $L$; local retractions}
\label{S3}

We shall now describe the additional assumptions that we shall
make on our sets $L_j$ (or the unique $L$ of the introduction),
so that the proof below runs reasonably smoothly.
We start with some notation.

We assume that $0 \in U$, and for $r > 0$ such that
$\overline B(0,r) \i U$, set
\begin{equation} \label{3.1}
L(r) = \overline B(0,r) \cap \Big\{\bigcup_{1 \leq j \leq j_{max}} L_j \Big\},
\end{equation}
where the $L_j$ are the boundary pieces of Section \ref{S2}.
If we have just one boundary set $L$, as the introduction,
just take $L(r) = \overline B(0,r) \cap L$.

We shall often use the truncated cone
\begin{equation} \label{3.2}
L^\ast(r) = \overline B(0,r) \cap 
\big\{ \lambda z, \, ; \, z\in L(r) \text{ and } \lambda \geq 0 \big\}
\end{equation}
over $L(r)$, and then its trace
\begin{equation} \label{3.3}
L^\circ(r) = \d B(0,r) \cap L^\ast(r)
= \d B(0,r) \cap 
\big\{ \lambda z, \, ; \, z\in L(r) \text{ and } \lambda > 0 \big\}
\end{equation}
(think about the shadow of $L \sm \{0 \}$ on $\d B(0,r)$, with a light at
the origin).

Our first additional assumption is that
\begin{equation} \label{3.4}
\H^d(L^\ast(\rho)) < +\infty \ \text{ for some } \rho > 0.
\end{equation}
Notice that as soon as we have this, then 
\begin{equation} \label{3.5}
\H^d(L^\ast(r)) < +\infty \ \text{ for every } r > 0.
\end{equation}
Indeed, $L^\ast(r) \i L^\ast(\rho)$ for $r \leq \rho$,
and if $r > \rho$, $L^\ast(r)$ is contained in the union of two
cones. The first one is the cone over $L(\rho)$, which we control by \eqref{3.4}.
The second one is the cone $C$ over $L(r) \sm L(\rho)$. But by
\eqref{2.2}, $L(r) \sm L(\rho)$ is contained in a finite union of
bilipschitz images of cubes of dimensions at most $d$; then
$\H^{d-1}(L(r) \sm L(\rho)) < +\infty$, and (for instance by
the area theorem and because $L(r) \sm L(\rho)$ stays away from
the origin) $\H^d(C \cap \overline B(0,r)) < +\infty$; \eqref{3.5}
follows.

If $L(r)$ does not contain the origin, \eqref{3.4} holds trivially. 
But we also want to allow the case when $0$ lies in some $L_j$, $j \geq 1$,
so we include \eqref{3.4} as an assumption. 

\ms
We shall play with the sets $L^\ast(r)$, in particular to deform parts
of $B(0,r)$ onto subsets of $L^\ast(r)$, and it will be good to have
some local retractions.

\ms
\begin{defi} \label{t3.1}
Let $r> 0$ be such that $\overline B(0,r) \i U$. We say that
$r$ \underbar{admits a local retraction} when we can find 
constants $\tau_0 >0$ and $C_0 \geq 1$, and a $C_0$-Lipschitz
function $\pi_0$, defined on the set
\begin{equation} \label{3.6}
R(r,\tau_0) = \big\{ x\in \d B(0,r) \, ; \dist(x,L^\circ(r)) \leq \tau_0\big\},
\end{equation}
such that
\begin{equation} \label{3.7}
\pi_0(x) \in L^\circ(r) \ \text{ for } x\in R(r,\tau_0),
\end{equation}
and
\begin{equation} \label{3.8}
\pi_0(x) = x \ \text{ for } x\in  L^\circ(r).
\end{equation}
\end{defi}

\ms
Our typical assumption will be that almost every $r$ in the
interval of interest admits a local retraction. We won't need 
uniform bounds on $\tau_0$ and $C_0$, because these two constants
will disappear in a limiting argument.

Admittedly, this is not such a beautiful condition, but we only expect to
use our results with fairly simple sets $L$, and then the local retractions
will be easy to obtain. For instance, if $L$ is an affine subspace,
or is convex, $\pi_0$ is very easy to construct.

Probably a Lipschitz retraction on a neighborhood of $L^\ast(r)$
would have been easier to use, but we decided to use retractions on spheres, 
just for the hypothetical case when we would be interested in a set $L$ with a loop,
for which the cone $L^\ast(r)$ then has a small loop at $0$, making retractions of
$L^\ast(r)$ near $0$ hard to get.

\section{The main competitor for monotonicity} \label{S4}

The typical way to obtain monotonicity results like \eqref{1.8}
is to compare $E$ with a cone which has the same trace on a sphere $\d B(0,r)$.
Here we cannot quite do that when the $L_j$ are not cones centered at $0$,
so we shall need to add an extra piece to the cone (essentially, the set 
$L^\sharp$ below).

The assumptions for this section are the following. We work in a closed ball
$\overline B = \overline B(0,r)$, and we assume that there is an open
set $U$ and boundary sets $L_j$, $0 \leq j \leq j_{max}$, such that
$\overline B \i U$, \eqref{2.1},  \eqref{2.2}, and \eqref{3.4} hold, 
and $E$ is a coral sliding almost minimal set in $U$, as in \eqref{2.8}.
Set
\begin{equation} \label{4.1}
L = L(r) = \overline B \cap \Big\{\bigcup_{1 \leq j \leq j_{max}} L_j \Big\},
\end{equation}
(as in \eqref{3.1}),
\begin{equation} \label{4.2}
L^\ast = L^\ast(r) 
= \overline B \cap \big\{ \lambda z, \, ; \, z\in L \text{ and } \lambda \geq 0 \big\}
\end{equation}
(as in \eqref{3.2}), and also
\begin{equation} \label{4.3}
L^\sharp = L^\sharp(r)
= \big\{ \lambda z, \, ; \, z\in L \text{ and } \lambda \in [0,1] \big\}
\i \overline B.
\end{equation}

In this section we just construct some acceptable deformations
in the ball $\overline B$; the idea is to compare $E \cap B$ with
the union of $L^\sharp$ and the cone over $E \cap \d B$,
but we shall not do this exactly, and instead be very prudent and deform
$E \cap B$ to sets that tend to this union.
We shall only see later how to use the deformation of this
section to prove our monotonicity results.

We assume that
\begin{equation} \label{4.4}
\text{ $r$ admits a local retraction,}
\end{equation}
as in Definition \ref{t3.1}. Let us recall what this means with the
present notation. Set
\begin{equation} \label{4.5}
R(\tau) = \big\{ x\in \d B \, ; \, \dist(x,L^\ast \cap \d B) 
\leq \tau \big\}
\end{equation}
for $\tau > 0$. If $L \cap \overline B = \emptyset$, 
we take $L^\ast = \emptyset$ and $R(\tau) = \emptyset$.
Definition \ref{t3.1} gives us a $C_0$-Lipschitz mapping 
$\pi_0 : R(\tau_0) \to L^\ast \cap \d B$,
such that
\begin{equation} \label{4.6}
\pi_0(x) = x \ \text{ for } x\in L^\ast \cap \d B.
\end{equation}

\ms
Our construction will have four parameters, a small $\tau$ 
that controls distances to $L^\ast$, and three radii $r_j$, $j=0, 1, 2$,
with $r \geq r_0 > r_1 > r_2$ (and $r-r_2$ very small).
Later on, we shall take specific values for the $r_j$ and take limits twice.
We shall take $\tau <{1/4} \min(\tau_0, r)$, but rapidly it will tend to $0$.

\ms
Our first task is to use $\pi_0$ to construct a new mapping $\pi$, 
which will be defined on the whole $\d B$. Set
\begin{equation} \label{4.7}
\pi(x) = \pi_0(x) \ \text{ for } x\in R(\tau)
\end{equation}
and
\begin{equation} \label{4.8}
\pi(x) = x \ \text{ for } x\in \d B \sm R(2 \tau).
\end{equation}
In the remaining region $R(2\tau) \sm R(\tau)$, set
\begin{equation} \label{4.9}
\alpha(x) = {\dist(x,L^\ast \cap \d B) \over \tau} - 1 \in [0,1]
\end{equation}
and then
\begin{equation} \label{4.10}
\pi(x) = \alpha(x) x + (1-\alpha(x)) \pi_0(x), 
\end{equation}
which is well defined because $x\in R(\tau_0/2)$.
Let us check that 
\begin{equation} \label{4.11}
\text{$\pi$ is $12C_0$-Lipschitz on $\d B$.}
\end{equation}
First notice that \eqref{4.10} is still valid for $x\in R(2 \tau_0)$,
if we set $\alpha(x) = 0$ on $R(\tau)$ and 
$\alpha(x) = 1$ on $R(2 \tau_0) \sm R(2 \tau)$. 
Next we show that $\pi$ is $12C_0$-Lipschitz on $R(2 \tau_0)$.
Consider $x, y\in R(2 \tau_0)$, and notice that
\begin{eqnarray}  \label{4.12}
|\pi(x)-\pi(y)| &=& \big| [\pi_0(x) - \pi_0(y)]+ \alpha(x)[x - \pi_0(x)]
- \alpha(y)[y - \pi_0(y)]\big|
\nonumber \\
&\leq& C_0 |x-y| + \alpha(x) |x - \pi_0(x) - y + \pi_0(y)|
+ |\alpha(x)-\alpha(y)| |y - \pi_0(y)|
\nonumber \\
&\leq& C_0 |x-y| + \alpha(x) (1+C_0) |x-y| + \tau^{-1}|x-y| |y - \pi_0(y)|,
\end{eqnarray}
where we used \eqref{4.9} to get a bound on $|\alpha(x)-\alpha(y)|$. 
If in addition, $y \in R(4\tau)$, we can find $z\in L^\ast \cap \d B$ 
such that $|z-y| \leq 4 \tau$; then
\begin{equation} \label{4.13}
|y - \pi_0(y)| \leq |y-z| + |\pi_0(z) - \pi_0(y)| \leq (1+C_0) 4\tau
\end{equation}
because $\pi_0(z) = z$ by \eqref{4.6}; altogether
\begin{equation} \label{4.14}
|\pi(x)-\pi(y)| \leq 6 (1+C_0) |x-y| \leq 12 C_0 |x-y|
\ \text{ for $x \in R(2 \tau_0)$ and } y \in R(4\tau).
\end{equation}
We get the same bound when  $x \in R(4\tau)$ and $y \in R(2 \tau_0)$
(just exchange $x$ and $y$ in the estimate). And when both $x$ and $y$ lie in 
$R(2\tau_0) \sm R(4\tau)$, $|\pi(x) - \pi(y)| = |x-y|$ by \eqref{4.8}; thus
$\pi$ is $12 C_0$-Lipschitz on $R(2\tau_0)$.

To complete the proof of \eqref{4.11}, we still need to estimate
$|\pi(x)-\pi(y)|$ when $x \in \d B \sm R(2\tau_0)$.
When $y\in R(2\tau)$,
\begin{eqnarray} \label{4.15}
|\pi(x) - \pi(y)| &=& |x-\pi(y)| \leq |x-y| + |y-\pi(y)|
\leq |x-y| + |y-\pi_0(y)| 
\nonumber \\
&\leq& |x-y| + 4 (1+C_0) \tau \leq |x-y| + 2 (1+C_0) |x-y|
\end{eqnarray}
by \eqref{4.8}, \eqref{4.10}, and \eqref{4.13}, and because 
$\dist(x,L^\ast \cap \d B) \geq 2 \tau_0 \geq 4\tau$
and $\dist(y,L^\ast \cap \d B) \leq 2\tau$.
When $y \in \d B \sm R(2\tau)$, $\pi(x) - \pi(y) = x-y$ 
by \eqref{4.8}, and we are happy too.
So \eqref{4.11} holds.

Next we extend $\pi$ to $B$ by homogeneity, i.e., set
\begin{equation} \label{4.16}
\pi(\lambda x) = \lambda \pi(x)
\ \text{ for } x\in \d B \text{ and } 0 \leq \lambda \leq 1.
\end{equation}
This completes our definition of $\pi$. Notice that 
\begin{equation} \label{4.17}
\text{$\pi$ is $13C_0$-Lipschitz on $\overline B$;}
\end{equation}
this time, the simplest is to compute the radial and tangential
derivatives, and notice that $|\pi(x)| \leq |x|$
(by \eqref{4.10} and because $|\pi_0(x)| = r$ on $\d B$).
Set
\begin{equation} \label{4.18}
B^\ast = \overline B \sm \{ 0 \}
\ \text{ and } \ 
d(x) = \dist\Big({r x \over |x|},L^\ast \cap \d B \Big)
\text{ for } x\in B^\ast;
\end{equation}
we use this to measure a radial distance to $L^\ast$.
Notice that $d$ is locally Lipschitz on $B^\ast$, with
\begin{equation} \label{4.19}
|\nabla d(x)| \leq {r \over |x|}.
\end{equation}
Then set
\begin{eqnarray} \label{4.20}
R_1 &=& \big\{ x\in B^\ast \, ; d(x) \leq \tau \big\}
= \big\{ x\in B^\ast \, ; {r x \over |x|} \in R(\tau) \big\},
\nonumber\\
R_2 &=& \big\{ x\in B^\ast \, ; \tau < d(x) \leq 2\tau \big\}
= \big\{ x\in B^\ast \, ; {r x \over |x|} \in R(2\tau)\sm R(\tau) \big\},
\text{ and}
\\
R_3 &=& B^\ast \sm [R_1 \cup R_2] 
= \big\{ x\in B^\ast \, ; {r x \over |x|} \in \d B\sm R(2\tau) \big\}.
\nonumber
\end{eqnarray}
In the easier special case when $L = \emptyset$ and $L^\ast = \emptyset$,
we just have $R_1 = R_2 = \emptyset$, and we can take $d(x)=+\infty$.
Notice that 
\begin{equation} \label{4.21}
\pi(x) = x \ \text{ for } x\in L^\ast
\end{equation}
by \eqref{4.6}, \eqref{4.7}, and \eqref{4.16},
\begin{equation} \label{4.22}
\pi(x) \in L^\ast \ \text{ for } x\in R_1
\end{equation}
by \eqref{4.4}, \eqref{4.7}, and \eqref{4.16}, and
\begin{equation} \label{4.23}
\pi(x) = x \ \text{ for } x\in R_3,
\end{equation}
by \eqref{4.8} and \eqref{4.16}. Finally record that
\begin{equation} \label{4.24}
|\pi(x)| \leq |x| \ \text{ for } x\in \overline B,
\end{equation}
by \eqref{4.16} and the definition of $\pi$ on $\d B$.

\ms
We are ready to define our final mapping $\varphi = \varphi_1$.
We don't need to restrict to $E$ yet; $\varphi$ will be defined
on $\R^n$. Set
\begin{equation} \label{4.25}
B_j = B(0,r_j), \text{ for } j = 0, 1, 2, \  
A_1 = B_0 \sm B_1, \text{ and } A_2 = B_1 \sm B_2.
\end{equation}
We start slowly and set
\begin{equation} \label{4.26}
\varphi(x) = x \ \text{ for } x\in \R^n\sm B_0.
\end{equation}
Next we set
\begin{equation} \label{4.27}
\varphi(x) = \pi(x) \ \text{ for } x\in \d B(0,r_1),
\end{equation}
and interpolate quietly in the middle. That is, we take
\begin{equation} \label{4.28}
\alpha_1(x) = {|x|-r_1 \over r_0 - r_1}
\ \text{ and } \  \varphi(x) = \alpha_1(x) x + (1-\alpha_1(x)) \pi(x)
\ \text{ for } x\in A_1 = B_0 \sm B_1.
\end{equation}
In the remaining ball $B_1 = B(0,r_1)$, we shall use a function $\hbar$
to contract along radii whenever this is possible. Set
\begin{equation} \label{4.29} 
{\hbar}(x) = \big[ 1 - \tau^{-1} \dist(x,L) 
- \tau^{-1} d(x) \big]_+ \in [0,1]
\ \text{ for } x\in \overline B^\ast
\end{equation}
(where $d$ and $B^\ast$ come from \eqref{4.18} and
$[a]_+$ is a notation for $\max(a,0)$). Thus ${\hbar}\equiv 0$
when $L = \emptyset$. Then define $\varphi$ on $B_2$ by 
\begin{equation} \label{4.30}
\varphi(x) = {\hbar}(x)\pi(x) \ \text{ for } x\in B_2 \sm \{ 0 \}
\end{equation}
and $\varphi(0) = 0$; this makes a continuous function, by \eqref{4.24}
and because $0 \leq {\hbar} \leq 1$ everywhere.
On the remaining annulus $A_2$, we interpolate. That is, we set
\begin{equation} \label{4.31}
\alpha_2(x) = {|x|-r_2 \over r_1 - r_2} \ \text{ for } x\in A_2 = B_1 \sm B_2,
\end{equation}
and take
\begin{equation} \label{4.32}
\varphi(x) = \alpha_2(x) \pi(x) + (1-\alpha_2(x)) {\hbar}(x)\pi(x)
\ \text{ for } x\in A_2.
\end{equation}
Since $d$ has a singularity at the origin, we feel obligated to check that
\begin{equation} \label{4.33}
\text{$\varphi$ is Lipschitz on $\overline B$}.
\end{equation}
Since it is continuous along the boundaries of our different pieces,
it is enough to check that $\varphi$ is Lipschitz on each piece separately.
The various cut-off functions are Lipschitz, so it will be enough to check 
that
\begin{equation} \label{4.34}
{\hbar}(x) \pi(x) \text{ is ${15C_0 r \over \tau}$-Lipschitz on $\overline B$.}
\end{equation}
We know that ${\hbar}\pi$ is locally Lipschitz on $B^\ast$
(by \eqref{4.19} in particular), so we just need to bound the derivative
$D(\hbar\pi)(x)$; but
\begin{eqnarray} \label{4.35}
|D(\hbar \pi)(x)| &\leq& |D\pi(x)| \hbar(x) + |\pi(x)| |D\hbar(x)|
\leq 13 C_0 \hbar(x) + |\pi(x)| \big[ \tau^{-1} + \tau^{-1} |\nabla d(x)|  \big]
\nonumber\\
&\leq& 13 C_0 + \tau^{-1} |x| + \tau^{-1} |x| |\nabla d(x)| 
\leq 13 C_0 +2 \tau^{-1} r 
\leq  {15 C_0 r \over \tau}
\end{eqnarray}
by \eqref{4.17}, \eqref{4.29}, \eqref{4.24}, and \eqref{4.19},
and because $|x| \leq r$ and $\tau \leq r/4$; \eqref{4.34} follows
because ${\hbar} \pi$ is also continuous across $0$.

We now complete the family by taking
\begin{equation} \label{4.36}
\varphi_t(x) = t x + (1-t) \varphi(x)
\ \text{ for $x\in \R^n$ and $t \in [0,1]$,}
\end{equation}
and check that 
\begin{equation} \label{4.37}
\begin{aligned}
&\text{the restriction to $E$ of the $\varphi_t$, $0 \leq t \leq 1$,}
\\&\hskip2cm
\text{forms an acceptable deformation, with $\wh W \i B_0$.} 
\end{aligned}
\end{equation}
By \eqref{4.33} in particular, the mapping $(x,t) \to \varphi_t(x)$
is Lipschitz on $\R^n \times [0,1]$, which takes care of \eqref{1.2}
and \eqref{1.4}; \eqref{1.3} is trivial. All the sets $W_t$, $0 \leq t \leq 1$,
of \eqref{1.5} are contained in $B_0$, by \eqref{4.26}, and  
\begin{equation} \label{4.38}
\varphi_t(W_t) \i \varphi_t(B_0) \i B_0
\end{equation}
by \eqref{4.24} and  because $0 \leq {\hbar}(x) \leq 1$.
Thus $\wh W \i B_0$ and \eqref{1.5} holds.

We are left with \eqref{2.3} (the current replacement for \eqref{1.6}) to check.
So let $1 \leq j \leq j_{max}$ and $x\in E \cap L_j$ be given, and let us
check that $\varphi_t(x) \in L_j$ for $0 \leq t \leq 1$. If
$x \in \R^n\sm B_0$, \eqref{4.26} and \eqref{4.36} say that
$\varphi_t(x)=x$, and we are happy because $x\in L_j$.
Otherwise, notice that $x\in L 
\i L^\ast$, by \eqref{4.1} and
\eqref{4.2}. Then $\pi(x) = x$, by \eqref{4.21},
and ${\hbar}(x) = 1$ by \eqref{4.29}. This yields $\varphi_t(x) = x \in L_j$,
which proves \eqref{2.3}; \eqref{4.37} follows.

\ms
Thus we get one of the formulas \eqref{2.5}-\eqref{2.7}, with $r = r_0$.
Our next task is to estimate the measure of 
$\varphi_1(E \cap B_0) = \varphi(E\cap B_0)$,
which we will cut into many small pieces. 

We start with $B_2$. Notice that by \eqref{4.29} and 
the definitions \eqref{4.18} and \eqref{4.20},
\begin{equation} \label{4.39}
{\hbar}(x) = 0 \ \text{ for } x\in R_2 \cup R_3;
\end{equation}
thus \eqref{4.30} yields $\varphi(x) = 0$ for $x\in B_2 \cap [R_2 \cup R_3]$,
hence
\begin{equation} \label{4.40}
\H^d(\varphi(B_2 \cap [R_2 \cup R_3])) = 0.
\end{equation}
We are left with $B_2 \cap R_1$. Let us show that
\begin{equation} \label{4.41} 
\varphi(B_2 \cap R_1) \i \big\{ z\in L^\ast \, ; \, 
\dist(z,L^\sharp) \leq (C_0+2) \tau \big\}.
\end{equation}
Let $x\in B_2 \cap R_1$ be given. 
If $\dist(x,L^\sharp) \geq \tau$, 
then $\dist(x,L) \geq \tau$ (because $L \i L^\sharp$),
${\hbar}(x) = 0$ by \eqref{4.29}, $\varphi(x) = 0$ by \eqref{4.30},
and we are happy because $R_1$ is not empty, $L^\ast$ and 
$L$ are not empty either, and \eqref{4.3} says that $0 \in L^\sharp$.
So we may assume that
\begin{equation} \label{4.42}
\dist(x,L^\sharp) \leq \tau. 
\end{equation}
Set $x^\circ = {r x \over |x|}$. By \eqref{4.16},
$\pi(x) = {|x| \pi(x^\circ) \over r}$. Since $x\in R_1$,
\eqref{4.20} says that $x^\circ \in R(\tau)$,
then $\pi(x^\circ) = \pi_0(x^\circ) \in L^\ast$ by
\eqref{4.7} and \eqref{4.4}. Since $x^\circ \in R(\tau)$,
we can find $y\in L^\ast \cap \d B$ such that $|y-x^\circ| \leq \tau$
(see \eqref{4.5}), and then
\begin{equation} \label{4.43}
|\pi_0(x^\circ)-x^\circ| \leq |\pi_0(x^\circ)-\pi_0(y)|+|y-x^\circ|
\leq (C_0+1) |y-x^\circ| \leq (C_0+1) \tau
\end{equation}
because $\pi_0(y)=y$ by \eqref{4.6}. Next
\begin{equation} \label{4.44}
\av{\pi(x)-x} = {|x| \over r} \av{\pi(x^\circ)-x^\circ}
= {|x| \over r} \av{\pi_0(x^\circ)-x^\circ}
\leq (C_0+1) \tau
\end{equation}
and so $\dist(\pi(x),L^\sharp) \leq (C_0+2) \tau$ by \eqref{4.42}.
But $\lambda y \in L^\sharp$ when $y\in L^\sharp$ and
$\lambda \in [0,1]$ (by \eqref{4.2}),
so 
\begin{equation} \label{4.45}
\dist(\varphi(x),L^\sharp) = \dist({\hbar}(x)\pi(x),L^\sharp)
\leq \dist(\pi(x),L^\sharp) \leq (C_0+2) \tau
\end{equation}
by \eqref{4.30} and because ${\hbar}(x) \in [0,1]$. By \eqref{4.22},
$\pi(x) \in L^\ast$ and hence also $\varphi(x) = {\hbar}(x) \pi(x) \in L^\ast$;
this completes our proof of \eqref{4.41}.

\ms
Next we consider the (more interesting) interior annulus $A_2$.
The largest piece is $A_2 \cap R_3$. Recall from \eqref{4.23} that
$\pi(x) = x$ for $x\in R_3$. In addition, ${\hbar}(x) = 0$ by \eqref{4.39},
so \eqref{4.32} simplifies and becomes
\begin{equation} \label{4.46}
\varphi(x) = \alpha_2(x) x = {|x|-r_2 \over r_1 - r_2} \, x
\ \text{ for } x\in A_2 \cap R_3,
\end{equation}
by \eqref{4.31}; this is a rather simple dilation that maps $A_2 \cap R_3$ to
$B_1 \cap R_3$. In this region, we have no other option
but to compute $\H^d(\varphi(E \cap A_2 \cap R_3))$ with
the area formula, and later take a limit.

Recall from \eqref{2.11} that $E$ is rectifiable. Then it has an
approximate tangent $d$-plane $P(x)$ at $\H^d$-almost every point $x$. 
In fact, thanks to the local Ahlfors regularity \eqref{2.9}, this approximate tangent 
plane is even a true tangent plane; this is reassuring, but we shall not really
need this remark. The mapping $\varphi$, given by \eqref{4.46}, is smooth,
and we can compute its Jacobian $J(x)$ relative to the plane $P(x)$. 
Again, an approximate differential would be enough, but we are happy that we 
can compute $J(x)$ in terms of $P(x)$. 
Denote by $P'(x)$ the vector space of dimension $d$ parallel to $P(x)$.
The main quantity here is the smallest angle $\theta(x) \in [0,\pi/2]$ between
the line $(0,x)$ and a vector of $P'(x)$. Said in other words,
\begin{equation} \label{4.47}
\cos\theta(x) = \sup\big\{ \langle v, {x \over |x|}\rangle \, ; \, 
v\in P'(x) \text{ and } |v| = 1 \big\}.
\end{equation}
If $E$ were a cone centered at $0$, we would get $\cos\theta(x) = 1$
almost everywhere, for instance.

Denote by $D = D\varphi(x)$ the differential of $\varphi$ at $x$.
Set $e = {x \over |x|}$ and define $\alpha : [r_2,r_1] \to [0,1]$ by 
$\alpha(\rho) = {\rho-r_2 \over r_1 - r_2}$.
From \eqref{4.46} we deduce that for $v\in \R^n$,
\begin{equation} \label{4.48}
D(v) = \alpha_2(x) v + \langle \nabla \alpha_2(x), v\rangle x
= \alpha_2(x) v + \alpha'(|x|) \langle e, v\rangle x
=\alpha_2(x) v + {\langle e, v\rangle \over r_1-r_2} \, x.
\end{equation}
Then we compute the Jacobian $J(x)$. Use \eqref{4.47} to choose a 
first unit vector $v_1 \in P'(x)$, such that $\cos\theta(x) = \langle v, e \rangle$,
and then choose unit vectors $v_2, \ldots, v_d$ such that
$(v_1, \ldots, v_d)$ is an orthonormal basis of $P'(x)$.
By \eqref{4.47} again, 
\begin{equation} \label{4.49}
\langle v_1 + t v_j , e\rangle \leq \cos\theta(x) |v_1 + t v_j|
\end{equation}
for $j \geq 2$ and $t \in \R$. When we take the derivative at $t=0$,
we get that $\langle v_j , e\rangle = 0$.

Now return to \eqref{4.48}. Set $\rho = |x|$; for $j \geq 2$, we get that
\begin{equation} \label{4.50}
D(v_j) =\alpha_2(x) v_j = \alpha(\rho) v_j.
\end{equation}
For $v_1$, we further decompose $v_1$ as
$v_1 = \cos\theta(x) e + \sin\theta(x) w$,
where $w$ is a unit normal vector orthogonal to $e$, and get that
\begin{equation} \label{4.51}
D(v_1) = \alpha_2(x) v_1 + {x \cos\theta(x) \over r_1-r_2}
= \alpha(\rho) [\cos\theta(x) e + \sin\theta(x) w] 
+ {\rho \cos\theta(x) e\over r_1-r_2}.
\end{equation}
Both $e$ and $\sin\theta(x) w =  v_1 - \cos\theta(x) e$ are orthogonal
to the $v_j$, $j \geq 2$, so all the vectors $D(v_j)$, $j \geq 1$, are
orthogonal, and 
\begin{equation} \label{4.52}
J(x) = \prod_{j \geq 1} | D(v_j) |
= \alpha(\rho)^{d-1} | D(v_1) | = \alpha(\rho)^{d-1} \beta(x),
\end{equation}
with
\begin{eqnarray} \label{4.53}
\beta(x) &=& \Big\{ \cos^2\theta(x) \big(\alpha(\rho) 
+ {\rho\over r_1-r_2}\big)^2
+  \sin^2\theta(x) \alpha^2(\rho) \Big\}^{1/2}
\nn\\
&=& \Big\{ \cos^2\theta(x) \big({2\rho -r_2 \over r_1-r_2}\big)^2
+  \sin^2\theta(x) \alpha^2(\rho) \Big\}^{1/2}.
\end{eqnarray}
We now write the area formula for the injective mapping $\varphi$ (see for instance \cite{Federer}):
\begin{equation} \label{4.54}
\H^d(\varphi(E\cap A_2 \cap R_3))
= \int_{E\cap A_2 \cap R_3} J(x) d\H^d(x)
= \int_{E\cap A_2 \cap R_3} \alpha(\rho)^{d-1} \beta(x) d\H^d(x),
\end{equation}
where we set
\begin{equation} \label{4.55}
\rho = |x| \ \text{ and  } \ 
\alpha(\rho) = {\rho-r_2 \over r_1 - r_2}.
\end{equation}
We leave this as it is for the moment, to be evaluated later,
and return to the other pieces of $A_2$, starting with
$A_2 \cap R_2$. In this region, we still have that ${\hbar}(x) = 0$,
by \eqref{4.39}, but we need to keep $\pi(x)$ as it is, and 
\eqref{4.32} only yields
\begin{equation} \label{4.56}
\varphi(x) = \alpha_2(x) \pi(x) = {|x|-r_2 \over r_1 - r_2} \, \pi(x)
\ \text{ for } x\in A_2 \cap R_2.
\end{equation}
Let us again apply the area formula. Let $x \in E$ be such that
$E$ has a tangent plane $P(x)$ at $x$ (as before), but also
$\varphi$ has an approximate differential $D\varphi(x)$ at $x$ in the direction of
$P(x)$; since $\varphi$ is Lipschitz, this happens for $\H^d$-almost
every $x\in E \cap B$; see for instance \cite{Federer},
to which we shall systematically refer concerning the area formula on rectifiable sets.
At such a point $x$, $\pi$ also has an approximate 
differential $D\pi(x)$ in the direction of $P(x)$; we compute as
in \eqref{4.48} and get that for $v\in P'(x)$,
\begin{equation} \label{4.57}
D\varphi(x)(v) = \alpha_2(x) D\pi(x)(v) 
+ \langle \nabla \alpha_2(x),v \rangle \pi(x)
= \alpha_2(x) D\pi(x)(v) 
+ {\langle e,v \rangle \pi(x) \over r_1 - r_2}.
\end{equation}
Let us use the same orthonormal basis $(v_1, \ldots, v_d)$
of $P'(x)$ as before. For $j \geq 2$, we now get that
$D\varphi(x)(v_j) = \alpha_2(x) D\pi(x)(v_j)$ (because
$v_j \perp e$), and we shall remember that $|D\varphi(x)(v_j)| \leq C$ 
for $\H^d$-almost every $x$, where $C$ depends on $C_0$, 
just because \eqref{4.17} says that $\pi$ is $13C_0$-Lipschitz.
For $v_1$ we have an extra term, and we can only say that
\begin{equation} \label{4.58}
|D\varphi(x)(v_1)| \leq C + {|\pi(x)| \over r_1 - r_2}
\leq C + {r \over r_1 - r_2}.
\end{equation}
Then we estimate $J(x)$, the Jacobian of $\varphi$ on $E$ at $x$,
brutally, and get that $J(x) \leq {C r \over r_1 - r_2}$. The
area formula now yields
\begin{equation} \label{4.59}
\H^d(\varphi(E\cap A_2 \cap R_2))
\leq \int_{E\cap A_2 \cap R_3} J(x) d\H^d(x)
\leq {C r \over r_1 - r_2} \, \H^d(E\cap A_2 \cap R_2),
\end{equation}
where we only get an inequality in the first part because do not know
whether $\varphi$ is injective.
This looks large because of ${r \over r_1 - r_2}$, but we hope
that the fact that $A_2$ is quite thin, plus the small angle of
$R_2$, will compensate.

Our next piece is $A_2\cap R_1$, which we decompose again.
Set
\begin{equation} \label{4.60} 
V = \big\{ x\in B \, ; \, \dist(x,L^\sharp) \leq \tau \big\}. 
\end{equation}
On $A_2\cap R_1 \sm V$, we again have that ${\hbar}(x) = 0$
(directly by \eqref{4.29} and because $\dist(x,L) \geq \dist(x,L^\sharp) \geq \tau$), 
so the formula \eqref{4.56} still holds, and we can compute as for $A_2 \cap R_2$. 
We avoid $L^\ast$ for the moment, and we get that
\begin{eqnarray} \label{4.61}
\H^d(\varphi(E\cap A_2 \cap R_1 \sm (V\cup L^\ast)))
&\leq& {C r \over r_1 - r_2} \, \H^d(E\cap A_2 \cap R_1 \sm (V\cup L^\ast))
\nonumber\\
&\leq& {C r \over r_1 - r_2} \, \H^d(E\cap A_2 \cap R_1 \sm L^\ast).
\end{eqnarray} 
Next we claim that
\begin{equation} \label{4.62} 
\varphi(A_2 \cap R_1 \cap V) \i \big\{ z\in L^\ast \, ; \, 
\dist(z,L^\sharp) \leq (C_0+2) \tau \big\}.
\end{equation}
We can repeat the proof that we gave below \eqref{4.42},
up to \eqref{4.45}, which is the first time where we used
the fact that $x\in B_2$ (to compute $\varphi(x)$).
Here \eqref{4.31} and \eqref{4.32} say that 
$\varphi(x) \in [{\hbar}(x)\pi(x),\pi(x)]$,
so we still can write $\varphi(x) = \lambda \pi(x)$ for some
$\lambda \in [0,1]$, and we can replace \eqref{4.45} by
\begin{equation} \label{4.63}
\dist(\varphi(x),L^\sharp) = \dist(\lambda\pi(x),L^\sharp)
\leq \dist(\pi(x),L^\sharp) \leq (C_0+2) \tau.
\end{equation}
Then our claim follows as before.

The last part of $A_2$ is $A_2 \cap L^\ast \sm V$. By \eqref{4.21},
$\pi(x) = x$ on this set, so $\varphi(x) \in [0,x]$, and we just record
that
\begin{equation} \label{4.64}
\varphi(E \cap A_2 \cap L^\ast \sm V)
\i \bigcup_{x \in E\cap A_2 \cap L^\ast} [0,x]
= \big\{ \lambda x \, ; \, x\in E \cap A_2 \cap L^\ast 
\text{ and } \lambda \in [0,1] \big\}.
\end{equation}

\ms
We are left with the contribution of the exterior annulus $A_1$,
where $\varphi$ interpolates between $x$ and $\pi(x)$ (see \eqref{4.28}).
When $x\in R_3$, \eqref{4.23} says that $\pi(x)=x$, so
let us record that
\begin{equation} \label{4.65}
\varphi(x) = x \ \text{ for } x\in A_1 \cap R_3.
\end{equation}
Let us check that 
\begin{equation} \label{4.66}
\varphi \text{ is $28 C_0 \big(1+ {\tau \over r_0 - r_1}\big)$-Lipschitz
on $A_1$.}
\end{equation}
As usual, for $x, y \in A_1$, we write
\begin{eqnarray} \label{4.67}
\varphi(x)-\varphi(y)
&=& \alpha_1(x) x + (1-\alpha_1(x)) \pi(x) - \alpha_1(y) y - (1-\alpha_1(y)) \pi(y)
\nn\\
&=& [\pi(y)-\pi(x)] + \alpha_1(x) [x-\pi(x)] - \alpha_1(y)[y-\pi(y)]
\nn\\
&=& [\pi(y)-\pi(x)] + \alpha_1(x) [x-\pi(x)-y+\pi(y)] 
+ [\alpha_1(x)-\alpha_1(y)][y-\pi(y)].
\end{eqnarray}
Then we observe that $\dist(y,R_3) \leq 2 \tau$ by \eqref{4.20} and
\eqref{4.5}, so we can choose $z\in R_3$ such that $|z-y| \leq 2\tau$,
and
\begin{equation} \label{4.68}
|y-\pi(y)| \leq |y-z| + |\pi(z)-\pi(y)| \leq 14 C_0 |z-y|
\leq 28C_0 \tau
\end{equation}
by \eqref{4.23} and \eqref{4.17}. Thus \eqref{4.67} yields
\begin{equation} \label{4.69}
|\varphi(x)-\varphi(y)| \leq 13C_0 |x-y| + 14 C_0 |x-y| 
+ {28C_0 \tau |x-y| \over r_0-r_1}
\end{equation}
by \eqref{4.28} and \eqref{4.68}; the Lipschitz bound \eqref{4.66}
follows. We deduce from this that
\begin{equation} \label{4.70}
\H^d(\varphi(E \cap A_1 \cap (R_1\cup R_2)))
\leq \big[28 C_0 \big(1+ {\tau \over r_0 - r_1}\big)\big]^d
\H^d(E \cap A_1 \cap (R_1\cup R_2)).
\end{equation}

\ms
Let us summarize our estimates so far. Let us first assume that
$E$ is $A'$-almost minimal (see Definition \ref{t2.1}). 
As a consequence of \eqref{4.37},
we can apply \eqref{2.6} (see above \eqref{4.39}).
We get that
\begin{equation} \label{4.71}
\H^d(E \sm \varphi(E)) \leq \H^d(\varphi(E)\sm E) + h(r_0) r_0^d
\leq \H^d(\varphi(E)\sm E) + h(r) r^d
\end{equation}
because $\varphi_1 = \varphi$ and $h$ is nondecreasing. 
Next observe that $E$ and $\varphi(E)$ coincide on $\R^n \sm B_0$,
because $\varphi(x) = x$ on $\R^n \sm B_0$ and $\varphi(B_0) \i B_0$.
Then add $\H^d(B_0 \cap E \cap \varphi(E))$ to both sides of
\eqref{4.71}. We get that
\begin{equation} \label{4.72}
\H^d(E \cap B_0) \leq \H^d(\varphi(E) \cap B_0) + h(r) r^d
= \H^d(\varphi(E \cap B_0)) + h(r) r^d,
\end{equation}
where the last part holds because $\varphi(x) = x$ on $\R^n \sm B_0$.
Then we add the various pieces from above and get that
\begin{equation} \label{4.73}
\H^d(E \cap B_0) \leq \H^d(L^\sharp) + I_1+ I_2+I_3+I_4+I_5+I_6
+ h(r) r^d,
\end{equation}
where we get no contribution from \eqref{4.40},
\begin{equation} \label{4.74}
I_1 = \H^d(Z_1), \text{ with }
Z_1 = \big\{ z\in L^\ast \sm L^\sharp \, ; \, 
\dist(z,L^\sharp) \leq (C_0+2)\tau \big\}
\end{equation}
comes from \eqref{4.41},
\begin{equation} \label{4.75}
I_2 = \H^d(\varphi(E\cap A_2 \cap R_3))
= \int_{E\cap A_2 \cap R_3} \alpha(\rho)^{d-1} \beta(x) d\H^d(x)
\end{equation}
comes from \eqref{4.54},
\begin{equation} \label{4.76}
I_3 = \H^d(\varphi(E\cap A_2 \cap R_2))
\leq {C r \over r_1 - r_2} \, \H^d(E\cap A_2 \cap R_2)
\end{equation}
comes from \eqref{4.59}, 
\begin{equation} \label{4.77}
I_4 = {C r \over r_1 - r_2} \, \H^d(E\cap A_2 \cap R_1 \sm L^\ast)
\end{equation} 
comes from \eqref{4.61}, the contribution of \eqref{4.62} was already
accounted for in $I_1$, 
\begin{equation} \label{4.78}
I_5 = \H^d(Z_5), \text{ with }
Z_5 = \big\{ \lambda x \, ; \, x\in E \cap A_2 \cap L^\ast
\text{ and } \lambda \in [0,1] \big\} \sm L^\sharp
\end{equation} 
comes from \eqref{4.64}, and we remove $L^\sharp$ because it
was accounted for in \eqref{4.73}, and 
\begin{eqnarray} \label{4.79}
I_6 &=& \H^d(\varphi(E \cap A_1))
\nn\\
&\leq& \H^d(E \cap A_1\cap R_3) + 
\big[28 C_0 \big(1+ {\tau \over r - r_1}\big)\big]^d
\H^d(E \cap A_1 \cap (R_1\cup R_2))
\nn\\
&\leq& [28 C_0 \big(1+ {\tau \over r - r_1}\big)\big]^d \H^d(E \cap A_1)
\end{eqnarray} 
comes from \eqref{4.65} and \eqref{4.70}. We shall estimate all these
terms in the next section.

\ms
If $E$ is $A$-almost minimal, we can of course use 
Proposition 20.9 in \cite{Sliding} to say that 
$E$ is $A'$-almost minimal, with the same gauge function, and use
the estimate above. Since this is a little heavy, we can instead use 
\eqref{2.5} and work a little bit more. 

Since \eqref{2.5} is written in terms of 
$W_1 = \big\{ x\in E \, ; \, \varphi(x) \neq x \big\}$,
we shall need to control $E \sm W_1$. We claim that
\begin{equation} \label{4.80}
E \cap B_0 \sm W_1 \i (E \cap A_1) \cup (E \cap L) \cup \{ 0 \}. 
\end{equation}
Indeed, let $x\in E \cap B_0 \sm W_1$ be given; by definition,
$\varphi(x) = x$. 
We may assume that $x \notin L$, 
and then \eqref{4.29} says that ${\hbar}(x) < 1$.
Also recall from \eqref{4.24} that $|\pi(x)| \leq |x|$.

If $x\in A_1$, we are happy. 
If $x\in A_2$, \eqref{4.31} and \eqref{4.32} say that $\varphi(x)$ lies
(strictly) between ${\hbar}(x) \pi(x)$ and $\pi(x)$; hence
$|\varphi(x)| < |\pi(x)| \leq |x|$, and $\varphi(x) \neq x$
(a contradiction). 
If $x\in B_2 \sm \{ 0 \}$, then again 
$|\varphi(x)| = |{\hbar}(x)| |\pi(x)| < |x|$.
Finally, we are also happy if $x=0$. So \eqref{4.80} holds.

Now we want to estimate $\H^d(E \cap B_0)$. We say that
\begin{eqnarray} \label{4.81}
\H^d(E \cap B_0) &=& \H^d(W_1) + \H^d(E \cap B_0 \sm W_1)
\leq \H^d(\varphi(W_1)) + h(r) r^d + \H^d(E \cap B_0 \sm W_1)
\nn\\
&\leq& \H^d(\varphi(E \cap B_0)) + h(r) r^d + \H^d(E \cap B_0 \sm W_1)
\nn\\
&\leq& \H^d(\varphi(E \cap B_0)) + h(r) r^d + \H^d(E \cap A_1)
\end{eqnarray}
because $W_1 \i E \cap B_0$ (by \eqref{2.4}), by \eqref{2.5},
and then by \eqref{4.80} and because $\H^d(L) = 0$ 
(recall that $L$ is at most $(d-1)$-dimensional).
This almost the same thing as \eqref{4.72}, and then we get
the same thing as \eqref{4.73}, except that we need to add the extra term
\begin{equation} \label{4.82}
I_7 = \H^d(E \cap A_1).
\end{equation}
This term will not bother, as it is dominated by $I_6$.

\ms
Finally assume that $E$ is $A_+$-almost minimal. This time we can only use 
\eqref{2.7}, whose error term is $h(r) \H^d(\varphi(W_1))$ instead of $h(r) r^d$.
Notice that if $\H^d(\varphi(W_1)) \geq \H^d(W_1))$, we have 
\eqref{2.5} with no error term, and otherwise we can replace
the error term $h(r) \H^d(\varphi(W_1))$ with the larger $\H^d(W_1))$. 
Thus we get that
\begin{eqnarray} \label{4.83}
\H^d(E \cap B_0) &\leq& \H^d(L^\sharp) + \sum_{j=1}^7 I_j 
+ h(r) \min( \H^d(\varphi(W_1)), \H^d(W_1)))
\nn\\
&\leq&  \H^d(L^\sharp) + \sum_{j=1}^7 I_j 
+ h(r) \H^d(E \cap B)
\end{eqnarray}
when $E$ is sliding $A_+$-almost minimal.

This may be better than \eqref{4.73}, if the origin lies outside of $E$. 
It is not much worse, because of the local Ahlfors regularity of $E$. 
That is, if we assume (as in \eqref{2.10}) that
\begin{equation} \label{4.84}
B(0,2r) \i U \ \text{ and } \ h(2r) \text{ is small enough,}  
\end{equation}
then we get that 
\begin{equation} \label{4.85}
\H^d(E \cap B) \leq C r^d,
\end{equation}
with a constant $C$ that depend only on $n$, $d$, and the $L_j$
(through the constants in \eqref{2.1}), 
regardless of whether $x\in E$ or not, and \eqref{4.83}
is nearly as good as \eqref{4.73}. If we do not want to assume that
$B(0,2r) \i U$, we still get \eqref{4.85}, but with a constant $C$ 
that depends also on $r^{-1} \dist(B,\R^n \sm U)$
(apply $\eqref{2.8}$ to balls of size $\dist(B,\R^n \sm U)$, and then
count how many you need to get an upper bound for $\H^d(E \cap B)$).

\section{We take a first limit and get an integral estimate}
\label{S5}

In this section we integrate the estimate obtained in the previous
section, and take a first limit. We get a bound on 
$\H^d(E \cap B(0,a))$, for $a < r$, in terms of the restriction
of $E$ to an annulus $B(0,b) \sm B(0,a)$; see Lemma \ref{t5.1} below.
Later on, we will let $a$ tend to $b$.

We continue with the fixed radius $r$, and keep the same assumptions
as in Section \ref{S4}. Let $a, b \in [r/2,r]$ be given, with $a < b \leq r$.
We want to do the following computations. For each small $\tau > 0$
and $t \in [a,b]$, we shall write down the main estimate \eqref{4.73} 
(with the added term $I_7$ if $E$ is $A$-almost minimal, 
or even \eqref{4.83} if $E$ is $A_+$-almost minimal), with
\begin{equation} \label{5.1}
r_0 = t, r_1 = t-\tau, \text{ and } r_2 = t-2\tau, 
\end{equation}
average in $t$, and then take the limit when $\tau$ tends to $0$. 

The main reason why we do this slow and cautious limiting process
is that the author was not able to handle the more natural process 
when one would take $r_0=r$, $r_1=r-\tau$, and $r_2 = r-2\tau$,
and go to the limit directly. It seems harder to control the contribution
of the regions $R_2 \cap A_2$ when we do that.

The average of \eqref{4.73} (adapted to $A$-almost minimal sets)
yields
\begin{equation} \label{5.2}
\H^d(E \cap B(0,a)) \leq \H^d(L^\sharp \cap B(0,b)) 
+ \sum_{j=1}^7 J_j(\tau) + h(r) r^d,
\end{equation}
with
\begin{equation} \label{5.3}
J_j(\tau) = {1 \over b-a} \int_{t\in [a,b]} I_j(t,\tau) dt
\end{equation}
for $1 \leq j \leq 7$, and where $I_j(t,\tau)$ is the value of $I_j$
for the choice of $r_0, r_1, r_2$ of \eqref{5.1}. 
For $A_+$-almost minimal sets, we would replace $h(r) r^d$ with
$h(r) \H^d(E \cap B)$, or $C h(r) r^d$, as in \eqref{4.83} or \eqref{4.85}.

Our next task is to take the $J_j(\tau)$ one after the other, and estimate them.
The advantage of $I_1$ (in \eqref{4.74}) is that it does not depend
on $t$. Recall that $\H^d(L^\ast) < +\infty$ (because we assumed 
\eqref{3.4} and \eqref{3.5} follows),
and since the set $Z_1 = Z_1(\tau)$ is contained in $L^\ast$ and
decreases to the empty set when $\tau$ tends to $0$, we get that
\begin{equation} \label{5.4}
\lim_{\tau \to 0} J_1(\tau) = \lim_{\tau \to 0} H^d(Z_1(\tau)) = 0.
\end{equation}
Next consider the integral
\begin{equation}  \label{5.5}
J_2(\tau) = {1 \over b-a} \int_{t\in [a,b]} 
\int_{E\cap A_2(t,\tau) \cap R_3(\tau)} \alpha_t(\rho)^{d-1} \beta_t(x) d\H^d(x)
\end{equation}
(coming from \eqref{4.75}),
where $\alpha_t$ and $\beta_t$ are as in \eqref{4.53} and \eqref{4.55}, 
and we still work with $\rho = |x|$. Here
\begin{equation} \label{5.6}
A_2(t,\tau) = B_1(t) \sm B_2(t) = \big\{ t-2\tau \leq |x| < t-\tau \big\}
\end{equation}
by \eqref{4.25} and \eqref{5.1}, the set $R_3(\tau)$ depends on $\tau$,
but not on $t$ (see \eqref{4.20}), 
\begin{equation} \label{5.7}
\alpha_t(\rho) = {\rho-r_2 \over r_1 - r_2}
= {\rho-t+2\tau \over \tau}
\end{equation}
by \eqref{4.55}, and 
\begin{eqnarray} \label{5.8}
\beta_t(x) &=& \Big\{ \cos^2\theta(x) \big({2\rho -r_2 \over r_1-r_2}\big)^2
+  \sin^2\theta(x) \alpha^2(\rho) \Big\}^{1/2}
\nn\\
&=& \Big\{ \cos^2\theta(x) \big({2\rho -t + 2\tau \over \tau}\big)^2
+  \sin^2\theta(x) \alpha_t^2(\rho) \Big\}^{1/2}
\end{eqnarray}
by \eqref{4.53}. We apply Fubini and get
\begin{equation}  \label{5.9}
J_2(\tau) = {1 \over b-a} \int_{E} f(x) d\H^d(x),
\end{equation}
where in fact we only integrate on 
\begin{equation} \label{5.10}
A(\tau) = \bigcup_{t \in [a,b]} A_2(t,\tau) 
= \big\{ a-2\tau \leq |x| < b-\tau \big\}
\end{equation}
and
\begin{equation} \label{5.11}
f(x) = \int_{t\in [a,b]} \1_{t-2\tau \leq |x| < t-\tau}(x) \, \1_{R_3(\tau)}(x)
\,\alpha_t(\rho)^{d-1} \beta_t(x) dt.
\end{equation}
We want to estimate $\beta_t$. Notice that
$0 \leq \alpha_t(\rho) \leq 1$ when $x\in A_2(t,\tau)$, so
\begin{equation} \label{5.12}
\beta_t(x)^2 
\leq \cos^2\theta(x) \big({2\rho -t + 2\tau \over \tau}\big)^2+ 1
\leq \cos^2\theta(x) \big({t \over \tau}\big)^2+ 1
\leq {b^2 \cos^2 \theta(x) \over \tau^2} + 1
\end{equation}
because $\rho \leq t-\tau$ and $t \leq b$. Since
$\sqrt{1+u^2} \leq 1+ u$ for $u \geq 0$, we get that
\begin{equation} \label{5.13} 
\beta_t(x) \leq 1+ \tau^{-1} b \cos\theta(x)
\ \text{ for } x\in A_2(t,\tau).
\end{equation}
Thus
\begin{equation} \label{5.14}
f(x) \leq \1_{R_3(\tau)}(x) \, [1+\tau^{-1} b \cos\theta(x)] \, g(x),
\end{equation}
with
\begin{eqnarray} \label{5.15}
g(x) &=& \int \1_{t-2\tau \leq |x| < t-\tau}(x) \,\alpha_t(\rho)^{d-1} dt
= \int_{[\rho+\tau,\rho+2\tau]} \Big({\rho-t+2\tau \over \tau}\Big)^{d-1} dt
\nn\\
&=& \int_{u \in [0,\tau]}  \Big({\tau-u \over \tau}\Big)^{d-1} du
= \tau^{1-d}\int_{v\in [0,\tau]} v^{d-1} dv
= {\tau \over d} \, ,
\end{eqnarray}
where we set $t = u+\rho+\tau$ and then $v= \tau-u$.
We return to \eqref{5.9} and get that
\begin{eqnarray}  \label{5.16}
J_2(\tau) &\leq& {1 \over d (b-a)} \int_{E \cap A(\tau) \cap R_3(\tau)} 
[ b \cos\theta(x) + \tau] d\H^d(x)
\nn\\
&\leq& {1 \over d (b-a)} \int_{E \cap A(\tau) \sm L^\ast} 
[ b \cos\theta(x) + \tau] d\H^d(x)
\end{eqnarray}
by \eqref{5.14} and because the set $R_3(\tau)$ never meets $L^\ast$ 
(see \eqref{4.20}). Set $A(a,b) = B(0,b) \sm B(0,a)$. We claim that
\begin{equation} \label{5.17}
\limsup_{\tau \to 0} J_2(\tau) 
\leq {b \over d (b-a)} 
\int_{E \cap A(a,b)\sm L^\ast} \cos\theta(x) d\H^d(x).
\end{equation}
Indeed, for every $c < a$, $A(\tau) \i A(c,b)$ for $\tau$ small,
so \eqref{5.16} yields \eqref{5.17}, but where we integrate on $A(c,b)$ 
instead of $A(a,b)$.
We easily deduce \eqref{5.17} from this, because 
$\H^d(E\cap B(0,b)) < +\infty$.

Recall from \eqref{4.76} that
\begin{equation} \label{5.18}
I_3(t) \leq {C r \over r_1 - r_2} \, \H^d(E\cap A_2(t,\tau) \cap R_2(\tau))
\leq {2C b \over \tau} \H^d(E\cap A_2(t,\tau) \cap R_2(\tau))
\end{equation}
with the same sort of notation as above. We average over $[a,b]$
and get that
\begin{eqnarray} \label{5.19}
J_3(\tau) &\leq& {C b \over \tau (b-a)} \int_{t\in [a,b]} 
\H^d(E\cap A_2(t,\tau) \cap R_2(\tau)) dt
\nn\\
&=& {C b \over \tau (b-a)} \int_{x\in E \cap A(\tau) \cap R_2(\tau)} 
\int_{t\in [a,b]} \1_{x\in A_2(t,\tau)}(t) dt d\H^d(x).
\end{eqnarray}
But $\int_{t\in [a,b]} \1_{x\in A_2(t,\tau)}(t) dt \leq \tau$
by \eqref{5.6}, so we are left with
\begin{equation} \label{5.20}
J_3(\tau) \leq {C b \over (b-a)} \H^d(E \cap A(\tau) \cap R_2(\tau)).
\end{equation}
Now $E \cap A(\tau) \cap R_2(\tau) \i H(\tau)$, where
$H(\tau) = \big\{ x\in E \cap B(0,b) \, ; \, 0 < d(x) \leq 2\tau  \big\}$
(see \eqref{4.20}). Since $\H^d( E \cap B(0,b)) < +\infty$ and the
$H(\tau)$ decrease to the empty set, we get that
\begin{equation} \label{5.21}
\lim_{\tau \to 0} J_3(\tau) = 0.
\end{equation}
We turn to
\begin{equation} \label{5.22}
I_4(t) = {C r \over r_1 - r_2} \, 
\H^d(E\cap A_2(t,\tau) \cap R_1(\tau) \sm L^\ast)
\leq {2C b \over \tau} \, \H^d(E\cap A_2(t,\tau) \cap R_1(\tau) \sm  L^\ast)
\end{equation}
(see \eqref{4.77}). This term can be treated exactly like $I_3(t)$, and we get that
\begin{equation} \label{5.23}
\lim_{\tau \to 0} J_4(\tau) = 0.
\end{equation}
Next we study $I_5(t,\tau) = \H^d(Z_5(t,\tau))$, where
$Z_5(t,\tau)$ is as in \eqref{4.78}. For each $c \in (0,a]$, set
\begin{equation} \label{5.24}
Z(c,b) = \big\{\lambda x \, ; \, x\in E \cap  L^\ast \cap B(0,b) \sm B(0,c)
\text{ and } \lambda \in [0,1] \big\} \sm L^\sharp.
\end{equation}  
If $c < a$, then for $\tau$ small, $Z_5(t,\tau) \i Z(c,b)$ for every $t\in [a,b]$.
We take the average and get that $J_5(\tau) \leq \H^d(Z(c,b))$.
Then we let $c$ tend to $a$, use the fact that $\H^d(L^\ast) < +\infty$,
and get that
\begin{equation} \label{5.25}
\limsup_{t \to 0} J_5(\tau) \leq \H^d(Z(a,b)).
\end{equation}
Recall from \eqref{5.1} that $r_0 - r_1 = \tau$; then by \eqref{4.79}
\begin{equation} \label{5.26}
I_6(t) \leq [56C_0]^d \H^d(E \cap A_1(t,\tau)),
\end{equation}
where by \eqref{5.1}
\begin{equation} \label{5.27}
A_1(t,\tau) = B(0,r_0) \sm B(0,r_1) = B(0,t) \sm B(0,t-\tau).
\end{equation}
Set $A_1(\tau) = \bigcup_{t\in [a,b]} A_1(t,\tau) = 
B(0,b) \sm B(0,a-\tau)$.
We proceed as for $J_3(\tau)$ and use Fubini to estimate
\begin{equation} \label{5.28}
J_6(\tau) = {1 \over b-a} \int_{t\in [a,b]} I_6(t) dt
\leq [56C_0]^d \, {1 \over b-a} \int_{x\in E \cap A_1(\tau)} 
\1_{x\in A_1(t,\tau)}(t) dt d\H^d(x).
\end{equation}
As before, $\int_{t\in [a,b]} \1_{x\in A_1(t,\tau)}(t) dt \leq \tau$,
so 
\begin{equation} \label{5.29}
J_6(\tau) \leq [56C_0]^d \, {\tau \over b-a} \, \H^d(E \cap A_1(\tau))
\leq [56C_0]^d \, {\tau \over b-a}\, \H^d(E\cap B(0,b)).
\end{equation}
Recall from \eqref{4.82} that $I_7(t) = H^d(E \cap A_1(t))$;
this term is smaller than the right-hand side of \eqref{5.26},
so \eqref{5.29} also holds for $J_7(\tau)$. Then of course
\begin{equation} \label{5.30}
\lim_{\tau \to 0} (J_6(\tau) + J_7(\tau)) = 0.
\end{equation}
Let us summarize the estimates from this section as a lemma.

\begin{lem} \label{t5.1}
Let $U$, $E$, and $r$ satisfy the assumptions of Section \ref{S4}.
If $E$ is of type $A$ or $A'$ (as in \eqref{2.5} or \eqref{2.6}),
then for all choices of $r/2 \leq a < b \leq r$,
\begin{equation} \label{5.31}
\begin{aligned}
\H^d(E \cap B(0,a)) \leq \H^d(L^\sharp \cap B(0,b)) 
&+ {1 \over d} {b \over (b-a)} 
\int_{E \cap A(a,b)\sm L^\ast} \cos\theta(x) d\H^d(x)
\\
& + \H^d(Z(a,b)) + h(r) r^d,
\end{aligned}
\end{equation}
with $Z(a,b)$ as in \eqref{5.24}. If $E$ is of type $A_+$
(as in \eqref{2.7}), replace $h(r) r^d$ by
$h(r) \H^d(E \cap B)$, or $C h(r) r^d$, 
as in \eqref{4.83} or \eqref{4.85}. 
\end{lem}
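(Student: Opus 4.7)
The plan is to assemble the lemma directly from the pointwise estimate \eqref{4.73} (or \eqref{4.83} in the $A_+$ case) by averaging over a thin radial parameter and taking a limit. Concretely, for each $\tau\in(0,\tfrac14 \min(\tau_0,r))$ and each $t\in[a,b]$, I apply the construction of Section \ref{S4} with the choice $r_0=t$, $r_1=t-\tau$, $r_2=t-2\tau$, giving an acceptable deformation supported in $B(0,t)$ and hence the bound $\H^d(E\cap B(0,t))\le \H^d(L^\sharp\cap B(0,t))+\sum_{j=1}^7 I_j(t,\tau)+h(r)r^d$. I then average this inequality in $t$ over $[a,b]$; on the left, monotonicity of $t\mapsto \H^d(E\cap B(0,t))$ in the radius $t\ge a$ lets me bound the average from below by $\H^d(E\cap B(0,a))$, and on the right I get the terms $\sum J_j(\tau)$ introduced in \eqref{5.3}.

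Next I pass to the limit $\tau\to 0$ term by term. The boundary term $\H^d(L^\sharp\cap B(0,b))$ is $\tau$-independent, and $J_1(\tau)\to 0$ since $Z_1(\tau)\subset L^\ast$ shrinks to $\emptyset$ and $\H^d(L^\ast)<\infty$ by \eqref{3.4}--\eqref{3.5}. For the main interior term $J_2(\tau)$, I use Fubini to rewrite the integral as $\frac{1}{b-a}\int_E f(x)d\H^d(x)$ where $f$ involves the Jacobian factor $\alpha_t(\rho)^{d-1}\beta_t(x)$; the key observation is that $\int_{\rho+\tau}^{\rho+2\tau}\alpha_t(\rho)^{d-1}dt=\tau/d$ while $\beta_t(x)\le 1+\tau^{-1}b\cos\theta(x)$, so after cancellation of the $\tau$'s we are left with the desired bound $\tfrac{b}{d(b-a)}\int_{E\cap A(a,b)\setminus L^\ast}\cos\theta(x)\,d\H^d(x)$ in the limit (the error term $\tau\cdot \H^d(E\cap\cdot)$ vanishes, and the $R_3$ indicator is absorbed by restricting the integration to the complement of $L^\ast$).

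The remaining terms are error terms that disappear. For $J_3$ and $J_4$, a Fubini computation shows they are bounded by $Cb(b-a)^{-1}\H^d(E\cap A(\tau)\cap H(\tau))$ where $H(\tau)\subset\{d(x)\le 2\tau\}$ shrinks to $\emptyset$, so they tend to $0$ since $\H^d(E\cap B(0,b))<\infty$. For $J_5(\tau)$, the sets $Z_5(t,\tau)$ are contained in $Z(c,b)$ for any $c<a$ once $\tau$ is small, and $\H^d(Z(c,b))\to \H^d(Z(a,b))$ as $c\uparrow a$ by finiteness of $\H^d(L^\ast)$. Finally, $J_6$ and $J_7$ are controlled by $C_0^d\,\tau(b-a)^{-1}\H^d(E\cap B(0,b))$ which is $O(\tau)$ and thus vanishes. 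Collecting these limits yields \eqref{5.31} in the $A$/$A'$ case; the $A_+$ variant comes from using \eqref{4.83} in place of \eqref{4.73} at the start.

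The main subtlety, rather than any serious obstacle, is the computation for $J_2$: one must recognize that the Jacobian $\beta_t$, though singular of order $\tau^{-1}$, pairs with the weight $\alpha_t^{d-1}$ whose integral in $t$ is of order $\tau$, producing a finite limit with the correct factor $\tfrac{1}{d}\tfrac{b}{b-a}$ and the geometrically meaningful cosine. Everything else amounts to bookkeeping of error terms against the finite measure $\H^d(E\cap B(0,b))$ and the finite measure $\H^d(L^\ast)$.
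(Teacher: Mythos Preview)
Your proposal is correct and follows essentially the same route as the paper: choose $r_0=t$, $r_1=t-\tau$, $r_2=t-2\tau$, average the estimate \eqref{4.73} (or \eqref{4.83}) over $t\in[a,b]$, and let $\tau\to 0$ term by term, with the $J_2$ computation (the $\tau/d$ integral of $\alpha_t^{d-1}$ against $\beta_t\le 1+\tau^{-1}b\cos\theta$) as the only nontrivial step. Your bookkeeping of the error terms $J_1,J_3,J_4,J_5,J_6,J_7$ matches the paper's \eqref{5.4}, \eqref{5.21}, \eqref{5.23}, \eqref{5.25}, and \eqref{5.30}.
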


\ms
This follows from \eqref{5.2}, \eqref{5.4}, \eqref{5.17}, \eqref{5.21},
\eqref{5.23}, \eqref{5.25}, and \eqref{5.30}.
\qed

\section{The second limit and a differential inequality}
\label{S6}

In this section we still work with $U$, $E$, and a fixed $r > 0$,
with the same assumptions as in the previous sections, and we 
try to see what happens to the estimates of Lemma \ref{t5.1}
when we fix $b$ (in a suitable Lebesgue set) and let $a$ tend to $b$.

We start with $\H^d(Z(a,b))$, for which no special caution needs
to be taken. Set
\begin{equation} \label{6.1}
Z(b) = \big\{\lambda x \, ; \, x\in E \cap  L^\ast \cap \d B(0,b) 
\text{ and } \lambda \in [0,1] \big\} \sm L^\sharp.
\end{equation}
We claim that
\begin{equation} \label{6.2}
\limsup_{a \to b^-} \H^d(Z(a,b)) \leq \H^d(Z(b)).
\end{equation}
Indeed, the sets $Z(a,b) \cup Z(b)$ all trivially contain $Z(b)$,
are contained in a set $L^\ast$ such that $\H^d(Z^\ast) < +\infty$,
and their monotone intersection is the set $Z(b)$ (see \eqref{5.24});
\eqref{6.2} follows.

We want to evaluate $\H^d(Z(b))$, and for this we shall use the coarea
formula. First we check that
\begin{equation} \label{6.3}
\text{$L^\ast$ is a rectifiable set of dimension $d$.}
\end{equation}
For $k \geq 0$, set $L^k = L \sm B(0,2^{-k})$. By 
the definition \eqref{4.1} and our assumptions
\eqref{2.1} and \eqref{2.2}, $L^k$ is a rectifiable set of dimension
$d-1$, with finite measure. Then the set 
$L_k^\ast = \overline B \cap \big\{ \lambda x \, ; \, \lambda \geq 0
\text{ and } x\in L^k \big\}$ is a rectifiable set of dimension
$d$, with finite measure. Since $L^\ast$ is the union of these sets,
it is rectifiable as well. We already know from \eqref{3.4} and \eqref{3.5}
that $\H^d(L^\ast) < +\infty$, so \eqref{6.3} holds.

Since $Z(b) \i L^\ast$, it is rectifiable as well, with finite measure, and
we can write the coarea formula, applied to $Z(b)$ and the radial projection
$\pi$ defined by $\pi(x) = |x|$. By \cite{Federer}, Theorem~3.2.22,
we have the following identity between measures:
\begin{equation} \label{6.4}
J d\H^d_{|Z(b)} 
= \int_0^b d\H^{d-1}_{| \pi^{-1}(t) \cap Z(b)} dt
= \int_0^b d\H^{d-1}_{|\d B(0,t) \cap Z(b)} dt,
\end{equation}
where $J$ is a Jacobian function that we shall discuss soon, and
\eqref{6.4} means that we can take any positive Borel function, 
integrate both sides of \eqref{6.4} against this function, and get the same
result. Let us say that, if needed, we normalized the Hausdorff measures
so that they coincide with the Lebesgue measures of the same 
dimensions; then we do not need a normalization constant in
\eqref{6.4} (i.e, we can take $J=1$ when $Z(b)$ is a 
$d$-plane through the origin).

Now the Jacobian $J$ is the same as if we computed 
it for $L^\ast$ (either go to the definitions, or observe that \eqref{6.4}
is the restriction of the coarea formula on $L^\ast$). But in $B(0,r)$,
$L^\ast$ coincides with a cone, so its approximate tangent planes, wherever
they exist, contain the radial direction. In these direction, the derivative
of $\pi$ is $\pm 1$, so $J(x) \geq 1$ almost everywhere on $L^\ast$
and $Z(b)$. Also, $\pi$ is $1$-Lipschitz, so $J\leq 1$. Altogether,
$J=1$.

We apply \eqref{6.4} to the function $1$ and get that
\begin{equation} \label{6.5}
\H^d(Z(b)) = \int_0^b d\H^{d-1}(\d B(0,t) \cap Z(b)) dt.
\end{equation}
Set $X = E \cap  L^\ast \cap \d B(0,b)$.
For our main estimate, we can forget about removing $L^\sharp$ in \eqref{6.1},
say that $Z(b)$ is contained in the cone over $X$, and get that
\begin{equation} \label{6.6}
\H^{d-1}(\d B(0,t) \cap Z(b)) \leq \H^{d-1}((t/b) X)
= (t/b)^{d-1} \H^{d-1}(X),
\end{equation}
and hence
\begin{equation} \label{6.7}
\H^d(Z(b)) \leq \int_0^b (t/b)^{d-1} \H^{d-1}(X) dt
= {b \over d} \, \H^{d-1}(X).
\end{equation}
But we want to prepare the case of equality, so we also evaluate 
the intersection with $L^\sharp$. Define a function
$g$ on $X$ by
\begin{equation} \label{6.8}
g(x) = \sup\big\{ t \in [0,1] ; t x\in L \big\}.
\end{equation}
Since $L$ is closed, this a Borel (even semicontinuous) function.
Also (again because $L$ is closed), $g(x) x \in L$, which implies
that $\lambda x \in L^\sharp$ for $0 \leq \lambda \leq g(x)$, 
by \eqref{4.3}.

Let us check that for $0 < t \leq b$,
\begin{equation} \label{6.9}
\d B(0,t) \cap Z(b) = b^{-1} t \big\{ x \in X \, ; \, g(x) < b^{-1} t \big\}.
\end{equation}
If $z\in \d B(0,t) \cap Z(b)$, then by \eqref{6.1}
we can find $x\in X$ and $\lambda \in [0,1]$ such that $z = \lambda x$.
Clearly, $\lambda = b^{-1} t$, and also $\lambda > g(x)$ because
otherwise $z = \lambda x \in L^\sharp$. That is, $g(x) < b^{-1} t$.
Conversely, if $x\in X$ and $g(x) < b^{-1} t$, then $z = b^{-1} t x \in Z(b)$
because otherwise $z=\lambda y$ for some $y\in L$ and $\lambda \in [0,1]$
(by \eqref{4.3}), and then $g(x) \geq b^{-1} t$.

Now \eqref{6.5} yields
\begin{eqnarray} \label{6.10}
\H^d(Z(b)) &=& 
\int_0^b (b^{-1} t)^{d-1} \int_X \1_{g(x) < b^{-1} t} \, d\H^{d-1}(x) dt 
= b^{1-d} \int_X \Big\{\int_{bg(x)}^b  t^{d-1} dt \Big\}d\H^{d-1}(x)
\nn\\
&=&  b^{1-d} \int_X { b^d \over d} (1-g(x)^d) d\H^{d-1}(x)
= {b \over d}\, \H^{d-1}(X) - {b \over d} \int_X g(x)^d d\H^{d-1}(x).
\end{eqnarray}
For the moment, just set
\begin{equation} \label{6.11}
\Delta = {b \over d} \int_X g(x)^d d\H^{d-1}(x),
\end{equation}
and remember that $\Delta \geq 0$ and 
\begin{equation} \label{6.12}
\H^d(Z(b)) = {b \over d}\, \H^{d-1}(X) - \Delta.
\end{equation}

\ms
Next we evaluate integrals on the annulus $A(a,b)$.
We start with integrals on $E \cap L^\ast$. Denote by
$\mu$ the restriction of $\H^d$ to $E \cap L^\ast$, and by
$\nu$ its pushforward by the radial projection $\pi$.
Thus
\begin{equation} \label{6.13}
\nu(K) = \mu(\pi^{-1}(K)) = \H^d(E \cap L^\ast \cap \pi^{-1}(K))
\end{equation}
for Borel subsets $K$ of $[0,r]$. Let us use again the coarea formula
\eqref{6.4}, but now on the set $E \cap L^\ast$. The Jacobian is still $J = 1$,
for the same reason. We apply the formula to the function
$F = \1_{\pi^{-1}(K)}$, and we get that
\begin{eqnarray} \label{6.14}
\nu(K) &=& \H^d(E \cap L^\ast \cap \pi^{-1}(K))
= \int_{E \cap L^\ast} F d\H^d
= \int_{t=0}^r \int_{E \cap L^\ast \cap \d B(0,t)} F d\H^{d-1} dt
\nn\\
&=&  \int_{t=0}^r \1_K(t)  H^{d-1}(E \cap L^\ast \cap \d B(0,t)) dt.
\end{eqnarray}
This proves that $\nu$ is absolutely continuous with respect
to the Lebesgue measure on $[0,r]$, with the density
$f(t) = H^{d-1}(E \cap L^\ast \cap \d B(0,t))$
(the measurability of $f$ is included in the formula).
We shall restrict to points $b\in [r/2,r]$ that are Lebesgue points
for $f$, because for such points $b$,
\begin{equation} \label{6.15}
\lim_{a \to b, \, a < b} \,{1 \over b-a} \int_{[a,b)} f(t) dt = f(b).
\end{equation}
Since
\begin{equation} \label{6.16}
\int_{[a,b)} f(t) dt = \nu([a,b) 
= H^d(E \cap L^\ast \cap \pi^{-1}([a,b))
= H^d(E \cap L^\ast \cap A(a,b))
\end{equation}
by \eqref{6.13} and with the notation introduced above \eqref{5.17},
we get that
\begin{eqnarray} \label{6.17}
\lim_{a \to b, \, a < b} \,{1 \over b-a} H^d(E \cap L^\ast \cap A(a,b))
&=& f(b) = H^{d-1}(E \cap L^\ast \cap \d B(0,b))
\nn\\
&=& H^{d-1}(X) = {d \over b} \H^d(Z(b)) + {d \over b} \Delta  
\geq {d \over b} \H^d(Z(b))
\end{eqnarray}
by various definitions and \eqref{6.12}.

Let us now consider the measures 
\begin{equation} \label{6.18}
\mu_0 = \H^d_{| E \cap \overline B(0,r)}, \, 
\mu_1 = \H^d_{| E \cap \overline B(0,r)\sm L^\ast} = \mu_0 - \mu
\text{ and } \mu_2 = \cos \theta(x) \mu_1, 
\end{equation}
where $\theta(x)$ is the same angle as in \eqref{5.31}, for instance.
For $j= 0,1, 2$, define the pushforward measure $\nu_j$ of $\mu_j$
by $\pi$, as we did for $\mu$ in \eqref{6.13}, and then decompose
$\nu_j$ into its absolutely continuous part $\nu_{j,a}$ and its 
singular part $\nu_{j,s}$. Finally, let $f_j$ denote the density of
$\nu_{j,a}$ with respect to the Lebesgue measure $d\lambda$
on $[0,r]$. 

Observe that $f_j$ can be computed from density ratios, i.e.,
\begin{equation} \label{6.19}
f_j(t) = \lim_{\tau \to 0} \tau^{-1} \mu_j([t-\tau,t))
\end{equation}
for (Lebesgue)-almost every $t\in [0,r]$.
See for instance Theorem 2.12 in \cite{Mattila}, 
in a much more general context. 

Again we shall assume that $b$ is such a point. 
Then the definitions yield
\begin{eqnarray} \label{6.20}
\lim_{a \to b, \, a < b}  \,{1 \over b-a}
\int_{E \cap A(a,b)\sm L^\ast} \cos\theta(x) d\H^d(x)
&=& \lim_{a \to b, \, a < b}  \,{1 \over b-a}\, \mu_2(A(a,b))
\nonumber\\
&=& \lim_{a \to b, \, a < b}  \,{1 \over b-a}\, \nu_2([a,b))
= f_2(b).
\end{eqnarray}
We now let $a$ tend to $b$ in \eqref{5.31}, and get that
\begin{eqnarray} \label{6.21}
\H^d(E \cap B(0,b)) &\leq& \H^d(L^\sharp \cap B(0,b))
+ {b \over d} f_2(b) + \H^d(Z(b)) + h(r) r^d
\nn\\
&=& \H^d(L^\sharp \cap B(0,b))
+ {b \over d} f_2(b) + {b\over d} f(b) -  \Delta  + h(r) r^d
\nn\\
&=& \H^d(L^\sharp \cap B(0,b))
+ {b \over d} f_2(b) +  {b \over d}(f_0(b)-f_1(b)) - \Delta  + h(r) r^d
\\
&\leq& \H^d(L^\sharp \cap B(0,b)) + {b \over d} f_0(b) + h(r) r^d
\nn
\end{eqnarray}
by \eqref{6.20} and \eqref{6.2}, then the second part of \eqref{6.17}, 
then \eqref{6.18}, and \eqref{6.19} and its analogue
\eqref{6.15} for $f$. This holds for Lebesgue-almost every $b \in [r/2,r]$
and when $E$ is of type $A$ or $A'$; when $E$ is of type $A_+$, we modify
the last term $h(r) r^d$ as usual. This comment about $A_+$ will remain valid,
but we shall not always repeat it.

\ms
We are now ready to take a third limit, and let $b$ tend to $r$,
but since additional constraints on $r$ will arise, let us review some
of the notation. Set 
\begin{equation} \label{6.22}
R = \dist(0,\R^n \sm U) = \sup\big\{ r > 0 \, ; \, B(0,r) \subset U \big\},
\end{equation}
define the measure $\overline \mu_0 = \H^d_{| E \cap B(0,R)}$,
and let $\overline \nu_0$ denote the pushforward measure of 
$\overline \mu_0$ by $\pi$, as usual. Thus our measure $\nu_0$ is 
the restriction of $\overline \nu_0$ to $[0,r]$.

Write $\overline \nu_0 = \overline \nu_{0,a} + \overline \nu_{0,s}$,
and denote by $f_0$ the density of the absolutely continuous
part $\overline \nu_{0,a}$. There is no confusion here, the previous
$f_0$ was just the restriction of the new one to $[0,r]$.
In addition to the assumptions from the beginning of Section \ref{S4},
let us assume that $r$ is a Lebesgue point of $f_0$, so that in particular
\begin{equation} \label{6.23}
f_0(r) = \lim_{\tau \to 0} \, {1 \over \tau} \int_{r-\tau}^r f_0(t) dt.
\end{equation}
Then we can take a limit in \eqref{6.21} (which is valid for almost 
every $b\in [r/2,r]$), and get that
\begin{equation} \label{6.24}
\H^d(E \cap B(0,r)) \leq 
\H^d(L^\sharp \cap B(0,r)) + {r \over d} f_0(r) + h(r) r^d.
\end{equation}

\section{The almost monotonicity formula}
\label{S7}

We start with the assumptions for the next theorem.
We are given a domain $U$, which contains $0$, and 
a finite collection of boundary sets $L_j$, 
$0 \leq j \leq j_{max}$. We assume that 
\begin{equation} \label{7.1}
\text{the $L_j$ satisfy \eqref{2.1}, \eqref{2.2}, and \eqref{3.4}.}
\end{equation}
We are also given an interval $(R_0,R_1)$, such that
\begin{equation} \label{7.2}
B(0, R_1) \i U
\end{equation}
and 
\begin{equation} \label{7.3}
\text{ almost every $r \in (R_0,R_1)$ admits a local retraction
(as in Definition \ref{t3.1}).}
\end{equation}
Then we consider a coral sliding almost minimal set $E$ in $U$, with boundary
conditions defined by the $L_j$, and with a gauge function $h$,
as in \eqref{2.8}. 

Finally we assume that $h$ satisfies the Dini condition \eqref{1.17}, and we set,
as in \eqref{1.20},
\begin{equation} \label{7.4}
A(r) = \int_{0}^{r} h(t) {dt \over t}
\ \text{ for } 0 < r  <  R_1 \, .
\end{equation}

\ms
Next we introduce more notation and define our functional $F$. 
First set
\begin{equation} \label{7.5}
L' = \bigcup_{1 \leq j \leq j_{max}} L_j.
\end{equation}
If, as in the introduction, there is only one boundary set $L$,
then $L' = L$. For $0 < r < R_1$, set
\begin{equation} \label{7.6}
L^\sharp(r) =  
\big\{ \lambda z, \, ; \, z\in L' \cap \overline B(0,r) 
\text{ and } \lambda \in [0,1]\big\}
\end{equation}
(this is the same thing as $L^\sharp$ in the previous sections), then
\begin{equation} \label{7.7}
m(r) = \H^d(L^\sharp(r))
\ \text{ and } 
H(r) = d r^d \int_0^r {m(t) dt \over t^{d+1}}.
\end{equation}
With our current assumptions we are not sure that 
$H(r) < \infty$ for $r$ small. For instance, when $d=2$, $L'$ could be a 
Logarithmic spiral in the plane; then $m(r) = a r^2$ for $r$ small,
and the integral in \eqref{7.7} diverges. But if $L'$ is a $C^{1+\varepsilon}$
curve through the origin, $m(r) \leq C r^{2+\varepsilon}$ (only a small sector
is seen), and the integral converge. Of course $H(r)=0$ for $r$ small when 
$L'$ lies at positive distance from the origin.

If $H \equiv +\infty$, the theorem below is true, but useless, 
so we may as well assume that the integral in \eqref{7.7} converges.

The reason why we choose this function $H$ is that it is a solution of
a differential equation. Namely, \eqref{7.7} yields
\begin{equation} \label{7.8}
H'(r) = {d \over r}\, H(r) + d r^d \, {m(r) \over r^{d+1}} 
= {d \over r}\, H(r) + {d \over r} \, m(r)
\end{equation}
for $0 < r < R_1$. 

We shall use the functions $G$ and $F$ defined by
\begin{equation} \label{7.9}
G(r) = \H^d(E \cap B(0,r)) + H(r)
\ \text{ and } \ 
F(r) = r^{-d} G(r)
\end{equation}
for $r \in (0,R_1)$. In our mind, $H$ is a correcting term which we add 
to $\H^d(E \cap B(0,r))$ so that $F$ becomes nondecreasing for minimal sets, 
and almost nondecreasing for almost minimal sets. 
Notice that although it seems complicated, 
it depends only on the geometry of $L'$. We will check later that
in the special case of the introduction where $L'$ is an affine subspace
of dimension $d-1$, $H(r) = \H^d(S \cap B(0,r))$, where the shade $S$ is as in
\eqref{1.9}. See Remark \ref{t7.3}.

\ms
\begin{thm} \label{t7.1}
Let $U$, the $L_j$, $E$, and $h$ satisfy the assumptions \eqref{7.1}-\eqref{7.3}
and \eqref{1.17}. Then there exist constants $a > 0$ and $\tau > 0$, that depend
only on $n$, $d$, and the constants that show up in \eqref{2.1}, with
the following property. Suppose in addition that
\begin{equation} \label{7.10}
0 \in E \ \text{ and } \ h(R_1) \leq \tau.
\end{equation}
Then 
\begin{equation} \label{7.11}
F(r) e^{aA(r)} \text{ is nondecreasing on the interval } (R_0,R_1).
\end{equation}
\end{thm}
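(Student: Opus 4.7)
The plan is to convert the inequality (6.24)---the end product of Sections 4--6---into a distributional differential inequality for $F$, and then integrate it against the smooth weight $e^{aA(r)}$.

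Set $\theta(r) = \H^d(E \cap B(0,r))$. Hypothesis (7.3) ensures that almost every $r \in (R_0, R_1)$ admits a local retraction, so the derivation of (6.24) is available at almost every such $r$. At Lebesgue points of the density $f_0$ lying in that full-measure set, (6.24) rewrites as
\begin{equation*}
f_0(r) \;\geq\; \frac{d}{r}\bigl(\theta(r) - m(r) - h(r) r^d\bigr).
\end{equation*}
Because $\theta$ is nondecreasing, its distributional derivative $d\theta$ is a nonnegative Radon measure whose absolutely continuous part has density $f_0$; its singular part is nonnegative, so $d\theta \geq f_0(r)\,dr$ as measures. Combined with the previous pointwise bound this yields the measure inequality
\begin{equation*}
d\theta(r) \;\geq\; \frac{d}{r}\bigl(\theta(r) - m(r) - h(r) r^d\bigr)\,dr \qquad \text{on } (R_0, R_1).
\end{equation*}

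Next I add the exact identity (7.8), namely $H'(r) = \frac{d}{r} H(r) + \frac{d}{r} m(r)$. The terms involving $m(r)$ cancel and $G(r) = \theta(r) + H(r)$ satisfies
\begin{equation*}
dG(r) \;\geq\; \frac{d}{r}\bigl(G(r) - h(r) r^d\bigr)\,dr.
\end{equation*}
Differentiating the product $F(r) = r^{-d} G(r)$ in the sense of distributions,
\begin{equation*}
dF(r) \;=\; r^{-d}\, dG(r) \;-\; d\, r^{-d-1} G(r)\, dr \;\geq\; -\,\frac{d\, h(r)}{r}\, dr,
\end{equation*}
the two $d r^{-d-1} G(r)\,dr$ contributions cancelling cleanly.

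To promote this additive bound into the multiplicative form (7.11) I would invoke local Ahlfors regularity (2.9): choosing $\tau \leq \eta_0$ in (7.10), and after the harmless reduction to a slightly smaller $R_1$ needed to ensure $B(0,2r) \subset U$, the fact that $E$ is coral with $0 \in E$ gives $\theta(r) \geq c_0 r^d$ for a constant $c_0 > 0$ depending only on $n$, $d$, and the constants in (2.1). Hence $F(r) \geq c_0$ on the interval, and with the choice $a = d/c_0$ the inequality above becomes
\begin{equation*}
dF(r) \;\geq\; -\frac{a\,h(r)}{r}\, F(r)\, dr \;=\; -a A'(r) F(r)\, dr.
\end{equation*}
The product rule applied to the positive smooth factor $e^{aA(r)}$ now gives $d\!\left( F(r) e^{aA(r)}\right) \geq 0$ as a Radon measure on $(R_0, R_1)$, which is exactly (7.11).

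The principal technical obstacle is the passage from the a.e.\ pointwise bound on $f_0$ to the measure inequality on $d\theta$, together with the accounting of the three nested limits (in $\tau$, then $a \to b^-$, then $b \to r$) used to produce (6.24): each step discards a null set of radii and one must check that their intersection is still of full measure in $(R_0, R_1)$. A minor secondary point is the mismatch between the hypothesis $B(0, R_1) \subset U$ and the stronger $B(0, 2r) \subset U$ needed to invoke (2.9); this is handled by proving the result on $(R_0, R_1 - \varepsilon)$ and letting $\varepsilon \to 0$.
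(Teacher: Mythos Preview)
Your argument is essentially the paper's own, recast in distributional language: the paper derives the same inequality $G(r)\leq\frac{r}{d}(f_0(r)+H'(r))+h(r)r^d$ (this is (7.13)), proves the product-rule identity (7.19) for $g(r)G(r)$ with $g(r)=r^{-d}e^{aA(r)}$ by an explicit Fubini computation rather than by invoking BV calculus, and closes with the same appeal to Ahlfors regularity to absorb the additive error into the multiplicative weight.

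One correction is needed in your Ahlfors-regularity step. Your proposed workaround for the hypothesis $B(0,2r)\subset U$ in (2.9)---shrinking $R_1$ to $R_1-\varepsilon$ and then letting $\varepsilon\to 0$---does not work: for $r$ near $R_1-\varepsilon$ one still has $2r$ close to $2R_1$, and $B(0,2r)$ need not lie in $U$ (take $U=B(0,R_1)$ itself). The paper's fix is to apply (2.9) at radius $r/2$ instead: then $B(0,2\cdot r/2)=B(0,r)\subset B(0,R_1)\subset U$ and $h(r)\leq h(R_1)\leq\tau$, so $\theta(r)\geq\H^d(E\cap B(0,r/2))\geq C^{-1}2^{-d}r^d$ and $F(r)\geq c_0$ uniformly on all of $(R_0,R_1)$. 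A second minor point: $e^{aA(r)}$ is not smooth but only absolutely continuous, since $A'(r)=h(r)/r$ with $h$ merely nondecreasing; the product rule you use still holds for a BV function against a continuous BV factor, but the justification should rest on that rather than on smoothness.
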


\ms
We start with a few remarks, and then we shall prove the theorem.

When $E$ is a minimal set, i.e., $h=0$, then $A=0$ and \eqref{7.11}
says that $F$ is nondecreasing.

In the special case when all the $L_j$ are composed of cones
of dimensions at most $d-1$ centered at the origin, 
$m \equiv 0$, $F$ is the same as $\theta_0$ in \eqref{1.8}, 
and we recover a special case of Theorem 28.7 in \cite{Sliding}. 

When the $L_j$ are almost cones (again, of dimensions
smaller than $d$), $r^{-d} m(r)$ is rather small near $0$, 
and we get some form of near monotonicity for $\theta_0$ as well. The
author did not try to compare this to Theorem 18.15 in \cite{Sliding},
but bets that Theorem 18.15 in \cite{Sliding} is at least as good because the
competitor used in the proof looks more efficient.

When $E$ is an $A_+$-almost minimal set, we do not even need 
\eqref{7.10}, and and we can take $a=d$. 
See Remark \ref{t7.2} below. 

In the special case when $L'$ is an affine subspace, or more generally
when $L'$ has at most one point on each ray starting from the origin,
$H(r)=H^d(S \cap B(0,r))$, where $S$ is the shade
\begin{equation} \label{7.12}
S = \big\{ x\in \R^n \, ; \lambda x \in L' \text{ for some } \lambda \in (0,1] \big\}.
\end{equation}
See Remark \ref{t7.3}.
We thus recover Theorems \ref{t1.2} and \ref{t1.5} as special cases of 
Theorem \ref{t7.1}.

Even though Theorem \ref{t7.1} looks quite general, it is not clear to the
author that all this generality will be useful (in fact, the author did not know
exactly where to stop and ended up not taking tough decisions). 
In the proof, we spend some time
making sure that it works in many complicated situations, but this does not
mean that it is efficient there. For instance, if $n=3$, $d=2$, and 
$L$ consists in two parallel lines at a small distance from each other,
our main competitor uses two half planes bounded by the two lines, and it
would be more efficient to use one of these half planes, plus a small
thin stripe that connects the two lines.

Many of our assumptions (for instance \eqref{2.2}, \eqref{3.4}, or \eqref{7.3})
are used in the limiting process, but do not show up in the final estimate. 
This explains why we do not need uniform bounds for $C_0$ in \eqref{7.3}.

\ms
We shall now complete the proof of the theorem.
The main point will be a differential inequality, that we shall derive from
\eqref{6.24}. Recall from Sections \ref{S4}-\ref{S6}
that \eqref{6.24} holds for almost every $r \in (0,R_1)$
that admits a local retraction, as in \eqref{4.4}.
Because of our assumption \eqref{7.3}, this means almost
every $r \in (R_0,R_1)$.

The set $L^\sharp$ in \eqref{6.24} is the same as
our $L^\sharp(r)$; see \eqref{7.6}, \eqref{4.3}, and \eqref{4.1}. 
Then we get that for almost every $r \in (R_0,R_1)$,
\begin{eqnarray} \label{7.13}
G(r) &=& \H^d(E \cap B(0,r)) + H(r)
\leq \H^d(L^\sharp(r) \cap B(0,r)) + {r \over d} f_0(r) + h(r) r^d + H(r)
\nn\\
&\leq& m(r) + {r \over d} f_0(r) + h(r) r^d + H(r)
= {r \over d} (f_0(r)+H'(r)) + h(r) r^d
\end{eqnarray}
by \eqref{7.9}, \eqref{6.24}, \eqref{7.7}, and \eqref{7.8}.

\ms
This is our main differential inequality. We now need to integrate it to obtain
the desired conclusion \eqref{7.11}.
Let $a$ and $A$ be as in \eqref{7.11}, and set 
\begin{equation} \label{7.14}
g(r) = r^{-d} e^{aA(r)} \ \text{ for } R_0 < r < R_1.
\end{equation}
Since $F(r) = r^{-d}G(r)$ by \eqref{7.9}, 
\eqref{7.11} amounts to checking that 
\begin{equation} \label{7.15}
\text{$gG$ is nondecreasing on $(R_0, R_1)$.}
\end{equation}
Recall from the definitions below \eqref{6.22} that
for $0 < r < s \leq R_1$, 
\begin{eqnarray} \label{7.16}
\H^d(E\cap B(0,s)) - \H^d(E\cap B(0,r))
&=& \overline \nu_0([0,s)) - \overline \nu_0([0,r)) = \overline \nu_0([r,s))
\nn\\
&\geq&  \overline \nu_{0,a}([r,s))
= \int_r^s f_0(u) du.
\end{eqnarray}
By \eqref{7.7}, 
\begin{equation} \label{7.17}
H(s)-H(r) = \int_r^s H'(t) dt.
\end{equation}
Set $d\nu = d\overline \nu_0 + H'(t) dt$ for the moment.
We sum \eqref{7.16} and \eqref{7.17} and get that
\begin{equation} \label{7.18}
G(s) - G(r) = \nu([r,s)),
\end{equation}
by \eqref{7.9}.
Next we check that for $0 < r < t < R_1$,
\begin{equation} \label{7.19}
g(t)G(t) - g(r)G(r)  =  \int_r^t g'(s) G(s) ds + \int_{[r,t)} g(s) d\nu(s).
\end{equation}
To see this without integrating by parts, we use Fubini's theorem to compute 
the integral $I = \int\int_{r \leq s \leq u < t} g'(s) d\nu(u)$ in two different ways.
When we integrate in $s$ first, we get that
\begin{equation} \label{7.20}
I = \int_{r \leq u < t} (g(u)-g(r)) d\nu(u) 
= \int_{r \leq u < t} g(u) d\nu(u) - g(r) [G(t)-G(r)].
\end{equation}
When we integrate in $u$ first, we get
\begin{equation} \label{7.21}
I = \int_{r \leq s < t} g'(s)[G(t)-G(s)] ds = G(t) (g(t)-g(r)) 
- \int_{r \leq s < t} g'(s)G(s)ds.
\end{equation}
We compare the two and get \eqref{7.19}.

The positive measure $\nu$ is at least as large as its absolutely
continuous part $\nu_a$, whose density is $f_0+H'$ (see 
\eqref{7.16}). Thus \eqref{7.19} yields
\begin{equation} \label{7.22}
g(t)G(t) - g(r)G(r)  \geq \int_r^t g'(s) G(s) ds + \int_r^t g(s)(f_0+H')(s)ds.
\end{equation}
But $g'(s) = g(s) \Big[{-d \over s} + a A'(s) \Big]
= g(s) \Big[{-d \over s} + {a h(s) \over s} \Big]$
by \eqref{7.14} and \eqref{7.4}.
So \eqref{7.15} and \eqref{7.11} will follow if we prove that
\begin{equation} \label{7.23}
(f_0+H')(s) \geq \Big[{d \over s} - {a h(s) \over s} \Big] G(s)
\end{equation}
for almost every $s\in (R_0,R_1)$.
But \eqref{7.13} says that
\begin{equation} \label{7.24}
{d \over s} G(s) \leq (f_0+H')(s) + {d \over s} h(s) s^d,
\end{equation}
so it is enough to prove that
\begin{equation} \label{7.25}
{d \over s} h(s) s^d \leq {a h(s) \over s} G(s).
\end{equation}
This is the place where we use our assumption \eqref{7.10}:
the (lower) local Ahlfors regularity \eqref{2.9} yields
\begin{equation} \label{7.26}
G(s) \geq \H^d(E \cap B(0,r/2)) \geq C^{-1} 2^{-d} r^d
\end{equation}
(we used $r/2$ to make sure that \eqref{2.10} holds).
We now choose $a$ sufficiently large, depending on $C$,
and \eqref{7.25} follows from \eqref{7.26}.
This completes the proof of Theorem \ref{t7.1}.
\qed

\ms
\begin{rem} \label{t7.2}
When $E$ is a sliding $A_+$-almost minimal set (as in \eqref{2.7}),
we may drop \eqref{7.10} from the assumptions of Theorem \ref{t7.1}.
Indeed, our initial error term in \eqref{4.83} could be taken to be
$h(r) \H^d(E\cap B)$. When we follow the computations, we see that
we can replace $s^d$ with $\H^d(E\cap B(0,s))$ in 
\eqref{7.24} and \eqref{7.25}. Then we just need to observe that
$G(s) \geq \H^d(E\cap B(0,s))$, and we get \eqref{7.25} if $a \geq d$, 
without using \eqref{7.10} or the local Ahlfors regularity.
Some constraint on $h$, namely the fact that it tends to $0$, is needed
to obtain the qualitative properties of $E$ (mainly the rectifiability; the
local Ahlfors regularity was only comfort), but not the specific bound in
\eqref{7.10}.
\end{rem}

\begin{rem} \label{t7.3}
Suppose that for some $R \in (0,R_1)$, 
\begin{equation} \label{7.27}
\text{the set $L' \cap B(0,R)$ never meets a radius $(0,x]$ more than once.}
\end{equation}
Then
\begin{equation} \label{7.28}
H(r) = \H^d(S \cap B(0,r)) \ \text{ for } 0 < r < R,
\end{equation}
where $S$ is the shade set of \eqref{7.12}.
\end{rem}

\ms
Since we also want to prepare the next remark, we shall
make a slightly more general computation than needed for \eqref{7.28}.
First we define a function $N$.

Set $N(0) = 0$ and, for $x \neq 0$, denote by $N(x)$ the number
of points of $L'$ that lie on the line segment $(0,x]$. In short,
\begin{equation} \label{7.29}
N(x) = \sharp(L' \cap (0,x]) \in [0,+\infty].
\end{equation}
It is not hard to check that $N$ is a Borel function. 
Notice that
\begin{equation} \label{7.30}
S \sm\{ 0 \}= \big\{ x\in \R^n \, ; \, N(x) > 0 \big\}.
\end{equation}
Then set
\begin{equation} \label{7.31}
H_1(r) = \int_{S \cap B(0,r)} N(x) dH^d(x);
\end{equation}
we will later show that $H_1$ could have been used instead of $H$,
but let us not discuss this yet.
When we assume \eqref{7.27}, we have that 
\begin{equation} \label{7.32}
N(x) \leq 1 \ \text{ for } x\in B(0,R)
\end{equation}
and we shall see that $H_1 = H$, but let us not use this assumption for the moment.

Since we prefer to work with finite measures, let us first 
replace $L'$ with $L'_\rho = L' \sm B(0,\rho)$, where the small $\rho >0$
will soon tend to $0$.
Define $N_\rho$, $S_\rho$, and $H_{1,\rho}$ as above,
but with the smaller set $L'_\rho$. 
As for \eqref{7.30} and because $0 \notin S_\rho$, 
\begin{equation} \label{7.33}
S_\rho = \big\{ x\in \R^n \, ; \, N_\rho(x) > 0 \big\}.
\end{equation}

For $0 < r  \leq R_1$,  set
\begin{equation} \label{7.34}
L_\rho(r) = L'_\rho \cap \overline B(0,r) 
= L' \cap \overline B(0,r) \sm B(0,\rho) 
\end{equation}
and   
\begin{equation} \label{7.35}
L_\rho^\ast(r) = \overline B(0,r) \cap 
\big\{ \lambda z, \, ; \, z\in L_\rho(r) \text{ and } \lambda \geq 0 \big\}.
\end{equation}
Recall that by the description of $L'$ by \eqref{2.1} and \eqref{2.2},
$L'_\rho$ is contained in a finite union of biLipschitz images of
dyadic cubes of dimensions at most $d-1$. Since it also stays far from
the origin, we get that $\H^d(L_\rho^\ast(R_1)) < +\infty$.

Set $\mu_\rho = N_\rho \H^d_{|B(0,R_1)} 
= N_\rho \H^d_{|S_\rho \cap B(0,R_1)}$ (by \eqref{7.33}).
Our nice description of  $L'_\rho$ also implies, with just a little more work, that
$\mu_\rho$ is a finite measure. 
Finally denote by $\nu_\rho$ the pushforward 
measure of $\mu_\rho$ by $\pi$; we want to show that $\nu_\rho$
is absolutely continuous and control its density, and we shall use 
the coarea formula again.

By \eqref{7.12}, $S_\rho \cap B(0,R_1) \i L_\rho^\ast(R_1)$,
which is a rectifiable truncated cone of finite $\H^d$ measure. 
This allows us to apply the coarea formula \eqref{6.4}, 
with $Z(b)$ replaced by $S_\rho$ and $J=1$, as we did near 
\eqref{6.13}-\eqref{6.14} with the set $E \cap L^\ast$. 
That is, for every nonnegative Borel function $h$ on $B(0,R_1)$, 
\begin{equation} \label{7.36}
\int_{S_\rho \cap B(0,R_1)} h(x) d\H^d(x)
= \int_{t=0}^{R_1} \int_{S \cap \d B(0,t)} h(x) d\H^{d-1}(x) dt.
\end{equation}
We apply this with $h = \1_{\pi^{-1}(K)} N_\rho $, where $K$ is a 
Borel subset of $[0,R_1]$, and get that
\begin{eqnarray}  \label{7.37}
\nu_\rho(K)  &=& \int_{S_\rho \cap B(0,R_1)} \1_{\pi^{-1}(K)}(x) d\mu_\rho(x)
= \int_{S_\rho \cap B(0,R_1)} \1_{\pi^{-1}(K)}(x) N_\rho(x)  d \H^d(x) 
\nn\\
&=& \int_{t=0}^{R_1} \int_{S \cap \d B(0,t)} 
\1_{\pi^{-1}(K)}(x) N_\rho(x)  d\H^{d-1}(x) dt
\nn\\
&=& \int_{t=0}^{R_1} \1_{K}(t) \int_{S \cap \d B(0,t)} N_\rho(x) d\H^{d-1}(x) dt;
\end{eqnarray}
this proves that $\nu_\rho$ is absolutely
continuous with respect to the Lebesgue measure, and its density is
\begin{equation} \label{7.38}
f_\rho(t) = \int_{S_\rho \cap \d B(0,t)} N_\rho(x) d\H^{d-1}(x).
\end{equation}
The function $f_\rho$ is locally integrable, but we can say a bit more:
since $N_\rho$ is radially nondecreasing, $f_{\rho}$ is also nondecreasing,
so its restriction to any $[0,T]$, $T < R_1$, is bounded.

Let us record that by the analogue of \eqref{7.31} for $H_{1,\rho}$, 
\begin{equation} \label{7.39}
H_{1,\rho}(r) = \int_{S_\rho \cap B(0,r)} N_\rho(x) dH^d(x) = \nu_\rho([0,r))
= \int_0^r f_\rho(t) dt.
\end{equation}

For $r > 0$, define a function 
$g_r$ on $B(0,r)$ by 
\begin{equation} \label{7.40}
g_r(0) = 0 \ \text{ and }\ g_r(x) = N_\rho(xr/|x|) \text{ for } x\neq 0.
\end{equation}
Notice that
\begin{equation} \label{7.41}
g_r(x) \geq N_\rho(x) \ \text{ for } x\in B(0,r),
\end{equation}
just because $N_\rho$ is radially nondecreasing (see the definition \eqref{7.29}).
In addition, we claim that
\begin{equation} \label{7.42}
g_r(x) \geq N_\rho(x) +1 \ \text{ for } x\in B(0,r) \cap L_\rho^\sharp(r) 
\sm L_\rho(r),
\end{equation}
where 
\begin{equation} \label{7.43}
L_\rho^\sharp(r) =  
\big\{ \lambda z, \, ; \, z\in L_\rho(r) 
\text{ and } \lambda \in [0,1]\big\}
\end{equation}
is the analogue of $L^\sharp(r)$ for $L'_\rho$ (see \eqref{7.6}
and \eqref{7.34}).
Indeed, if $x\in B(0,r) \cap L_\rho^\sharp(r) \sm L_\rho(r)$,
\eqref{7.43} says that there exists $z\in L_\rho(r)$ and $\lambda \in [0,1]$
such that $x= \lambda z$. In addition, $\lambda \neq 1$ because 
$x \notin L_\rho(r)$. Thus $z \in (x,rx/|x|]$, and 
by \eqref{7.40} and \eqref{7.29}, 
$g_r(x) = N_\rho(x,rx/|x|) > N_\rho(x)$, as claimed.
This yields
\begin{eqnarray}  \label{7.44}
H_{1,\rho}(r) + \H^d(L_\rho^\sharp(r) \cap B(0,r)) &=& 
\int_{S_\rho \cap B(0,r)} N_\rho(x) dH^d(x) 
+ \int_{L_\rho^\sharp(r)\cap B(0,r)} dH^d(x)
\nn\\
&\leq& \int_{B(0,r)} g_r(x) dH^d(x)
\end{eqnarray}
by \eqref{7.39}, \eqref{7.41}, and \eqref{7.42}, and because 
$\H^d(L_\rho(r)) = 0$.

In the special case when \eqref{7.27} holds and for $r < R$, we claim that in fact
\begin{equation} \label{7.45}
H_{1,\rho}(r) + \H^d(L_\rho^\sharp(r) \cap B(0,r)) =  \int_{B(0,r)} g_r(x) dH^d(x).
\end{equation}
By the first line of \eqref{7.44}, we just need to check that 
$N_\rho(x) + \1_{L_\rho^\sharp(r)}(x) = g_r(x)$
for $H^d$-almost every $x\in B(0,r)$. So let $x\in B(0,r)$ be given.
If $g_r(x) = 0$, then $N_\rho(x) = 0$ by \eqref{7.41}, and $x$ cannot lie in 
$L_\rho^\sharp(r)\sm L_\rho(r)$, by \eqref{7.42}. 
Since $\H^d(L_\rho(r))=0$, we get that 
$N_\rho(x) + \1_{L_\rho^\sharp(r)}(x) = 0$ almost surely.
If instead $g_r(x) \neq 0$, then $(0,xr/|x|]$ meets $L_\rho(r)$ 
by \eqref{7.40} and \eqref{7.29}. 
By \eqref{7.27}, the intersection is unique. Call $z$ the only point of
$(0,xr/|x|] \cap L_\rho(r)$; if $|z| > |x|$, then $N_\rho(x) = 0$,
but $x\in L^\sharp(r) \sm L_\rho(r)$. If $|z| \leq |x|$, then $N_\rho(x) = 1$,
but $x\notin L^\sharp(r) \sm L_\rho(r)$. In both cases 
$N_\rho(x) + \1_{L_\rho^\sharp(r)}(x) = 1$. The claim follows.

We return to the general case. The coarea formula \eqref{7.36}, applied
with $h = g_r \1_{B(0,r)}$, yields
\begin{eqnarray} \label{7.46}
\int_{S_\rho \cap B(0,r)} g_r(x) dH^d(x)
&=& \int_{t=0}^{r} \int_{S_\rho \cap \d B(0,t)} g_r(x) d\H^{d-1}(x) dt
\nn\\
&=& \int_{t=0}^{r} \int_{S_\rho \cap \d B(0,t)} N_\rho(xr/t) d\H^{d-1}(x) dt
\nn\\
&=& \int_{t=0}^{r}  (t/r)^{d-1} \int_{S_\rho \cap \d B(0,r)} 
N_\rho(y) d\H^{d-1}(y) dt = {r \over d} \, f_\rho(r)
\end{eqnarray}
by \eqref{7.40} and \eqref{7.38}. Thus by \eqref{7.44} and \eqref{7.45}
\begin{equation} \label{7.47}
H_{1,\rho}(r) + \H^d(L_\rho^\sharp(r)\cap B(0,r)) \leq {r \over d} \, f_\rho(r)
\end{equation}
for almost every $r\in [0,R_1)$, with equality when 
\eqref{7.27} holds and $0 < r < R$. 

Let us now complete the proof of Remark \ref{t7.3}. 
Set $\varphi = H_{1,\rho}$; by \eqref{7.39} and the paragraph above it,
$\varphi$ is differentiable almost everywhere on $[0,R)$, with a derivative
$\varphi' = f_\rho$ which is bounded on compact subintervals, and in
addition $\varphi(r) = \int_0^r \varphi'(t) dt$ for $0 < r < R$. 
Finally, set $m_\rho(r) = \H^d(L_\rho^\sharp(r))\cap B(0,r))$,
and recall that 
\begin{equation} \label{7.48}
{r \over d}  \varphi'(r) = \varphi(r) + m_\rho(r)
\end{equation}
almost everywhere on $[0,R)$, by \eqref{7.47} and the line that follows it.

Notice that thanks to the fact that we removed $B(0,\rho)$
from $L'$, $m_\rho(r) = 0$ for $r < \rho$
(see \eqref{7.43} and \eqref{7.34}). Similarly,
$\varphi(r) = H_{1,\rho}(r) = 0$ for $r < \rho$, by the analogue for 
$L'_\rho = L' \sm B(0,\rho)$ of \eqref{7.31} and \eqref{7.29}.
On any compact subinterval of $(0,R)$ our equation \eqref{7.48}
is very nice, and it is easy to see that
\begin{equation} \label{7.49}
H_{1,\rho}(r) = \varphi(r) = d r^{d} \int_0^r {m_\rho(t) dt \over t^{d+1}}
\ \text{ for } 0 < r < R;
\end{equation}
for instance we may call $\psi$ the right-hand side of \eqref{7.49}, 
observe that $\eta = \varphi - \psi$ vanishes near $0$, is the integral of its
derivative, and satisfies $\eta'(r) = {d \over r}\eta(r)$ almost-everywhere,
and then  apply Gr\"onwall's inequality to show that it vanishes on $(0,R)$.

We may now let $\rho$ tend to $0$ in \eqref{7.49}, notice that
both $H_{1,\rho}$ and $m_\rho$ are nondecreasing functions of $\rho$
(see \eqref{7.43} and \eqref{7.7} for $m_\rho$, and \eqref{7.29}, \eqref{7.31},
\eqref{7.34}, and \eqref{7.39} for $H_{1,\rho}$).

Thus by Beppo-Levi we get that $H_1(r) = H(r)$ for $0 < r < R$. Since
$N(x) = \1_S(x)$ by \eqref{7.32} and \eqref{7.33}, \eqref{7.31}
yields $H_1(r) = \H^d(S \cap B(0,r))$, as needed for 
Remark \ref{t7.3}.
\qed

\ms
Return to the general case. 
Notice that $H_{1,\rho}$, $L_\rho^\sharp(r)$, and $f_\rho$, 
are nondecreasing functions of $\rho$; then by \eqref{7.47} and Beppo-Levi,
\begin{equation} \label{7.50}
H_1(r) + \H^d(L^\sharp(r)\cap B(0,r)) \leq {r \over d} \, f(r)
\end{equation}
for almost every $r \in (0,R_1)$, with 
$f(r) = \lim_{\rho \to 0} f_\rho(r) = \int_{S \cap \d B(0,t)} N(x) d\H^{d-1}(x)$.
Let us assume that 
\begin{equation} \label{7.51}
H_1(r) < +\infty \ \text{ for some } r>0.
\end{equation}
Since the formulas \eqref{7.29} and \eqref{7.31} are additive in terms
of $L'$, and we checked earlier that the function $\H_{1,\rho}$ associated
to $L' \sm B(0,\rho)$ is bounded on $[0,R_1]$
(see \eqref{7.39} and recall that $\nu_\rho$ is a finite measure),
we get that $H_1(r) \leq H_1(R_1) < +\infty$ for $0 < r \leq R_1$
(because $H_1$ is clearly nondecreasing). We are ready for 
the following.

\begin{rem} \label{t7.4}
In the statement of Theorem \ref{t7.1}, we may replace the function $H$
of \eqref{7.7} with the function $H_1$ of \eqref{7.31}.
\end{rem}

Of course the statement is only useful when the functional $F$ that we build
with $H_1$ is finite somewhere, which forces \eqref{7.51}. When this happens,
$H_1$ is bounded by $H_1(R_1)$, $H_1$ is the integral of its derivative
$f$ (start from \eqref{7.39} and apply Beppo-Levi), and we have the differential inequality \eqref{7.50}. We can then reproduce
the proof of Theorem \ref{t7.1}, starting at \eqref{7.17}, and conclude as before.
\qed

\section{$E$ is contained in a cone when $F$ is constant.}
\label{S8}

The main result of this section is the following theorem, which gives some 
information on the case of equality in Theorem \ref{t7.1}.

\begin{thm} \label{t8.1}
Let $U$ and the $L_j$ satisfy the assumptions \eqref{7.1}-\eqref{7.3}
of Theorem \ref{t7.1}, and let $E$ be a coral sliding minimal set in $U$, with
boundary conditions given by the $L_j$. Suppose in addition that the functional
$F$ defined by \eqref{7.6}, \eqref{7.7}, and \eqref{7.9} is finite and constant
on $(R_0,R_1)$ (the same interval as in \eqref{7.3}).
Set $A = B(R_1) \sm \overline B(0,R_0)$. Then
\begin{equation} \label{8.1}
\H^d(E \cap S \cap A) = 0,
\end{equation}
where $S$ is the shade set defined by \eqref{7.12},
and, if 
\begin{equation} \label{8.2}
X = \big\{ \lambda x \, ; \, \lambda \geq 0 \text{ and } x\in A\cap E \big\}
\end{equation}
denotes the cone over $A\cap E$, then
\begin{equation} \label{8.3}
A \cap X\sm S \i E.
\end{equation}
If in addition $R_0 < \dist(0,L')$, then $X$ is also a coral minimal set
of dimension $d$ in $\R^n$ (with no boundary condition), and 
\begin{equation} \label{8.4}
\H^d(S \cap B(0,R_1)\sm X) = 0.
\end{equation}
\end{thm}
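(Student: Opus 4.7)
The plan is to exploit the equality case in the chain of inequalities from the proof of Theorem \ref{t7.1} (applied with $h = 0$). Constancy of $F$ on $(R_0, R_1)$ forces $G(r) = c r^d$ there, so the differential inequality $dG(s) \le s(f_0 + H')(s)$ from \eqref{7.13} must saturate for a.e.\ $s$. Tracing this equality back through \eqref{6.21}, both internal inequalities hidden there must become equalities, which yields for a.e.\ $b \in (R_0, R_1)$ both $\Delta(b) = 0$ (so $g(x) = 0$ for $\H^{d-1}$-a.e.\ $x \in E \cap L^\ast \cap \partial B(0, b)$, by \eqref{6.11}) and $f_1(b) = f_2(b)$ (so $\cos \theta(x) = 1$ for $\H^d$-a.e.\ $x \in E \cap A \setminus L^\ast$, by \eqref{6.18}--\eqref{6.20}).

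To prove \eqref{8.1} I would use the first equality: since $g(x) > 0$ exactly when $\lambda x \in L'$ for some $\lambda \in (0, 1]$ (equivalently $x \in S$, modulo the $\H^{d-1}$-null set $L'$ itself on the sphere), we get $\H^{d-1}(E \cap S \cap \partial B(0, b)) = 0$ for a.e.\ $b$; applying the coarea formula \eqref{6.4} to the rectifiable set $E \cap S \subset L^\ast$ (whose Jacobian for radial projection is $1$ since $L^\ast$ is a cone) integrates this into \eqref{8.1}. From the second equality, \eqref{4.47} says the approximate tangent plane $P'(x)$ contains the radial direction $x/|x|$ at $\H^d$-a.e.\ $x \in E \cap A \setminus L^\ast$, so by a standard rectifiability/slicing argument (a rectifiable $d$-set whose approximate tangent planes $\H^d$-a.e.\ contain the radial direction coincides $\H^d$-a.e.\ with the cone it generates) $E \cap A \setminus L^\ast$ agrees modulo $\H^d$-null sets with $X \cap A \setminus L^\ast$. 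Combined with \eqref{8.1}, this gives $E \cap A = (X \cap A) \setminus S$ modulo $\H^d$-null sets.

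I would then upgrade this $\H^d$-a.e.\ identification to the pointwise inclusion \eqref{8.3}. For $y \in A \cap X \setminus S$, pick $\rho > 0$ small enough that $B(y, \rho) \subset A \setminus S$. Since $y$ lies on the $d$-dimensional cone $X$ generated by a point of $E$ where $E$ is locally Ahlfors regular (see \eqref{2.9}), $\H^d(X \cap B(y, \rho)) > 0$; the a.e.\ identification $E \approx X \setminus S$ in $A$ then forces $\H^d(E \cap B(y, \rho)) > 0$ for every such $\rho$, and closedness of $E$ gives $y \in E$. For the final assertions under $R_0 < \dist(0, L')$: on $B(0, \dist(0, L'))$ we have $L^\ast = \emptyset$ and $H \equiv 0$, so $F = \theta_0$ is constant on the nonempty overlap $(R_0, \dist(0, L'))$, and the classical monotonicity/equality result for plain minimal sets (the argument above with $L' = \emptyset$, cf.\ \cite{Holder}) gives that $E$ coincides with a plain minimal cone $X_\ast$ on $B(0, \dist(0, L'))$. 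By the cone consistency established above (cones are determined by any one spherical slice), $X = X_\ast$ is a coral plain minimal cone in $\R^n$. Equation \eqref{8.4} then follows because $X$ is a cone and, from the saturation of the $m(r) = \H^d(L^\sharp(r))$ term in the equality case of \eqref{7.13}, one derives $L' \cap B(0, R_1) \subset X$ modulo $\H^d$-null sets, whence $S \cap B(0, R_1) \subset X$ modulo null sets by positive homogeneity of $X$.

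The main obstacle will be the upgrade from $\H^d$-a.e.\ agreement to the pointwise set inclusion \eqref{8.3}, which must be handled delicately using closedness and the coral property (lower Ahlfors regularity) of $E$; a secondary subtlety is extracting $L' \cap B(0, R_1) \subset X$ modulo $\H^d$-null sets from the equality case in order to derive \eqref{8.4}.
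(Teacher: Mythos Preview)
Your overall plan---trace equality back through the inequalities leading to \eqref{7.13} and \eqref{6.21}---is correct, and your treatment of \eqref{8.1} via $\Delta=0$ and coarea is essentially right (modulo the careful limit $r\to b^+$ needed because $\Delta$, $f_1$, $f_2$, $L^\sharp$ all depend on the auxiliary radius $r$; the paper handles this in \eqref{8.16}--\eqref{8.23}).

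The real gap is not where you flagged it. You write that ``a rectifiable $d$-set whose approximate tangent planes $\H^d$-a.e.\ contain the radial direction coincides $\H^d$-a.e.\ with the cone it generates,'' calling this a standard rectifiability/slicing fact. It is false without minimality. For $d=1$ in $\R^2$, take $E$ to be the radial segment at angle $0$ over $1\le r\le 3/2$ together with the radial segment at a different angle $\alpha$ over $3/2\le r\le 2$: this is $1$-rectifiable with radial tangent line $\H^1$-a.e., yet the cone it generates meets the annulus in a set of length $2$ while $\H^1(E)=1$. The passage from $\theta(x)=0$ a.e.\ to the radial-segment inclusion \eqref{8.35} genuinely requires the minimality of $E$: the paper invokes Proposition~6.11 of \cite{Holder}, whose proof is a deformation argument---if some point on the radial segment through $x$ (inside $A\setminus L'$) were missing from $E$, one pushes a thin tube of $E$ radially toward that gap and strictly decreases $\H^d$. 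The deformation stays away from $L'$, so the sliding constraint is vacuous there. Once \eqref{8.35} is established, the upgrade to the pointwise inclusion \eqref{8.3} via corality proceeds roughly as you describe. So your ``main obstacle'' (a.e.\ to pointwise) is actually the easy part; the hard part is getting the a.e.\ cone structure in the first place, and that is not measure theory but a competitor construction.

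Your sketch for \eqref{8.4} is also not right as stated: there is no further ``saturation of the $m(r)$ term'' to extract from \eqref{7.13}---equality there is exactly \eqref{8.7} and nothing more. The paper instead computes from the constancy of $F$ that $\H^d(X\cap S\cap B(0,R_1))=H(R_1)$ (see \eqref{8.38}--\eqref{8.39}) and then proves $H(r)\ge \H^d(S\cap B(0,r))$ by comparing the differential equation \eqref{7.8} for $H$ with a differential inequality \eqref{8.42} for $r\mapsto \H^d(S\cap B(0,r))$, via Gr\"onwall.
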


\ms
By sliding minimal, we mean sliding almost minimal with the gauge function 
$h = 0$, as in Definition \ref{t2.1}; then the three ways to measure
minimality (that is, $A$, $A'$, and $A_+$) are equivalent. Recall also
that ``coral'' is defined near \eqref{1.11}. 
When $R_0 = 0$, we will
prove the result with $A = B(0,R_1) \sm \{ 0 \}$, but it immediately
implies the result with $A = B(0,R_1)$.

In the special case of the introduction, we recover Theorem \ref{t1.3}.
Theorem \ref{t8.1} is also a generalization of 
Theorem 6.2 in \cite{Holder} and a partial generalization of 
Theorem 29.1 in \cite{Sliding}.

Of course we can feel very good when we can apply Theorem \ref{t8.1},
because this means that the choice of functional $F$ was locally optimal.
We are lucky that there is at least one example (the case when $L$ is an affine
space of dimension $d-1$) where this happens.

\ms
Most of the proof of Theorem \ref{t8.1}, which we shall start now,
consists in checking the proof of Theorem \ref{t7.1} for places where
we could have had strict inequalities, and our main target is the string
of inequalities that lead to the differential inequality \eqref{6.24}.

Let $E$ be as in the statement. This means that on $(R_0,R_1)$,
$F$ is finite and constant. In the present situation, \eqref{7.14}
just says that $g(r) = r^{-d}$, and then $gG = F$ (see \eqref{7.9}).
By \eqref{7.19},
\begin{equation} \label{8.5}
\int_{[r,t)} g(s) d\nu(s) = - \int_r^t g'(s) G(s) ds = d \int_r^t  s^{-d-1} G(s) ds
\end{equation}
for $R_0 < r < t < R_1$. This forces $\nu$, and then $\overline \nu_0$
to be absolutely continuous on $(R_0,R_1)$ (recall from below \eqref{7.17}
that $d\nu = d\overline\nu_0 + H'(t)dt$),
and since the density of $\overline \nu_0$ is $f_0$ (see below \eqref{6.22}),
we get that
\begin{equation} \label{8.6}
s^{-d} (f_0+H')(s) = g(s) (f_0+H')(s)
= d s^{-d-1} G(s) \ \text{ for  almost every } s\in (R_0,R_1).
\end{equation}
That is, \eqref{7.13} is an identity almost everywhere on $(R_0,R_1)$
(recall that $h \equiv 0$),
which means (because \eqref{7.13} was derived from \eqref{6.24})
that \eqref{6.24} is an identity almost everywhere on $(R_0,R_1)$.
Let us record this:
\begin{equation} \label{8.7}
\H^d(E \cap B(0,r)) = \H^d(L^\sharp(r) \cap B(0,r)) + {r \over d} f_0(r)
\ \text{ for almost every } r \in (R_0,R_1);
\end{equation}
we noted above \eqref{7.13} that $L^\sharp$ in \eqref{6.24} is the same
as $L^\sharp(r)$ in \eqref{7.6}.

We first prove \eqref{8.1}. We proceed by contradiction, and suppose
that $\H^d(E \cap S \cap A) > 0$. First define a function $\overline g$
by
\begin{equation} \label{8.8}
\overline g(x) = \sup\big\{ t\in [0,1] ; tx \in L' \big\}
\ \text{ for } x \in E \cap S.
\end{equation}
Notice that $\overline g(x) > 0$ by the definition \eqref{7.12} of $S$.
It is easy to check that $g$ is a Borel function, and we can use our contradiction 
assumption that $\H^d(E \cap S \cap A) > 0$ to find $x_0 \in E \cap S \cap A$ 
such that 
\begin{equation} \label{8.9}
\liminf_{\rho \to 0} \rho^{-d} \H^d(E \cap S \cap B(x_0,\rho)) = \omega_d
\end{equation}
(this density requirement holds $\H^d$-almost everywhere on 
$E \cap S \cap A$, because this is a subset of the rectifiable set 
$L^\ast(R_1)$, which also has a finite Hausdorff measure
(see the proof of \eqref{6.3}); in fact bounds on the lower and upper densities 
would also be enough),
\begin{equation} \label{8.10}
\liminf_{\rho \to 0} \rho^{-d} \int_{E \cap S \cap B(x_0,\rho)}
|\overline g(x)^d-\overline g(x_0)^d| d\H^d(x)  = 0
\end{equation}
(i.e., $x_0$ is a Lebesgue point for $\overline g^d$ on $E \cap S$),
and finally $x_0 \notin L'$, which we may add because $L'$ is at most
$(d-1)$-dimensional. We shall restrict our attention to radii $\rho$ so small
that
\begin{equation} \label{8.11}
B(x_0,4\rho) \i A \sm L'.
\end{equation}
For $r \in (R_0,R_1)$, set 
\begin{equation} \label{8.12}
L^\ast(r) = \overline B(0,r) \cap \big\{  \lambda z \, ; \,
z\in \overline B(0,r) \cap L' \text{ and } \lambda \geq 0\big\};
\end{equation}
this is the same set that was called $L^\ast$ in \eqref{4.2}
(also see \eqref{7.5} for $L'$) but since we shall let $r$ vary, 
we include it in the notation. Set $B = B(x_0,\rho)$ and notice that
\begin{equation} \label{8.13}
S \cap B \i L^\ast(|x_0|+\rho)
\end{equation}
because if $x\in S \cap B$, \eqref{7.12} says that we can find 
$\lambda \in (0,1]$ such that $\lambda x \in L'$; by \eqref{8.11},
$|\lambda x - x_0| \geq 4 \rho$, which implies that $|\lambda x - x| \geq 3 \rho$
and forces $\lambda x \in B(0,|x|-3\rho) \subset B(0,|x_0|-2\rho)$. Hence 
$x\in L^\ast(|x_0|+\rho)$ by \eqref{8.12}. Let us also check that
\begin{equation} \label{8.14}
B \cap L^\ast(r) = B \cap \overline B(0,r) \cap L^\ast(|x_0|+\rho)
\ \text{ for } |x_0|-\rho \leq r \leq |x_0|+\rho.
\end{equation}
The direct inclusion is clear from the definitions; conversely, if 
$x\in B \cap \overline B(0,r) \cap L^\ast(|x_0|+\rho)$, then
$x = \lambda z$ for some $z \in L' \cap \overline B(0,|x_0|+\rho)$ and
$\lambda \geq 0$. By definition of $B$ and then \eqref{8.11}, 
$|z-x| \geq |z-x_0| - \rho \geq 3\rho$, and since $z$ is 
collinear with $x$, this forces $|z| \leq |x|-3\rho$ or 
$|z| \geq |x|+3\rho$. The second one is impossible because 
$z \in \overline B(0,|x_0|+\rho)$, and so
$|z| \leq |x|-3\rho < |x_0|-\rho \leq r$. Thus $z\in L' \cap \overline B(0,r)$
and $x\in L^\ast(r)$, which proves \eqref{8.14}.

Now suppose that
\begin{equation} \label{8.15}
|x_0|-\rho \leq b < r \leq |x_0|+\rho,
\end{equation}
and recall from the intermediate estimate in \eqref{6.21} that
\begin{eqnarray} \label{8.16}
\H^d(E \cap B(0,b)) &\leq& \H^d(L^\sharp(r) \cap B(0,b))
+ {b \over d} f_0(b) -  {b \over d}(f_1(b)-f_2(b)) - \Delta(b,r)
\nn\\
&\leq& \H^d(L^\sharp(r) \cap B(0,b)) + {b \over d} f_0(b) - \Delta(b,r),
\end{eqnarray}
where we give a more explicit notation for $\Delta$ because we will let $b$ and
$r$ vary, and where the last line follows because $f_1(b) \geq f_2(b)$, 
by \eqref{6.18}). Also recall that 
\begin{equation} \label{8.17}
\Delta(b,r) = {b \over d} \int_X g(x)^d \H^{d-1}(x)
\end{equation}
(by \eqref{6.11}), with $X = E \cap L^\ast(r) \cap \d B(0,b)$ (see below \eqref{6.5})
and 
\begin{equation} \label{8.18}
g(x) = \sup\big\{ t\in [0,1] ; tx \in L' \cap \overline B(0,r)\big\}
\end{equation}
(see \eqref{6.8}, and recall the definitions \eqref{4.1} of $L$ and \eqref{7.5} of $L'$).
We claim that 
\begin{equation} \label{8.19}
g(x) = \overline g(x) \ \text{ for } x\in X \cap B
\end{equation}
when $r$ and $b$ as in \eqref{8.15}. Indeed, if $t\in [0,1]$ is such that
$tx \in L'$, then $|tx-x| \geq |tx - x_0| - \rho \geq 3\rho$ by \eqref{8.11},
hence $tx \in \overline B(0,|x|-3\rho) \subset B(0,r)$; hence the supremums
in \eqref{8.8} and \eqref{8.17} are the same, and \eqref{8.19} follows.
Thus
\begin{equation} \label{8.20}
\Delta(b,r) \geq {b \over d} \int_{X\cap B} \overline g(x)^d \H^{d-1}(x)
\geq {b \over d} \int_{B \cap E \cap S\cap \d B(0,b)} 
\overline g(x)^d \H^{d-1}(x)
\end{equation}
because $B \cap E \cap S\cap \d B(0,b) \i B \cap E \cap L^\ast(|x_0|+\rho) \cap \d B(0,b)
= B \cap X$ by \eqref{8.13} and \eqref{8.14}.
Thus \eqref{8.16} implies that
\begin{equation} \label{8.21}
\H^d(E \cap B(0,b)) \leq \H^d(L^\sharp(r) \cap B(0,b)) + {b \over d} f_0(b) 
- {b \over d} \int_{B \cap E \cap S\cap \d B(0,b)} \overline g(x)^d \H^{d-1}(x).
\end{equation}
Fix $b$ and let $r > b$ tend to $b$. Notice that by \eqref{7.6},
$L^\sharp(r)$ is a nondecreasing set function that tends to $L^\sharp(b)$;
since all these sets have finite $\H^d$-measure by \eqref{3.5}, we can take a
limit in \eqref{8.21} and get that
\begin{equation} \label{8.22}
\H^d(E \cap B(0,b)) \leq \H^d(L^\sharp(b) \cap B(0,b)) + {b \over d} f_0(b) 
- {b \over d} \int_{B \cap E \cap S\cap \d B(0,b)} \overline g(x)^d \H^{d-1}(x).
\end{equation}

We compare this to \eqref{8.7} and get that
\begin{equation} \label{8.23}
\int_{B \cap E \cap S\cap \d B(0,b)} \overline g(x)^d \H^{d-1}(x)
= 0 \ \text{ for almost every } b\in (|x_0|-\rho,|x_0|+\rho).
\end{equation}

We are going to use the coarea formula again. By \eqref{8.13},
$B \cap E \cap S$ is contained in the truncated rectifiable cone 
$L^\ast(|x_0|+\rho)$, which has a finite $\H^d$-measure; as in \eqref{6.4}
above, the coarea formula yields
\begin{equation} \label{8.24}
J d\H^d_{|B \cap E \cap S} 
= \int_{|x_0|-\rho}^{|x_0|+\rho} d\H^{d-1}_{| \pi^{-1}(t) \cap B \cap E \cap S} dt
= \int_{|x_0|-\rho}^{|x_0|+\rho} d\H^{d-1}_{|\d B(0,t) \cap B \cap E \cap S} dt,
\end{equation}
where in addition $J=1$ because $L^\ast(|x_0|+\rho)$ is a truncated cone.
We apply this to the function $\overline g$ and get that
\begin{equation} \label{8.25}
\int_{B \cap E \cap S} \overline g(x) d\H^d(x)
= \int_{|x_0|-\rho}^{|x_0|+\rho} \int_{\d B(0,t) \cap B \cap E \cap S} 
\overline g(x) d\H^{d-1}(x) dt = 0
\end{equation}
by \eqref{8.23}. On the other hand, recall that $B = B(x_0,\rho)$,
where we still may choose $\rho$ as small as we want. Then by
\eqref{8.9} and \eqref{8.10},
\begin{eqnarray} \label{8.26}
\int_{B \cap E \cap S} \overline g(x) d\H^d(x)
&\geq& g(x_0) \, \H^d(B \cap E \cap S) 
- \int_{B \cap E \cap S} |\overline g(x)- \overline g(x_0)| d\H^d(x)
\nn\\
&\geq& g(x_0) \, {\omega_d \rho^d \over 2} 
- \int_{B \cap E \cap S} |\overline g(x)- \overline g(x_0)| d\H^d(x)
> 0
\end{eqnarray}
if $\rho$ is small enough. This contradiction with \eqref{8.25} completes our proof
of \eqref{8.1}.

\ms\ms
Next we worry about the angle $\theta(x)$.
Recall that $\theta(x) \in [0,\pi/2]$ was defined, for $H^d$-almost every point
$x\in E$, to be the smallest angle between the radial direction
$[0,x)$ and the (approximate) tangent plane $P(x)$ to $E$ at $x$;
see near \eqref{4.47}. We want to check that
\begin{equation} \label{8.27}
\theta(x) = 0 \ \text{ for $\H^d$-almost every point } x\in E \cap A. 
\end{equation}
Suppose that \eqref{8.27} fails. Then we can find $\eta < 1$ and
a set $E_0 \i E \cap A$ such that $\H^d(E_0) > 0$ and 
$\cos\theta(x) \leq \eta$ for $\H^d$-almost every $y\in E_0$.

Then choose $x_0 \in E_0$ such that (as in \eqref{8.9})
\begin{equation} \label{8.28}
\liminf_{\rho \to 0} \rho^{-d} \H^d(E_0 \cap B(x_0,\rho)) > 0.
\end{equation}
As before, we can take $x_0 \in A \sm L'$, because 
$\H^d(L') = 0$. Then pick $\rho > 0$ so small that \eqref{8.11}
holds and set $B = B(x_0,\rho)$. 
Notice that \eqref{8.14} holds for the same reason as before.
In fact, we just need the (trivial) first inclusion. Observe that since 
$L^\ast(|x_0|+\rho)$ is a truncated rectifiable cone with finite $\H^d$-measure,
the tangent measure to any of its points, when it exists, passes through the
origin. Since this is almost never the case on the set $E_0$ (and by
the uniqueness of the tangent plane almost everywhere on 
$E_0 \cap L^\ast(|x_0|+\rho))$, we get that
\begin{equation} \label{8.29}
\H^d(E_0 \cap L^\ast(r)) \leq \H^d(E_0 \cap L^\ast(|x_0|+\rho)) = 0
\end{equation}
for $r \leq |x_0|+\rho$.

Denote by $\mu_B$ the restriction of $\H^d$ to $E_0 \cap B$, 
and let $\nu_B$ be direct image of $\mu_B$ by $\pi$. Observe that since
$\mu_B \leq \overline\mu_0$ (the restriction of $\H^d$ to 
$E \cap B(0,R) \supset E \cap A$;
see \eqref{6.22} and below), we get that $\nu_B \leq \overline \nu_0$ is 
absolutely continuous as well (see below \eqref{8.5}).
Denote by $f_B$ the density of $\nu_B$.

Let $r \in (|x_0|-\rho,|x_0|+\rho)$ be given; for $b < r$, we defined in \eqref{6.18}
the two measures $\mu_1 = \H^d_{E \cap \overline B(0,r) \sm L^\ast(r)}$
and $\mu_2 = \cos\theta(x) \mu_1$, and then defined the pushed measures
$\nu_1$ and $\nu_2$ as usual. Notice that
$\1_{B(0,r)} \mu_B \leq \1_{E_0 \cap B} \,\mu_1$ because of \eqref{8.29}.
 
Next consider $b$ such that $|x_0|-\rho <b < r$. For $\tau > 0$ (small),
\begin{eqnarray} \label{8.30}
\nu_1([b-\tau,b)) - \nu_2([b-\tau,b)) 
&=& \int_{B(0,b) \sm B(0,b-\tau)} (1-\cos\theta(x)) d\mu_1(x)
\nn\\
&\geq& \int_{B(0,b) \sm B(0,b-\tau)} \1_{E_0 \cap B}(x) 
(1-\cos\theta(x)) d\mu_1(x)
\nn\\
&\geq& (1-\eta) \int_{B(0,b) \sm B(0,b-\tau)} d\mu_B(x) 
= (1-\eta) \nu_B([b-\tau,b)) 
\end{eqnarray}
because $\cos\theta(x) \leq \eta$ almost everywhere on $E_0$,
then by definition of $\mu_B$ and $\nu_B$. 

If in addition $b$ is a Lebesgue point for the absolutely continuous parts
of $\nu_1$, $\nu_2$, and of $\nu_B$, we can divide \eqref{8.30}
by $\tau$, take a limit, and get that
\begin{equation} \label{8.31}
f_1(b) - f_2(b) \geq (1-\eta) f_B(b).
\end{equation}
Then \eqref{6.21} says that
\begin{eqnarray} \label{8.32}
\H^d(E \cap B(0,b)) &\leq& \H^d(L^\sharp(r) \cap B(0,b))
+ {b \over d} f_2(b) +  {b \over d}(f_0(b)-f_1(b))
\nn\\
&\leq& \H^d(L^\sharp(r) \cap B(0,b))
+ {b \over d} f_0(b) -  {b \over d} (1-\eta) f_B(b).
\end{eqnarray}
With $b$ fixed, we may let $r > b$ tend to $b$, observe that
$\H^d(L^\sharp(r) \cap B(0,b))$ tends to $\H^d(L^\sharp(b) \cap B(0,b))$
(see above \eqref{8.22}), and get as in \eqref{8.22} that
\begin{equation} \label{8.33}
\H^d(E \cap B(0,b)) \leq \H^d(L^\sharp(b) \cap B(0,b))
+ {b \over d} f_0(b) -  {b \over d} (1-\eta) f_B(b).
\end{equation}
Then we compare with \eqref{8.7} and get that $f_B(b) = 0$
for almost every $b\in (|x_0|-\rho, |x_0|+\rho)$.
Since $\nu_B$ is absolutely continuous and $f_B$ is its density,
we get that $\nu_B = 0$, and hence $\H^d(E_0 \cap B) = \mu_B(E_0 \cap B) = 0$.
If $\rho$ was chosen small enough, this contradicts \eqref{8.28}, and proves
\eqref{8.27}.

\ms\ms
Now we shall worry about cones. For $x\in A$, set
\begin{equation} \label{8.34}
\ell(x) = \big\{ \lambda x \, ; \, \lambda > 0 \big\}.
\end{equation}
The next stage is to prove the following: for $\H^d$-almost
every $x\in E \cap A \sm S$ (in fact, every $x\in E \cap A \sm S$
such that the tangent plane to $E$ at $x$ goes through the origin), 
\begin{equation} \label{8.35}
\text{the connected component of $x$ in 
$A \cap \ell(x) \sm L'$ is contained in $E$}.
\end{equation}
Notice that $x\in \ell(x) \sm L'$ because $L' \i S$ by \eqref{7.12},
so \eqref{8.35} makes sense.
The proof of this is long and painful (especially since this should
morally be easy once we know \eqref{8.27}), but fortunately it
was already done in \cite{Holder}, Proposition 6.11. 
Of course the statement in \cite{Holder} does not mention sliding
boundary conditions, but the argument applies for the following
reason. The proof goes by contradiction: if \eqref{8.35} fails,
we first find a point $y\in \ell'(x)$, the component of $x$ in 
$A \cap \ell(x) \sm L'$, which does not lie in $E$. Then we 
deform $E$ into a set $E'$ with smaller measure and get a contradiction.
The deformation takes place in a neighborhood of the segment
$[x,y]$ which is as small as we want, and in particular can be taken
not to meet $L'$ (because $L'$ is closed and does not meet 
$[x,y]$). Then the boundary constraints \eqref{2.3} are
automatically satisfied, the competitor constructed in \cite{Holder}
is also valid here, and we can use the estimates from \cite{Holder}
to get the conclusion. So \eqref{8.35} holds for $\H^d$-almost every 
$x\in E \cap A \sm S$.

We are ready to prove \eqref{8.3}. Let $z\in A \cap X$ be given,
and let $x\in E \cap A$ and $\lambda \geq 0$ be such that
$z = \lambda x$.
 
Since $E$ is coral,
$\H^d(E\cap B(x,r)) > 0$ for every $r > 0$, so we can find a sequence
$\{ x_k \}$ in $E$, that converges to $x$, such that for each $k \geq 0$,
$x_k \in A \sm S$ (possible by \eqref{8.1}) and \eqref{8.35} holds.

For each $k$, the segment $(0,x_k]$ lies in $\R^n \sm L'$
(see the definition \eqref{7.12}), so $A \cap (0,x_k] \i E$
by \eqref{8.35}, and now $A \cap (0,x] \i E$ because $E$ is closed.
Thus $z\in E$ if $\lambda \leq 1$.

Suppose that  $\lambda > 1$ instead, and assume in addition that 
$z \in A \cap X \sm S$.
Then the segment $[x,z] = [x,\lambda x]$ does not meet $L'$ 
(because $z \notin S$ and by the definition \eqref{7.12}), 
and is contained in $A$ (because $x\in A$). For $k$ large, 
$[x_k, \lambda x_k]$ is also contained in $A$, and does not meet $L'$ either
(because $L'$ is closed). By \eqref{8.35}, $[x_k,\lambda x_k] \i E$. Hence
$[x,\lambda x] \i E$, and in particular $z\in E$. This completes our proof of \eqref{8.3}.

\ms
Let us now assume that $R_0 < \dist(0,L')$ and get additional
information on $X$. First we want to show that
\begin{equation} \label{8.36}
\text{$X$ is a minimal set in $\R^n$,}
\end{equation}
with no boundary condition.

Set $\delta = \dist(0,L')$ and observe 
that for $0 < r < \delta$, $H(r) = 0$ and so
$F(r) = r^{-d} \H^d(E \cap B(0,r))$. See \eqref{7.6}, \eqref{7.7}, 
and \eqref{7.9}. Similarly, $S \cap B(0,\delta) = \emptyset$
(see \eqref{7.12}).

By \eqref{8.2} and \eqref{8.3}, $E$ and $X$ coincide
on $B(0,\delta) \sm \overline B(0,R_0)$. In particular, $X$ is closed. 
If $D$ denotes the constant value of $F$ on $(R_0,R_1)$; then for
$R_0 < r < \delta$, 
\begin{eqnarray} \label{8.37}
\H^d(X\cap B(0,r)) &=& {r^d \over \delta^d - R_0^d} \,
\H^d(X \cap B(0,\delta) \sm B(0,R_0))
\nn\\
&=& {r^d \over \delta^d - R_0^d} \,\H^d(E \cap B(0,\delta) \sm B(0,R_0)) 
\nn\\
&=& r^d D = \H^d(E \cap B(0,r))
\end{eqnarray}
because $X$ is a cone and by the discussion above.
Notice also that $E$ is a minimal set in $B(0,\delta)$,
with no boundary condition (because $L' \cap B(0,\delta) = \emptyset$).
Now if $X$ was not a minimal in $\R^n$,
it would be possible to find a strictly better competitor $\wt X$ of $X$,
with a deformation inside  $B(0,\delta)$ (we may reduce to this because
$X$ is a cone). We could then deform $E$ inside $B(0,\delta)$, first to $X$ 
and then to $\wt X$, and contradict the minimality of $X$. See the argument
on page 1225-126 of \cite{Holder} (near (6.98)) for detail.
This proves \eqref{8.36}.

Since $E$ is coral and $X = E$ on $B(0,\delta) \sm \overline B(0,R_0)$,
we easily get that $X$ is coral too.
Then we prove \eqref{8.4}. Observe that
\begin{equation} \label{8.38}
R_1^d D =  H^d(X \cap B(0,R_1))
= \H^d(E \cap (B(0,R_1)) + H^d(X \cap S \cap B(0,R_1))
\end{equation}
by \eqref{8.37} and because $X$ is a cone, and because \eqref{8.2} and \eqref{8.3} 
say that $A \cap E = A \cap X \sm S$ modulo a $\H^d$-negligible set.
Also,
\begin{equation} \label{8.39}
R_1^d D = R_1^d F(R_1) = G(R_1) = \H^d(E \cap (B(0,R_1)) + H(R_1)
\end{equation}
by \eqref{7.9}. Thus $H^d(X \cap S \cap B(0,R_1)) = H(R_1)$.
We shall now prove that
\begin{equation} \label{8.40}
H(r) \geq \H^d(S \cap B(0,r))  \ \text{ for } 0 < r \leq R_1,
\end{equation}
and \eqref{8.4} will follow at once. Let us proceed as in Remark \ref{t7.3},
and compare $H_2(r) = \H^d(S \cap B(0,r))$ with $H(r)$ by means of
the differential equalities that they satisfy.
We claim that for $0 < r < R_1$,
\begin{equation} \label{8.41}
H_2(r) = \int_0^r f_2(t) dt,
\text{ with } f_2(t) = \H^{d-1}(S \cap \d B(0,t)).
\end{equation}
Indeed, $S \cap B(0,R_1) \i  L^\ast(R_1)$, where
$L^\ast(R_1)$ is the same as in \eqref{7.35}. We observed above \eqref{7.36} 
that $L^\ast(R_1)$ is a rectifiable truncated cone with 
$\H^d(L^\ast(R_1)) < +\infty$; then we can use the coarea formula on
$S \cap B(0,R_1) \i  L^\ast(R_1)$, and we get \eqref{8.41}, as in 
\eqref{7.37}-\eqref{7.38}, where we use $\1_S$ instead of $N_\rho$.
If we prove that for almost all $r < R_1$, we have the differential inequality
\begin{equation} \label{8.42}
H_2'(r) = f_2(r) \leq {d \over r} H_2(r) + {d \over r} m(r),
\end{equation}
where $m(r) = \H^d(L^\sharp(r))$ is as in \eqref{7.7}, then the comparison
with $H$, which satisfied the corresponding identity (by \eqref{7.8})
and has the same vanishing values on $(0,\delta)]$, we will deduce 
\eqref{8.40} from Gr\"onwall's inequality, as we did near \eqref{7.49}.
But by the coarea formula again,
${r \over d} f_2(r) = {r \over d}  \H^{d-1}(S \cap \d B(0,r))$ is 
the $\H^d$-measure of the cone 
$\Gamma = \big\{ \lambda z \, ; \, z\in S \cap \d B(0,r) \text{ and } 
\lambda \in [0,1) \big\}$. Let us check that
\begin{equation} \label{8.43}
\Gamma \i (S\cap B(0,r)) \cup L^\sharp(r).
\end{equation}
Let $x\in \Gamma$ be given, and write $x = \lambda z$ as in the definition of $\Gamma$. 
Since $z\in S$, the segment $(0,z]$ meets $L'$ at some
point $w$ (see \eqref{7.12}). If $|w| \leq |x|$, then $(0,x]$ contains
$w\in L'$, and $x\in S$. Otherwise, $x\in [0,w]$ and $x\in L^\sharp(r)$
(see \eqref{7.6}). This proves \eqref{8.43}, the differential inequality \eqref{8.42},
and finally \eqref{8.40} and \eqref{8.4}.
This also completes the proof of Theorem 8.1.
\qed

\section{Nearly constant $F$ and approximation by a cone}
\label{S9}

The main result of this section is a simple consequence of the theorems on limits
of sliding almost minimal sets that are at the center of \cite{Sliding}.
Roughly speaking, we start from a sliding almost minimal set $E$ with sufficiently small
gauge function $h$, assume that its function $F$ is almost constant on
an interval, and get that in a slightly smaller annulus, $E$ is close to a sliding minimal 
set $E_0$ for which $F$ is constant.

Then we should be able to apply Theorem~\ref{t8.1} to the set $E_0$,
prove that it is close to a truncated cone, and get the same thing for $E$,
but we shall only do this here in very special cases; see Sections \ref{S11}
and \ref{S12}.

We start with a statement where the interval is of the form $(0,r_1)$.

\ms
\begin{thm} \label{t9.1}
Let $U$ and the $L_j$ be as in Section \ref{S7}.
In particular, assume \eqref{2.1}, \eqref{2.2}, and \eqref{3.4}.
Let $r_1 > 0$ be such that $B(0,r_1) \i U$ and almost every
$r \in (0,r_1)$ admits a local retraction (as in Definition \ref{t3.1}).
For each small $\tau > 0$ we can find $\varepsilon > 0$,
which depends only on $\tau$, $n$, $d$, $U$, the $L_j$, and $r_1$,
with the following property. 
Let $E \in SA^\ast M(U,L_j,h)$ be a coral sliding almost minimal set
in $U$,  with sliding condition defined by $L$ and some nondecreasing gauge 
function $h$ (see Definition \ref{t2.1}). Suppose that
\begin{equation} \label{9.1}
\text{$B(0,r_1) \i U$ and $h(r_1) < \varepsilon$,}
\end{equation}
and 
\begin{equation} \label{9.2}
F(r_1) \leq \varepsilon + \inf_{0 < r < 10^{-3} r_1} F(r) < +\infty.
\end{equation}
Then there is a coral minimal set $E_0$ in $B(0,r_1)$, with 
sliding condition defined by $L$, such that
\begin{eqnarray} \label{9.3}
\text{ the analogue of $F$ for the set $E_0$ is constant on }(0,r_1),
\end{eqnarray}
\begin{equation} \label{9.4}
E_0 \text{ satisfies the conclusion of Theorem \ref{t8.1},
with the radii $R_0 = 0$ and $R_1 =r_1$},
\end{equation}
\begin{equation} \label{9.5}
\dist(y, E_0) \leq \tau r_1 \ \text{ for } y \in E\cap B(0,(1-\tau)r_1),
\end{equation} 
\begin{equation} \label{9.6}
\dist(y,E) \leq \tau r_1 \ \text{ for } y \in E_0 \cap B(0,(1-\tau)r_1),
\end{equation}
and
\begin{eqnarray} \label{9.7}
\av{\H^d(E \cap B(y,t)) - \H^d(E_0 \cap B(y,t))} \leq \tau r_1^d &&
\nonumber \\
&&\hskip-7cm
\ \text{ for all $y\in \R^n$ and $t>0$ such that }
B(y,t) \i B(0,(1-\tau)r_1).
\end{eqnarray}
\end{thm}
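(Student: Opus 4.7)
The plan is to argue by contradiction, using compactness and the limit theorems for sliding almost minimal sets collected in \cite{Sliding}. Suppose the conclusion fails: there exist $\tau_0 > 0$ and a sequence of coral sliding almost minimal sets $E_k$ in $U$, with nondecreasing gauge functions $h_k$, satisfying $h_k(r_1) < 1/k$ and
\begin{equation*}
F_k(r_1) \leq \tfrac{1}{k} + \inf_{0 < r < 10^{-3} r_1} F_k(r) < +\infty,
\end{equation*}
but such that no coral sliding minimal set $E_0$ in $B(0,r_1)$ satisfies \eqref{9.3}--\eqref{9.7} with $\tau = \tau_0$ in place of $E_k$. Here $F_k$ denotes the functional \eqref{7.9} associated to $E_k$ (the correcting term $H$ depends only on the $L_j$ and is the same for all $k$).

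Next, extract a limit. The local Ahlfors regularity \eqref{2.9}--\eqref{2.10} holds for each $E_k$ with uniform constants once $k$ is large enough so that $h_k(r_1) \leq \eta_0$. After passing to a subsequence, the closed sets $E_k$ converge, in local Hausdorff distance in every ball $\overline B(0,\rho) \i B(0,r_1)$, to a closed set $E_\infty$. The limit theorems for sliding almost minimal sets in \cite{Sliding} then imply that $E_\infty$, possibly after replacing it by its core, is a coral sliding minimal set in $B(0,r_1)$, with boundary condition defined by the $L_j$. The same theorems (continuity of $\H^d$ along such limits) yield that $\H^d(E_k \cap V) \to \H^d(E_\infty \cap V)$ for every open set $V$ compactly contained in $B(0,r_1)$ and, more precisely, that $\H^d(E_k \cap B(y,t)) \to \H^d(E_\infty \cap B(y,t))$ uniformly for $B(y,t) \i B(0,(1-\tau_0)r_1)$ and $t$ bounded away from $0$; this gives \eqref{9.7} for $E_\infty$ and $E_k$ with $k$ large.

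Now show that $F_\infty$ (the analogue of $F$ for $E_\infty$) is constant on $(0,r_1)$. Since $E_\infty$ is minimal, Theorem \ref{t7.1} applies with $h \equiv 0$, so $F_\infty$ is nondecreasing on $(0,r_1)$. Measure convergence gives $F_k(r) \to F_\infty(r)$ for every $r \in (0, r_1)$ outside a countable exceptional set, hence for a dense set of radii. The hypothesis $F_k(r_1) - \inf_{r < 10^{-3} r_1} F_k(r) \to 0$ then forces
\begin{equation*}
\sup_{r \in (0, r_1)} F_\infty(r) - \inf_{r \in (0, 10^{-3} r_1)} F_\infty(r) = 0,
\end{equation*}
and by monotonicity $F_\infty$ is constant on $(0, r_1)$, proving \eqref{9.3}. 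Theorem \ref{t8.1} applied to $E_\infty$ on $(R_0, R_1) = (0, r_1)$ then gives \eqref{9.4}. Setting $E_0 = E_\infty$, the Hausdorff convergence $E_k \to E_\infty$ in $B(0,(1-\tau_0)r_1)$ produces \eqref{9.5}--\eqref{9.6} for $k$ large, and the uniform measure convergence above produces \eqref{9.7}, contradicting our standing assumption and completing the proof.

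The principal obstacle is the uniform near-equality $|\H^d(E_k \cap B(y,t)) - \H^d(E_\infty \cap B(y,t))| \to 0$ underlying \eqref{9.7}: lower semicontinuity of $\H^d$ gives one direction immediately, but the reverse requires the nontrivial machinery of \cite{Sliding}, where one builds, from a small deformation of $E_\infty$, a competitor for $E_k$ that, through the almost minimality inequality \eqref{2.5} or \eqref{2.6} and the smallness of $h_k(r_1)$, transfers the upper bound from $E_\infty$ to $E_k$. A secondary technical point is verifying that $E_\infty$ is still coral and that the radii outside of which convergence of $F_k$ may fail form only a countable set; both follow from the general measure-theoretic properties of the weak limits of measures $\H^d|_{E_k}$.
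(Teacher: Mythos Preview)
Your overall strategy matches the paper's proof exactly: contradiction, extraction of a Hausdorff-converging subsequence, use of the limit theorems in \cite{Sliding} to show the limit $E_\infty$ is sliding minimal, the two-sided semicontinuity of $\H^d$ along the sequence, monotonicity of $F_\infty$ via Theorem~\ref{t7.1}, and then Theorem~\ref{t8.1} for \eqref{9.4}. Two points deserve more care.

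First, your passage from the hypothesis on $F_k$ to ``$\sup F_\infty - \inf F_\infty = 0$'' is a little loose: you do not have $F_k(r_1)\to F_\infty(r_1)$ at the endpoint, and for fixed $k$ the functional $F_k$ need not be monotone. The paper bridges this with the elementary inequality $F_k(s)\le (r_1/s)^d F_k(r_1)$ (just monotonicity of $r\mapsto \H^d(E_k\cap B(0,r))+H(r)$), applied at radii $s<r_1$ where \eqref{9.14} holds, and then lets $s\to r_1$; you should do the same.

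Second, and more substantively, the \emph{uniform} estimate \eqref{9.7} does not follow from ``general measure-theoretic properties of the weak limits''. Weak convergence of $\H^d|_{E_k}$ to $\H^d|_{E_\infty}$ gives $\H^d(E_k\cap B(y,t))\to\H^d(E_\infty\cap B(y,t))$ only when $\H^d(E_\infty\cap\partial B(y,t))=0$, and for a generic rectifiable limit this could fail on many balls, obstructing uniformity. The paper closes this gap by first using \eqref{9.4}: Theorem~\ref{t8.1} shows $E_\infty\cap B(0,r_1)$ is contained in a (countable union of) closed rectifiable cone(s) $X$, and Lemma~7.34 of \cite{Holder} then gives $\H^d(X\cap\partial B(y,t))=0$ for \emph{every} ball. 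This makes $(y,t)\mapsto\H^d(E_\infty\cap B(y,t))$ continuous on the compact parameter set, and a finite-net argument upgrades pointwise convergence to the uniform bound \eqref{9.7}. This is packaged in the paper as Lemma~\ref{t9.2}; the key input you are missing is that the cone structure of $E_\infty$, already established in \eqref{9.4}, is what drives the uniformity.
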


\ms 
Clearly this is not a perfect statement, because we would prefer
$\varepsilon$ not to depend on $r_1$ or the $L_j$. The difficulty 
is that we should then understand how the part $H$ of the functional
depends on $L'$, and in particular its values near the origin.

Instead we decided to make a simple statement, show how the proof
goes, and maybe revise it later if a specific need arises.

See Corollary \ref{t9.3} for a statement that concerns 
a single boundary condition that comes from a set $L$ which is very close
to an affine subspace.

We now prove the theorem by contradiction and compactness, following
the arguments for Proposition 7.24 in \cite{Holder} and Proposition 30.19 in \cite{Sliding}.
Let $U$, the $L_j$, $r_1$, and $\tau$ be given, and suppose we cannot
find $\varepsilon$ as in the statement. In particular, $\varepsilon = 2^{-k}$
does not work, which means that for $k \geq 1$ we can find $E_k$ and
$h_k$ as in the statement (and in particular that satisfy 
\eqref{9.1} and \eqref{9.2} with $\varepsilon = 2^{-k}$), 
and for which no minimal set $E_0$ satisfies the conclusion.

First we want to replace $\{ E_k \}$ with a converging subsequence.
Let us say what we mean by that. For each ball $B(x,r)$, let $d_{x,r}$ 
be the normalized local variant of the Hausdorff distance between closed sets 
that was defined by \eqref{1.29}. 
Standard compactness results on the Hausdorff distance imply 
that we can replace $\{ E_k \}$ with a subsequence
for which there is a closed set $E_0$ such that
\begin{equation} \label{9.8}
\lim_{k \to +\infty} d_{x,r}(E_k,E_0) = 0
\end{equation}
for every ball $B(x,r)$ such that $\overline B(x,r) \i U$.
In short, we will just say that the $E_k$ converge to $E_0$
locally in $U$. We want to show that 
\begin{equation} \label{9.9}
\text{$E_0$ is a sliding $A$-almost minimal set in $U$,}
\end{equation}
where it is understood that the boundary conditions are still
given by the $L_j$, for the gauge function $h_0$ defined by
\begin{equation} \label{9.10}
h_0(r) = 0 \text{ for } 0 < r < r_1 \text{ and } 
h_0(r) = +\infty \text{ for } r \geq r_1.
\end{equation}
For this we shall apply Theorem 10.8 in \cite{Sliding}. 

We may replace $\{ E_k \}$ with a subsequence so that all the 
$E_k$ are almost minimal of the same type (i.e, $A$, $A'$, or $A_+$;
see Definition \ref{t2.1}).
Let us first suppose that the $E_k$ are almost minimal of type 
$A$, i.e., satisfy \eqref{2.5}. Then let us check the assumptions
with $M = 1$, $\delta = r_1$, and any small number $h>0$. 

The Lipschitz assumption ((10.1) in \cite{Sliding}) is satisfied, by
\eqref{2.1}. For $k$ large enough, $h_k(r_1) \leq 2^{-k}< h$, and then 
the main assumption that $E_k$ lies in the class $GSAQ(U,M,\delta,h)$ 
follows from \eqref{2.5} (compare with Definition 2.3 in \cite{Sliding}). 
This takes care of (10.3) in \cite{Sliding},  (10.4) holds 
because the $E_k$ are coral, and (10.5) holds (with the limit $E_0$),
precisely by \eqref{9.8}. Finally the technical assumption (10.7)
holds, because we said in \eqref{2.2} that the faces of the $L_j$
are at most $(d-1)$-dimensional. So Theorem 10.8 in \cite{Sliding} applies, 
and we get that $E_0$ lies in $GSAQ(U,1,r_1,h)$ for any $h > 0$. 
Looking again at Definition 2.3 in \cite{Sliding}, we see that 
$E_0 \in GSAQ(U,1,r_1,0)$, and this means that \eqref{9.9} holds
with the function $h_0$.

If the $E_k$ are almost minimal of type $A'$, i.e., satisfy \eqref{2.6},
the easy part of Proposition~20.9 in \cite{Sliding} says that they are
also of type $A$, with the same function $h_k$, and we can conclude as before.
Finally, if the $E_k$ are of type $A_+$, as in \eqref{2.7}, we apply 
Theorem 10.8 in \cite{Sliding} with $M = 1+a$, where $a > 0$ is any
small number, $\delta = r_1$, and $h=0$. The main assumption that
$E_k \in GSAQ(U,1+a,r_1,0)$ is satisfied as soon as $h_k(r_1) < a$
(compare \eqref{2.7} with Definition 2.3 in \cite{Sliding}). This time we get
that $E_0 \in GSAQ(U,1+a,r_1,0)$ for any $a> 0$, hence
$E_0 \in GSAQ(U,1,r_1,0)$, and $E$ is $A$-almost minimal with the
gauge $h_0$, as before. So we get \eqref{9.9}.

Next we worry about Hausdorff measure.
By Theorem 10.97 in \cite{Sliding}, which has the same hypotheses
as Theorem 10.8 there, we get that for every open set $V \i U$,
\begin{equation} \label{9.11}
\H^d(E_0 \cap V) \leq \liminf_{k \to +\infty} \H^d(E_k \cap V).
\end{equation}
In addition, Lemma 22.3 in \cite{Sliding}, which has also the same assumptions
as Theorem 10.8, can be applied with $M$ as close to $1$ and $h$ as
small as we want; thus we get that for every compact set $K \i U$,
\begin{equation} \label{9.12}
\H^d(E_0 \cap K) 
\geq \limsup_{k \to +\infty} \H^d(E_k \cap K).
\end{equation}
Because of \eqref{2.9}, for each $\rho < r_1$ there is a constant
$C(\rho)$ such that $\H^d(E_k \cap B(0,\rho)) \leq C(\rho)$
for all $k \geq 0$. Then by \eqref{9.11}, 
$\H^d(E_0 \cap B(0,\rho)) \leq C(\rho) < +\infty$ for each $\rho < r_1$, 
and $\H^d(E_0 \cap \d B(0,r)) = 0$ for almost every
$r \in (0,r_1)$, because the $\d B(0,r)$ are disjoint.
For such $r$,
\begin{eqnarray}  \label{9.13}
\limsup_{k \to +\infty} \H^d(E_k \cap B(0,r))
&\leq& \H^d(E_0 \cap \overline B(0,r)) = \H^d(E_0 \cap B(0,r))
\nonumber \\
&\leq& \liminf_{k \to +\infty} \H^d(E_k \cap B(0,r)),
\end{eqnarray}
by \eqref{9.11} and \eqref{9.12}, so
\begin{equation} \label{9.14}
\lim_{k \to +\infty} \H^d(E_k \cap B(0,r)) = \H^d(E_0 \cap B(0,r)).
\end{equation}

Denote by $F_k$ the analogue of $F$ for $E_k$; that is, set
\begin{equation} \label{9.15}
F_k(r) = r^{-d} \H^d(E_k \cap B(0,r)) + r^{-d} H(r)
\ \text{ for } 0 < r \leq r_1,
\end{equation}
where $H(r)$ is as in \eqref{7.7} (see \eqref{7.9} for the definition
of $F$). Notice that $H(r)$ stays the same (it depends on
the geometry of $L'$, not on $E_k$). Similarly, set
\begin{equation} \label{9.16}
F_\infty (r) = r^{-d} \H^d(E_0 \cap B(0,r)) + r^{-d} H(r)
\ \text{ for } 0 < r \leq r_1.
\end{equation}
We want to show that
\begin{equation} \label{9.17}
F_\infty \text{ is finite and constant on } (0,r_1).
\end{equation}
We start with the finiteness. We checked below \eqref{9.12} that for $0 < r < r_1$,
there is a constant $C(r)$ such that $\H^d(E_0 \cap B(0,r)) \leq C(r) < +\infty$,
so we just need to show that $H(r) < +\infty$. This last follows from our assumption 
\eqref{9.2} (for any $k$).

Let us apply Theorem \ref{t7.1} to $E_0$, with the function $h_0$.
First observe that $E_0$ is also $A_+$-almost minimal,
with the same gauge function $h_0$ (just compare
\eqref{2.5} and \eqref{2.7} when $h(r) = 0$).
By Remark \ref{t7.2}, we do not need to check \eqref{7.10}.
Also observe that the function $A$ defined by \eqref{7.4} with the
gauge function $h_0$ vanishes on $(0,r_1)$. Then Theorem \ref{t7.1} says that
$F_\infty$ is nondecreasing on $(0,r_1)$. 

Next $r$ and $s$ be such that $0 < r < 10^{-3} r_1 < s < r_1$ and 
\eqref{9.14} holds for $r$ and $s$. Then 
\begin{eqnarray} \label{9.18}
F_\infty (s) &=& \lim_{k \to +\infty} F_k(s)
\leq (r_1/s)^d \limsup_{k \to +\infty} F_k(r_1)
\nn\\
&\leq& (r_1/s)^d  \lim_{k \to +\infty} (2^{-k} + F_k(r)))
= (r_1/s)^d \lim_{k \to +\infty} F_k(r) = (r_1/s)^d F_\infty (r)
\end{eqnarray}
by \eqref{9.14}, the definition of $F_k$ in \eqref{9.15} (twice), 
because $E_k$ satisfies \eqref{9.2} 
with $\varepsilon = 2^{-k}$, and by \eqref{9.14} again.
We know that $F_\infty$ is nondecreasing; then $F_\infty (s)$ has a limit when
$s$ tends to $r_1$, and by \eqref{9.18} this limit is at most $F_\infty (r)$; 
thus $F_\infty$ is constant on $(r,r_1)$ and since this holds for all $r\in (0,10^{-3} r_1)$ we get \eqref{9.17}.

Now we claim that \eqref{9.4} holds.
This is not an immediate consequence of the statement, because
\eqref{9.9} does not exactly say that $E_0$ is minimal in $U$,
but the proof of Theorem \ref{t8.1} only uses deformations in
compact subsets of $B(0,r_1)$, for which the values of $h_0(r)$,
$r \geq r_1$, do not matter. So \eqref{9.4} holds.

The set $E_0$ also satisfies the constraints \eqref{9.5} and \eqref{9.6}
for $k$ large, by \eqref{9.8}. We want to check that it satisfies \eqref{9.7}
as well, and as soon as we do this, we will get the desired contradiction with
the definition of $E_k$. Since we shall use this sort of argument a few times,
let us state a lemma.

\begin{lem} \label{t9.2}
Let the open set $U \i \R^n$ and the closed sets $L_j$ satisfy the Lipschitz
assumption (as in \eqref{2.1}), and let $B_0 = B(0,r_0) \subset U$ be given.
Then let $\{ E_k \}$ be a sequence of coral sliding almost minimal sets, such that 
\begin{equation} \label{9.19}
E_k \in SA^\ast M(U,L_j,h_k) \ \text{ for } k \geq 0,
\end{equation}
for a sequence $\{ h_k \}$ of nondecreasing gauge functions $h_k$ such that
\begin{equation} \label{9.20}
\lim_{k \to +\infty} h_k(r_0) = 0.
\end{equation}
Also suppose that there is a closed set $E_0$ such that
\begin{equation} \label{9.21}
\lim_{k \to +\infty} d_{x,r}(E_k,E_0) \ \text{ for every ball $B(x,r)$ such that } 
\overline B(x,r) \i U.
\end{equation}
Finally assume that $E_0 \cap B(x_0,r_0)$ is contained in a cone $X$, 
which is a countable union of closed rectifiable cones $X_m$ such that
$\H^d(X_m \cap B(0,R)) < +\infty$ for every $R > 0$.
Then for each $\tau > 0$, the following measure estimate holds for $k$ large:
\begin{eqnarray} \label{9.22}
\av{\H^d(E \cap B(y,t)) - \H^d(E_0 \cap B(y,t))} \leq \tau r_0^d &&
\nonumber \\
&&\hskip-7cm
\ \text{ for all $y\in \R^n$ and $t>0$ such that }
B(y,t) \i B(x_0,(1-\tau)r_0).
\end{eqnarray}
\end{lem}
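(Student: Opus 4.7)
The plan is to argue by contradiction and compactness, combining the semicontinuity results from \cite{Sliding} already used in the proof of Theorem \ref{t9.1} with the cone hypothesis to kill the boundary mass on a limiting sphere.

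Suppose the conclusion fails for some $\tau>0$. After extracting a subsequence I obtain balls $B_k = B(y_k, t_k) \i B(x_0, (1-\tau) r_0)$ violating the inequality, and a further subsequence so that $y_k \to y_\infty$ and $t_k \to t_\infty$; write $B_\infty = B(y_\infty, t_\infty)$, so that $\overline{B_\infty} \i B(x_0, r_0)$. The degenerate case $t_\infty=0$ is handled directly by the local Ahlfors regularity \eqref{2.9} (available for $k$ large because $h_k(r_0)\to 0$), which forces both $\H^d(E_k \cap B_k)$ and $\H^d(E_0 \cap B_k)$ to zero; so I may assume $t_\infty>0$. For any $\varepsilon>0$ and $k$ large, $B(y_\infty, t_\infty-\varepsilon) \i B_k \i \overline{B(y_\infty, t_\infty+\varepsilon)}$. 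Applying Theorem 10.97 of \cite{Sliding} to the smaller open ball and Lemma 22.3 of \cite{Sliding} to the larger closed one (with $M$ close to $1$ and $h$ close to $0$, as in the proof of Theorem \ref{t9.1}), and then letting $\varepsilon\to 0$, sandwiches both $\H^d(E_k \cap B_k)$ and $\H^d(E_0 \cap B_k)$ between $\H^d(E_0 \cap B_\infty)$ and $\H^d(E_0 \cap \overline{B_\infty})$. The whole argument thus reduces to showing that the gap $\H^d(E_0 \cap \partial B_\infty)$ vanishes.

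Showing $\H^d(E_0 \cap \partial B_\infty)=0$ is the main obstacle: for a general rectifiable $d$-set the analogous statement only holds for almost every radius, which is not enough at a prescribed limit $(y_\infty,t_\infty)$. The cone hypothesis is exactly what upgrades this to a pointwise statement. Since $\overline{B_\infty}\i B(x_0,r_0)$, the hypothesis gives $E_0 \cap \partial B_\infty \i X = \bigcup_m X_m$, and it suffices to show $\H^d(X_m \cap \partial B_\infty)=0$ for each $m$. I will parametrize $X_m$ from its vertex $v_m$ by $(r,\omega) \mapsto v_m + r\omega$ with $\omega$ in the spherical trace $\Sigma_m = X_m \cap \partial B(v_m,1)$; the coarea formula applied to the radial distance from $v_m$, together with the assumption $\H^d(X_m \cap B(0,R))<+\infty$, gives $\H^{d-1}(\Sigma_m)<+\infty$ locally. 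The equation $|v_m + r\omega - y_\infty|^2 = t_\infty^2$ is quadratic in $r$ and has at most two solutions for each fixed $\omega$, so the preimage of $\partial B_\infty$ in these coordinates is $(d-1)$-rectifiable with locally finite $\H^{d-1}$-measure, and its Lipschitz image $X_m \cap \partial B_\infty$ therefore has zero $\H^d$-measure. Summing over $m$ eliminates the boundary, the two limiting bounds agree, and the required contradiction follows.
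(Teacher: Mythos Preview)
Your proof is correct and follows essentially the same route as the paper: both rest on the semicontinuity results from \cite{Sliding} together with the fact that a locally finite rectifiable cone meets every fixed sphere in a set of zero $\H^d$-measure. The paper cites this last fact as Lemma~7.34 of \cite{Holder} and then handles uniformity via a continuity/finite-net argument (see Lemma~\ref{t9.5} and the lines following it), whereas you fold everything into a single contradiction/compactness argument and reprove the cone-sphere fact by hand; your final sentence would be cleaner phrased via Fubini on $(0,\infty)\times\Sigma_m$ plus the area formula (each $\omega$-fiber of $\Phi^{-1}(\partial B_\infty)$ is finite, hence has $\mathcal L^1$-measure zero) rather than through $(d-1)$-rectifiability of the preimage.
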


\ms\begin{proof}
Recall that the distance $d_{x_0,r_0}$ is defined in \eqref{1.29}.
We start as in the argument above. We first check that \eqref{9.9} holds
with the gauge function $h_0$ of \eqref{9.10}; the proof relies on
Theorem 10.8 in \cite{Sliding} and is the same as above.
We also have the two semicontinuity estimates \eqref{9.11} and \eqref{9.12},
with the same proof, and we deduce from this that 
\begin{equation} \label{9.23}
\H^d(E_0 \cap B(y,t)) = \lim_{k \to +\infty} \H^d(E_k \cap B(y,t))
\end{equation}
for every ball $B(y,t)$ such that $\overline B(y,t) \subset U$ and 
\begin{equation} \label{9.24}
\H^d(E_0 \cap \d B(y,t)) = 0.
\end{equation}
The proof is the same as for \eqref{9.14}.

Now this almost the same thing as \eqref{9.22}; the difference is that maybe 
some balls do not satisfy \eqref{9.24}, but also that in the lemma, we announced 
some uniformity in $B(y,t)$. This is where we use our extra assumption about $X$.
Again we follow the proof of Proposition~7.1 in \cite{Holder}. First we observe that
if $X_m$ is a rectifiable cone of locally finite $\H^d$-measure (as in the statement),
not necessarily minimal, Lemma 7.34 in \cite{Holder} says that
$H^d(X\cap \d B) = 0$ for every ball $B$. This stays true for a countable union $X$
of such cones, and then also for $E_0 \cap B(x_0,r_0)$.

So \eqref{9.24}, and then \eqref{9.23} hold for all balls $B(y,t)$ such that 
$\overline B(y,t) \subset B(x_0,r_0)$.

Finally we need to deduce from this the conclusion of the lemma. 
This can be done with a small uniformity argument. We do not repeat it here,
and instead send the reader either to \cite{Holder}, 
starting from (7.15) on page 128 and ending below (7.19) on the next page,
where the proof is done in almost the same context, or to 
Lemma \ref{t9.4} and the argument that follows it, where we shall do it in a slightly 
more complicated situation.
Lemma \ref{t9.2} follows.
\end{proof}

We may now return to the proof of Theorem \ref{t9.1}. 
We observed just before Lemma \ref{t9.2} that all we have to do is
prove that the sets $E_k$ that we constructed satisfy the condition
\eqref{9.7} for $k$ large.

For this, it is enough to show that we can apply
Lemma \ref{t9.2} to the sets $E_k$, in the ball $B(0,r_1)$.
We already checked all the assumptions except one, the existence of the
closed rectifiable cone $X$.

Recall from \eqref{9.4} that $E_0$ satisfies the conclusions
of Theorem \ref{t8.1}. Then let $X$ be, as in \eqref{8.2}, the cone over $A \cap E$, 
where here $A = B(0,r_1)$ (see the remark below the statement of Theorem \ref{t8.1}).
By definition, $X$ contains $E \cap A = E \cap B(0,r_1)$.
By \eqref{8.3}, and for any $r < r_1$,
\begin{eqnarray} \label{9.25}
\H^d(X\cap B(0,r)) &\leq& \H^d(E\cap B(0,r)) + \H^d(S\cap B(0,r))
\nn\\
&\leq& \H^d(E\cap B(0,r)) + H(r)
\leq r^d F_{\infty}(r) < +\infty
\end{eqnarray}
where the bound on $\H^d(S\cap B(0,r))$ follows from \eqref{8.40}.
So $\H^d(X\cap B) < +\infty$ for every ball $B$. Next,
$X$ is (locally) rectifiable by \eqref{8.3}, because $E$ is locally rectifiable 
(see \eqref{2.11}), and $S$ is also locally rectifiable.

Maybe $X$ is not closed because of bad things that may happen near the origin, 
but it is a countable union of closed cones $X_m \i X$; for instance, we may take for 
$X_m$ the cone over $E \cap \overline B(0,r_1 - 2^{-m}) \sm B(0,2^{-m})$.
So Lemma \ref{t9.2} applies, and we get \eqref{9.7} and the desired contradiction.

This completes our proof of Theorem \ref{t9.1} by contradiction.
\qed

\ms
In the special case of the introduction when $L'$ is composed of a unique boundary 
piece $L$ which is an affine subspace of $\R^n$, we can try to choose an 
$\varepsilon$ in Theorem \ref{t9.1} that does not depend on the position of
$L$. We shall only do this when $0$ is not too close to the origin (see the
condition \eqref{9.26} below, but we shall include the possibility that
$L$ be very close (in a bilipschitz way) to an affine subspace, without 
necessarily being one. We leave open the case when $\dist(0,L)$ is very
small, but in this case we claim that it is probably just as convenient to center
the balls at some $x_0 \in L$, use the simple density 
$r^{-d}\H^d(E\cap B(x_0,r))$, and apply
Proposition~30.3 in \cite{Sliding}. See Remark \ref{t9.6}.

\ms
\begin{cor} \label{t9.3}
For each small $\tau > 0$ we can find $\varepsilon > 0$,
which depends only on $\tau$, $n$, and $d$, with the following property. 
Let $E \in SA^\ast M(U,L_j,h)$ be a coral sliding almost minimal set
(as in Definition \ref{t2.1}), associated to a single boundary set $L$,
a gauge function $h$, and an open set $U$ that contains $B(0,1)$.
Suppose that 
\begin{equation} \label{9.26}
\tau \leq \dist(0,L) \leq {9\over 10}, 
\end{equation}
and let $y_0 \in L$ be such that $\dist(0,L) = |y_0|$. 
Also suppose that there exists a vector subspace $L_0$, whose 
dimension $m$ is at most $d-1$, and a bilipschitz mapping 
$\xi : \R^n \to \R^n$, such that $\xi(L_0) = L$ and 
\begin{equation} \label{9.27}
(1-\varepsilon)|y-z| \leq |\xi(y)-\xi(z)| \leq (1-\varepsilon)|y-z|
\ \text{ for } y,z\in \R^n
\end{equation}
Finally suppose that
\begin{equation} \label{9.28}
h(1) < \varepsilon \ \text{ and } \ 
F(1) \leq \varepsilon + \inf_{0 < r < 10^{-3}} F(r).
\end{equation}
Then we can find a coral minimal cone $X_0$ in $\R^n$ (with no sliding boundary condition), and an affine subspace $L'$ of dimension $m$ through $y_0$, such that
\begin{equation}\label{9.29}
L' \cap B(0,99/100) \i X_0 \ \text{ if $m=d-1$,} 
\end{equation}
and, if \begin{equation}\label{9.30}
S = \big\{ z \in \R^n \, ; \, \lambda z \in L' \text{ for some } \lambda \in (0,1] \big\}
\end{equation}
denotes the shade of $L'$, 
\begin{eqnarray}\label{9.31}
&&E_0 = \overline{X_0 \sm S} \text{ is a coral minimal set in $B(0,1-\tau)$,} 
\nn\\
&&\hskip3.6cm
\text{ with sliding boundary condition defined by $L'$,}
\end{eqnarray}
\begin{equation} \label{9.32}
\dist(y, E_0) \leq \tau \ \text{ for } y \in E\cap B(0,1-\tau),
\end{equation} 
\begin{equation} \label{9.33}
\dist(y,E) \leq \tau \ \text{ for } y \in E_0 \cap B(0,1-\tau),
\end{equation}
and
\begin{eqnarray} \label{9.34}
\av{\H^d(E \cap B(y,t)) - \H^d(E_0 \cap B(y,t))} \leq \tau &&
\nonumber \\
&&\hskip-7cm
\ \text{ for all $y\in \R^n$ and $t>0$ such that } B(y,t) \i B(0,1-\tau).
\end{eqnarray}
\end{cor}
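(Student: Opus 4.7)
The plan is to argue by contradiction and compactness, exactly in the spirit of Theorem \ref{t9.1}, but paying attention to the fact that here both the set $E$ and the boundary $L$ vary with $k$, and we want $\varepsilon$ independent of them. Fix $\tau>0$ and suppose the statement fails: for each $k\geq 1$ we obtain a counterexample $(E_k,L_k,y_k,\xi_k,h_k)$ satisfying the hypotheses with $\varepsilon_k=2^{-k}$ but for which no $(X_0,L',E_0)$ as in the conclusion exists. After composing each $\xi_k$ with a bounded translation we may assume $\xi_k(0)\in L_k$ is bounded, and by extracting a subsequence: the dimension $m_k=m$ is constant; the $\xi_k$, being $(1\pm 2^{-k})$-bilipschitz, converge uniformly on $\overline B(0,1)$ to an isometry $\xi_\infty$; $L_{0,k}$ converges in the Grassmannian to a vector $m$-subspace $L_0'$; consequently $L_k\cap \overline B(0,1)$ converges in Hausdorff distance to an affine subspace $L'=\xi_\infty(L_0')$ of dimension $m$; $y_k\to y_\infty\in L'$ with $|y_\infty|=\dist(0,L')\in[\tau,9/10]$; and finally $E_k\to E_\infty$ locally in $B(0,1)$ in the distance $d_{x,r}$.

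The next step is to verify that $E_\infty$ is a coral sliding minimal set in $B(0,1)$ with boundary $L'$, and to transfer \eqref{9.28} to the limit. The slight obstacle is that Theorem 10.8 in \cite{Sliding} is written for a fixed boundary; this is circumvented by conjugating by $\xi_k^{-1}$, that is, by replacing $E_k$ with $\widetilde E_k:=\xi_k^{-1}(E_k)$, which (after absorbing the $O(2^{-k})$ bilipschitz distortion into a slightly larger gauge $\widetilde h_k$ with $\widetilde h_k(1)\to 0$) is a coral sliding almost minimal set with fixed boundary $L_{0,k}$, and then letting $L_{0,k}\to L_0'$ in the Grassmannian via a second application of \cite{Sliding}, Theorem 10.8. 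Conjugating back gives the claim. Combining the lower semicontinuity and upper semicontinuity of Hausdorff measure from \cite{Sliding} (Theorem 10.97 and Lemma 22.3) we get \eqref{9.14} at all radii $r$ with $\H^d(E_\infty\cap\partial B(0,r))=0$; since the functional $F$ depends on $L'$ only through its shade and $m(r)=\H^d(L^\sharp(r))$, and these are continuous in $L'$ in our setup, the functional $F_\infty$ associated to $(E_\infty,L')$ is finite and, by \eqref{9.28} and the monotonicity from Theorem \ref{t7.1} (applied to $E_\infty$ with gauge $0$), constant on $(0,1)$.

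Now apply Theorem \ref{t8.1} to $E_\infty$ on every interval $(R_0,1)$ with $0<R_0<\tau\leq\dist(0,L')$. The last part of that theorem yields a coral plain minimal cone $X_0$ in $\R^n$ (no sliding constraint), independent of $R_0$, such that $E_\infty$ and $X_0\setminus S$ coincide on $B(0,1)\setminus\overline B(0,R_0)$ modulo $\H^d$-null sets, where $S$ is the shade \eqref{9.30} of $L'$. Letting $R_0\to 0$ and using that $X_0$ is a cone and $E_\infty$ is closed and coral, we obtain $E_\infty=\overline{X_0\setminus S}$ on $B(0,1)$; this is the set $E_0$ of \eqref{9.31}, and it is sliding minimal in $B(0,1)$ with boundary $L'$ because $E_\infty$ is. When $m=d-1$, the shade $S$ is a closed half of the $d$-plane spanned by $L'$ and $0$, with $L'$ on its topological boundary inside that plane; from \eqref{8.4} and the closedness of $X_0$, an open-set argument on the interior of $S$ forces $S\cap B(0,1)\subseteq X_0$, hence $L'\cap B(0,99/100)\subseteq X_0$ and \eqref{9.29} holds.

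Finally, \eqref{9.32}-\eqref{9.33} are immediate from the Hausdorff convergence $E_k\to E_0$ for $k$ large. For \eqref{9.34} we invoke Lemma \ref{t9.2} applied to the sequence $\{E_k\}$ converging to $E_0\subseteq X_0\cup S$: both $X_0$ (a minimal cone) and $S$ (a rectifiable truncated cone) are closed rectifiable $d$-cones with $\H^d$-locally finite measure, so the hypothesis of the lemma is satisfied and \eqref{9.22} gives \eqref{9.34} for $k$ large. All three conclusions then hold for $E_k$ with $(X_0,L',E_0)$, contradicting the definition of the sequence. The main obstacle in the whole argument is the moving boundary in the convergence step; once the conjugation by $\xi_k$ reduces this to the fixed-boundary theorems of \cite{Sliding}, the remainder is essentially the template of Theorems \ref{t9.1} and \ref{t8.1}.
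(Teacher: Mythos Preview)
Your overall strategy---contradiction and compactness, pass to a limit $E_\infty$ that is sliding minimal with respect to a limiting affine boundary, apply Theorem \ref{t8.1} to identify $E_\infty$ with $\overline{X_0\setminus S}$, then transfer back---is exactly the paper's. But several of your shortcuts hide real work that the paper does explicitly.

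First, the limiting theorems you invoke (Theorem 10.8 and Lemma 22.3 in \cite{Sliding}, and the paper's Lemma \ref{t9.2}) are all stated for a \emph{fixed} boundary $L_j$. Your conjugation trick does not fix this: after replacing $E_k$ by $\xi_k^{-1}(E_k)$ you still have a varying boundary $L_{0,k}$, and ``a second application of Theorem 10.8'' cannot absorb that variation because that theorem simply does not allow it. The paper instead invokes Theorem 23.8, Remark 23.23 and Lemma 23.31 of \cite{Sliding}, which are precisely the variable-boundary versions, and which require the auxiliary bilipschitz maps $\widetilde\xi_k$ converging to the identity. Similarly, your appeal to Lemma \ref{t9.2} for \eqref{9.34} is formally illegitimate for the same reason; the paper proves a separate continuity lemma (Lemma \ref{t9.5}) and does the uniformity argument by hand.

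Second, and more substantively, you produce a single affine subspace $L'$ through $y_\infty$, but the statement demands an $L'$ through $y_0=y_{0,k}$ for the specific set $E_k$. Your limit set $E_\infty$ is sliding minimal relative to $L_\infty$; to satisfy \eqref{9.31} for $E_k$ one must transport $(X_\infty,L_\infty,E_\infty)$ by an isometry-plus-dilation $\rho_k I_k$ carrying $y_{0,\infty}$ to $y_{0,k}$, obtaining $k$-dependent objects $(X_{0,k},L'_k,E_{0,k})$. This adjustment is why the paper needs Lemma \ref{t9.5} (continuity of $(y,t)\mapsto\H^d(E_\infty\cap B(y,t))$) rather than just Lemma \ref{t9.2}: one must compare $\H^d(E_k\cap B)$ not with the fixed $\H^d(E_\infty\cap B)$ but with the moving $\H^d(E_{0,k}\cap B)=\rho_k^d\,\H^d(E_\infty\cap\rho_k^{-1}I_k^{-1}(B))$.

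Finally, your sentence ``these are continuous in $L'$ in our setup'' sweeps under the rug the convergence $H_k(r)\to H_\infty(r)$, which is genuinely delicate when the $L_k$ are merely bilipschitz images of planes (the paper's Lemma \ref{t9.4}). For affine $L_k$ this is easy, but the corollary as stated allows the more general case.
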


\ms
We are mostly interested in the case when $L$ is an affine
subspace, in which case Definition~\ref{t2.1} could be replaced
by the simpler Definition \ref{t1.4} (or its $A$- and $A_+$ variants).
In this case we take $L_0$ to be the vector space parallel to $L$,
and the proof will show that the conclusion holds with $L' = L$.

For our main application, $X_0$ will turn out to be a plane through $0$
or a $\bY$-set centered at $0$,  \eqref{9.29} will say that $X_0$
contains (a piece of) $L$, and this will determine $X_0$ and $E_0$.

Of course we don't really need to have a bilipschitz mapping defined
on the whole $\R^n$; a ball of radius $3$ centered at $\xi^{-1}(y_0)$
would be more than enough to cover $B(0,1)$ and prove the 
theorem.

When $L$ lies very close to the origin, we can still get an approximation
result, but only by a sliding minimal cone, and we claim that it should be as
simple to use results concerning ball that are centered on the boundary.
See Remark \ref{t9.6}. 
When $\dist(x,L) \geq 9/10$, the simplest is to restrict to
$B(0,9/10)$ and apply Proposition 7.24 in \cite{Holder}
to the plain almost minimal set $E$, with no boundary condition.
We would still get something like the conclusion above, maybe in a smaller ball
(notice that then $E_0 = X_0$ in $B(0,9/10)$, and 
\eqref{9.29} and \eqref{9.31} are void anyway).

\ms
We start the proof of the corollary as for Theorem \ref{t9.1} above. 
We may assume that the dimension of $L$ is a given integer $m \leq d-1$, 
and that $L_0$ is a fixed $m$-dimensional vector space.

We assume that we can find $\tau > 0$ such that the corollary fails for 
all $\varepsilon$, and we let $E_k$ provide a counterexample for the statement, 
with $\varepsilon = 2^{-k}$. This time, $E_k$ is a sliding almost minimal set
associated to a (changing) boundary set $L_k$, of fixed dimension $m$, 
and which is bilipschitz equivalent, through some mapping $\xi_k$ whose 
bilipschitz constants that tend to $1$, to the set $L_0$. 

By precomposing each $\xi_0$ with a translation in $L_0$ if needed, 
we may assume that $\xi_k(0) = y_{0,k}$, where 
$y_{0,k}$ denotes a point of $L_k$ such that $|y_{0,k}| = \dist(0,L_k)$.
Then we can use the uniform Lipschitz bounds on the $\xi_k$ to choose our 
subsequence so that the $\xi_k$ converge, uniformly in $\overline B(0,1)$, 
to some Lipschitz mapping $\xi_\infty$.
Since the $\xi_k$ satisfy \eqref{9.27} with biLipschitz constants that tend to
$1$, $\xi_\infty$ is an isometry from $\overline B(0,1)$ to its image, 
and it is known that there is an affine isometry of $\R^n$ that coincides with
$\xi_\infty$ on $\overline B(0,1)$. We subtract $\xi_\infty(0)$
and we get a linear isometry $\xi$ of $\R^n$ such that
\begin{equation} \label{9.35}
\xi(y) = \xi_\infty(y) - \xi_\infty(0) = 
\lim_{k \to +\infty} [\xi_k(y) - \xi_k(0)]
= \lim_{k \to +\infty} [\xi_k(y) - y_{0,k}]
\ \text{ for } y\in \overline B(0,1).
\end{equation}

By rotation invariance of our problem, the sets $\xi^{-1}(E_k)$ still provide 
a counterexample, so we may replace the $E_k$ by $\xi^{-1}(E_k)$;
thus we may assume that $\xi$ is the identity, i.e., that 
\begin{equation} \label{9.36}
\lim_{k \to +\infty} [\xi_k(y) - y_{0,k}] = y \ \text{ for } y\in \overline B(0,1).
\end{equation}
We want to use the $m$-planes $L_0 + y_{0,k}$ and their limit
\begin{equation} \label{9.37}
L_\infty = L_0+y_{0,\infty}, \ \text{ where } \,
y_{0,\infty} = \xi_\infty(0) = \lim_{k \to +\infty} y_{0,k},
\end{equation}
as boundaries. Maybe we should mention that in the simpler case of 
Corollary \ref{t9.3} when $L$ is an affine space, we may decide in advance
that $L_0$ was the vector space parallel to $L$, and then $\xi_\infty$ is a translation.
When we follow the argument above, we see that we do not need to modify
the $E_k$, and that we get $L_0 + y_{0,k} = L_k$. 

We replace $\{E_k\}$ with a subsequence for which 
$\{E_k\}$ converges, locally in $B(0,1)$, to a closed set $E_\infty$. 
This just means that $d_{0,\rho}(E,E_\infty)$ tends to $0$ for every 
$\rho \in (0,1)$, and the existence of a subsequence like this is classical. 

We claim that
\begin{eqnarray}\label{9.38}
&\,&\text{$E_\infty$ is a coral almost minimal set in $B(0,1-\tau/2)$,}
\nn\\
&\,&\hskip3cm
\text{with boundary condition coming from $L_\infty$,}
\end{eqnarray}
and with the special gauge function $h_0$ given by \eqref{9.10} with $r_1=1$.

Compared to what we did for Theorem \ref{t9.1}, the situation is slightly different,
because we have variable boundary conditions, so we shall have to use a
limiting result of Section~23 in \cite{Sliding} rather than its Section 10.
We want to apply Theorem 23.8, so let us check the assumptions.

We use the domain $U = B(0,1-\tau/2)$ and the single boundary piece $L_\infty$, 
and then the Lipschitz assumption (23.1) is satisfied.
We use the bilipschitz mappings $\wt\xi_k$ defined by
\begin{equation} \label{9.39}
\wt\xi_k(y) = \xi_k(y-y_{0,\infty}).
\end{equation}
Notice that if we set $U_k = \wt\xi_k(U)$, then 
\begin{equation} \label{9.40}
U_k = \wt\xi_k(B(0,1-{\tau \over 2})) = \xi_k(B(-y_{0,\infty},1-{\tau\over2}))
\subset y_{0,k} + B(-y_{0,\infty},1-{\tau\over 3})
= B(0,1-{\tau\over 4})
\end{equation}
by \eqref{9.36} and for $k$ large; in addition,
\begin{equation} \label{9.41}
\wt\xi_k(L_\infty) = \xi_k(L_0) = L_k,
\end{equation}
so the condition (23.2) of \cite{Sliding} is satisfied, the asymptotically
optimal Lipschitz bound (23.3) comes from \eqref{9.27}, and 
(23.4) (the pointwise convergence of the $\wt \xi_k$ to the identity)
follows from \eqref{9.36}, \eqref{9.39}, and \eqref{9.37}.

With the current notation, $E_k$ is sliding minimal in a domain that contains
$U_k$, with a boundary condition given by $\wt\xi_k(L_\infty)$, and the 
gauge function $h_k$. This implies that (23.5) holds with constants
$M$ that are arbitrarily close to $1$, $\delta$ arbitrarily close to $1$,
and $h$ as small as we want because of \eqref{9.28}
(see the discussion below \eqref{9.9}).

The assumption (23.6) comes from the convergence of the $E_k$ to
$E_\infty$, we don't need to check the technical assumptions (10.7) or (19.36)
because the $L_k$ are $m$-dimensional, and so Theorem 23.8
in \cite{Sliding} applies. We get that $E_\infty$ is sliding quasiminimal,
relative to $L_\infty$, and with constants $M$ arbitrarily close to $1$, $\delta$ 
arbitrarily close to $1$, and $h$ arbitrarily small; \eqref{9.38} follows.

Next we estimate Hausdorff measures. We start with the analogue of \eqref{9.11}.
By Remark~23.23 in \cite{Sliding}, we can apply Theorem 10.97 in \cite{Sliding}
as soon as the assumptions of Theorem 23.8 there are satisfied. We checked this to
prove \eqref{9.38}, so we get that if $V$ is an open set such that
$V \subset B(0,\rho)$ for some $\rho < 1$,  
\begin{equation}\label{9.42}
\H^d(E_\infty \cap V) \leq \liminf_{k \to +\infty} \H^d(E_k \cap V), 
\end{equation}
as in (23.23) in \cite{Sliding}. This stays true for any open set $V \i B(0,1)$
(just apply \eqref{9.42} to $V \cap B(0,\rho)$ and take a limit).
Similarly, Lemma 23.31 in \cite{Sliding} (applied with constants $M > 1$ arbitrarily
close to $1$ and $h > 0$ arbitrarily small) says that if $K$ is a compact subset of $B(0,1)$,
\begin{equation}\label{9.43}
\limsup_{k \to +\infty} \H^d(E_k \cap K) \leq \H^d(E_\infty \cap K).
\end{equation}
Alternatively we could use Lemma 23.36 in \cite{Sliding}, or copy its proof.
As in \eqref{9.14}, we deduce from \eqref{9.42} and \eqref{9.43} that 
\begin{equation}\label{9.44}
\lim_{k \to +\infty} \H^d(E_k \cap B(0,r)) = \H^d(E_\infty \cap B(0,r))
\end{equation}
for almost every $r \in (0,1)$. 

Denote by $F_k$ the functional associated to $E_k$ and the boundary $L_k$.
By \eqref{7.9},
\begin{equation}\label{9.45}
F_k(r) = r^{-d} \H^d(E_k \cap B(0,r)) + r^{-d} H_k(r),
\end{equation}
where $H_k(r)$ is given by \eqref{7.6} and \eqref{7.7} in terms of $L' = L_k$.
That is, 
\begin{equation}\label{9.46}
H_k(r) = d r^d \int_0^r {m_k(t) dt \over t^{d+1}},
\end{equation}
with $m_k(t) = \H^d(L_k^\sharp(t) \cap B(x,t))$ and 
$L_k^\sharp(t) = \big\{\lambda z \, ; \, \lambda \in [0,1] \text{ and } 
z\in L_k \cap \overline B(0,t)\big\}$. 

We also define $F_\infty$, $H_\infty$, $m_\infty$, and the $L_\infty^\sharp(t)$
similarly, but in terms of $E_\infty$ and $L_\infty = L_0+ y_{0,\infty}$ (see \eqref{9.37}). 
We want to check that
\begin{equation}\label{9.47}
F_\infty(r) = \lim_{k \to +\infty} F_k(r)
\end{equation}
for almost every $r\in (0,1)$ and, because of \eqref{9.44}, it will follow from the next lemma.

\begin{lem}\label{t9.4} 
We have that
\begin{equation}\label{9.48}
H_\infty(r) = \lim_{k \to +\infty} H_k(r) \ \text{ for } 0 < r < 1.
\end{equation}
\end{lem}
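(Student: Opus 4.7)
From the formula $H_k(r) = d\, r^d \int_0^r m_k(t)\, t^{-d-1}\, dt$ and its analogue for $H_\infty$, the lemma reduces to (i) pointwise convergence $m_k(t) \to m_\infty(t)$ for almost every $t \in (0, r)$, together with (ii) a $k$-uniform integrable dominating function on $(0, r)$, at which point dominated convergence concludes.

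If $m \leq d - 2$, both $L_k^\sharp(t)$ and $L_\infty^\sharp(t)$ are contained in $(m+1)$-rectifiable sets of dimension at most $d-1$, so $m_k \equiv m_\infty \equiv 0$ and \eqref{9.48} is immediate. We henceforth assume $m = d-1$. By \eqref{9.26} and \eqref{9.37} we have $|y_{0,\infty}| \geq \tau > 0$, and since $y_{0,k} \to y_{0,\infty}$, the distance $\dist(0, L_k) = |y_{0,k}|$ stays $\geq \tau/2$ for $k$ large; consequently $m_k(t) = 0 = m_\infty(t)$ for $t \leq \tau/2$. For $t \in [\tau/2, r]$, the uniform biLipschitz bound \eqref{9.27} together with $L_k = \xi_k(L_0)$ yields $\H^{d-1}(L_k \cap \overline B(0, t)) \leq C\, t^{d-1}$ with $C$ independent of $k$, and then a radial slicing argument identical to \eqref{6.4}--\eqref{6.7} gives $m_k(t) \leq C'\, t^d$. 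Thus (ii) holds: $m_k(t)\, t^{-d-1} \leq 2C'/\tau$ on $(0, r]$.

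For (i), fix $t \in (\tau/2, r)$ with $t \neq |y_{0,\infty}|$ (a single excluded value). Parameterize via $F_k : L_0 \times [0, 1] \to \R^n$, $F_k(z, \lambda) := \lambda\, \xi_k(z)$, so that $L_k^\sharp(t) = F_k(D_k(t))$ with $D_k(t) := \{(z, \lambda) \in L_0 \times [0, 1] : |\xi_k(z)| \leq t\}$, and similarly $L_\infty^\sharp(t) = F_\infty(D_\infty(t))$. Since $y_{0,\infty}$ is the closest point of $L_\infty = L_0 + y_{0,\infty}$ to the origin we have $y_{0,\infty} \perp L_0$, whence $F_\infty$ is injective on $L_0 \times (0, 1]$ (each ray from $0$ meets the non-radial affine plane $L_\infty$ at most once); by uniform convergence $\xi_k \to \xi_\infty$ on bounded subsets of $L_0$ (obtained from \eqref{9.36} after passing to a further subsequence), $F_k$ is also injective on $D_k(t)$ for $k$ large. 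Introducing the comparison map $G_k := F_k \circ F_\infty^{-1}$ on $L_\infty^\sharp(t) \setminus \{0\}$, the orthogonal decomposition $\R^n = L_0 \oplus L_0^\perp$ and the biLipschitz bound on $\xi_k$ with constants $1 + o_k(1)$ show that $G_k$ is biLipschitz with constants tending to $1$ and converges uniformly to the identity, yielding $\H^d(F_k(D_\infty(t))) = \H^d(G_k(L_\infty^\sharp(t))) \to m_\infty(t)$. Finally, the symmetric difference $D_k(t) \triangle D_\infty(t)$ has Lebesgue measure on $L_0 \times [0,1]$ tending to $0$ (because $|\xi_k(z)| \to |z + y_{0,\infty}|$ uniformly in $z$ on the relevant bounded subset of $L_0$, while $L_\infty \cap \partial B(0, t)$ is a $(d-2)$-sphere and hence $\H^{d-1}$-null in $L_0$ when $t \neq |y_{0,\infty}|$); Lipschitz continuity of $F_k$ with uniform constant then gives $\H^d(F_k(D_k(t)) \triangle F_k(D_\infty(t))) \to 0$, and combining this with the previous display yields $m_k(t) \to m_\infty(t)$.

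\emph{Main obstacle.} The delicate point is establishing the biLipschitz convergence of $G_k$ to the identity, since $\xi_k$ is only assumed biLipschitz (with no regularity of its differential) so a pointwise Jacobian comparison is unavailable. The trick is that the radial structure of $F_\infty$ together with the orthogonality $y_{0,\infty} \perp L_0$ propagates the biLipschitz constants $1 + o_k(1)$ of $\xi_k$ into biLipschitz constants of $G_k$ on the cone $L_\infty^\sharp(t)$ away from its apex, while the integrable domination of Step (ii) absorbs the degeneracy near the apex after integration.
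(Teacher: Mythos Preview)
Your overall plan---reduce to pointwise convergence of $m_k$ plus a dominating bound, then apply dominated convergence---matches the paper's, and your treatment of the dominating function and the symmetric difference $D_k(t)\triangle D_\infty(t)$ is fine. The gap is in the central claim that $G_k=F_k\circ F_\infty^{-1}$ is biLipschitz with constants tending to $1$. Writing $\xi_k(z)=\xi_\infty(z)+\eta_k(z)$ and $p_i=\lambda_i\xi_\infty(z_i)$, one computes
\[
G_k(p_1)-G_k(p_2)=(p_1-p_2)+\bigl[(\lambda_1-\lambda_2)\eta_k(z_1)+\lambda_2(\eta_k(z_1)-\eta_k(z_2))\bigr],
\]
and while $|\lambda_1-\lambda_2|\le C|p_1-p_2|$ and $\|\eta_k\|_\infty\to0$ handle the first bracket term, the second requires $\lambda_2|\eta_k(z_1)-\eta_k(z_2)|=o(|p_1-p_2|)$, i.e.\ $\mathrm{Lip}(\eta_k)\to0$. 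But the hypotheses on $\xi_k$ only give $\bigl||\xi_k(z_1)-\xi_k(z_2)|-|z_1-z_2|\bigr|\le\epsilon_k|z_1-z_2|$ together with $\eta_k\to0$ uniformly; neither of these controls the \emph{direction} of $\xi_k(z_1)-\xi_k(z_2)$ at small scales, which is exactly what $\mathrm{Lip}(\eta_k)\to0$ would assert. Your phrase ``the orthogonal decomposition\dots propagates the biLipschitz constants'' does not supply this, and it is not clear the statement is even true without further argument.

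The paper avoids this difficulty entirely by splitting into two inequalities. For $\limsup m_k(t)\le m_\infty(t)$ it uses the area formula \eqref{9.51} and the fact that the $m$-vector $\partial_1\wt\xi_k\wedge\cdots\wedge\partial_m\wt\xi_k$ converges \emph{weakly} to $e_1\wedge\cdots\wedge e_m$ (this follows from uniform convergence of $\wt\xi_k$ by integrating each coordinate over boxes, as in \eqref{9.53}), which is much weaker than the pointwise derivative control you would need. For $\liminf m_k(t)\ge m_\infty(t)$ it uses a topological degree argument \eqref{9.58}--\eqref{9.61}: the map $\pi_V\circ h_k$ on $\partial T_\rho$ is homotopic to the identity, hence has degree $1$, forcing $\pi_V(L_k^\sharp(t))\supset T_{\rho_1}$ and therefore $m_k(t)\ge\H^d(T_{\rho_1})$. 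This second step also bypasses the injectivity of $F_k$ on $D_k(t)$, which you assert but which the paper explicitly does not assume (see the remark after \eqref{9.51}).
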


\begin{proof}
First notice that the lemma is particularly simple when the sets $L_k$ are affine
subspaces of dimension $m$ (and in this case it is also slightly 
simpler to use Remark \ref{t7.3} and the more direct formula \eqref{7.28} 
to compute $H_\infty$ and the $H_k$ in terms of shades),
and also when $m< d-1$ (because then $H_k = H_\infty = 0$). When the $L_k$ are bilipschitz
images of $(d-1)$-planes, we need to be a little careful about how the $H_k$ tend to a limit,
but hopefully the reader will not be surprised by the result.

So let us assume that $m=d-1$. The main step will be to check that for $0 < t < 1$,
\begin{equation}\label{9.49}
m_\infty(t) = \lim_{k \to +\infty} m_k(t).
\end{equation}
Recall from \eqref{9.41} that $L_k = \wt\xi_k(L_\infty)$,
so we get a parameterization of $L_k^\sharp(t)$ as 
\begin{equation}\label{9.50}
L_k^\sharp(t) = \big\{ \lambda \wt\xi_k(y) \, ; \, (\lambda,y) \in [0,1] \times Z_k(t) \big\},
\end{equation}
where $Z_k(t) = \big\{ y \in L_\infty \, ; \, \wt\xi_k(y) \in \overline B(0,t) \big\}$.
We use the area formula and get that
\begin{eqnarray}\label{9.51}
\H^d(L_k^\sharp(t)) &\leq& \int_{y\in Z_k(t)} \int_{t\in [0,1]} 
\lambda^{m-1} \Big| {\d \wt\xi_k(y) \over \d x_1} \wedge \ldots\wedge  {\d \wt\xi_k(y) \over \d x_m}
\wedge \wt\xi_k(y) \Big| dy d\lambda
\nn\\
&=& {1\over d} \int_{y\in Z_k(t)} 
\Big| {\d \wt\xi_k(y) \over \d x_1} \wedge \ldots\wedge  {\d \wt\xi_k(y) \over \d x_m}
\wedge \wt\xi_k(y) \Big| dy
\end{eqnarray}
where we took coordinates $(x_1, \ldots, x_m)$ in $L_\infty$ to compute 
the Jacobian of the parameterization. Unfortunately for the converse computation,
we only have an inequality
because the parameterization may fail to be injective. 

Recall from \eqref{9.39}, \eqref{9.36} and \eqref{9.37} that 
the $\wt\xi_k$ converge uniformly to the identity; then for each $\rho \in (t,1)$, 
$Z_k(t) \i  Z_\rho = : L_\infty \cap \overline B(0,\rho)$ for $k$ large, so
\begin{equation}\label{9.52}
\limsup_{k \to +\infty} \H^d(L_k^\sharp(t))
\leq {1\over d} \limsup_{k \to +\infty} \int_{y\in Z_\rho} 
\Big| {\d \wt\xi_k(y) \over \d x_1} \wedge \ldots\wedge  {\d \wt\xi_k(y) \over \d x_m}
\wedge \wt\xi_k(y) \Big| dy.
\end{equation}
Next we claim that the wedge product
${\d \wt\xi_k(y) \over \d x_1} \wedge \ldots {\d \wt\xi_k(y) \over \d x_m}$
converges weakly to the constant $m$-vector $e_1  \wedge \ldots\wedge e_m$,
where the $e_j$ are the corresponding basis vectors of $L_0$ 
(the vector space parallel to $L_\infty$), 
in the sense that for each small product $Q$ of intervals 
(with faces parallel to the axes) in $L_\infty$,
\begin{equation}\label{9.53}
\lim_{k\to +\infty} \int_Q 
{\d \wt\xi_k(y) \over \d x_1} \wedge \ldots \wedge {\d \wt\xi_k(y) \over \d x_m}
\, dy
= |Q| e_1  \wedge \ldots\wedge e_m.
\end{equation}
This last follows from integrating coordinate by coordinate,
and using the fact that the $\wt\xi_k(y)$ converge uniformly to $y$.
Also, we have uniform bounds on $\nabla \wt\xi_k$,
by \eqref{9.27}, which allows one to pass from the dense class of linear combinations
of characteristic functions $\1_{Q}$ to the vector-valued function 
$y \, \1_{Z_\rho}$, so
\begin{equation}\label{9.54}
\lim_{k \to +\infty} {1\over d} \int_{y\in Z_\rho} 
\Big| {\d \wt\xi_k(y) \over \d x_1} \wedge \ldots\wedge  
{\d \wt\xi_k(y) \over \d x_m} \wedge y \Big| 
= {1\over d} \int_{y\in Z_\rho}  \Big| e_1  \wedge \ldots\wedge e_m 
\wedge y \Big| .
\end{equation}
We can replace $y$ by $\wt\xi_k(y)$ in the right-hand side of \eqref{9.54}
because the $\wt\xi_k$ are uniformly Lipschitz, and converge uniformly 
to the identity; we get that
\begin{equation}\label{9.55}
\limsup_{k \to +\infty} \H^d(L_k^\sharp(t))
\leq {1\over d} \int_{y\in Z_\rho}  \Big| e_1  \wedge \ldots\wedge e_m 
\wedge y \Big| 
= \H^d(L_\infty^\sharp(\rho)),
\end{equation}
where the last part comes from the same computations as above,
i.e., applying the area formula to 
\begin{equation}\label{9.56}
L_\infty^\sharp(\rho) 
= \big\{ \lambda z \, ; \, \lambda \in [0,1] \text{ and } y \in L_\infty \cap B(0,\rho)\big\}
\end{equation}
(this time, our parameterization is injective). We let $\rho$ tend to $t$ from above and get that
\begin{equation}\label{9.57}
\limsup_{k \to +\infty} m_k(t)
=\limsup_{k \to +\infty} \H^d(L_k^\sharp(t)) 
\leq \H^d(L_\infty^\sharp(t)) = m_{\infty}(t)
\end{equation}
by \eqref{7.7}. We may now concentrate on the other inequality in \eqref{9.49}, which we shall obtain 
by topology.

For $\rho < 1$, still set $Z_\rho = L_\infty \cap \overline B(0,\rho)$,
and set $T_\rho = \big\{ \lambda y \, ; \, \lambda \in [0,1]
\text{ and } y\in Z_\rho \big\}$. The $T_\rho$ are homothetic cones in a
fixed space $V$ of dimension $d$; let $\pi_V$ denote the orthogonal projection
on that space. 

Let $t\in (0,1)$ be given, and pick $\rho < t$. For $k$ large, 
$Z_\rho \i Z_k(t)$, so 
\begin{equation} \label{9.58}
L_k^\sharp(t) \supset 
\big\{ \lambda \wt\xi_k(y) \, ; \, (\lambda,y) \in [0,1] \times Z_\rho \big\},
\end{equation}
by \eqref{9.50}. We can even define a continuous mapping 
$h_k : T_\rho \to L_k^\sharp(t)$, by setting $h(z) = \lambda \wt\xi_k(y)$
when $z = \lambda y$ for some $\lambda \in [0,1]$ and $y\in Z_\rho$
(notice that $y$ can be computed from $z$, since it is its radial projection on
$L_\infty$).

Next let $\rho_1 < \rho$ and $\xi\in T_{\rho_1}$ be given. Notice that for
$k$ large and $z\in \d T_\rho$ (the boundary of $T_\rho$ in the space $V$),
\begin{equation} \label{9.59}
\dist(\pi_V(h_k(z)),\xi) \geq \dist(\pi_V(h_k(z)),T_{\rho_1})
\geq {1 \over 2} \dist(\d T_\rho,T_{\rho_1}) > 0
\end{equation}
because $\pi_V \circ h_k(z)$ tends to $z$ uniformly on $T_\rho$
(since $\wt h_k(y)$ tends to $y$ uniformly on $Z_\rho$).
The same argument also shows that
\begin{equation} \label{9.60}
\dist(w,\xi) \geq {1 \over 2}\dist(\d T_\rho,T_{\rho_1}) > 0
\end{equation}
for $w\in [z,\pi_V(h_k(z))]$ (and $z$ as above). This means that
there is a homotopy, from the identity on $\d T_\rho$ to the mapping
$\pi_V \circ h_k$ on $\d T_\rho$, among continuous mappings 
with values in $V$ and that do not take the value $\xi$. 

Denote by $\pi_\xi$ the radial projection, centered at $\xi$, that
maps any point $v\in V\sm \{ \xi \}$ to the point $\pi_\xi \in \d T_\rho$ 
that lies on the half line from $\xi$ through $v$. The mapping
$\pi_\xi \circ \pi_V \circ h_k$ is continuous, from $\d T_\rho$ to itself,
and had degree $1$ (when we identify $\d T_\rho$ with a $(d-1)$-sphere)
because it is homotopic to the identity.
This implies that it cannot be extended to a continuous mapping from
$T_\rho$ to $\d T_\rho$, and in turns this implies that
$\xi \in \pi_V \circ h_k(T_\rho)$
(otherwise, $\pi_\xi \circ \pi_V \circ h_k$ would be a continuous extension).

We proved that for $k$ large, $\pi_V \circ h_k(T_\rho)$ contains $T_{\rho_1}$, 
hence, by \eqref{9.58} $\pi_V(L_k^\sharp(t))$ contains $T_{\rho_1}$. Then
\begin{equation} \label{9.61}
m_k(t) = \H^d(L_k^\sharp(t)) \geq \H^d(\pi_V(L_k^\sharp(t)))
\geq \H^d(T_{\rho_1}) 
\end{equation}
by \eqref{7.7} and because $\pi_V$ is $1$-Lipschitz.
This is true for all choices of $\rho_1 < \rho < t$; when $\rho_1$
tends to $t$, the right-hand side of \eqref{9.61} tends to
$\H^d(T_{t}) = m_\infty(t)$ (by \eqref{7.7} again). Thus
\begin{equation} \label{9.62}
\liminf_{k \to +\infty} m_k(t) \geq m_\infty(t),
\end{equation}
which gives the second half of \eqref{9.49}.

Notice that (by the proof of \eqref{9.57}) the $m_k$ are uniformly 
bounded on $[0,r]$, $r < 1$. They also vanish on $[0,\tau)$, because 
since the $L_k$ satisfy \eqref{9.26}, $B(0,\tau)$ does not meet $L_k$
and $L_k^\sharp(t) = \emptyset$ for $t<\tau$; see below \eqref{9.46}.
Our conclusion \eqref{9.48} now follows from
the dominated convergence theorem, the definition \eqref{9.46}, 
\eqref{9.49}, and \eqref{7.7} (for $H_\infty$).
\end{proof}

By Lemma \ref{9.4}, \eqref{9.44}, and the definitions \eqref{7.9}
and its analogue for $F_\infty$, we get \eqref{9.47}. We then proceed 
as in the proof of \eqref{9.17}. We first apply Theorem \ref{t7.1} to $E_\infty$ 
with the gauge function $h_0$ (of course $L_\infty$ satisfies the conditions of 
Section \ref{S7}, and Remark \ref{t7.2} still allows us not to check \eqref{7.10});
then Theorem \ref{t7.1} says that $F_\infty$ is nondecreasing on $(0,1)$
and the argument of \eqref{9.18} yields that $F_\infty$ is constant on $(0,1)$. 

Now let us apply Theorem \ref{t1.3}, with $R_1 = 1$ and $R_0$ arbitrarily small. 
The main assumption is satisfied because $F_\infty$ is constant on $(0,1)$.
Notice that 
\begin{equation} \label{9.63}
\dist(0, L_\infty) \geq \tau
\end{equation}
because $\dist(0,L_k) \geq \tau$ by \eqref{9.26}, 
$\wt \xi_k(L_\infty) = L_k$ by \eqref{9.41}, and the $\wt \xi_k$ converge 
pointwise to the identity (see below \eqref{9.41}).

Set $A = B(0,1) \sm B(0,R_0)$, and denote by $X_\infty$ the cone over 
$A \cap E_\infty$ (as in \eqref{1.13}); we take $R_0 < \tau$, and then 
we know that $X_\infty$ is a coral minimal cone (with no boundary condition).
Set $A' = B(0,1) \sm \overline B(0,R_0)$, and let us check that
\begin{equation}\label{9.64}
A' \cap E_\infty = A' \cap \overline{X_\infty \sm S_\infty},
\end{equation}
where $S_\infty$ is the shade of $L_\infty$ seen from the origin 
(as in \eqref{7.12}).
By \eqref{1.12}, $H^d(A' \cap E_\infty \cap S_\infty) = 0$;
then each $x\in E_\infty \cap A'$ is the limit of a sequence $\{ x_j \}$ in
$E_\infty \sm S_\infty$ (recall that $E_\infty$ is coral). Obviously, $x_j \in A'$
for $j$ large, hence, by definition of $X_\infty$, $x_j \in X_\infty$; it follows that
$x\in A' \cap \overline{X_\infty \sm S_\infty}$. Conversely, if
$x\in A' \cap X_\infty \sm S_\infty$, \eqref{1.14} says that $x\in E_\infty$;
\eqref{9.64} follows.

Since \eqref{9.64} and \eqref{9.63} say that $E_\infty = X_\infty$ on 
$B(0,\tau) \sm \overline B(0,R_0)$,
and $X_\infty$ is a cone, we see that it does not depend on $R_0$ 
(provided that we take $R_0 < \tau$), and then (letting $R_0$ tend to $0$),
\begin{equation}\label{9.65}
B(0,1) \cap E_\infty = B(0,1) \cap \overline{X_\infty \sm S_\infty}.
\end{equation}
Here we took the intersection with $B(0,1)$, but we do not really need to: all our sets
are initially defined as subsets of $B(0,1)$. We still need to modify the sets 
$X_\infty$ and $E_\infty = \overline{X_\infty \sm S_\infty}$
a little to get the desired $X_0$ and $E_0$, because 
$E_\infty$ is minimal with a sliding boundary condition defined by $L_\infty$, 
while we promised in \eqref{9.31} that $E_0 = E_{0,k}$ would be sliding minimal 
with respect to an affine $d$-plane $L' = L'_k$ through $y_{0,k}$.

So we replace $X_\infty$ and $E_\infty$ with slightly different sets.
Recall from \eqref{9.37} that $L_\infty = L_0 + y_{0,\infty}$ and that
$y_{0,\infty}$ is the limit of the $y_{0,k}$; also, 
$|y_{0,\infty}| = \dist(0,L_\infty) \geq \tau$
because $|y_{0,k}| = \dist(0,L_k)$, by \eqref{9.41}, and by \eqref{9.63}. 
Then we can find numbers $\rho_k$, that tend to $1$, 
and linear isometries $I_k$, that converge to
the identity, so that $y_{0,k} = \rho_k I_k(y_{0,\infty})$. We set 
\begin{equation} \label{9.66}
X_0 = X_{0,k} = I_k(X_\infty), \, 
E_0 = E_{0,k} =  \rho_k I_k(E_\infty)
\ \text{ and } 
L' = L'_k = \rho_k I_k(L_\infty).
\end{equation}
Notice that $L'_k$ is a $m$-dimensional affine subspace that contains 
$y_{0,k}$, because $L_\infty$ contains $y_{0,\infty}$ (by \eqref{9.37}). 
We want to show that for $k$ large, these set satisfy the properties
announced in Corollary \ref{t9.3}.

For the special case of Corollary \ref{t9.3} where $L$ is an affine
space, we can assume that the $L_k$ are affine subspaces, 
all of the same dimension and parallel to the same vector space $L_0$.
For this case, we announced that we would take $L'_k=L_k$, so let us check 
that we can choose $\rho_k$ and $I_k$ so that this is the case. 
Recall that $y_{0,k}$ is the orthogonal projection of $0$ on $L_k$,
and hence, since the $L_k$ converge to $L_\infty$, $y_{0,\infty}$
is the projection of $0$ on $L_\infty$. We need to set 
$\rho_k = |y_{0,k}|/|y_{0,\infty}|$. If $y_{0,k}$ is collinear to $y_{0,\infty}$,
we just take $I_k$ to be the identity. Otherwise, let us first define $I_k$
on the $2$-plane $P_k$ that contains $y_{0,k}$ and $y_{0,\infty}$, so that it
preserves $P_k$ and maps $y_{0,\infty}$ to $y_{0,k} |y_{0,\infty}|/|y_{0,k}|$.
Then set $I_k(z) = z$ on $P_k^\perp$. It is easy to see that $I_k$
is an isometry; since it preserves $L_0 \i P_k^\perp$, the mapping
$\rho_k I_k$ sends $L_\infty$ to $L_k$, as needed.

The fact that $X_\infty$ and  $X_0$ are coral minimal cones (no boundary condition)
comes from our application of Theorem \ref{t1.3} (see below \eqref{9.63}).

For \eqref{9.29}, we need to check that if $m=d-1$,
$L'_k \cap B(0,99/100) \i X_0$, or equivalently (by \eqref{9.66}),
\begin{equation} \label{9.67}
L_\infty \cap B(0, 99\rho_k^{-1}/100) \i I_k^{-1}(X_0) = X_\infty.
\end{equation}
Recall from above \eqref{9.63} that we were able to apply Theorem \ref{t1.3}
to $E_\infty$, with $R_1=1$ and $R_0$ arbitrarily small; then \eqref{1.15} holds,
which says that $\H^d(S_\infty \cap B(0,1) \sm X_\infty) = 0$; 
since $S_\infty$ is a $d$-dimensional set 
(because $L_\infty$ does not contain the origin), we deduce from this that
$X_0$ contains $L_\infty \cap B(0,1)$ (recall that $X_\infty$ is closed too);
\eqref{9.67} (for $k$ large) follows. So \eqref{9.29} holds.

The next condition \eqref{9.31} is equivalent to the fact that
$E_\infty$ is a coral minimal set in $B(0,(1-\tau) \rho_k^{-1})$,
with boundary condition given by $L_\infty = \rho_k^{-1}I_k^{-1}(L')$,
and this follows from \eqref{9.38} as soon as $k$ is so large that
$(1-\tau) \rho_k^{-1} < 1-\tau/2$.

The next conditions \eqref{9.32} and \eqref{9.33} come from
the fact that $E_\infty$ is the limit, locally in $B(0,1)$, of the
$E_k$, and that $\rho_k I_k$ tends to the identity.
We are thus left with \eqref{9.34} to check. That is, we need to show that 
for $k$ large,
\begin{equation}\label{9.68}
|\H^d(B \cap E_k) - \H^d(B \cap E_{0,k})| \leq \tau
\end{equation}
for every ball $B = B(y,t)$ such that $B \i B(0,(1-\tau))$. We intend to proceed 
as in Lemma~\ref{t9.2}, and the main step is the following small lemma.

\begin{lem} \label{t9.5}
Denote by $H$ the set of pairs $(y,t)$, with $y\in \overline B(0,(1-\tau/2))$ 
and $t \geq 0$, such that $\overline B(y,t) \i \overline B(0,(1-\tau/2))$. 
We include $t=0$ (and set $B(x,0) = \emptyset$) to make sure that $H$ is compact.
Set
\begin{equation} \label{9.69}
h(y,t) = \H^d(E_\infty \cap B(y,t)) \ \text{ for } (y,t) \in H.
\end{equation}
Then $h$ is a continuous function on $H$.
\end{lem}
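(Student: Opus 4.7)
The plan is to reduce the continuity of $h$ to the fact that $\H^d(E_\infty \cap \partial B(y,t)) = 0$ for every admissible pair $(y,t)$, and then to deduce this vanishing from the structural description of $E_\infty$ obtained earlier in the section.

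First, recall from \eqref{9.65} that
\[
B(0,1) \cap E_\infty = B(0,1) \cap \overline{X_\infty \sm S_\infty},
\]
where $X_\infty$ is the coral minimal cone furnished by Theorem \ref{t1.3} (with no boundary) and $S_\infty$ is the shade of $L_\infty$. In particular, $E_\infty \cap B(0,1) \i X_\infty \cup S_\infty$. Both $X_\infty$ and $S_\infty$ are $d$-rectifiable cones centered at $0$ of locally finite $\H^d$-measure: for $X_\infty$ this holds because it is a minimal cone, and for $S_\infty$ this follows from the fact that $S_\infty \i L_\infty^\ast(R)$ for every $R>0$, the truncated cone over $L_\infty$, which is rectifiable of finite measure on bounded sets (as in the verification of \eqref{6.3}). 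Writing each of $X_\infty$ and $S_\infty$ as a countable union of closed rectifiable subcones $X_m$ of finite $\H^d$-measure (e.g. intersect with dyadic annuli and take closures), one fits $E_\infty$ into the framework of Lemma \ref{t9.2}.

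Next, as already invoked in the proof of Lemma \ref{t9.2}, Lemma 7.34 in \cite{Holder} asserts that any rectifiable cone of locally finite $\H^d$-measure charges no sphere; summing this over countably many $X_m$ gives $\H^d((X_\infty \cup S_\infty) \cap \d B(y,t)) = 0$ for every ball $B(y,t)$, and consequently
\[
\H^d(E_\infty \cap \d B(y,t)) = 0 \ \text{ for every } (y,t) \in H.
\]

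Finally, the continuity of $h$ on $H$ is routine from here. Let $(y,t) \in H$ and let $(y_j, t_j) \in H$ tend to $(y,t)$. For any $\varepsilon > 0$, the triangle inequality gives $B(y, t-\varepsilon) \i B(y_j, t_j) \i B(y, t+\varepsilon)$ for $j$ large, hence
\[
\H^d(E_\infty \cap B(y,t-\varepsilon)) \leq h(y_j,t_j) \leq \H^d(E_\infty \cap B(y,t+\varepsilon)).
\]
As $\varepsilon \to 0$, both bounds converge to $h(y,t)$ by monotone convergence and the vanishing $\H^d(E_\infty \cap \d B(y,t)) = 0$ just established (the case $t=0$ is trivial since $B(y,0) = \emptyset$). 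This yields $h(y_j,t_j) \to h(y,t)$, proving continuity. The only non-routine ingredient is the measure-zero statement on spheres, and it rests on the rectifiable-cone structure of $E_\infty$ coming from \eqref{9.65}.
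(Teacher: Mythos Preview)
Your proof is correct and follows essentially the same route as the paper: both arguments reduce continuity to the vanishing $\H^d(E_\infty \cap \d B(y,t)) = 0$, obtain this from Lemma~7.34 in \cite{Holder} via the rectifiable-cone structure of $E_\infty$ (the paper just writes ``as for \eqref{9.24} above'' where you spell out the use of \eqref{9.65}), and then conclude by a standard symmetric-difference/sandwich argument.
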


\begin{proof}
We prove the continuity of $h$ at the point $(y,t) \in H$. 
Suppose $\{ (y_j, t_j) \}$ is a sequence in $H$ that tends to $(y,t)$,
and denote by $\Delta_j$ the symmetric difference
\begin{equation} \label{9.70}
\Delta_j = [B(y_j,t_j) \sm B(y,t)] \cup [B(y,t) \sm B(y_j,t_j)];
\end{equation}
then
\begin{equation}  \label{9.71}
\limsup_{j \to \infty} |h(y,t)-h(y_j,t_j)|
\leq \limsup_{j \to \infty} \H^d(E_\infty \cap \Delta_j)
\leq \H^d(E_\infty \cap \d B(y,t))
\end{equation}
because if $V$ is any open neighborhood of $\d B(y,t)$, 
$\Delta_j$ is contained in $V$ for $j$ large (when $t=0$, we replace 
$\d B(y,t)$ by $\{ y \}$ to make this work).

But Lemma 7.34 in \cite{Holder} implies, as for \eqref{9.24} above, that
\begin{equation}\label{9.72}
\H^d(E_\infty \cap \d B(y,t)) = 0;
\end{equation}
thus $h(y,t)-h(y_j,t_j)$ tends to $0$ and the lemma follows.
\end{proof}

\ms
Let us return to the proof of \eqref{9.34}, fix a point $(y,t) \in H$, and prove that
\begin{equation} \label{9.73}
\lim_{k \to +\infty} \H^d(B(y,t) \cap E_k) = \H^d(B(y,t) \cap E_\infty)
= \lim_{k \to +\infty} \H^d(B(y,t) \cap E_{0,k}).
\end{equation}
The first part follows from \eqref{9.42}, \eqref{9.43},
and the fact that $\H^d(E_\infty \cap \d B(y,t)) = 0$ exactly as for
\eqref{9.44} or \eqref{9.14}. For the second part, we use the definition
\eqref{9.66} and get that
\begin{equation} \label{9.74}
B(y,t) \cap E_{0,k} = B(y,t) \cap \rho_k I_k(E_\infty)
= \rho_k I_k(B(y_k,t_k) \cap E_\infty),
\end{equation}
with $y_k = \rho_k^{-1} I_k^{-1}(y)$ and $t_k = \rho_k^{-1} t$.
Then 
\begin{equation} \label{9.75}
\H^d(B(y,t) \cap E_{0,k}) = \rho_k^d \H^d(B(y_k,t_k) \cap E_\infty),
\end{equation}
which tends to $\H^d(B(y,t) \cap E_{\infty})$ by Lemma \ref{t9.5}.
So \eqref{9.73} holds.

Next let $\tau > 0$ be given (as in the statement).
Since $H$ is compact and $h$ is continuous on $H$, there is a constant
$\eta > 0$ such that
\begin{equation} \label{9.76}
|h(y,t) - h(y',t')| \leq \tau/10
\ \text{ for $(y,t), (y',t') \in H$ such that } 
|y-y'|+|t-t'| \leq 5\eta.
\end{equation}
We may assume that $10 \eta < \tau$. Then let $Y$ be a finite
subset of $B(0,1)$ which is $\eta$-dense in $B(0,1)$, and
$T$ a finite subset of $[0,1]$ which is $\eta$ dense. Let us include
$t=0$ in $T$. Then let $H_0$ denote the set of pairs $(y,t)\in H$
such that $y\in Y$ and $t\in T$. By \eqref{9.73}, we get that for $k$ large
\begin{equation} \label{9.77}
|\H^d(B(y,t) \cap E_k) - \H^d(B(y,t) \cap E_\infty)|
+|\H^d(B(y,t) \cap E_\infty)-\H^d(B(y,t) \cap E_{0,k})| \leq \tau/10
\end{equation}
for $(y,t) \in H_0$. For \eqref{9.68}, we want a similar estimate for every
pair $(y,t)$ such that $B(y,t) \i B(0,1-\tau)$. Let us fix such a pair
and first try to choose pairs $(y',t_1), (y',t_2) \in H_0$, so that if we set
$B = B(y,t)$, $B_1 = B(y',t_1)$, and $B_2 = B(y',t_2)$, then
\begin{equation} \label{9.78}
B_1 \subset B \subset B_2 \ \text{ and } \ t_2-t_1 \leq 4 \eta.
\end{equation}
Let us take $y'\in Y$ such that $|y'-y| \leq \eta$. If 
$t > \eta$, take $t_1 \in T \cap [t-2\eta, t-\eta]$; otherwise,
take $t_1=0$.  In both cases, take $t_2 \in T \cap [t+\eta,t+2\eta]$.
With these choices, we get \eqref{9.78}, and also both $(y',t_j)$ lie in $H_0$.
Then for instance
\begin{equation} \label{9.79}
\H^d(B_1 \cap E_k) \leq \H^d(B \cap E_k) \leq \H^d(B_2\cap E_k),
\end{equation}
which by \eqref{9.77} yields
\begin{equation} \label{9.80}
\H^d(B_1 \cap E_\infty) - \tau/10
\leq \H^d(B \cap E_k) \leq \H^d(B_2\cap E_\infty) + \tau/10
\end{equation}
for $k$ large, and since
\begin{equation} \label{9.81}
\H^d(B_1 \cap E_\infty) \leq \H^d(B \cap E_\infty) 
\leq \H^d(B_2\cap E_\infty) \leq \H^d(B_1 \cap E_\infty) + \tau/10
\end{equation}
by \eqref{9.78} and \eqref{9.76}, we get that
\begin{equation} \label{9.82}
|\H^d(B \cap E_k) - \H^d(B \cap E_\infty)| \leq \tau/5
\end{equation}
for $k$ large. The same proof also yields
\begin{equation} \label{9.83}
|\H^d(B \cap E_{0,k}) - \H^d(B \cap E_\infty)| \leq \tau/5,
\end{equation}
and \eqref{9.68} follows. This completes our proof that for $k$ large,
the sets $X_{0,k}$ and $E_{0,k}$ satisfy all the properties \eqref{9.29}-\eqref{9.34} 
(relative to $E_k$ and $L_k$) that were required in the statement of 
Corollary \ref{t9.3}. This contradicts the initial definitions, and completes our proof
of Corollary~\ref{t9.3}. The additional statement below the corollary (concerning
the case of affine subspaces) was checked below \eqref{9.66}.
\qed

\ms
\begin{rem} \label{t9.6}
We could try to prove an analogue of Corollary~\ref{t9.3} when the origin
lies very close to the boundary set $L$, but if $L$ is $(d-1)$-dimensional
and without more precise assumptions on $L$ 
(for instance, uniform $C^1$ estimates, rather than bilipschitz, that say
that $L$ is close to an affine subspace), we will not get a good convergence of 
the functions $H_k$ to their analogue for $L_\infty$, as in Lemma \ref{t9.4} 
above, and then we shall not be able to show that $F_\infty$ is constant and
apply Theorem~\ref{t1.3} as above.

Even if we do (for instance, if $L$ is assumed to be an affine subspace),
we do not get the same conclusion as before, because we may get that
$y_{0,\infty} = 0$ and $L_\infty = L_0$, and then we cannot take 
$R_0 < \dist(0,L_\infty)$ to prove that $X_0$ is a minimal cone.
Instead, we only get the approximation of $E_k$ by a sliding minimal
cone, with boundary condition given by $L_0$.

We shall not try to pursue this here, because it seems as convenient,
when $0$ is very close to $L$, to consider balls centered at a point $x_0 \in L$,
try to deduce some interesting information on the density $\theta_{x_0}(r)$
from the assumption \eqref{9.28}, and then apply Proposition 30.3 in
\cite{Sliding} to show that $E$ is well approximated by a sliding minimal cone,
with boundary condition given by an affine subspace of the same dimension
as  the $L_0$ from \eqref{9.27}.
See the argument below \eqref{11.42} and the proof of Proposition \ref{t12.7}
for illustrations of this scheme.

Generally speaking, if we we have sets $L_j$ that are not necessarily a single
$m$-plane, and we still want an analogue of Theorem \ref{t9.1} 
where $\varepsilon$ depends only on $n$ and $d$, and not on the specific
choices of $L_j$ or $r_1$, we can always try to mimic the proof of
Theorem \ref{t9.1} or Corollary~\ref{t9.3}, but we will need to understand how the function $H$ depends on the specific $L_j$, and this seems easier to do 
on a case by case basis. 
Notice that when each $L_k$ in the proof above is composed of two planes 
that both tend to the same plane $L_\infty$, 
Lemma \ref{t9.4} fails in general.
\end{rem}

\ms
We relax a little and end this section with the variant of Theorem \ref{t9.1} 
that corresponds to an annulus.

\ms 
\begin{thm} \label{t9.7}
Let $U$ and the $L_j$ be as in Section \ref{S7},
and in particular, assume \eqref{2.1}, \eqref{2.2}, and \eqref{3.4}.
Let $r_0, r_1$ be such that $0 < r_0 < r_1$ and $B(0,r_1) \i U$,
and assume that almost every $r \in (r_0,r_1)$ admits a local retraction 
(as in Definition \ref{t3.1}).
For each small $\tau > 0$ we can find $\varepsilon > 0$,
which depends only on $\tau$, $n$, $d$, $U$, the $L_j$, 
$r_0$, and $r_1$, with the following property. 
Let $E$ be a coral sliding almost minimal set in $U$,  with sliding condition defined 
by $L$ and some nondecreasing gauge function $h$. Suppose that
\begin{equation} \label{9.84}
\text{ $B(0,r_1) \i U$ and $h(r_1) < \varepsilon$,}
\end{equation}
and  
\begin{equation} \label{9.85}
F(r_1) \leq F(r_0) + \varepsilon  < +\infty.
\end{equation}
Then there is a coral minimal set $E_0$ in $B(0,r_1)$, with 
sliding condition defined by $L$, such that
\begin{eqnarray} \label{9.86}
\text{ the analogue of $F$ for the set $E_0$ is constant on }(r_0,r_1),
\end{eqnarray}
\begin{equation} \label{9.87}
\text{the conclusions of Theorem \ref{t8.1} hold for $E_0$,
with the radii $R_0 = r_0$ and $R_1 = r_1$,}
\end{equation}
\begin{equation} \label{9.88}
\dist(y, E_0) \leq \tau r_1 \ \text{ for } y \in E\cap B(0,r_1),
\end{equation} 
\begin{equation} \label{9.89}
\dist(y,E) \leq \tau r_1 \ \text{ for } y \in E_0 \cap B(0,r_1),
\end{equation}
and
\begin{eqnarray} \label{9.90}
\av{\H^d(E \cap B(y,t)) - \H^d(E_0 \cap B(y,t))} \leq \tau r_1^d
\text{ for all} &&
\nonumber \\
&&\hskip-9cm
\ \text{ $y\in \R^n$ and $t>0$ such that }
B(y,t) \i B(0,(1-\tau)r_1) \sm B(0,(1+\tau)r_0). 
\end{eqnarray}
\end{thm}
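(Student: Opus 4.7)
The plan is to mimic the proof of Theorem \ref{t9.1} closely, replacing the interval $(0,r_1)$ by $(r_0,r_1)$ at every step. I argue by contradiction and compactness: assume the theorem fails, so for each $k \geq 1$ there is a counterexample $E_k \in SA^\ast M(U, L_j, h_k)$ satisfying \eqref{9.84} and \eqref{9.85} with $\varepsilon = 2^{-k}$, but for which no $E_0$ meeting the conclusion exists. After extracting a subsequence, $\{E_k\}$ converges locally in $U$ (in the Hausdorff distance $d_{x,r}$) to a closed set $E_0$, and I replace $E_0$ by its core $E_0^\ast$ so that the limit is coral.

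Exactly as in the proof of Theorem \ref{t9.1}, Theorem 10.8 of \cite{Sliding} implies that $E_0$ is a sliding $A$-almost minimal set in $U$, with the special gauge function $h_0$ of \eqref{9.10}. Theorem 10.97 and Lemma 22.3 of \cite{Sliding} (applied with $M$ arbitrarily close to $1$) give the semicontinuity estimates \eqref{9.11}-\eqref{9.12}, from which one deduces that
\begin{equation*}
\lim_{k \to +\infty} \H^d(E_k \cap B(0,r)) = \H^d(E_0 \cap B(0,r))
\end{equation*}
for almost every $r \in (0,r_1)$. Since the correcting function $H(r)$ in the definition of $F$ depends only on the $L_j$ and not on the approximating set, this yields the corresponding convergence $F_k(r) \to F_\infty(r)$ at a.e.\ $r \in (0,r_1)$, where $F_\infty$ is the analogue of $F$ for $E_0$.

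Applying Theorem \ref{t7.1} to $E_0$ with gauge $h_0$ (using Remark \ref{t7.2} to bypass the hypothesis \eqref{7.10}, noting that $A(r)$ vanishes on $(0,r_1)$), I obtain that $F_\infty$ is nondecreasing on $(0,r_1)$. Choosing Lebesgue points $r \in (r_0-\delta, r_0)$ and $s \in (r_1 - \delta, r_1)$ of $F_\infty$ with $\delta$ arbitrarily small and passing to the limit in \eqref{9.85} gives $F_\infty(s) \leq F_\infty(r) + 0$; combined with monotonicity, this forces $F_\infty$ to be constant on $(r_0,r_1)$, establishing \eqref{9.86}. Then Theorem \ref{t8.1} applied with $R_0 = r_0$ and $R_1 = r_1$ yields \eqref{9.87}: in particular, on the annulus $A = B(0,r_1) \sm \overline B(0,r_0)$, the set $E_0$ coincides (modulo an $\H^d$-null set) with $X \sm S$, where $X$ is the cone over $A \cap E_0$ and $S$ is the shade of $L'$.

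Conditions \eqref{9.88} and \eqref{9.89} follow from the Hausdorff convergence $d_{0,r_1}(E_k, E_0) \to 0$. For the measure estimate \eqref{9.90}, I invoke Lemma \ref{t9.2} applied inside the open annulus $W = B(0,(1-\tau/2)r_1) \sm \overline B(0,(1+\tau/2)r_0)$; the structural hypothesis of the lemma is satisfied because in $W$, the set $E_0$ lies in the countable union of closed rectifiable cones consisting of $X$ (which has locally finite $\H^d$-measure by the cone structure and the Ahlfors regularity of $E_0$, as in \eqref{9.25}) together with pieces of $L^\ast$ of finite measure via \eqref{6.3}. The uniformity argument at the end of the proof of Lemma \ref{t9.2} then gives \eqref{9.90} uniformly for all balls $B(y,t) \i B(0,(1-\tau)r_1) \sm \overline B(0,(1+\tau)r_0)$, for $k$ large. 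This contradicts the choice of $E_k$ and completes the proof.

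The main obstacle, and the only genuine difference with Theorem \ref{t9.1}, is handling the inner boundary sphere $\d B(0,(1+\tau)r_0)$: we cannot apply Lemma \ref{t9.2} on a closed ball whose interior meets $B(0, r_0)$ (since there $F_\infty$ need not be constant and $E_0$ need not be conical). The fix is to work in the slightly larger annulus $W$ above and to invoke the lemma there, using that the required rectifiable cone structure of $E_0$ holds throughout $W$; this is routine but needs to be laid out with care.
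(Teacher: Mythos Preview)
Your approach is essentially the same as the paper's: contradiction and compactness, pass to a limit $E_0$, use Theorem 10.8 and the semicontinuity lemmas from \cite{Sliding} to show $E_0$ is sliding minimal, prove $F_\infty$ is constant on $(r_0,r_1)$, apply Theorem \ref{t8.1}, and then handle \eqref{9.90} using the cone structure only in the annulus. The paper says exactly this in one short paragraph.

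There is one small slip in your constancy step. You pick Lebesgue points $r \in (r_0-\delta, r_0)$, but you cannot pass to the limit in \eqref{9.85} that way: there is no control relating $F_k(r_0)$ to $F_k(r)$ for $r<r_0$, and you only have monotonicity of $F_\infty$ on $(r_0,r_1)$ (not on $(0,r_1)$), since the local retraction hypothesis is only assumed on $(r_0,r_1)$. The paper instead uses the semicontinuity estimates directly: \eqref{9.11} with $V=B(0,r_1)$ gives $F_\infty(r_1)\le\liminf F_k(r_1)$, then \eqref{9.85} and \eqref{9.12} with $K=\overline B(0,r_0)$ give $\limsup F_k(r_0)\le r_0^{-d}[\H^d(E_0\cap\overline B(0,r_0))+H(r_0)]\le (r/r_0)^d F_\infty(r)$ for any $r>r_0$; combining, $F_\infty(r_1)\le F_\infty(r)$ for all $r\in(r_0,r_1)$, hence constancy. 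So pick $r$ slightly \emph{above} $r_0$, not below, or just quote the semicontinuity argument directly.

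Your treatment of \eqref{9.90} is fine in spirit; note that Lemma \ref{t9.2} is literally stated for a ball, so you are really reusing its proof (the uniformity argument via Lemma \ref{t9.5} and the vanishing of $\H^d(E_0\cap\partial B)$) restricted to balls contained in the annulus. The paper glosses over this in the same way.
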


\ms
There would probably be a way to state Theorem \ref{t9.7} so that
$\varepsilon$ does not depend on $r_0$, but we shall not do it.
Also, we shall not try to generalize Corollary \ref{t9.3} to 
the case of an annulus.

\ms
The proof is almost the same as for  Theorem \ref{t9.1}.
We change nothing up to \eqref{9.18}, which we replace with
\begin{eqnarray} \label{9.91}
F_\infty (r_1) &=& r_1^{-d} \H^d(E_0 \cap B(0,r_1)) + r_1^{-d} H(r_1)
\leq \liminf_{k \to +\infty} \H^d(E_k \cap B(0,r_1)) + r_1^{-d} H(r_1)
\nn\\
&=& \liminf_{k \to +\infty} F_k(r_1)
\leq \liminf_{k \to +\infty} (F_k(r_0)+2^{-k})
= \liminf_{k \to +\infty} F_k(r_0)
\nn\\
&=& r_0^{-d} H(r_0) + r_0^{-d} \liminf_{k \to +\infty} \H^d(E_k \cap B(0,r_0))
\end{eqnarray}
by \eqref{9.11} and \eqref{9.85}. But for $r_0 < r$,
$H(r_0) \leq H(r)$ and 
\begin{eqnarray} \label{9.92}
\liminf_{k \to +\infty} \H^d(E_k \cap B(0,r_0))
&\leq& \limsup_{k \to +\infty} \H^d(E_k \cap \overline B(0,r_0))
\leq \H^d(E_0 \cap \overline B(0,r_0)) 
\nn\\
&\leq& \H^d(E_0 \cap B(0,r)),
\end{eqnarray}
by \eqref{9.12}, so $F_{\infty}(r_1) \leq F_\infty(r)$ for
$r_0 < r < r_1$. The fact that $F_\infty$ is nondecreasing on $(r_0,r_1)$
is proved as before, so we get that $F_\infty$ is constant on 
$(r_0,r_1)$, as in \eqref{9.17}. The rest of the argument is as before;
we only prove \eqref{9.90} for balls that do not meet 
$B(0,(1+\tau)r_0)$, because we use the fact that in the annulus $A$, 
$E$ is contained in the cone $X$. 
\qed

\section{Simple properties of minimal cones}
\label{S10}
Before we start proving the application mentioned in the introduction, we shall
introduce some properties of minimal cones (even, without sliding boundary
condition), that will be used in the proofs. 

Let us denote by $MC = MC(n,d)$ the set of coral minimal sets of dimension
$d$ in $\R^n$, which are also cones centered at the origin. 

The set $MC(n,d)$ is only known explicitly when $d=1$
(and then $MC$ is composed of lines and sets $Y \in \bY_0(n,1)$; see 
the definition above \eqref{1.28}), and when $d=2$ and $n=3$,
where $MC(3,2)$ is composed of $2$-planes, sets $Y \in \bY_0(3,2)$,
and cones of type $\bT$ (our name for a cone over the union of the edges of a regular
tetrahedron centered at the origin); see for instance
\cite{Mo}. Even $MC(4,2)$ is not known explicitly, but at least we have a rough 
description of the minimal cones of $MC(n,2)$ for all $n > 3$.

For $X\in MC(n,d)$, the density of $X$ is
\begin{equation} \label{10.1}
d(X) = \H^d(X\cap B(0,1)).
\end{equation}
Notice that by the monotonicity of density (see near \eqref{1.8}),
we get that for $X\in  MC(n,d)$, $x\in \R^n$ and $r > 0$,
\begin{eqnarray} \label{10.2}
r^{-d} \H^d(X\cap B(x,r)) &\leq& \lim_{\rho \to +\infty} r^{-d} \H^d(X\cap B(x,r))
\leq \lim_{\rho \to +\infty} r^{-d} \H^d(X\cap B(0,r+|x|))
\nn\\
&=& \lim_{\rho \to +\infty} r^{-d} (r+|x|)^d d(X) = d(X).
\end{eqnarray}
Denote by 
\begin{equation} \label{10.3}
\omega_d = \H^d(\R^d \cap B(0,1))
\end{equation}
the $\H^d$-measure of the unit ball in $\R^d$.
Since we know that the minimal sets are rectifiable, and also that
\begin{equation} \label{10.4}
\lim_{r \to 0} r^{-d} \H^d(X\cap B(x,r)) = \omega_d
\end{equation}
for $\H^d$-almost every point $x$ of a rectifiable set $X$
(see for instance the easy part of Theorem~16.2 in \cite{Mattila}), 
we get that $d(X) \geq \omega_d$
for $X\in MC(n,d)$. We can then try to classify the minimal cones by their density.
The beginning is easy. To prove that
\begin{equation} \label{10.5}
\text{if $X \in MC(n,d)$ and $d(X) \leq \omega_d$, then $X$ is a $d$-plane,}
\end{equation}
one observes that if $d(X) = \omega_d$, then \eqref{10.2} is an identity
for almost-every point $x\in X$. A close look at the proof of the monotonicity
of density then shows (with some effort but no surprise) that $X$ is also a cone
centered at $x$, and it is then easy to conclude. Here is a quantitative
(but not explicit) version of this.

\begin{lem} \label{t10.1}
For each choice of integers $0 < d < n$, there is a constant $d(n,d) > \omega_d$
such that $d(X) \geq d(n,d)$ for $X \in MC(n,d) \sm \bP_0(n,d)$.
\end{lem}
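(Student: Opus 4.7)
The plan is to argue by contradiction and compactness. Suppose the lemma fails. Then for each $k\geq 1$ we can find $X_k \in MC(n,d) \setminus \bP_0(n,d)$ with $d(X_k) \leq \omega_d + 2^{-k}$. Since the $X_k$ are cones centered at $0$, the local Ahlfors regularity bound \eqref{2.9} (together with \eqref{10.2}) gives uniform control on $\H^d(X_k\cap B(0,R))$ for every $R$, and classical compactness for closed sets with respect to the local Hausdorff distance lets us extract a subsequence converging locally to some closed cone $X_\infty \subset \R^n$.

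Next I would promote the Hausdorff limit to a measure-theoretic one. Since each $X_k$ is sliding minimal with no boundary and gauge function $h\equiv 0$, the limiting results used in Section \ref{S9} (Theorem 10.8 and the semicontinuity statements Theorems 10.97 and 22.3 / Lemma 22.3 of \cite{Sliding}) apply and show that $X_\infty$ is itself a coral minimal set, necessarily a cone. Moreover, arguing as for \eqref{9.14} and using the fact that $\H^d(X_\infty\cap\d B(0,1))=0$ (Lemma~7.34 of \cite{Holder} applied to the rectifiable cone $X_\infty$), we obtain
\begin{equation*}
d(X_\infty) = \H^d(X_\infty\cap B(0,1)) = \lim_{k\to\infty} \H^d(X_k\cap B(0,1)) = \lim_{k\to\infty} d(X_k) = \omega_d.
\end{equation*}
By \eqref{10.5} this forces $X_\infty \in \bP_0(n,d)$, so $X_\infty$ is a $d$-plane $P$.

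The final step, which I expect to be the main obstacle, is to derive a contradiction with $X_k\notin\bP_0(n,d)$. The idea is that a coral minimal cone which is close enough to a $d$-plane must equal that $d$-plane. Concretely, for $k$ large, $d_{0,2}(X_k,P)$ is arbitrarily small and $\H^d(X_k\cap B(0,2))$ is arbitrarily close to $2^d\omega_d$; one can then invoke an $\varepsilon$-regularity theorem of Allard type (see \cite{Allard}, or equivalently a Reifenberg-style flatness result such as the one behind Proposition~7.24 of \cite{Holder}) to conclude that $X_k$ coincides in $B(0,1)$ with a $C^{1,\alpha}$ graph over $P$. Because $X_k$ is a cone centered at the origin, this $C^1$ smoothness at $0$ forces the graph to be linear, and hence $X_k \cap B(0,1) = P\cap B(0,1)$; since $X_k$ is a closed coral cone, $X_k = P\in\bP_0(n,d)$, contradicting the choice of $X_k$.

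Thus the supremum $d(n,d) := \inf\{d(X) : X\in MC(n,d)\setminus \bP_0(n,d)\}$ is strictly greater than $\omega_d$, proving the lemma. The only non-routine input is the flatness-implies-regularity statement in the last paragraph; everything else is a direct application of the compactness and semicontinuity machinery already recorded in Sections \ref{S2} and \ref{S9}.
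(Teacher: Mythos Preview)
Your proof is correct and follows essentially the same route as the paper's: a contradiction-and-compactness argument producing a limiting minimal cone $X_\infty$ with $d(X_\infty)=\omega_d$, hence a plane by \eqref{10.5}, and then Allard/Almgren $\varepsilon$-regularity to force each $X_k$ to be $C^1$ near the origin, hence a plane. The only minor difference is that you work a bit harder than necessary to get the exact limit $d(X_\infty)=\omega_d$ (the paper just uses lower semicontinuity to get $\leq\omega_d$, the reverse inequality being automatic), and you cite \cite{Sliding} where the paper cites \cite{limits}, but these are inessential.
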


\begin{proof}
We prove this by contradiction and compactness. 
Suppose that for each integer $k \geq 0$ we can find 
$X_k \in MC(n,d) \sm \bP_0(n,d)$
such that $d(X) \leq \omega_d + 2^{-k}$. We may replace $\{ X_k \}$
with a subsequence for which the $X_k$ converge to a limit $X_\infty$,
i.e., that $d_{0,N}(X_k, X_\infty)$ tends to $0$ for each integer $N$
(see the definition \eqref{1.29}).
Since all the $X_k$ are cones, $X_\infty$ is a cone too. By Theorem 4.1
in \cite{limits} (with $\Omega = \R^n$, $M=1$ and $\delta = +\infty$), 
$X_\infty$ is a coral minimal set in $\R^n$. By the lowersemicontinuity of
$\H^d$ along that sequence (for instance Theorem~3.4 in \cite{limits}),
$d(X_\infty) \leq \liminf_{k \to +\infty} d(X_k) = \omega_d$. Since
$d(X_\infty) \geq \omega_d$ anyway, \eqref{10.5} says that 
$X_\infty \in \bP_0(n,d)$. Since $d_{0,2}(X_k, X_\infty)$ tends to $0$,
$X_k$ is arbitrarily close to a plane in $B(0,1)$. In addition, by assumption
$\H^d(X_k \cap B(0,1)) \leq \omega_d + 2^{-k}$. We may now apply
either Almgren's regularity result for almost minimal sets 
\cite{AlmgrenMemoir}, 
or Allard's regularity result for stationary varifolds \cite{Allard}
(whichever the reader finds easiest), and get that for $k$ large,
$X_k$ is $C^1$ near the origin. But $X_k$ is a cone, and this means that
$X_k$ is a plane. This proves the lemma.
\end{proof}

The following consequence of Lemma \ref{t10.1} will be used in the next section.

\begin{cor} \label{t10.2}
Let $L$ be a vector space of dimension $m < d$ in $\R^n$, and let
$E$ be a coral sliding minimal cone, with boundary condition given by $L$.
Suppose that $\H^d(E \cap B(0,1)) \leq {\omega_d \over 2}$. Then
$m=d-1$ and $E$ is a half $d$-plane bounded by $L$.
\end{cor}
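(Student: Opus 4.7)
The strategy is to apply Theorem~\ref{t1.2} at an off-center point (translating the data to reduce to the origin-centered statement) and then extract the structure of $E$ from its equality case, Theorem~\ref{t8.1}. Assume $E\neq\emptyset$. Because $E$ is coral of dimension $d$ and $\H^d(L)=0$, there is a point $x\in E\setminus L$ with $x\neq 0$; set $h=\dist(x,L)>0$, and let $S_x=\{y:x+\lambda(y-x)\in L\text{ for some }\lambda\in(0,1]\}$ be the shade of $L$ seen from $x$. After translating by $-x$, Theorem~\ref{t1.2} gives the monotonicity of
\[
F_x(r)=r^{-d}\bigl[\H^d(E\cap B(x,r))+\H^d(S_x\cap B(x,r))\bigr].
\]

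I now compare the limits of $F_x$ at $0$ and at $\infty$. For $r<h$ the shade $S_x$ and boundary $L$ do not meet $B(x,r)$, so $E$ is plain minimal there, and Lemma~\ref{t10.1} applied to a (nonempty) tangent cone of $E$ at $x$ gives $F_x(0^+)=\theta_x(0)\geq\omega_d$. As $r\to\infty$, the cone structure of $E$ at $0$ forces $r^{-d}\H^d(E\cap B(x,r))\to d(E)\leq\omega_d/2$, while $r^{-d}\H^d(S_x\cap B(x,r))\to\omega_d/2$ when $m=d-1$ (in which case $S_x$ is the closure of a half $d$-plane bounded by $L$ on the opposite side of $L$ from $x$) and $\to 0$ when $m\leq d-2$ (dimension deficit). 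If $m\leq d-2$, the chain $\omega_d\leq F_x(0^+)\leq F_x(\infty)\leq\omega_d/2$ is absurd, so $m=d-1$. In that case the chain $\omega_d\leq F_x(0^+)\leq F_x(\infty)\leq d(E)+\omega_d/2\leq\omega_d$ collapses to equalities, forcing $d(E)=\omega_d/2$ and $F_x\equiv\omega_d$ on $(0,\infty)$.

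With $F_x$ constant on $(0,\infty)$, I apply Theorem~\ref{t8.1} centered at $x$ on each annulus $A=B(x,R_1)\setminus\overline{B(x,R_0)}$ with $0<R_0<h<R_1<\infty$: since $R_0<\dist(x,L)$, the cone $X$ from $x$ over $A\cap E$ is a plain coral minimal cone, and $\H^d(S_x\cap B(x,R_1)\setminus X)=0$; this upgrades to the set-inclusion $S_x\cap B(x,R_1)\subset X$ because $S_x$ is closed with positive $\H^d$-density everywhere on itself. A short computation using the cone identity $\H^d(X\cap B(x,r))=r^d\,d(X)$, the identification $E\cap A=(X\setminus S_x)\cap A$ modulo null (from Theorem~\ref{t8.1}), and $F_x\equiv\omega_d$ yields $d(X)=\omega_d$, whence $X$ is a $d$-plane by Lemma~\ref{t10.1}, necessarily $X=L+\R x$ since it contains both $L$ and $x\notin L$. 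Writing $H_x:=X\setminus S_x$ for the half of $X$ bounded by $L$ containing $x$, Theorem~\ref{t8.1} gives $E\cap A=H_x\cap A$ modulo null; letting $R_0\to 0^+$ and $R_1\to\infty$, $E$ and $H_x$ agree on $\R^n\setminus\{x\}$ up to a null set, and since both are closed coral $d$-sets of positive density everywhere, $E=H_x$, a half $d$-plane bounded by $L$.

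The main obstacle is conceptual rather than technical: Theorems~\ref{t1.2} and~\ref{t8.1} are formulated with balls centered at the origin, while I apply them at the off-center point $x\neq 0$. This is resolved by the translation $-x$, under which $E-x$ becomes a sliding minimal set with boundary $L-x$ (still an affine subspace of dimension $m\leq d-1$, with the easy Lipschitz structure needed for \eqref{2.1}--\eqref{3.4} and for the existence of local retractions), so both theorems apply verbatim in the translated frame. The only routine computation is the asymptotic $r^{-d}\H^d(S_x\cap B(x,r))\to\omega_d/2$ when $m=d-1$, which follows from $S_x$ being the closure of a half $d$-plane at bounded distance $h$ from $x$.
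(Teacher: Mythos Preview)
Your proof is correct and follows essentially the same approach as the paper's: translate to center at $x\in E\setminus L$, apply the monotonicity of $F_x$ (Theorem~\ref{t1.2}) to squeeze $F_x$ between its values at $0$ and $\infty$, deduce that $F_x$ is constant, and then invoke the equality case (Theorem~\ref{t8.1}/\ref{t1.3}) to identify the cone $X$ as a $d$-plane containing $L$. The only notable difference is how the lower bound $F_x(0^+)\geq\omega_d$ is obtained: the paper selects $x$ to be a rectifiability point where \eqref{10.4} gives $\theta_x(0)=\omega_d$ exactly, whereas you take an arbitrary $x\in E\setminus L$ and appeal to the density of a blow-up cone (for which the relevant fact is the inequality $d(X)\geq\omega_d$ discussed just before \eqref{10.5}, rather than Lemma~\ref{t10.1} itself); both routes are valid.
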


\begin{proof}
Of course we assume that $E \neq \emptyset$. Then, since $E$ is 
coral and rectifiable (by \eqref{2.11}), we can find $x\in E \sm L$ 
such that \eqref{10.4} holds.
Let us apply Theorems \ref{t1.2} and \ref{t1.3} to the set $E_x = E - x$, 
which is sliding minimal with a boundary condition coming from $L_x = L - x$. 
We take $U = \R^n$, $R_0 = 0$, and $R_1 = +\infty$ (or arbitrarily large).
Theorem \ref{t1.2} says that the function $F$ defined by \eqref{1.10} is nondecreasing.
But 
\begin{equation} \label{10.6}
\lim_{r \to 0} F(r) = \lim_{r \to 0} r^{-d} \H^d(E_x\cap B(0,r))
= \lim_{r \to 0} r^{-d} \H^d(E\cap B(x,r)) = \omega_d
\end{equation}
by \eqref{1.10}, because $\dist(0,L_x) = \dist(x,L) > 0$, and by \eqref{10.4},
while 
\begin{eqnarray} \label{10.7}
\lim_{r \to +\infty} F(r) 
&=& \lim_{r \to +\infty} r^{-d} [\H^d(S \cap B(0,r)) + \H^d(E_x\cap B(0,r))]
\nn\\
&=& {\omega_d \over 2}  +\lim_{r \to 0} r^{-d} \H^d(E_x\cap B(0,r)) 
\nn\\
&\leq& {\omega_d \over 2}  +\lim_{r \to 0} r^{-d} \H^d(E\cap B(0,r+|x|))
\leq \omega_d
\end{eqnarray}
by \eqref{1.10}, where $S$ is the shade of $L_x$ as in \eqref{1.9}, 
because $B(x,r) \i B(0,r+|x|)$, and by assumption.

So $F$ is constant on $(0,+\infty)$, and Theorem \ref{t1.3} applies to any interval.
Notice that $\dist(0,L_x) >0$; so we get a coral minimal cone $X$ (no boundary condition)
such that $X\sm S \subset E_x$ (as in \eqref{1.14}). Then
\begin{eqnarray} \label{10.8}
d(X) &=& \lim_{r \to +\infty}  r^{-d} \H^d(X\cap B(0,r))
\nn\\
&\leq& \liminf_{r \to +\infty}  r^{-d} [\H^d(E_x\cap B(0,r))+\H^d(S\cap B(0,r))]
= \lim_{r \to +\infty} F(r) = \omega_d, 
\end{eqnarray}
and by \eqref{10.5}, $X$ is a $d$-plane. 
Notice that if $m < d-1$ (the dimension of $L$),
the proof of \eqref{10.8} even gives that $d(X) \leq {\omega_d \over 2}$, 
which is impossible.

In addition, \eqref{1.15} tells us that $\H^d(S \sm X) = 0$, which implies
that $S \subset X$ (because $X$ is closed and $S$ is $d$-dimensional). So
$X$ is the $d$-plane that contains $S$. We know from \eqref{1.14} that
$X \sm S \subset E_x$, and the definition \eqref{1.13} says that $E_x \i X$.
Since in addition $\H^d(E_x \cap S) = 0$ by \eqref{1.12}, we get that
$E_x = (X \sm S) \cup L_x$, and $E$ is a half plane bounded by $L$,
as announced.
\end{proof}

For Section \ref{S12} we will need to restrict to dimensions $n$ and $d$ such that
\begin{equation} \label{10.9}
\text{$d(X) > {3 \omega_d \over 2}$ 
for $X\in MC(n,d) \sm [\bP_0(n,d) \cup \bY_0(n,d)]$},
\end{equation}
i.e., when $X\in MC(n,d)$ is neither a vector $d$-plane nor a cone of type $\bY$
(see the definitions above \eqref{1.28}).

The author does not know for which values of $n$ and $d$ this assumption
is satisfied. When $d=1$, \eqref{10.9} holds trivially because 
$MC(n,1) = \bP_0 \cup \bY_0$. When $d=2$ and $n=3$,
it follows from the explicit description of $MC(3,2)$ as the union of
$\bP_0$, $\bY_0$, and the cones of type $\bT$.
When $d=2$ and $n > 3$ we also get in Proposition 14.1 of \cite{Holder} a 
description of the minimal cones that implies \eqref{10.9}.
In all these cases, there is even a constant $d_{n,d} >  {3 \omega_d \over 2}$
such that 
\begin{equation} \label{10.10}
d(X) \geq d_{n,d} \ \text{ when } X \in MC(n,d)\sm (\bP_0 \cup \bY_0).
\end{equation}
See Lemma 14.2 in \cite{Holder} for the last case when $n > 3$.

Finally, we claim that \eqref{10.9} probably holds when $d=n-1$ and $n \leq 6$.
The proof is written in Lemmas 2.2 and 2.3 of \cite{Luu1},
in the special case of $d=3, n=4$, and Luu uses a result of
Almgren \cite{Almgren1} that says that the only minimal cones of dimension $3$
in $\R^4$ whose restriction to the unit sphere are smooth hypersurfaces are 
the $3$-planes. The same proof should work in codimension 1 when $n \leq 6$,
with a dimension reduction argument, and starting
from the generalization of Almgren's result by Simons \cite{Simons}.
But even when $d=3$ and $n=4$, it does not seem to be known whether
the analogue of \eqref{10.10} holds for some constant 
$d_{n,d} > {3 \omega_d \over 2}$.

Let us check that the assumption \eqref{10.9} implies an apparently slightly 
stronger one.

\ms
\begin{lem} \label{t10.3}
If $n$ and $d$ are such that \eqref{10.9} holds, then for 
for each small $\tau > 0$ we can find $\eta > 0$ such that if 
$X \in MC(n,d)$  is such that $d(X) \leq {3 \omega_d \over 2} + \eta$,
then either $X \in \bP_0$ or else there is a cone $Y \in \bY_0$ such that 
$d_{0,1}(X,Y) \leq \tau$ (where $d_{0,1}$ is as in \eqref{1.29}).
\end{lem}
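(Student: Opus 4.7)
The plan is to prove this by contradiction and compactness, following the same template as Lemma \ref{t10.1}. Suppose the conclusion fails for some $\tau > 0$; then for each $k \geq 1$ we can find $X_k \in MC(n,d)$ with $d(X_k) \leq {3 \omega_d \over 2} + 2^{-k}$, such that $X_k \notin \bP_0$ and $d_{0,1}(X_k, Y) > \tau$ for every $Y \in \bY_0$. Up to extracting a subsequence, the $X_k$ converge locally in Hausdorff distance to a closed set $X_\infty$, and since each $X_k$ is a cone, so is $X_\infty$. By Theorem 4.1 in \cite{limits} (applied with $\Omega = \R^n$, $M=1$, $\delta = +\infty$, and the zero gauge), $X_\infty$ is a coral minimal set in $\R^n$, i.e.\ $X_\infty \in MC(n,d)$. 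The lower semicontinuity of Hausdorff measure along converging sequences of almost minimal sets (Theorem~3.4 in \cite{limits}) gives $d(X_\infty) \leq \liminf_k d(X_k) \leq {3\omega_d \over 2}$. The assumption \eqref{10.9} then forces $X_\infty \in \bP_0 \cup \bY_0$.

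Next I would split into two cases. If $X_\infty \in \bY_0$, then $d_{0,1}(X_k,X_\infty) \to 0$ by construction, and for $k$ large enough the cone $Y = X_\infty \in \bY_0$ contradicts $d_{0,1}(X_k,Y) > \tau$. The interesting case is $X_\infty \in \bP_0$. There I would use upper semicontinuity of Hausdorff measure for almost minimal sets (Lemma 22.3 in \cite{Sliding}, as applied below \eqref{9.11}), which in combination with the lower semicontinuity forces $d(X_k) \to d(X_\infty) = \omega_d$. Since also $d_{0,1}(X_k, X_\infty) \to 0$ and $X_\infty$ is a $d$-plane, for $k$ large $X_k$ is arbitrarily flat in $B(0,1)$ with density arbitrarily close to $\omega_d$; Almgren's $\varepsilon$-regularity result for restricted sets (\cite{AlmgrenMemoir}), or equivalently Allard's regularity theorem for stationary varifolds (\cite{Allard}), then implies that $X_k$ is a $C^1$ submanifold near the origin. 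Since $X_k$ is a cone centered at $0$, this forces $X_k$ to be a $d$-plane, contradicting $X_k \notin \bP_0$.

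Both cases lead to a contradiction, proving the lemma. The only step that requires minor care is the two-sided control on $\H^d(X_k \cap B(0,1))$ needed to apply the regularity theorem in the case $X_\infty \in \bP_0$; this is not really an obstacle, since exactly the same argument is used implicitly in the proof of Lemma \ref{t10.1}, and the quoted limit theorems from \cite{limits} and \cite{Sliding} apply verbatim here because the $X_k$ are genuine minimal sets (no boundary, zero gauge). No explicit constant is obtained, as in Lemma \ref{t10.1}; if \eqref{10.10} were known in the given dimensions, one could avoid the compactness and get a quantitative $\eta = \eta(\tau)$ directly, but in general we rely on the qualitative compactness argument above.
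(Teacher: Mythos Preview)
Your proof is correct, but it takes a slightly longer route than the paper's. The key difference is that the paper first observes that since each $X_k$ lies neither in $\bP_0$ nor in $\bY_0$ (the latter because $d_{0,1}(X_k,Y) > \tau$ for every $Y\in\bY_0$), assumption \eqref{10.9} immediately gives $d(X_k) \geq {3\omega_d\over 2}$. Combined with the upper bound $d(X_k) \leq {3\omega_d\over 2} + 2^{-k}$ and the two-sided semicontinuity (Theorem~3.4 in \cite{limits} and Lemma~3.12 in \cite{Holder}), this forces $d(X_\infty) = {3\omega_d\over 2}$ exactly, so $X_\infty \in \bY_0$ directly and the case $X_\infty \in \bP_0$ never arises.

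You instead only conclude $d(X_\infty) \leq {3\omega_d\over 2}$ from lower semicontinuity alone, land in $\bP_0 \cup \bY_0$, and then dispose of the $\bP_0$ case by rerunning the Allard/Almgren regularity argument of Lemma~\ref{t10.1}. That works, but it invokes a deep regularity theorem where a one-line density lower bound suffices. The paper's approach is therefore shorter and more self-contained; your approach has the mild advantage of not needing the upper-semicontinuity lemma from \cite{Holder} in the $\bY_0$ case, but you end up using it anyway in the $\bP_0$ branch, so there is no real gain.
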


\begin{proof}
The standard proof is the same as for the first part of Lemma \ref{t10.1}.
We suppose that this fails for some $\tau > 0$, and take a sequence $\{ X_k \}$ 
in $MC(n,d)$ such that $d(X_k) \leq {3 \omega_d \over 2} +2^{-k}$, 
but $X_k \notin \bP_0$ and there is no cone $Y\in \bY_0$ as in the statement.
By \eqref{10.9}, $d(X_k) \geq {3 \omega_d \over 2}$.

Then extract a subsequence that converges to a limit $X_\infty$.
Observe that $X_\infty \in MC(n,d)$ by Theorem 4.1 in \cite{limits}, that
$X_\infty$ is a coral cone (as a limit of coral cones), and
that $d(X_\infty) = \lim_{k \to + \infty}d(X_k) = {3 \omega_d \over 2}$ 
by Theorem 3.4 in \cite{limits} (for the lowersemicontinuity inequality) 
and Lemma 3.12 in \cite{Holder} (for the uppersemicontinuity, which 
unfortunately was not already included in \cite{limits}). 
By \eqref{10.9}, $X_\infty \in \bY_0$, this contradicts the definition of
the $X_k$ or the fact that they tend to $X_\infty$, and this proves the lemma.
\end{proof}

\section{Sliding almost minimal sets that look like a half plane}
\label{S11}

In this section we use the main results of the previous sections to prove 
Corollary \ref{t1.7}.

Let $L$ and $E$ be as in the statement. In particular,
$E$ is a coral sliding almost minimal set in $B(0,3)$, associated to
a unique boundary piece $L$, which is a $(d-1)$-plane through
the origin. We may choose the type of almost minimality as we wish
(that is, $A$, $A'$, or $A_+$ in Definition \ref{t2.1}).

We assume that $h$ is sufficiently small, as in \eqref{1.30}, and that
$E$ is sufficiently close in $B(0,3)$ to a half-plane $H \in \bH(L)$, 
as in \eqref{1.31}, and we want to approximate $E$ by planes and
half planes, in the Reifenberg way.

Recall that $\bH(L)$ is the set of $d$-dimensional half planes bounded by
$L$, and let $\bP$ be the set of (affine) $d$-planes.
The proof below will also follow known tracks. See for instance Section 16
of \cite{Holder}.
We first check that $E$ does not have too much mass in a slightly smaller ball.

\begin{lem} \label{t11.1}
Set $r_1 = {28 \over 10}$ and $B_1 = B(0,r_1)$. 
There is a constant $C \geq 0$, that depends only on $n$ and $d$,
such that 
\begin{equation} \label{11.1}
\H^d(E \cap B) \leq \H^d(H \cap B) + C \sqrt \varepsilon
\end{equation}
for each ball $B$ centered on $E$ and such that $\overline B \i B_1$.
\end{lem}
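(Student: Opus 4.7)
The plan is to construct a Lipschitz deformation $\varphi$ supported in a slightly enlarged ball $B' = B(x,r+\rho)$ that retracts $E \cap B$ onto the half-plane $H$, apply the $A'$-almost minimality inequality \eqref{1.18}, and optimize over the small parameter $\rho$. The $\sqrt\varepsilon$ in the conclusion is produced by this one-parameter optimization: the construction naturally yields an excess of the form $C\varepsilon + C\varepsilon/\rho$, which is minimized in the admissible range by choosing $\rho = \sqrt\varepsilon$.

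For the retraction, let $P \supset H$ be the $d$-plane carrying $H$, let $\pi_P : \R^n \to P$ be the orthogonal projection, and let $\pi_H : P \to H$ be the fold map that is the identity on $H$ and the reflection across $L$ on $P \sm H$. Both are $1$-Lipschitz and fix $L$ pointwise, so $\pi := \pi_H \circ \pi_P$ is a $1$-Lipschitz retraction of $\R^n$ onto $H$ that fixes $L$. A direct Pythagorean estimate gives $|y - \pi(y)| \leq 3\,\dist(y,H)$, which combined with \eqref{1.31} yields $|y - \pi(y)| \leq C\varepsilon$ for every $y \in E \cap B(0,3)$. Pick $\rho = \sqrt\varepsilon$ (admissible for small $\varepsilon$, since $\overline B \i B_1$ leaves at least $1/5$ of room up to $\d B(0,3)$), set $B' = B(x,r+\rho)$, and define $\varphi$ to be the identity outside $B'$, equal to $\pi$ on $B$, and the linear interpolation $\varphi(y) = (1-t(y)) y + t(y)\,\pi(y)$ with $t(y) = (r+\rho-|y-x|)/\rho$ on the annulus $A = B' \sm B$. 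Setting $\varphi_s(y) = s y + (1-s)\varphi(y)$ for $s \in [0,1]$, the family $\{\varphi_s\}$ is an acceptable sliding deformation: it is continuous, preserves $L$ because $\pi$ does, and traps $\wh W$ inside $\overline B(x, r+\rho+C\varepsilon) \i U = B(0,3)$. On $E$ the Lipschitz constant of $\varphi$ in $A$ is at most $1+C\varepsilon/\rho = 1+C\sqrt\varepsilon$, the only nontrivial contribution to its derivative being $\nabla t \otimes (\pi(y)-y)$, of size $O(\varepsilon/\rho)$ on $E$ because $|\pi(y)-y| \leq C\varepsilon$ there.

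Applying \eqref{1.18} with $\wh W$ of radius $r+\rho+C\varepsilon$ and subtracting the common contribution $\H^d(E \cap B(0,3) \sm B')$ from both sides gives
\[
\H^d(E \cap B) + \H^d(E \cap A) \leq \H^d(\varphi(E \cap B)) + \H^d(\varphi(E \cap A)) + C\varepsilon,
\]
where the gauge term is bounded by $C\varepsilon$ because \eqref{1.30} together with the monotonicity of $h$ forces $h(r) \leq C\varepsilon$ for $r$ bounded away from $3$, and $r+\rho+C\varepsilon \leq 2.9$. For the first image term, $\varphi(E \cap B) \i \pi(B(x,r)) \i H \cap B(x, r+C\varepsilon)$, so $\H^d(\varphi(E \cap B)) \leq \H^d(H \cap B) + C\varepsilon r^{d-1}$. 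For the second, combining the Lipschitz bound $1 + C\sqrt\varepsilon$ on $A$ with the upper Ahlfors estimate $\H^d(E \cap B') \leq Cr^d$ (from \eqref{2.9} at small scales and a standard covering, once $\varepsilon$ is small enough that $h$ is under the Ahlfors threshold) gives $\H^d(\varphi(E \cap A)) \leq \H^d(E \cap A) + C\sqrt\varepsilon r^d$. Cancelling $\H^d(E \cap A)$ and using $r \leq r_1 < 3$ produces $\H^d(E \cap B) \leq \H^d(H \cap B) + C\sqrt\varepsilon$, as announced. The main technical obstacle is the careful bookkeeping for the acceptable deformation, especially checking the sharp Lipschitz bound on $\varphi_{|E}$ (which relies on $|\pi(y)-y| \leq C\varepsilon$ holding only on $E$, not on all of $\R^n$) and that $\wh W$ remains compactly contained in $U = B(0,3)$; the transition from $\varepsilon$ to $\sqrt\varepsilon$ is forced by the optimization of $\rho$, which cannot be taken smaller without blowing up the annular Lipschitz error.
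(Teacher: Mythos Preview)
Your proof is correct and follows the same deformation idea as the paper---retract $E$ onto $H$ in a ball and interpolate to the identity on an annulus, then apply almost minimality---but the execution differs in two useful ways. First, the paper places the transition annulus \emph{inside} $B$ (taking $B' = B(x,r-6\varepsilon)$ and width $6\varepsilon$) and uses the nearest-point projection onto the convex set $H$, whereas you enlarge $B$ to $B' = B(x,r+\rho)$ and use the project-then-fold map $\pi_H\circ\pi_P$; both retractions are $1$-Lipschitz and fix $L$ (your displacement constant should be $5$ rather than $3$, which is harmless). Second, and more interestingly, the paper controls the annular contribution by a covering argument exploiting the flatness of $E$: since $E$ sits within $3\varepsilon$ of $H$, the set $E\cap(B\setminus B')$ is trapped near $P\cap\partial B$ and is covered by $O((r/\varepsilon)^{d-1})$ balls of radius $20\varepsilon$, giving $\H^d(E\cap A)\leq C\varepsilon r^{d-1}$; the map $\varphi$ is then only $C$-Lipschitz on $E\cap A$, but that suffices. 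You instead cancel $\H^d(E\cap A)$ against $\H^d(\varphi(E\cap A))$, leaving only the stretch factor $(1+C\varepsilon/\rho)^d-1$ multiplied by a crude Ahlfors bound. This is cleaner (no covering), and it makes the small-$r$ regime automatic, whereas the paper handles $r^d\leq\sqrt\varepsilon$ separately by the raw Ahlfors bound. One remark: your stated motivation that $\rho=\sqrt\varepsilon$ ``optimizes'' an error of the form $C\varepsilon+C\varepsilon/\rho$ is off---that expression is monotone in $\rho$, and in fact your own argument with a \emph{fixed} $\rho$ (say $\rho=1/10$, which the room $\overline B\subset B_1$ allows) would yield the sharper bound $C\varepsilon$; the choice $\rho=\sqrt\varepsilon$ still gives the claimed $C\sqrt\varepsilon$ and so is sufficient, but the optimization is a red herring.
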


\begin{proof}
We are shall use the local Ahlfors regularity of $E$
(see \eqref{2.9}). First observe that
\begin{equation} \label{11.2}
h(r_1) \leq C \int_{r_1}^3 {h(t) dt \over t} \leq C \varepsilon 
\end{equation}
because $h$ is nondecreasing and by \eqref{1.30}. 
Then for $x\in E \cap B_1$ and $\rho \leq 10^{-2}$,
the pair $(x,\rho)$ satisfies \eqref{2.10} if $\varepsilon$
is small enough, hence by \eqref{2.9}
\begin{equation} \label{11.3}
\H^d(E \cap B(y,\rho)) \leq C \rho^d.
\end{equation}
Here and in the next lines, $C$ is a constant that depends only on $n$ and $d$.

Next let $B = B(x,r)$ be as in the statement.
If $r^d \leq \sqrt \varepsilon$, then  $r \leq 10^{-2}$ too
(if $\varepsilon$ is small enough), and \eqref{11.1}
follows brutally from \eqref{11.3}. So let us assume that 
$r^d \geq \sqrt \varepsilon$.
Define a cut-off function $\alpha$ by
\begin{equation} \label{11.4}
\begin{aligned}
\alpha(y) &= 0 \ \text{ when } |y-x| \geq r, 
\\
\alpha(y) &= 1 \ \text{ when } |y-x| \leq r - 6\varepsilon, 
\\
\alpha(y) &=  (6\varepsilon)^{-1}(r - |y-x|)
\ \text{ otherwise.} 
\end{aligned}
\end{equation}
Denote by $\pi$ the smallest distance projection on the convex
set $H$; notice that $\pi$ is $1$-Lipschitz. We set
\begin{equation} \label{11.5}
\varphi(y) = \alpha(y) \pi(y) + (1-\alpha(y)) y
\end{equation}
for $y\in \R^n$, and then $\varphi_t(y) = t \varphi(y) + (1-t) y$
for $0 \leq t \leq 1$. Notice that the $\varphi_t$ define an acceptable
deformation (see near \eqref{1.2}), in particular because $\pi(y) = y$ on
$L \i H$, and that $\wh W$ is compactly contained in $B_1$,
by \eqref{11.4} and because $\overline B \i B_1$,
$\pi(y) \in B_1$ when $y\in B_1$, and $B_1$ is convex. 
Let us assume for the moment that $E$ is an almost minimal set of type $A'$.
We deduce from \eqref{2.6} that 
\begin{equation} \label{11.6}
\H^d(E \sm \varphi(E)) \leq \H^d(\varphi(E) \sm E) + r_1^d h(r_1) 
\leq \H^d(\varphi(E) \sm E) + C \varepsilon.
\end{equation}
Notice that if $x\in E \cap B$, then either $x \in E \sm \varphi(E)$,
or else $x\in E \cap \varphi(E)$. In the last case, $x\in \varphi(E\cap B)$,
because $\varphi(y) = y$ on $\R^n \sm B$. Thus
\begin{equation} \label{11.7}
\H^d(E \cap B) \leq \H^d(E \sm \varphi(E)) + \H^d(E \cap \varphi(E\cap B)).
\end{equation}
By \eqref{11.6} this yields
\begin{equation} \label{11.8}
\H^d(E \cap B) 
\leq  \H^d(\varphi(E) \sm E) + \H^d(E \cap \varphi(E\cap B)) + C \varepsilon
= \H^d(\varphi(E\cap B)) + C\varepsilon,
\end{equation}
where the last part holds because $\varphi(E) \sm E = \varphi(E\cap B) \sm E$
(since $\varphi(y) = y$ when $y\in \R^n \sm B$).
Set $B' = B(x, r-6\varepsilon)$, and let us check that
\begin{equation} \label{11.9}
\varphi(y) \in H \cap B \ \text{ for } y \in E \cap B'.
\end{equation}
First recall from \eqref{1.31} and the definition \eqref{1.29} that
\begin{equation} \label{11.10}
\dist(y,H) \leq 3 \varepsilon \ \text{ for } y \in E \cap B(0,3),
\end{equation}
hence
\begin{equation} \label{11.11}
|\pi(y) - y| \leq 6 \varepsilon \ \text{ for } y\in E \cap B(0,3)
\end{equation}
(pick $z\in H$ such that $|z-y| \leq 3\varepsilon$, then use the fact
that $\pi$ is $1$-Lipschitz and $\pi(z)=z$). If furthermore
$y \in E \cap B'$, then $\pi(y) \in H \cap B$ and,
since $\varphi(x) = \pi(x)$ by \eqref{11.5}, we get \eqref{11.9}.
This takes care of the major part of $\varphi(E \cap B)$.

We are left with the contribution of $A = B \sm B'$. 
Let $P_0$ denote the $d$-plane that contains $H$,
and let $P$ denote the $d$-plane through $x$ parallel
to $P$. By \eqref{11.10}, $\dist(P,P_0) \leq 3\varepsilon$,
and every point of $E\cap A$ lies within $3 \varepsilon$ of $P_0$,
hence within $6\varepsilon$ of $P$ and within $12\varepsilon$
of $P \cap \d B$. If we cover $P \cap \d B$ by balls $B_j$
of radius $20 \varepsilon$, the double balls $2B_j$ will then cover $E \cap A$.
We can do this with less than $C (r/\varepsilon)^{d-1}$ balls $B_j$,
and $\H^d(E \cap 2B_j) \leq C \varepsilon^d$ by \eqref{2.9}
(maybe applied to a twice larger ball centered on $E$, if $2B_j$ meets
$E$ but is not centered on $E$). Thus
\begin{equation} \label{11.12}
\H^d(E \cap A) \leq C (r/\varepsilon)^{d-1} \varepsilon^d 
\leq C \varepsilon r^{d-1}.
\end{equation}
Also, we claim that $\varphi$ is $C$-Lipschitz on $E \cap A$.
This is because $\alpha$ is $(6\varepsilon)^{-1}$-Lipschitz,
but $|\pi(y)-y| \leq 6 \varepsilon$ on $E \cap A$; the verification
is the same as for \eqref{4.11}, so we skip it. Thus
\begin{equation} \label{11.13}
\H^d(\varphi(E \cap A)) \leq C \H^d(E \cap A) \leq C \varepsilon r^{d-1}.
\end{equation}
We put things together and get that
\begin{eqnarray} \label{11.14}
\H^d(E \cap B) &\leq& \H^d(\varphi(E\cap B)) + C\varepsilon
\leq \H^d(\varphi(E\cap B')) +\H^d(\varphi(E \cap A)) + C\varepsilon
\nn\\
&\leq& \H^d(H \cap B) +  C \varepsilon r^{d-1} + C\varepsilon
\end{eqnarray}
by \eqref{11.8}, \eqref{11.9}, and \eqref{11.13}.
But we are in the case when $r^d \geq \sqrt \varepsilon$,
so $\sqrt \varepsilon r^{d-1} \leq r^{2d-1} \leq C r^d$ and 
\eqref{11.1} follows from \eqref{11.14}.

We still need to say how we proceed when $E$ is of type $A$
or $A_+$. Of course we could say that in both cases, $E$ is also
an $A'$-almost minimal set, but the long proof can be avoided with
a small trick. Choose a possibly different $d$-dimensional half
plane $H_1$, with the same boundary $L$ as $H$, so that
\begin{equation} \label{11.15}
\dist(y,H_1) \leq 4 \varepsilon \ \text{ for } y \in E \cap B(0,3).
\end{equation}
We have uncountably many choices of $H_1$, all disjoint except
for the set $L$ of vanishing $H^d$-measure, so we can choose
$H_1$ so that $\H^d(E \cap B_1 \cap H_1) = 0$.
Then we repeat the construction above, with $3\varepsilon$ replaced
by $4\varepsilon$. Since the points of $B'$ are now sent to $H_1$,
we get that $\H^d$-almost every point of $E \cap B'$ lies in 
$W_1 = \big\{ x\in E \, ; \, \varphi(x) \neq x \big\}$.
Suppose that $E$ is of type $A$; then
\begin{equation} \label{11.16}
\H^d(E \cap B') \leq \H^d(W_1) \leq \H^d(\varphi(W_1)) + h(r_1) r_1^d
\leq \H^d(\varphi(W_1)) + C \varepsilon
\end{equation}
by the discussion above and \eqref{2.5}, and in turn
\begin{eqnarray} \label{11.17}
\H^d(\varphi(W_1)) &\leq& \H^d(\varphi(E \cap B))
\leq \H^d(\varphi(E \cap B')) + \H^d(\varphi(E \cap A))
\nn\\
&\leq& \H^d(H \cap B) + C \varepsilon r^{d-1}
\end{eqnarray}
because $\varphi(y) = y$ on $\R^n \sm B$,
and by \eqref{11.9} and \eqref{11.13}. Therefore
$\H^d(E \cap B') \leq \H^d(H \cap B) + C \varepsilon r^{d-1} + C \varepsilon$,
and we can conclude as before.

The case when $E$ is of type $A_+$ is as simple; just observe
that the error term in \eqref{11.16} is replaced by
$h(r_1) \H^d(W_1) \leq h(r_1) \H^d(E \cap B) \leq C \varepsilon$.
\end{proof}

For each $x\in E\cap B(0,2)$, we define a shade $S_x$ and a functional
$F_x$ as we did in the introduction, but with the center $x$. That is,
\begin{equation} \label{11.18}
S_x = \big\{ y \in \R^n \, ; \, x + \lambda (y-x) \in L 
\text{ for some } \lambda \in [0,1] \big\}
\end{equation}
(see \eqref{1.9} and \eqref{7.12}), then
\begin{equation} \label{11.19}
\theta_x(r) = r^{-d} \H^d(E \cap B(x,r))
\end{equation}
and 
\begin{equation} \label{11.20}
F_x(r) = \theta_x(r) + r^{-d} \H^d(S_x \cap B(x,r))
\end{equation}
for $r > 0$ (compare with \eqref{1.10}).
Here and below, densities will often be compared to the $\H^d$-measure
of the unit $d$-disk, i.e.,
\begin{equation} \label{11.21}
\omega_d = \H^d(\R^d \cap B(0,1)).
\end{equation}

\ms
\begin{lem} \label{t11.2} Set 
\begin{equation} \label{11.22}
r_2 = {7 \over 10} \ \text{ and } \ B_2 = B(0,21/10).
\end{equation}
There is a constant $C \geq 0$, that depends only on $n$ and $d$,
such that 
\begin{equation} \label{11.23}
F_x(r_2) \leq \omega_d + C \sqrt \varepsilon
\ \text{ for } x\in E \cap B_2.
\end{equation}
\end{lem}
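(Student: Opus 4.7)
The plan is to combine the mass estimate of Lemma~\ref{t11.1} with a direct application of the monotonicity formula to the half-plane $H$ itself. Fix $x \in E \cap B_2$ and set $B := B(x, r_2)$. Because $|x| + r_2 \leq \tfrac{21}{10} + \tfrac{7}{10} = \tfrac{28}{10} = r_1$, we have $\overline{B} \subset B_1$, so Lemma~\ref{t11.1} applies to $B$ and yields
\[
\H^d(E \cap B(x,r_2)) \leq \H^d(H \cap B(x,r_2)) + C\sqrt{\varepsilon}.
\]

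The key observation is that $H$ is itself a coral sliding minimal set in $\R^n$ with boundary condition given by $L$ and zero gauge function. Indeed, $H$ lies in a full $d$-plane $P$, and for any acceptable deformation $\{\varphi_t\}$ preserving $L$, the nearest-point projection $\pi_H : \R^n \to H$ is $1$-Lipschitz and fixes $L$ pointwise, so composing with $\varphi_1|_H$ produces a Lipschitz self-map of $H$ whose image covers $H \cap \widehat W$ by a standard topological degree argument; the $1$-Lipschitz bound then forces $\H^d(\varphi_1(H) \cap \widehat{W}) \geq \H^d(H \cap \widehat{W})$. Applying Theorem~\ref{t1.2} to $H$ (recentered at $x$ by translation invariance, since $L$ is an affine subspace), the associated functional
\[
G(r) := r^{-d}\bigl[\H^d(H \cap B(x,r)) + \H^d(S_x \cap B(x,r))\bigr]
\]
is nondecreasing on $(0, +\infty)$. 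A direct computation gives $\lim_{r \to +\infty} G(r) \leq \omega_d$: since $H$ is a half $d$-plane, $r^{-d}\H^d(H \cap B(x,r)) \to \omega_d/2$, and reading \eqref{11.18} with $\lambda \in (0,1]$ (which only affects the $\H^d$-null exceptional case $x \in L$), the shade $S_x$ is either a half $d$-plane through $L$ when $x \notin L$, or equal to $L$ itself when $x \in L$, contributing at most $\omega_d/2$ or $0$ to the limit respectively. Monotonicity therefore gives $G(r_2) \leq \omega_d$.

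Combining the two steps,
\[
F_x(r_2) = r_2^{-d}\bigl[\H^d(E \cap B(x,r_2)) + \H^d(S_x \cap B(x,r_2))\bigr] \leq G(r_2) + r_2^{-d}C\sqrt{\varepsilon} \leq \omega_d + C'\sqrt{\varepsilon},
\]
where we used that $r_2 = 7/10$ is a fixed constant to absorb the factor $r_2^{-d}$ into $C'$. This is the desired bound. The only step that requires any care is verifying that $H$ is sliding minimal for $L$---a routine nearest-point projection argument---together with the elementary asymptotic computation for $G$; neither constitutes a serious obstacle.
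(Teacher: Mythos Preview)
Your proof is correct and takes a genuinely different route from the paper's. The paper argues directly by splitting into two cases according to whether $\dist(x,L)$ is below or above $\sqrt{\varepsilon}$: in the first case it compares $B(x,r_2)$ with a ball centered on $L$ and uses that a half $d$-plane through its boundary has density exactly $\omega_d/2$; in the second case it exploits that $x$ lies within $3\varepsilon$ of $H$, so the angle between $H$ and $S_x$ along $L$ is at least $\pi - C\sqrt{\varepsilon}$, and a projection onto the plane $P_x$ containing $x$ and $L$ shows that $(H\cup S_x)\cap B(x,r_2)$ has measure at most $\omega_d r_2^d + C\sqrt{\varepsilon}$.

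Your argument replaces this case analysis with a single application of the monotonicity formula (Theorem~\ref{t1.2}) to the model set $H$ itself, centered at the arbitrary point $x$. This is cleaner and more conceptual: the inequality $G(r_2)\le \lim_{r\to\infty}G(r)=\omega_d$ absorbs both cases at once, regardless of where $x$ sits relative to $L$. The price is that you must know two facts the paper does not formally establish for this purpose: that $H$ is sliding minimal with boundary $L$ (which you sketch via the projection/degree argument, and which the paper asserts informally in the introduction), and that Theorem~\ref{t1.2} applies with center off the set (which holds by Remark~\ref{t7.2}, since a minimal set is trivially $A_+$-almost minimal with $h=0$). Both are legitimate, and your route is arguably the more natural one once the monotonicity theorem is in hand. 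The paper's computation, by contrast, is entirely self-contained and does not invoke the monotonicity machinery for the model set.
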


\begin{proof}
First assume that $\dist(x,L) \leq \sqrt\varepsilon$
and choose $z\in L$ such that $|z-x| \leq \sqrt{\varepsilon}$. Then
\begin{eqnarray} \label{11.24}
H^d(E \cap B(x,r_2)) &\leq& 
H^d(H \cap B(x,r_2)) + C \sqrt \varepsilon
\nn\\
&\leq&
\H^d(H \cap B(z,r_2+\sqrt\varepsilon)) + C \sqrt \varepsilon
\nn\\
&\leq& {\omega_d (r_2+\sqrt\varepsilon)^d \over 2} + C \sqrt \varepsilon
\leq {\omega_d r_2^d \over 2} + C \sqrt \varepsilon
\end{eqnarray}
by Lemma \ref{t11.1} and because $B(x,r_2) \i B(z,r_2+\sqrt\varepsilon)$.
Since $\H^d(S_x \cap B(x,r_2)) \leq {\omega_d r_2^d \over 2}$
because $S_x$ is contained in a half plane centered at $x$, we
add and get \eqref{11.23}.

If instead $\dist(x,L) \geq \sqrt\varepsilon$, \eqref{11.10} says that 
$\dist(x,H) \leq 3\varepsilon$, hence the two half planes
$H$ and $S_x$ make an angle $\alpha \geq \pi - C\sqrt \varepsilon$
along $L$. Let $P_x$ denote the $d$-plane that contains $x$, $L$, and
hence $S_x$, and let $\pi$ be the orthogonal projection on $P_x$;
since $\pi(B(x,r_2)) \i B(x,r_2)$, we get that
$\pi(H \cap B(x,r_2)) \i P_x \cap B(x,r_2)$. Also,
$\pi(H \cap B(x,r_2)) \cap S_x \i L$ (because of the small angle),
and hence
\begin{eqnarray} \label{11.25}
r_2^d F_x(r_2) &=& \H^d(S_x \cap B(x,r_2)) + \H^d(E \cap B(x,r_2))
\nn\\
&\leq& \H^d(S_x \cap B(x,r_2)) + \H^d(H \cap B(x,r_2)) + C \sqrt \varepsilon
\nn\\
&\leq& \H^d(S_x \cap B(x,r_2)) + {1 \over \cos(\pi-\alpha)}\H^d(\pi(H \cap B(x,r_2)))
+ C \sqrt \varepsilon
\\
&\leq& \H^d(S_x \cap B(x,r_2)) + \H^d(\pi(H \cap B(x,r_2))) 
+ C \sqrt\varepsilon
\nn\\
&=& \H^d(P_x \cap B(x,r_2)) + C \sqrt\varepsilon
\leq \omega_d r_2^d + C \sqrt \varepsilon
\nn
\end{eqnarray}
by Lemma \ref{t11.1} and because $\cos(\pi-\alpha) \geq 1-C\varepsilon$;  
Lemma \ref{t11.2} follows.
\end{proof}

Let $a > 0$ denote the constant in Theorem \ref{t1.5},
and choose $\varepsilon$ so small that \eqref{11.2} implies that
$h(r_2) \leq h(r_1) \leq \tau$, where $\tau$ is the small constant
in Theorem \ref{t1.5}. We deduce from Theorem \ref{t1.5}
(applied to a translation of $E$ by $-x$) that for $x\in E \cap B_2$
and $0 < r < s \leq r_2$,
\begin{equation} \label{11.26}
F_x(r) \leq e^{a(A(s)-A(r))} F_x(s) \leq e^{a\varepsilon} F_x(s)
\leq e^{2a\varepsilon} F_x(r_2) \leq \omega_d + C \sqrt \varepsilon
\end{equation}
by \eqref{1.21}, \eqref{1.20}, \eqref{11.2} or \eqref{1.30},
a second application of the same inequalities, and \eqref{11.23}.
Because of the first inequality (or directly \eqref{1.21}), 
there exists a limit
\begin{equation} \label{11.27}
F_x(0) = \lim_{r \to 0} F_x(r),
\end{equation}
and \eqref{11.26} implies that
\begin{equation} \label{11.28}
F_x(0) e^{-a\varepsilon}\leq F_x(s) \leq \omega_d + C \sqrt \varepsilon
\ \text{ for } 0 < s \leq r_2.
\end{equation}

Let us restrict our attention to $x\in E \sm L$. Then
\begin{equation} \label{11.29}
F_x(r) = \theta_x(r) \ \text{ for } 0 < r \leq \dist(x,L)
\end{equation}
by \eqref{11.20}, and there exists
\begin{equation} \label{11.30}
\theta_x(0) = \lim_{r \to 0} \theta_x(r) = F_x(0).
\end{equation}
In fact, we already knew this, just from the almost monotonicity
of $\theta_x$ for almost minimal sets with no sliding condition
(see for instance Proposition 5.24 in \cite{Holder}).
We claim that for $H^d$-almost every $x\in E \cap B_2$,
\begin{equation} \label{11.31}
 F_x(0) = \theta_x(0) = \omega_d.
\end{equation}
Since $H^d(L) = 0$, we may restrict to $x\in E \sm L$.
But we know that $E \sm L$ is rectifiable; since we are far from
the boundary set $L$, we can even use the result of 
Almgren \cite{AlmgrenMemoir} instead of \eqref{2.11}). 
Now \eqref{11.31} follows directly from this and known density 
properties of rectifiable sets (see for instance Theorem 16.2 in \cite{Mattila}) 
or because $E$ has an approximate tangent $d$-plane at almost every point 
(by \eqref{2.9}, we could even say that this is a real tangent plane); indeed,
at such a point, every blow-up limit of $E$ is a plane, and then
\eqref{11.31} follows for instance from Proposition 7.31 in \cite{Holder}.
From \eqref{11.28} and \eqref{11.31} (when it holds), we deduce that
\begin{equation} \label{11.32}
\omega_d e^{-a\varepsilon}\leq F_x(s) \leq \omega_d + C \sqrt \varepsilon
\ \text{ for } 0 < s \leq r_2\, ;
\end{equation}
that is, $F_x$ is nearly constant on $(0,r_2]$, and this will allow us to
show that $E$ look like a minimal cone at the corresponding scales.

\begin{lem} \label{t11.3}
For each $\tau > 0$, there is a constant $\varepsilon_0 > 0$, that
depends only on $n$, $d$, and $\tau$, such that following property holds as soon 
as $L$ and $E$ are as above and $\varepsilon < \varepsilon_0$.
Let $x\in E \cap B_2\sm L$ be such that \eqref{11.31} holds, 
and let $r \in (0,r_2)$ be given. 

If $0 < r < \dist(x,L)$, there a plane $X = X(x,r)$ through $x$ such that
\begin{equation} \label{11.33}
d_{x,3r/4}(E, X) \leq 4\tau/3
\end{equation}
and
\begin{eqnarray} \label{11.34}
&\,&\big|\H^d(E \cap B(y,t)) - \H^d(X \cap B(y,t))\big| \leq \tau r^d
\nn\\
&\,&\hskip2cm
\text{ for $y\in \R^n$ and $t>0$ such that } B(y,t) \i B(x,(1-\tau)r).
\end{eqnarray}

If ${1\over 2}\, \dist(x,L) < r < \tau^{-1} \dist(x,L)$, let $X$
denote the affine $d$-plane that contains $x$ and $L$, and set
$X' = \overline{X \sm S_x}$. Then
\begin{equation} \label{11.35}
d_{x,3r/4}(E, X') \leq \tau
\end{equation}
and
\begin{eqnarray} \label{11.36}
&\,&\big|\H^d(E \cap B(y,t)) - \H^d(X' \cap B(y,t))\big| \leq \tau r^d
\nn\\
&\,&\hskip2cm
\text{ for $y\in \R^n$ and $t>0$ such that } B(y,t) \i B(x,(1-\tau)r).
\end{eqnarray}
\end{lem}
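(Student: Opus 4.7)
My plan is to prove the lemma by contradiction and compactness, in the spirit of the proofs of Theorem~\ref{t9.1} and Corollary~\ref{t9.3}. Fix $\tau>0$ and, supposing the statement fails, take sequences $E_k, L_k, x_k, r_k$, together with $\varepsilon_k \to 0$, for which the conclusion fails; extract a subsequence so that a single case (Case 1 or Case 2) persists. In either case, translate $x_k$ to the origin and rescale by $1/r_k$, obtaining coral almost minimal sets $\wt E_k = r_k^{-1}(E_k - x_k)$ with boundary $\wt L_k = r_k^{-1}(L_k - x_k)$, whose associated gauge functions still tend to $0$ on $[0,2]$. By the standing hypotheses, together with \eqref{11.32}, the rescaled functional $\wt F_k$ satisfies $\omega_d e^{-a\varepsilon_k} \le \wt F_k(s) \le \omega_d + C\sqrt{\varepsilon_k}$ for all $s\in (0,1]$, so that $\wt F_k$ is nearly constant equal to $\omega_d$.

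In Case 1, $\dist(0,\wt L_k) > 1$, so the boundary plays no role in the unit ball and $\wt E_k$ is a plain almost minimal set there, with $\wt F_k = \theta_0$. A standard subsequence extraction plus the limit theorems of Section 10 of \cite{Sliding} (or Proposition 7.24 of \cite{Holder}) produces a limit $E_\infty$ which is a coral minimal set in $B(0,1)$ with $\theta_0$ constant equal to $\omega_d$; the standard monotonicity argument shows $E_\infty$ is a cone centered at $0$ with $d(E_\infty) = \omega_d$, and Lemma~\ref{t10.1} (or its immediate predecessor \eqref{10.5}) then forces $E_\infty$ to be a $d$-plane $X_\infty$ through $0$. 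Hausdorff convergence in $B(0,3/4)$ yields \eqref{11.33} and Lemma~\ref{t9.2} (applied with the single rectifiable cone $X_\infty$) yields \eqref{11.34} for $X = x_k + r_k X_\infty$, contradicting the initial choice of $(E_k,r_k)$.

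In Case 2, $\dist(0,\wt L_k) \in (1/(\tau^{-1}), 2) = (\tau,2)$. Choose a second dilation factor $\rho_k \in [10/9, 2/\tau]$ so that $\dist(0, \rho_k^{-1}\wt L_k) \in [\tau,9/10]$; the rescaled sets still satisfy the analogue of \eqref{11.32}, so the hypotheses of Corollary~\ref{t9.3} hold with a uniformly small parameter. Extracting a further subsequence converging to a limit pair $(E_\infty,L_\infty)$ as in the proofs of Theorem~\ref{t9.1} and Corollary~\ref{t9.3}, we obtain that $E_\infty$ is a coral sliding minimal set whose analogue functional $F_\infty$ is finite and constant equal to $\omega_d$ on a fixed interval, so Theorem~\ref{t1.3} applies and produces a coral minimal cone $X_\infty$ (no boundary condition) satisfying $E_\infty = \overline{X_\infty \setminus S_\infty}$ on a ball and $L_\infty \subset X_\infty$ (from \eqref{1.15}, since $\dim L_\infty = d-1$). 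The crucial step is bounding the density of $X_\infty$: since $S_\infty \subset X_\infty$ and $X_\infty$ is a cone,
\begin{equation*}
d(X_\infty) \le \H^d(E_\infty \cap B(0,1)) + \H^d(S_\infty \cap B(0,1)) = F_\infty(1) = \omega_d,
\end{equation*}
so by Lemma~\ref{t10.1} (or \eqref{10.5}) $X_\infty$ is a $d$-plane, and because it contains the $(d{-}1)$-plane $L_\infty$ through a point off the origin, it is uniquely determined as the affine $d$-plane spanned by $L_\infty$ and $0$, matching the description of $X'$ after unrescaling. Hausdorff convergence gives \eqref{11.35} and Lemma~\ref{t9.2} (applied to the closed rectifiable cone $X_\infty$, of which $E_\infty$ is a subset modulo a $\H^d$-null set) gives \eqref{11.36}, again contradicting the standing assumption.

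The main technical obstacle is the density computation in Case 2, which pins $X_\infty$ down as a plane: one must carefully identify $F_\infty$ with $\omega_d$ by passing to the limit through Lemma~\ref{t9.4} (to handle the $H(r)$ term), and then exploit the inclusion $S_\infty \subset X_\infty$ given by Theorem~\ref{t1.3} to convert the functional bound into a density bound on $X_\infty$ itself. A secondary but routine nuisance is choosing the auxiliary dilation factor $\rho_k$ in Case 2 so that the rescaled boundary stays in the regime where Corollary~\ref{t9.3} applies while still producing an approximation valid on the ball $B(x,3r/4)$ demanded by the conclusion; shrinking the approximation radius from $(1-\tau)r$ to $3r/4$ provides exactly the slack needed to absorb this rescaling.
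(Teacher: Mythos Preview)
Your approach is correct in outline and uses the same key ingredients as the paper's proof: the near-constancy of $F_x$ from \eqref{11.32}, a constancy-implies-cone step (Theorem~\ref{t1.3}), the density gap of Lemma~\ref{t10.1}, and, in Case~2, the inclusion $L\subset X$ coming from \eqref{1.15}. The organization differs: rather than running a fresh contradiction--compactness argument, the paper applies Proposition~7.24 of \cite{Holder} (Case~1) and Corollary~\ref{t9.3} (Case~2) directly as black boxes, which already package the compactness and the identification of the limit. In Case~2 the paper bounds the density of the resulting cone $X_0$ by applying the measure estimate \eqref{9.34} to the small ball $B(0,t)$ with $t=\dist(0,L_x)$ (where the shade is absent and $E_0=X_0$), obtaining $d(X_0)\le \theta_x(tr_1)+t^{-d}\tau_1$; this is equivalent to your limit computation $d(X_\infty)\le F_\infty(1)$ but avoids actually passing to a limit.

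One point needs care. Your auxiliary dilation $\rho_k$ in Case~2 can shrink the domain of almost minimality below $B(0,1)$: when $|x_k|$ is near $21/10$, $r_k$ near $r_2$, and $\dist(x_k,L_k)/r_k$ near $2$, the single-rescaled set $\wt E_k$ is almost minimal only in $B(0,R_k)$ with $R_k\approx 9/7$, while $\rho_k\approx 20/9>R_k$, so $\rho_k^{-1}\wt E_k$ need not be almost minimal in $B(0,1)$. The paper avoids this by restricting the direct application of Corollary~\ref{t9.3} to the sub-range $r>\tfrac{10}{9}\dist(x,L)$ (so $\dist(0,L_x)\le 9/10$ and no second dilation is needed) and treating the overlap $\tfrac12\dist(x,L)\le r\le\tfrac{10}{9}\dist(x,L)$ separately: if $3r\le r_2$ one applies the already-established case at radius $3r$, and if $3r>r_2$ the conclusion is read off directly from the initial closeness to $H$ in \eqref{1.31}. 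With that adjustment your argument goes through.
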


\begin{proof}
See \eqref{1.29} for the definition of $d_{x,r}$.
Let $\tau$, $x$ and $r$ be as in the statement, and set $B = B(x,r)$.
We start with the case when $r < \dist(x,L)$; then $E$ is a (plain)
almost minimal set in $B$, and we want to apply Proposition 7.24 in \cite{Holder}
to the set $E$ in the ball $B$. The fact that $E$ is almost minimal in $B$
follows at once from our assumptions (since $L \cap B = \emptyset$),
we can use the same gauge function $h$ as here, and the assumption
that $h(2r) \leq \varepsilon$ (for the small $\varepsilon$ of \cite{Holder})
follows from \eqref{1.30} if $\varepsilon_0$ is small enough. Then there is
the assumption that
\begin{equation} \label{11.37}
\theta_x(r) \leq \inf_{0 < t < r/100} \theta_x(t) + \varepsilon,
\end{equation}
where again $\varepsilon$ comes from Proposition 7.24 in \cite{Holder},
with the same $\tau$ as here. This follows from \eqref{11.32}, 
because $F_x(t) = \theta_x(t)$ for $t \leq r < \dist(x,L)$, and
if $\varepsilon_0$ is small enough. 
Then Proposition 7.24 in \cite{Holder} says that there is a minimal
cone $X$, centered at $x$, that satisfies our two conditions \eqref{11.33}
and \eqref{11.34}. In addition, \eqref{11.34} with the ball
$B(x,r/2)$ yields 
\begin{eqnarray} \label{11.38}
\big|\H^d(X \cap B(x,1))-\omega_d \big|
&=& \big({r\over2}\big)^{-d} \big|\H^d\big(X \cap B\big(x,{r\over2}\big)\big)
- \big({r\over2}\big)^{d} \omega_d  \big|
\nn\\
&\leq& \big({r\over2}\big)^{-d} 
\big|\H^d\big(X \cap B\big(x,{r\over2}\big)\big)
- \H^d\big(E \cap B\big(x,{r\over2}\big)\big)\big|
+ |\theta_x\big({r\over2}\big)-\omega_d|
\nn\\
&\leq& 2^d \tau + C \sqrt{\varepsilon}
\end{eqnarray}
by \eqref{11.32}. We may assume that $\tau$ was chosen smaller
than ${1 \over 2} (d(n,d)-\omega_d)$, where 
$d(n,d)$ is as in Lemma \ref{t10.1}; then, if $\varepsilon_0$ is small enough,
Lemma \ref{t10.1} and \eqref{11.38} imply that $X$ is a plane, and 
this completes our proof when $r < \dist(x,L)$.

\ms
Next we assume that ${10 \over 9} \dist(x,L) < r < \tau^{-1} \dist(x,L)$.
In this case we want to apply Corollary \ref{t9.3} to $E_x = r^{-1}(E - x)$,
$L_x = r^{-1}(E - x)$, and some small constant $\tau_1 < \tau$ 
that will be chosen soon. Set $t = \dist(0,L_x) = r^{-1} \dist(x,L)$;
our assumption says that
\begin{equation} \label{11.39}
\tau \leq t \leq {9 \over 10},
\end{equation}
and so the first assumption \eqref{9.26} is satisfied as soon as $\tau_1 \leq \tau$.
Here $L_x$ is a $(d-1)$-plane, which takes care of \eqref{9.27}, 
and \eqref{9.28} holds (if $\varepsilon_0$ is small enough), by 
\eqref{1.30} and \eqref{11.32}.
Thus we get a minimal cone $X_0$, which satisfies 
\eqref{9.29}-\eqref{9.34} with the constant $\tau_1$
(and with respect to $E_x$), and with $L' = L_x$
(by the comment after the statement of Corollary \ref{t9.3}).

By \eqref{11.39} we can apply \eqref{9.34} to $B(0,t)$; 
since $E_0 = X_0$ on $B(0,t)$  we get that
\begin{eqnarray} \label{11.40}
\H^d(X_0 \cap B(0,t)) &=& \H^d(E_0 \cap B(0,t))
\leq \H^d(E_x \cap B(0,t)) + \tau_1
\nn\\
&=& r^{-d}  \H^d(E \cap B(x,tr)) + \tau_1
= t^d \theta_x(tr) + \tau_1
\nn\\
&=& t^d F_x(tr) + \tau_1 \leq t^d \omega_d + C t^d \sqrt{\varepsilon_0} + \tau_1
\end{eqnarray}
because $E_x = r^{-1}(E - x)$ and $B(x,tr)$ does not meet $L$,
then by \eqref{11.32}. With the notation \eqref{10.1},
\begin{equation} \label{11.41}
d(X_0) = \H^d(X_0 \cap B(0,1)) \leq \omega_d + \sqrt{\varepsilon_0}
+ t^{-d}\tau_1;
\end{equation}
since $t \geq \tau$ by \eqref{11.39}, we see that if $\tau_1$ is
chosen small enough, and $\varepsilon_0$ is small enough, \eqref{11.41}
and lemma \ref{t10.1} imply that $X_0$ is a plane.

By \eqref{9.29} and because $L' = L_x$ really meets $B(0,99/100)$
(by \eqref{11.39} again), $X_0$ contains $L_x$.
That is, $X_0$ is the only vector $d$-plane that contains $L_x$,
and $X = x + r X_0 = x+X_0$ is the affine $d$-plane that contains
$x$ and $L$. We are now ready co conclude; 
the set $X' = \overline{X\sm S_x}$ is the same as
$x+ r \overline{X_0\sm S} = x+ r E_0$, where $E_0$ is
as in Corollary \ref{t9.3}, so \eqref{11.35} follows from \eqref{9.32} 
and \eqref{9.33}, and \eqref{11.36} follows from \eqref{9.34}
(if $\tau_1$ is small enough, as before).

\ms
We are left with the intermediate case when 
${1\over 2} \dist(x,L) \leq r \leq {10 \over 9} \dist(x,L)$.
If $3r \leq r_2$, we simply observe that we can apply the proof above to the 
radius $3r$, and with  choices of $\tau$ and $\tau_1$, and get the desired result for
$r$. When $r \geq r_2/3$ but $r \leq {10 \over 9} \dist(x,L)$, we deduce
\eqref{11.35} directly from \eqref{1.31}, because since $x$ itself lies
close to $H$ and far from $L$, $H$ is quite close to $X'$ at the unit scale.
As for \eqref{11.36}, we can easily deduce it from the same result
with $X'$ replaced by $H$, and this last is itself easy to deduce
from \eqref{1.31} and the same compactness argument as for 
Lemma \ref{t9.2} (also see near Lemma \ref{t9.5}). We skip the details; anyway,
the case when $r \geq r_2/3$ is far from being the most interesting.
This completes our proof of Lemma \ref{t11.3}.
\end{proof}

\ms
Let us now check \eqref{1.32}.
Let $z\in L \cap B(0,2)$ be given, suppose that $z \notin E$, and
set $d = \dist(z,E) > 0$. By \eqref{11.10}, $d\leq 3\varepsilon$.
Let $x\in E$ be such that $|x-z| = d \leq 3\varepsilon$. Then $x\in B_2$,
and we can apply Lemma \ref{t11.3} with $r = 3d$.
We get that if $X$ denotes the plane through $x$ that contains $L$,
then $X' = \overline{X \sm S_x}$ is very close to $E$ in $B(x,2r/3) = B(x,2d)$
(see \eqref{11.34}). But $X'$ meets $B(z,d/20)$, and all the points
of $X' \cap B(z,d/20)$ lie in $B(x,2d)$, hence very close to $E$;
thus $E$ meets $B(z,d/10)$; this contradiction with the definition
of $d$ proves \eqref{1.32}.

Next we check that Lemma \ref{t11.3} gives the existence of the desired
sets $Z= Z(x,r)$, $x\in E\cap B(0,2)$, as long as $r \leq \tau^{-1}\dist(x,L)$
and $x$ satisfies \eqref{11.31}.
When $r \leq \dist(x,L)/2$, we take $Z = X(x,4r/3)$, where the $d$-plane 
$X(x,4r/3)$ is obtained by applying Lemma \ref{t11.3}, 
with the slightly smaller constant $3\tau/4$; observe that
$4r/3 \leq r_2 = 7/10$ when $r \leq 1/2$ (see \eqref{11.22}).
Then \eqref{1.33} holds by definition and \eqref{1.36} and \eqref{1.37}
follow from \eqref{11.33} and \eqref{11.34}. 

When $\dist(x,L)/2 \leq r < \tau^{-1} \dist(x,L)$, we apply the
second case of Lemma \ref{t11.3} to the pair $(x,4r/3)$ and with
the constant $\tau/4$. We get that $E$ is well approximated
in $B(x,r)$ by $X' = \overline{X \sm S_x}$, where $X$ is the
$d$-plane that contains $x$ and $L$; thus $X'$ is the same half plane as 
$Z(x,r)$ in \eqref{1.34}, and \eqref{1.36} and \eqref{1.37}
follow from \eqref{11.35} and \eqref{11.36}.

In fact, by applying Lemma \ref{t11.3} with a much smaller $\tau_1$,
we even get the desired set $Z(x,r)\in \bH(L)$ for 
$\tau^{-1} \dist(x,L) \leq r \leq (2\tau_1)^{-1}\dist(x,L)$; we even get
a better approximation and the extra information that $Z(x,r)$
goes through $x$ (which was not required in \eqref{1.35}).

Also, our constraint that $x$ satisfy \eqref{11.31} can be lifted,
because if $x\in E \cap B(0,2)\sm L$ does not satisfy \eqref{11.31},
we can write $x$ as a limit of points $x_k \in E \cap B(0,2)$ that
satisfy \eqref{11.31} (because \eqref{11.31} holds almost everywhere on $E$), 
apply the result to these $x_k$ and a slightly larger radius $r'$, 
and get the desired $Z(x,r)$ as a minor modification of some $Z(x_k,r')$.

\ms
We are thus left with the case when 
\begin{equation} \label{11.42}
(2\tau_1)^{-1}\dist(x,L) \leq r \leq 1/2,
\end{equation}
where $\tau_1$ is as small as we want (we shall choose it soon,
depending on $\tau$). As before, it is enough to find
$Z(x,r)$ when $x\in E \sm L$ and $x$ satisfies \eqref{11.31},
which is useful because then \eqref{11.32} holds.

Since we declined to prove an analogue for Corollary \ref{t9.3}
for points that lie too close to the boundary $L$, 
we'll have to apply Proposition 30.3 in \cite{Sliding},
which provides a similar result for balls centered on the boundary.

Let $z$ denote the point of $L$ that lies closest to $x$; thus
\begin{equation} \label{11.43}
|z-x| = \dist(x,L) \leq 2\tau_1 r.
\end{equation}
We want to use \eqref{11.32} to find good bounds on the density $\theta_{z}$.
We start with the radius $r_3 = \sqrt{\tau_1} r$; set
$r'_3 = r_3 - |z-x|$ and notice that
\begin{eqnarray} \label{11.44}
\theta_z(r_3) &=& r_3^{-d} \H^d(E\cap B(z,r_3))
\geq r_3^{-d}  \H^d(E\cap B(x,r'_3))
= (r'_3/r_3)^d \,\theta_x(r'_3)
\nn\\
&=& (r'_3/r_3)^d F_z(r'_3) - r_3^{-d} \H^d(S_x \cap B(z,r'_3))
\geq (r'_3/r_3)^d F_z(r_3) - {\omega_d \over 2} - C |z-x| r_3^{-1}
\nn\\
&\geq& (1-|z-x| r_3^{-1})^d e^{-a \varepsilon} \omega_d - {\omega_d \over 2}
-C |z-x| r_3^{-1}
\geq {\omega_d \over 2} - C \sqrt{\tau_1} 
\end{eqnarray}
by \eqref{11.20}, \eqref{11.32}, a brutal estimate on $\H^d(S_x \cap B(z,r'_3))$
that uses the fact that $|z-x| = \dist(x,L) \leq 2\tau_1 r= 2\sqrt{\tau_1} r_3$, 
and if $\varepsilon$ is small enough.

We also want an upper bound on $\theta_z(4r/3)$. Set $r_4 = 4r/3$ and
$r'_4 = r_4 + |z-x| < r_2$; then
\begin{eqnarray} \label{11.45}
\theta_z(r_4) &=& r_4^{-d} \H^d(E\cap B(z,r_4))
\leq r_4^{-d} \H^d(E\cap B(x,r'_4))
= (r'_4/r_4)^d \,\theta_x(r')
\nn\\
&=& (r'_4/r_4)^d \big[F_z(r'_1) - (r'_4)^{-d} \H^d(S_x \cap B(z,r'_4))\big]
\nn\\
&\leq& (r'_4/r_4)^d \big[ F_z(r'_1) - {\omega_d \over 2} + {C |z-x| \over r'_4}\big]
\\
&\leq& \Big(1+{|z-x| \over r}\big)^d 
\Big(\big[\omega_d + C\sqrt{\varepsilon}\big] 
- {\omega_d \over 2} + {C |z-x| \over r}\Big)
\leq {\omega_d \over 2} + C \tau_1
\nn
\end{eqnarray}
by the same sort of estimates as above, including \eqref{11.32} and
\eqref{11.43}. 

We want to apply Proposition 30.3 in \cite{Sliding}, with 
$\tau$ replaced by a small constant $\eta > 0$ that will be chosen
soon (depending on our $\tau$), $x_0 = z$, the same $r_1$, 
and $r_0 = r_2 = r_4$. The main assumption (30.6)
(the fact that $\theta_z(4r/3)$ is at most barely larger than $\theta_z(r_1)$)
follows from \eqref{11.44} and \eqref{11.45}, if $\tau_1$ is small enough
(depending on $\eta$). The other assumptions are that $L$ be sufficiently close
to a $d$-plane through $z$ (in the bilipschitz sense), and that $E$ is a 
sliding quasiminimal set with small enough constants, and these are satisfied
if $\varepsilon$ is small enough. We get a coral sliding minimal cone $T$ 
centered at $z$, and which is sufficiently close to $E$. In particular,
\begin{equation} \label{11.46}
\dist(y,T) \leq \eta r_4 \ \text{ for } 
y\in E \cap B(z,(1-\eta) r_4) \sm B(z,r_3+\eta r_4)
\end{equation}
and
\begin{equation} \label{11.47}
\dist(y,E) \leq \eta r_4 \ \text{ for } 
y\in T \cap B(z,(1-\eta) r_4) \sm B(z,r_3+\eta r_4).
\end{equation}
Notice that this is not exactly the same as in (30.7) and (30.8) \cite{Sliding},
where $\eta r_4$ is replaced by $\eta$. So in fact we apply Proposition 30.3
to a dilation of $E$ by a factor $r_4^{-1}$, and then we get \eqref{11.46}
and \eqref{11.47}. The dilation does not matter here; we did not do it in
\cite{Sliding} because we were also authorizing boundaries $L_j^0$ that were
not planes, and the Lipschitz assumptions on those have less dilation invariance.

We also get, from (30.10) in the proposition, that
\begin{equation} \label{11.48}
\big| \H^d(E \cap B(z,r)) - \H^d(T \cap B(z,r)) \big| \leq \eta r_4^d.
\end{equation}
Since we also have that $|\theta_z(r) -{\omega_d \over 2}| \leq C\sqrt{\tau_1}$ 
by the proof of \eqref{11.44} and \eqref{11.45}, we get that
\begin{equation} \label{11.49}
|\H^d(T \cap B(z,1)) -  {\omega_d \over 2}|
= r^{-d} |\H^d(T \cap B(z,r)) -  {\omega_d \, r^d\over 2}|
\leq C\sqrt{\tau_1} + 2^d \eta.
\end{equation}

Let $\tau_2 > 0$ be small, to be chosen soon (depending on $\tau$);
we claim that if $\tau_1$ and $\eta$ are small enough, depending on $\tau_2$,
there is a half plane $Z\in \bH(L)$ such that
\begin{equation} \label{11.50}
d_{z,1}(T,Z) \leq \tau_2.
\end{equation}
In fact, we even claim the following apparently stronger result:
there is a constant $\eta_1 > 0$ such that, if
$H$ is a (nonempty) coral sliding minimal cone centered at the origin, 
relative to a boundary which is a vector space $L_0$ of dimension $d-1$,
and if $\H^d(H\cap B(0,1)) \leq {\omega_d \over 2} + \eta_1$,
then there is a half plane $Z\in \bH(L_0)$ such that $d_{0,1}(H,Z) \leq \tau_2$.

Let us prove this (stronger) claim by compactness. 
The proof is essentially the same as for Lemma \ref{t10.1},
to which we refer for details.
By rotation invariance, we may assume that $L_0$ is a given
vector $(d-1)$-plane. If the claim fails, then for each $k \geq 0$ we can find 
a coral sliding minimal cone $H_k$ centered at the origin, with 
$\H^d(H_k\cap B(0,1)) \leq {\omega_d \over 2} + 2^{-k}$,
and which is $\tau_2$-far from all $Z\in \bH(L_0)$.
We extract a subsequence for which $H_k$ tends to a limit
$H_\infty$, we observe that $H_\infty$ is also a coral sliding minimal cone
(by Theorem 4.1 in \cite{Sliding}), and 
$\H^d(H_\infty\cap B(0,1))\leq {\omega_d \over 2}$
by Theorem 3.4 in \cite{Sliding}. Then Corollary \ref{t10.2} says that
$H_\infty \in \bH(L_0)$, which contradicts the definition of the $H_k$
or the fact that they tend to $H_\infty$. This proves our two claims.

Return to $E$. If $\eta$ and $\tau_1$ are small enough, depending on
$\tau_2$, it follows from \eqref{11.46}, \eqref{11.47}, and \eqref{11.50}
that 
\begin{equation} \label{11.51}
d_{z,9r_4/10}(E,Z) \leq 2 \tau_2;
\end{equation}
we do not need to worry about what happens in the hole created
by $B(z,r_3+\eta r_4)$, because $r_3 +\eta r_4 = \sqrt{\tau_1} r +\eta r_4$
is much smaller than $\tau_2 r_4$, and $z$ lies on both sets
$E$ (by \eqref{1.32}) and $Z$.

Clearly $Z$ satisfies \eqref{1.35} and \eqref{11.51} implies 
\eqref{1.36} if $\tau_2 < \tau/2$. We still need to check that
\eqref{1.37} holds, and again it follows from \eqref{11.51} if $\tau_2$ 
is chosen small enough, depending on $\tau$. Otherwise, we could
find a sequence that contradicts Lemma \ref{t9.2}.

Let us summarize. Given $\tau > 0$ and $\tau_1 <<\tau$, 
we used the almost constant
density property \eqref{11.32} and Corollary \ref{t9.3} 
or its simpler variant with no boundary to prove \eqref{1.32} and establish 
the existence of $Z(x,r)$ for $r \leq \tau_1^{-1} \dist(x,L)$ (provided
$\varepsilon \leq \varepsilon_0$ is small enough).
Now we just checked that we can find constants $\tau_2$, then $\eta$
and $\tau_1$, so that if $\varepsilon$ is small enough (depending on these
constants too), we can find $Z(x,r)$ also when $r \geq \tau_1^{-1} \dist(x,L)$.
This completes the proof of Corollary \ref{t1.7}.
\qed

\ms
\begin{rem} \label{t11.4}
The author claims that Corollary \ref{t1.7} can be extended to the case when 
$L$ is a smooth embedded variety of dimension $(d-1)$ through the origin,
which is flat enough in $B(0,1)$. 
\end{rem}

The main ingredients, namely Corollary \ref{t9.3} and 
Proposition 30.3 in \cite{Sliding}, both work in this context (and even
for bilipschitz images with enough control); then it should only be a matter
of checking that the other estimates, such as Lemmas \ref{t11.1} and
\ref{t11.2}, only bring small additional error terms.

But the author was a little to lazy to write down the argument.
An excuse is that probably the right result is stronger, and says that
$E$ is a $C^{1+\alpha}$ version of a half plane locally, with a proof that
looks more like those of \cite{epi} when $d=2$. The main advantage of
Corollary~\ref{t1.7} is that it is relatively simple.

\section{Sliding almost minimal sets that look like a $V$}
\label{S12}

In this section we prove Corollary \ref{t1.8}, in a slightly more general 
setting than stated in the introduction. That is, we replace the precise
assumption that $d=2$ or $d = 3$ and $n=4$ with the assumption
that \eqref{10.9} holds.

We thus consider sliding almost minimal sets with respect to a single
boundary set $L$, which we assume to be a vector space of dimension $d-1$.
As in the introduction, denote by $\bV(L)$ the set of unions $V = H_1 \cup H_2$
of two half $d$-planes $H_i$, both bounded by $L$, and which make an angle
at least equal to $2\pi/3$ along $L$; see above \eqref{1.28}. 
We may call such a $V$ a \underbar{cone of type $\bV$}.
Notice that $V$ is allowed to be a $d$-plane that contain $L$.

In this section we prove the following slight extension of Corollary \ref{t1.8}. 

\ms
\begin{pro} \label{t12.1}
The statement of Corollary \ref{t1.8} is valid for all the integers
$d$ and $n$ such that \eqref{10.9} holds. 
\end{pro}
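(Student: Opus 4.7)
The plan is to combine the almost-monotonicity of $F_x$ (Theorem~\ref{t7.1} applied at $x$) with a pointwise upper bound $F_x(r) \leq \tfrac{3\omega_d}{2} + O(\sqrt{\varepsilon})$ that comes from the closeness of $E$ to $V \in \bV(L)$. For the upper bound I would mimic Lemma~\ref{t11.1}: deform $E\cap B$ onto $V$ by composition with the nearest-point projection and a radial cutoff (the sliding condition is automatically preserved because $L \subset V$), to obtain $\H^d(E\cap B)\leq \H^d(V\cap B) + C\sqrt\varepsilon$ for balls $B=B(x,r) \subset B(0,21/10)$ centered on $E$. Since $V = H_1\cup H_2$ is a union of two half-planes of $\bH(L)$ and $S_x$ is a half-plane nearly bounded by $L$ as well, the three half-planes overlap along (a thickening of) $L$ and jointly have measure at most $\tfrac{3\omega_d}{2}r^d + O(\sqrt\varepsilon\, r^{d-1})$ inside $B$. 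Taking $r_2 = 7/10$ gives $F_x(r_2) \leq \tfrac{3\omega_d}{2} + C\sqrt\varepsilon$, and Theorem~\ref{t7.1} then yields $\theta_x(0) \leq F_x(0) \leq e^{aA(r_2)} F_x(r_2) \leq \tfrac{3\omega_d}{2} + \tau$, which is \eqref{1.44}.

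Next I would fix $x\in E\cap B(0,1)\setminus L$ with $\theta_x(0)>\omega_d$ and argue by contradiction that $\delta(x):=\dist(x,L)\leq N^{-1}$. If $\delta(x) > N^{-1}$, then in $B(x,\delta(x)/2)$ the set $E$ is a \emph{plain} almost-minimal set (no sliding constraint), and $V\cap B(x,\delta(x)/2)$ lies in a single $d$-plane $P_i\supset H_i$; \eqref{1.43} gives $d_{x,\delta(x)/2}(E,P_i)\leq CN\varepsilon$. Proposition~7.24 of \cite{Holder} (equivalently Corollary~\ref{t1.7} with $L$ absent) applied in $B(x,\delta(x)/4)$ then yields $\theta_x(r) \leq \omega_d + \eta$ for arbitrary $\eta>0$ and some $r>0$, for $\varepsilon$ sufficiently small. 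Standard plain-monotonicity propagates this to $\theta_x(0)\leq \omega_d+\eta$; choosing $\eta<d(n,d)-\omega_d$ from Lemma~\ref{t10.1} forces $\theta_x(0) = \omega_d$, contradicting $\theta_x(0)>\omega_d$.

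For \eqref{1.46}-\eqref{1.47}, I would apply Theorem~\ref{t9.7} to $E$ centered at $x$ on an interval $(r_0,r_1)$ with $r_0 \ll \delta(x)$ and $r_1$ slightly larger than $2N\delta(x)$. The upper bound of paragraph~1 and the lower bound $F_x(r) \geq e^{-aA(r_1)}\theta_x(0) > \omega_d$ from almost-monotonicity, combined with Lemma~\ref{t10.3} (which rules out intermediate blow-up densities between $\omega_d$ and $\tfrac{3\omega_d}{2}$), show $F_x$ varies by $O(\sqrt\varepsilon)$ on $(r_0,r_1)$. Theorem~\ref{t9.7} then supplies a coral sliding minimal set $E_0$ centered at $x$ with $F$ constant on $(r_0,r_1)$ and with the closeness \eqref{9.88}-\eqref{9.90} to $E$. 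By Theorem~\ref{t8.1} applied to $E_0$ with these radii (using $r_0 < \delta(x) = \dist(x,L)$), the cone $X$ over $E_0\cap A$ is a plain coral minimal cone centered at $x$, with $E_0 = \overline{X\setminus S_x}$ in the annulus. Its density satisfies $d(X)\leq \tfrac{3\omega_d}{2}+\tau$; since $\theta_x(0)>\omega_d$ excludes planes (Lemma~\ref{t10.1}) and \eqref{10.9} together with Lemma~\ref{t10.3} rules out all other possibilities, we obtain $X\in \bY_0(n,d)$. The conclusion \eqref{1.15} of Theorem~\ref{t8.1} gives $\H^d(S_x\cap B(x,r_1)\setminus X)=0$, and since $L\subset S_x$ while $X$ is closed, this forces $L\cap B(x,r_1)\subset X$; the only $Y$-cone centered at $x$ containing $L$ is the $Y$ of the statement, so $X=Y$ and $E_0 = W$ in the annulus. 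Then \eqref{1.46}-\eqref{1.47} follow directly from \eqref{9.88}-\eqref{9.90}.

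The main obstacle will be making the approximation step uniform in $x$: Theorem~\ref{t9.7} must be applied with constants depending only on $\tau$, $N$, $n$, and $d$, while $\delta(x)$ may be arbitrarily small. The standard remedy is to rescale by $\delta(x)^{-1}$ so that $\delta(x)$ becomes $1$, invoke Theorem~\ref{t9.7} in the rescaled setting (the Lipschitz data of the single affine $(d-1)$-plane $L$ is invariant under this dilation), and translate back. A second delicate point is confirming that $F_x$ really is near-constant on the \emph{full} annulus $(r_0,r_1)$: this requires the sharp upper bound $\tfrac{3\omega_d}{2}+O(\sqrt\varepsilon)$ at the outer scale, together with the gap at density $\tfrac{3\omega_d}{2}$ supplied by Lemma~\ref{t10.3}, so that $\theta_x(0)$ cannot sit in the forbidden band $(\omega_d,\tfrac{3\omega_d}{2}-\eta)$ and must therefore be close to $\tfrac{3\omega_d}{2}$.
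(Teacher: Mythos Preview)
Your strategy is right in outline, but two steps are genuinely incomplete.

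First, the upper bound $F_x(r_2)\le\tfrac{3\omega_d}{2}+C\sqrt\varepsilon$. The nearest-point projection onto $V=H_1\cup H_2$ is not $1$-Lipschitz (the set is not convex; there is a fold along $L$), so Lemma~\ref{t11.1} does not transplant directly. More seriously, even granting $\H^d(E\cap B)\le\H^d(V\cap B)+C\sqrt\varepsilon$, the geometric inequality $\H^d(V\cap B(x,r))+\H^d(S_x\cap B(x,r))\le\tfrac{3\omega_d}{2}\,r^d$ that you assert is not obvious: when $x$ lies near but off $L$ the three half-planes $H_1,H_2,S_x$ are genuinely distinct and one must use the $\ge 2\pi/3$ angle condition on $V$. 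This is exactly the content of the paper's Lemma~\ref{t12.3}, proved by a Fubini slicing that reduces to a one-dimensional inequality about three half-lines from a common point. The paper then obtains the upper bound by compactness against this model (Lemma~\ref{t12.2}) rather than by projection.

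Second, your claim that the cone $X$ produced by Theorem~\ref{t8.1} lies in $\bY_0(n,d)$ is not justified by \eqref{10.9}: that hypothesis gives no gap \emph{above} $\tfrac{3\omega_d}{2}$, so from $d(X)\le\tfrac{3\omega_d}{2}+\tau$ you only get, via Lemma~\ref{t10.3}, that $X$ is \emph{close} to some $Y_0\in\bY_0$, not that $X\in\bY_0$. Your deduction ``$L\subset X$ and the unique $\bY$-cone through $L$ is $Y$, hence $X=Y$'' then collapses, because $X$ need not be a $\bY$-cone at all. The paper closes this gap with an additional lemma (Lemma~\ref{t12.6}) showing that $\overline{X_0\setminus S}$ and $\overline{Y_1\setminus S}$ are close both in Hausdorff distance and in measure, where $Y_1$ is the specific $\bY$-cone containing $L_x$; this is delicate because removing the shade $S$ can behave badly under a small Hausdorff perturbation from $X_0$ to $Y_1$. (Under the stronger hypothesis \eqref{10.10} with a strict gap $d_{n,d}>\tfrac{3\omega_d}{2}$, e.g.\ for $d=2$, your shortcut does work, and the paper says so.) A minor point: Corollary~\ref{t9.3} already has $\varepsilon$ uniform in $\dist(0,L)$ on the range \eqref{9.26}, so the paper uses it directly in place of your Theorem~\ref{t9.7}-plus-rescaling.
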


\ms
Let us start the proof. Let $L$, $E$, and $x\in E \cap B(0,1) \sm L$ 
be as in the statement.
We want to apply Theorem~\ref{t1.5} to the set $E_x = E-x$ and the boundary 
$L_x = L-x$. Let $S_x$ denote the shade of $L$ seen from $x$,
as in \eqref{1.45}, and notice that $S = S_x-x$ is the usual shade of $L_x$
(seen from the origin). We are interested in the functional $F_x$ defined by
\begin{eqnarray} \label{12.1}
F_x(r) &=& r^{-d} \H^d(E \cap B(x,r)) + r^{-d} \H^d(S_x \cap B(x,r)).
\end{eqnarray}
Note that $F_x(r) = r^{-d} \H^d(E_x \cap B(0,r)) + r^{-d} \H^d(S \cap B(0,r))$,
by \eqref{1.9} and \eqref{1.10}; this is the same thing as $F(r)$
relative to $E_x$ and $L_x$.
If $\varepsilon$ is small enough, the assumptions of Theorem~\ref{t1.5}
are satisfied (with $U = B(0,2)$ and $R_1 = 2$), and \eqref{1.21} says that 
\begin{equation} \label{12.2}
F_x(r) e^{aA(r)} \ \text{ is nondecreasing on } (0,2), 
\end{equation}
where $a > 0$ is a constant that depends on $n$ and $d$, and $A$ is 
the same function as in \eqref{1.20}.

Our next task is to evaluate $F_x(r)$ for some large $r$.
Let $\tau_1 > 0$ be small, to be chosen later.

\begin{lem} \label{t12.2}
If $\varepsilon$ is small enough (depending on $\tau_1$), then
\begin{equation} \label{12.3}
F_x(r) \leq {3 \omega_d \over 2} + 2\tau_1
\ \text{ for } x\in E \cap B(0,1) \sm L \text{ and } 0 < r \leq {19 \over 10}.
\end{equation}
\end{lem}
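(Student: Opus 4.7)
The plan is to bound $F_x$ at the single endpoint $r_\ast := 19/10$ by a competitor argument in the spirit of Lemma~\ref{t11.1}, and then propagate the bound to smaller $r$ via the almost-monotonicity of Theorem~\ref{t1.5}. Since $B(x, r_\ast) \i B(0, 29/10) \i B(0, 3)$ for $x \in B(0, 1)$, the almost-minimality of $E$ is available in that ball. Theorem~\ref{t1.5}, applied to the translate $E - x$ together with \eqref{1.42}, gives
\begin{equation*}
F_x(r) \leq e^{a[A(r_\ast) - A(r)]}\, F_x(r_\ast) \leq (1+C\varepsilon)\, F_x(r_\ast) \quad \text{for } 0 < r \leq r_\ast,
\end{equation*}
so it will suffice to show $F_x(r_\ast) \leq \tfrac{3}{2}\omega_d + \tau_1$ when $\varepsilon$ is small enough in terms of $\tau_1$.

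To bound $F_x(r_\ast)$, I adapt the competitor from Lemma~\ref{t11.1}, projecting $E$ onto $V$ rather than onto a single half-plane. The key point is that since $V = H_1 \cup H_2$ has dihedral angle $\geq 2\pi/3$ along $L$, the nearest-point projection $\pi_V$ is well defined and uniformly Lipschitz on a tubular neighbourhood $\{y : \dist(y, V) \leq \eta\}$, with $\eta > 0$ depending only on the angle; this is where the $2\pi/3$ hypothesis is essential. By \eqref{1.43}, every point of $E \cap B(0,3)$ lies within $3\varepsilon$ of $V$, so this tube contains $E \cap B(x, r_\ast)$ once $\varepsilon$ is small. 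The map $\varphi(y) = \alpha(y)\pi_V(y) + (1-\alpha(y))\, y$, with a cut-off $\alpha$ supported in $B(x,r_\ast)$ and equal to $1$ on $B(x, r_\ast - 6\varepsilon)$, is then an acceptable sliding deformation (since $\pi_V$ fixes $L \i V$ pointwise). Copying the bookkeeping of Lemma~\ref{t11.1}---the inner ball is mapped into $V \cap B(x, r_\ast)$, and the thin annulus $B(x,r_\ast) \sm B(x, r_\ast - 6\varepsilon)$ contributes at most $C\varepsilon r_\ast^{d-1}$ by covering $E$ there with $O((r_\ast/\varepsilon)^{d-1})$ balls of radius $20\varepsilon$ via \eqref{2.9}---together with the three-type $(A, A', A_+)$ trick at the end of that proof, gives
\begin{equation*}
\H^d(E \cap B(x, r_\ast)) \leq \H^d(V \cap B(x, r_\ast)) + C\sqrt{\varepsilon}.
\end{equation*}

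It then remains to establish the purely geometric inequality
\begin{equation*}
\H^d(V \cap B(x, r_\ast)) + \H^d(S_x \cap B(x, r_\ast)) \leq \tfrac{3}{2}\omega_d\, r_\ast^d + C\varepsilon.
\end{equation*}
Each of $H_1, H_2, S_x$ is a half $d$-plane $H_e := L + \R_{\geq 0}\, e$ with outward unit normal $e \perp L$: namely $e_1, e_2$ for $V$ and $e_3 := -x_\perp/\delta$ for $S_x$, where $x = x_L + x_\perp$ with $x_L \in L$, $x_\perp \perp L$, and $\delta = |x_\perp|$. Slab integration in the direction of $e$ yields, for $\delta \leq r_\ast$,
\begin{equation*}
\H^d(H_e \cap B(x, r_\ast)) = \tfrac{1}{2}\omega_d (r_\ast^2 - v^2)^{d/2} + \operatorname{sgn}(u) \int_0^{|u|} \omega_{d-1}(r_\ast^2 - v^2 - t^2)_+^{(d-1)/2}\, dt,
\end{equation*}
where $u = \langle e, x_\perp\rangle$ and $v = \sqrt{\delta^2 - u^2}$. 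For $e_3$ the signed correction is $-g(\delta, r_\ast)$, with $g(\delta, r_\ast) := \int_0^\delta \omega_{d-1}(r_\ast^2 - t^2)^{(d-1)/2}\, dt$. WLOG $x$ is closer to $H_1$ than to $H_2$; in the generic case $u_1 \geq 0$ and $u_2 \leq 0$ (forced by the angle hypothesis $\langle e_1, e_2\rangle \leq -1/2$ together with $\dist(x,H_1) \leq 3\varepsilon$), the $H_1$-correction is $\leq g(u_1, r_\ast) \leq g(\delta, r_\ast)$, which exactly cancels the $S_x$-correction, while the $H_2$-correction is non-positive. The sum of the three half-plane measures is then at most $\tfrac{3}{2}\omega_d r_\ast^d$. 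The remaining cases ($u_1 < 0$, or $u_2 > 0$) only occur when $\delta = O(\varepsilon)$, and then each signed correction is $O(\varepsilon r_\ast^{d-1})$ while each half-plane measure lies within $O(\varepsilon)$ of $\tfrac{1}{2}\omega_d r_\ast^d$, giving the same bound up to $C\varepsilon$.

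Combining the two steps gives $F_x(r_\ast) \leq \tfrac{3}{2}\omega_d + C\sqrt{\varepsilon}$, which is $\leq \tfrac{3}{2}\omega_d + \tau_1$ for $\varepsilon$ small, and the almost-monotonicity from the first paragraph then yields $F_x(r) \leq \tfrac{3}{2}\omega_d + 2\tau_1$ for all $r \in (0, 19/10]$. The main delicate point is the geometric cancellation in the third paragraph: the positive ``inside'' contribution from $H_1$ must be cancelled (not merely majorised) by the negative ``outside'' contribution from $S_x$, and the $2\pi/3$ angle hypothesis on $V$ gets used twice---once to guarantee uniform Lipschitz control on $\pi_V$ in the second paragraph, once to force $u_2 \leq 0$ so that the $H_2$-correction has the right sign in the third.
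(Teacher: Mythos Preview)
Your overall strategy—reduce to $r_\ast=19/10$ via almost-monotonicity, then bound $\H^d(E\cap B(x,r_\ast))$ by $\H^d(V\cap B(x,r_\ast))$ via a competitor as in Lemma~\ref{t11.1}, then prove the geometric inequality
\[
\H^d(V\cap B(x,r_\ast))+\H^d(S_x\cap B(x,r_\ast))\le \tfrac32\omega_d\,r_\ast^d + C\varepsilon
\]
—is sound and genuinely different from the paper's. The paper does \emph{not} build a competitor for $E$ directly; instead it argues by contradiction and compactness, passes to a limit $E_\infty\in\bV(L)$, invokes upper semicontinuity of $\H^d$ along the sequence, and proves the geometric inequality (your third paragraph, its Lemma~\ref{t12.3}) by slicing along $L$ down to a one-dimensional statement, which it then handles by case analysis and a monotonicity trick for a $\bY$-set. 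Your third-paragraph computation, by slab integration in the direction $e$, is a correct and pleasantly explicit alternative proof of that same lemma, and your cancellation bookkeeping (the positive $H_1$-correction bounded by $g(\delta,r_\ast)$, exactly offsetting the $S_x$ term; the $H_2$-correction nonpositive when $u_2\le 0$) is right.

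There is, however, a real gap in your second paragraph. The nearest-point projection $\pi_V$ onto a nonconvex $\bV$-set is \emph{not} well defined, and certainly not Lipschitz, on any tubular neighbourhood of $V$: on the bisecting half-plane inside the angle $\theta\in[2\pi/3,\pi)$, the two perpendicular feet on $H_1$ and $H_2$ are distinct points at mutual distance $r\sin\theta>0$, so any single-valued selection of $\pi_V$ jumps there. That bisector does meet the $3\varepsilon$-tube (at distance $O(\varepsilon)$ from $L$), so your $\varphi=\alpha\,\pi_V+(1-\alpha)\,\mathrm{id}$ is discontinuous on a set you cannot rule out meeting $E$, and hence is not an acceptable deformation in the sense of \eqref{1.2}–\eqref{1.4}. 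The $2\pi/3$ hypothesis does \emph{not} rescue the nearest-point projection.

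The fix is to replace $\pi_V$ by a genuine Lipschitz retraction onto $V$. This is exactly where the $2\pi/3$ hypothesis earns its keep: in each $2$-plane orthogonal to $L$, the trace of $V$ is the graph of $s\mapsto -|s|\cot(\theta/2)$ over the bisector line, with slope $\cot(\theta/2)\le\cot(\pi/3)=1/\sqrt3<1$, so the ``vertical'' projection onto this graph is Lipschitz (constant depending only on $\theta$), fixes $V$ and $L$, and moves points by at most $C\,\dist(\,\cdot\,,V)$. With this $\rho$ in place of $\pi_V$, your $\varphi$ becomes an acceptable deformation, the inner ball is mapped into $V\cap B(x,r_\ast+C\varepsilon)$, the thin annulus is handled exactly as you describe, and the rest of your proof goes through unchanged.
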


\ms
\begin{proof}
Because of \eqref{12.2} and the fact that $A(3)$ is as small as we want (by 
\eqref{1.42}), it is enough to show that
\begin{equation} \label{12.4}
F_x(19/10) \leq {3 \omega_d \over 2} + \tau_1.
\end{equation}

Suppose the lemma fails for some $\tau_1 > 0$; this means that if
we take $\varepsilon = 2^{-k}$, we can find $L_k$, $E_k$ and $h_k$
as above, and then $x_k \in E_k \cap B(0,1) \sm L_k$ such that \eqref{12.4}
fails (for $E_k$ and $x_k$). 
By rotation invariance, we may even assume that $L_k = L$ for some fixed 
vector space $L$. Let us also replace $\{ E_k \}$ by a subsequence 
which converges locally in $B(0,3)$ to a limit $E_\infty$, and for which
$x_k$ has a limit $x_\infty \in E_\infty \cap \overline B(0,1)$.

Since $E_k$ satisfies \eqref{1.43} with the constant $\varepsilon = 2^{-k}$,
we see that $E_\infty \in \bV(L)$, i.e., is the union of two half $d$-planes
$H_1$ and $H_2$, that are bounded by $L$ and make an angle at least
$2\pi/3$ along $L$. The following sublemma will help us with measure estimates.

\begin{lem} \label{t12.3}
If $E_\infty \in \bV(L)$, then 
\begin{equation} \label{12.5}
r^{-d} \H^d(E_\infty \cap B(x_\infty,r)) 
+ r^{-d} \H^d(S_{x_\infty} \cap B(x_\infty,r))
\leq {3\omega_2 \over 2} 
\end{equation}
for all $x_\infty \in E_\infty\sm L$ and $r > 0$.
\end{lem}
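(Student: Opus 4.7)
My plan is a direct geometric computation, not requiring any minimality of $E_\infty$. Write $E_\infty = H_1 \cup H_2$ with $H_i = L + [0,\infty)\, v_i \in \bH(L)$, where $v_i$ are unit vectors orthogonal to $L$, and by the $2\pi/3$ angle condition $\cos\alpha := \langle v_1, v_2 \rangle \leq -1/2$. After relabeling we may assume $x_\infty \in H_1 \sm L$; write $x_\infty = l_0 + \delta v_1$ with $l_0 \in L$ and $\delta := \dist(x_\infty, L) > 0$. Let $P_i = L + \R v_i$ be the $d$-plane containing $H_i$, and set $\wt H_1 := L + (-\infty,0]\, v_1$, the half of $P_1$ opposite to $H_1$.

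The key observation is that $S_{x_\infty} = \wt H_1$. To see this, decompose any $y = l_y + w$ with $l_y \in L$ and $w$ in the vector space $L_0^\perp$ orthogonal to the direction $L_0$ of $L$; the condition from \eqref{1.45} that $(1-\lambda)x_\infty + \lambda y \in L$ for some $\lambda\in[0,1]$ translates, upon taking $L_0^\perp$-components, into $(1-\lambda)\delta v_1 + \lambda w = 0$, which (since $\delta > 0$ forces $\lambda > 0$) requires $w = -c v_1$ with $c = (1-\lambda)\delta/\lambda \geq 0$. As $\lambda$ ranges over $(0,1]$, $c$ ranges over $[0,\infty)$, so $y \in \wt H_1$; conversely any such $y$ lies in $S_{x_\infty}$. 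Since $H_1 \cup \wt H_1 = P_1$ and $H_1 \cap \wt H_1 = L$ has vanishing $\H^d$-measure, and since $x_\infty \in P_1$, we get
\begin{equation}
\H^d(H_1 \cap B(x_\infty,r)) + \H^d(S_{x_\infty} \cap B(x_\infty,r)) = \H^d(P_1 \cap B(x_\infty,r)) = \omega_d r^d.
\end{equation}

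It remains to show $\H^d(H_2 \cap B(x_\infty,r)) \leq \tfrac{\omega_d}{2} r^d$, after which \eqref{12.5} follows by addition, using that $\H^d(H_1 \cap H_2) \leq \H^d(L) = 0$. Using $L_0 \perp \{v_1,v_2\}$, the orthogonal projection $x_\infty'$ of $x_\infty$ onto $P_2$ equals $l_0 + \delta(\cos\alpha)\, v_2$; it lies at distance $\delta|\cos\alpha|$ from $L$ inside $P_2$ and, since $\cos\alpha < 0$, on the side of $L$ opposite $H_2$. Therefore $P_2 \cap B(x_\infty, r)$ is a $d$-ball of some radius $r' \leq r$ centered at $x_\infty'$, and $H_2 \cap B(x_\infty,r)$ is the portion of this ball on the far side of $L$ from its center, which is a spherical cap of $\H^d$-measure at most $\tfrac{1}{2}\omega_d (r')^d \leq \tfrac{1}{2} \omega_d r^d$.

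The main step requiring care is the identification $S_{x_\infty} = \wt H_1$, where the codimension-one nature of $L$ inside $P_1$ enters essentially; once it is in hand, everything reduces to elementary ball-versus-half-ball comparisons in the $d$-planes $P_1$ and $P_2$. The angle hypothesis is used only through $\cos\alpha < 0$, which places $x_\infty'$ on the correct side of $L$.
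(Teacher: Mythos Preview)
Your proof is correct and genuinely simpler than the paper's. The paper proceeds by first proving a one–dimensional inequality \eqref{12.9}: for any point $y \neq \ell$ in a $2$-plane and two half-lines $h_1,h_2$ from $\ell$ making angle $\geq 2\pi/3$, one has $\H^1(v\cap B(y,\rho)) + \H^1(s\cap B(y,\rho)) \leq 3\rho$, where $s$ is the shade of $\ell$ from $y$. That $1$D estimate is proved by a case analysis (whether $\ell$ lies in the ball, whether the $h_i$ lie on the same side of the axis, etc.) and, in one case, by invoking the monotonicity of density for a $\bY$-set. The $d$-dimensional statement is then obtained by Fubini slicing along $L$.

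You bypass both the slicing and the case analysis by exploiting the hypothesis $x_\infty \in E_\infty\sm L$ more directly: since $x_\infty \in H_1$, the shade $S_{x_\infty}$ is \emph{exactly} the opposite half-plane $\wt H_1$, so $H_1$ and $S_{x_\infty}$ together account for exactly $\omega_d r^d$; and the orthogonal projection of $x_\infty$ onto $P_2$ falls on the side of $L$ opposite $H_2$ (here is where $\cos\alpha < 0$ enters), so $H_2\cap B(x_\infty,r)$ is a spherical cap of measure at most $\tfrac12\omega_d r^d$. This is more elementary (no appeal to monotonicity of $\bY$), avoids Fubini, and incidentally shows the inequality already under the weaker angle condition $\alpha > \pi/2$. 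The paper's $1$D lemma \eqref{12.9} is stated for arbitrary $y$, but in the slicing the projected point $y$ always lies on one of the $h_i$, so the extra generality is not actually needed for Lemma~\ref{t12.3}; your approach essentially recognizes and exploits this.
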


\ms
We shall prove this lemma soon, but let us first see why it implies
Lemma \ref{t12.2}. 
First assume that $x_\infty \notin L$. Then
\begin{equation} \label{12.6}
\lim_{k \to +\infty} \H^d(S_{x_k} \cap B(x_k,19/10)) 
= \H^d(S_{x_\infty} \cap B(x_\infty,19/10))
\end{equation}
(because $x_k$ too stays far from $L$). 

Denote by $F_{x_k}$ the functional associated to $E_k$ and 
$x_k$ as in \eqref{12.1}, and pick any $r_1 \in (19/10, 2)$. Then
set $\ell = \limsup_{k \to +\infty} F_{x_k}(19/10)$, and observe that
\begin{eqnarray} \label{12.7}
\ell 
&=& (19/10)^{-d} \limsup_{k \to +\infty} \Big[
\H^d(S_{x_k} \cap B(x_k,19/10)) 
+\H^d(E_k \cap B(x_k,19/10))\Big]
\nn\\
&\leq& (19/10)^{-d} \H^d(S_{x_\infty} \cap B(x_\infty,19/10))
+ (19/10)^{-d} \limsup_{k \to +\infty} \H^d(E_k \cap \overline B(x_\infty,r_1))
\nn\\
&\leq& (19/10)^{-d} \H^d(S_{x_\infty} \cap B(x_\infty,19/10))
+ (19/10)^{-d}  \H^d(E_\infty \cap \overline B(x_\infty,r_1))
\\
&\leq& (19/10)^{-d} \, {3\omega_2 r_1^d \over 2}
\nn
\end{eqnarray}
by \eqref{12.6} and Lemma 22.3 in \cite{Sliding}, applied to the compact 
set $\overline B(x_\infty,r_1)$ as we did for \eqref{9.12}, and then \eqref{12.5}.
If we choose $r_1$ close enough to $19/10$, we deduce \eqref{12.4}
for $E_k$ and $x_k$ (and if $k$ is large enough) from \eqref{12.7}, 
and this contradiction with the definition of $E_k$ proves the lemma.

If $x_\infty \in L$, we simply notice that 
$\H^d(S_{x_k} \cap B(x_k,19/10)) \leq (19/10)^d \, {\omega_d \over 2}$,
and $\H^d(E_\infty \cap B(x_k,r_1)) = r_1^d \omega_d$, so
\begin{eqnarray} \label{12.8}
\ell 
&=& (19/10)^{-d} \limsup_{k \to +\infty} \Big[
\H^d(S_{x_k} \cap B(x_k,19/10)) +\H^d(E_k \cap B(x_k,19/10))\Big]
\nn\\
&\leq&  {\omega_d \over 2}
+ (19/10)^{-d} \limsup_{k \to +\infty} \H^d(E_k \cap \overline B(x_\infty,r_1))
\nn\\
&\leq& {\omega_d \over 2}
+ (19/10)^{-d}  \H^d(E_\infty \cap \overline B(x_\infty,r_1))
\leq {\omega_d \over 2} + (19/10)^{-d} \, {\omega_2 r_1^d \over 2},
\end{eqnarray}
again by Lemma 22.3 in \cite{Sliding}. We can let $r_1$ tend to
$19/10$ and get \eqref{12.4} as above, so Lemma~\ref{t12.2}
follows from Lemma \ref{t12.3}.

\ms
\noindent
{\it Proof of Lemma \ref{t12.3}.}
We start with the case of a set $V$ of dimension $1$ in a $2$-plane $P$.
That is, let the points $y, \ell \in P$ be given, with $y \neq \ell$,
and let $v$ denote the union of two half lines $h_1, h_2$ in $P$, 
that both start at $\ell$ and make an angle at least $2\pi\over 3$; 
also denote by $s$ the shade of $\ell$ seen from $y$, i.e., the set of points $w\in P$
such that $y + \lambda (w-y) = \ell$ for some $\lambda \in [0,1]$.
See Figure 12.1 for an illustration.
We want to check that for $\rho > 0$,
\begin{equation}\label{12.9}
\H^1(v \cap B(y,\rho)) + \H^1(s \cap B(y,\rho)) \leq 3 \rho.
\end{equation}
By rotation and dilation invariance, it is enough to prove this when 
$P = \R^2$, $y=0$, $\rho = 1$, and $\ell$ lies on the positive real axis.

When $\ell \geq 1$, $\H^1(s \cap B(y,1))=0$; the result is clear when
$\H^1(h_1 \cap B(y,\rho)) \leq 1$ because $\H^1(h_2 \cap B(y,\rho)) \leq 2$;
otherwise, $h_1$ makes an angle at least $2\pi/3$ with the positive real axis
(see Figures 12.1 and 12.2); 
that, is $h_1$ lies in the shaded region of Figure 12.3, and our
angle condition forces $h_2$ to make an angle at most $2\pi/3$ with
the positive real axis, hence to lie on the right of the dotted half line
of Figure 12.3, so that $\H^1(h_2 \cap B(y,\rho)) \leq 1$, 
and the desired inequality follows.

\vfill
\ms
\centerline{
\includegraphics[height=3.6cm]{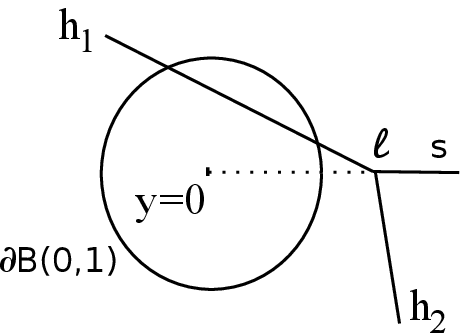}
\hskip0.8cm
\includegraphics[height=3.6cm]{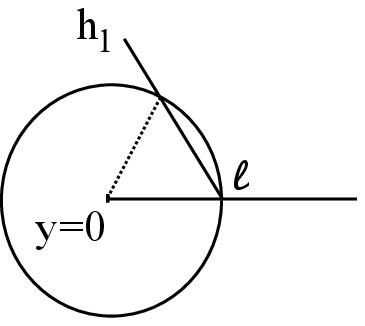}
\hskip0.5cm
\includegraphics[height=3.8cm]{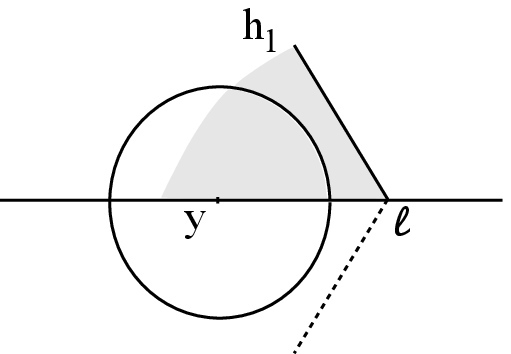}
}
\par\noindent
{\bf Figure 12.1(left).} General position with $\ell > 1$.
\par\noindent
{\bf Figure 12.2 (center).} The limiting case for $\H^1(h_1 \cap B(y,\rho)) \geq 1$.
\par\noindent
{\bf Figure 12.3 (right).} $h_1$ lies on the shaded region; then $h_2$
lies on the right of the dotted line.
\ms

\ms
So we may assume that $0 < \ell \leq 1$. For $i=1,2$, let $\theta_i \in [0,\pi]$
denote the angle between $h_i$ and the positive real axis; we may assume
that $\theta_1 \leq \theta_2$. 
Also denote by $\theta$ the angle between $h_1$ and $h_2$.
If $h_1$ and $h_2$ lie on the same side of the real axis, our
constraint that $\theta \geq 2\pi/3$ implies that $\theta_1 \leq \pi/3$
(see Figure 12.4).   
Then $\H^1(h_1 \cap B(y,\rho)) < 1$, while
$\H^1(h_2 \cap B(y,\rho)) + \H^1(s \cap B(y,\rho)) \leq 2$
(corresponding to $\theta_2=\pi$, look at Figure 12.4 again, and recall
that $\theta_2 \geq 2\pi/3$);
then \eqref{12.9} holds.

So we may assume that $h_1$ and $h_2$ lie on different sides of the real axis;
then $\theta_1+\theta_2+\theta = 2\pi$, our constraint that
$\theta \geq 2\pi/3$ yields $\theta_1 + \theta_2 \leq 4\pi/3$,
and since $\H^1(h_i \cap B(y,\rho))$ is easily seen to be an increasing
function of $\theta_i$, we may assume that $\theta_1 + \theta_2 = 4\pi/3$
and $\theta = 2\pi$. Let $h_3$ denote the half line in $P$ that makes an angle
of $2\pi/3$ with $h_1$ and $h_2$, and set $Y = h_1\cup h_2 \cup h_3$
(See Figure 12.5).

\vfill
\ms
\centerline{
\includegraphics[height=4cm]{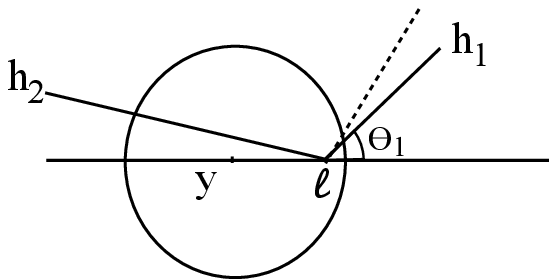}
\hskip0.4cm
\includegraphics[height=4cm]{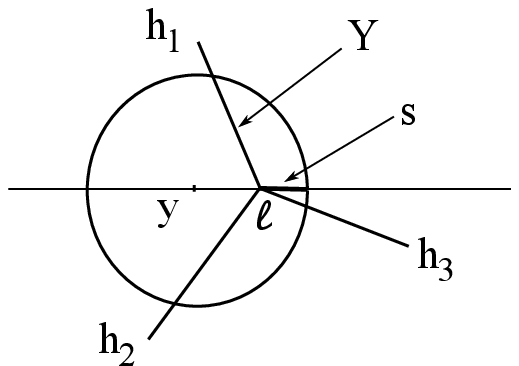}
}
\par\noindent
{\bf Figure 12.4(left).} General position when $\ell \leq 1$ and $h_1$ and
$h_2$ lie above the axis.
\par\noindent
{\bf Figure 12.5 (right).} The case when $h_1$ and
$h_2$ lie on different sides, and the set $Y = h_1\cup h_2 \cup h_3$.
\ms

Notice that $s\cap \overline B(0,1)$ is the shortest line segment from  $\ell$
to $\d B(0,1)$, hence $\H^1(s \cap B(y,1)) \leq \H^1(h_3 \cap B(y,1))$,
and \eqref{12.9} will follow as soon as we prove that 
\begin{equation}\label{12.10}
\H^1(Y\cap B(0,1)) \leq 3.
\end{equation}
This would probably not be so hard to compute, but it is simpler to notice that,
for $\ell$ fixed, $r^{-1}\H^1(Y\cap B(0,r))$ is a nondecreasing function of $r$, either
by  Proposition 5.16 in \cite{Holder} (the lazy way; notice that $0$ does not need to lie
on the minimal set $Y$), or (repeating the proof of monotonicity) because 
\begin{equation}\label{12.11}
{\d \over \d r}(\H^1(Y\cap B(0,r))\geq \sharp(Y \cap \d B(0,r))
= 3 \geq r^{-1} \H^1(Y\cap B(0,r))
\end{equation}
for $r > \ell$, and where the last inequality comes from the minimality
of $Y$ (compare $Y$ with the cone over $Y \cap \d B(0,r)$).

This proves \eqref{12.9}, and now we deduce \eqref{12.5} from \eqref{12.9}
and a slicing argument. Let $L$, $E_\infty$, $x_\infty$, and $r$ be as in 
Lemma \ref{t12.3}.
Recall that since $E_\infty \in \bV(L)$, it is the union of two half $d$-planes 
$H_1$ and $H_2 \in \bH(L)$ bounded by $L$. 
For $i=1,2$, let $e_i$ be the unit vector such that $e_i \in H_i$ 
and $e_i \perp L$, and let $h_i$ denote the half line 
$h_i = \big\{ t e_i, \, ; \, t \geq 0 \big\}$; 
notice that $H_i = h_i \times L$ (an orthogonal product), and that $h_1$ makes
an angle at least $2\pi/3$ with $h_2$, by definition of $\bV(L)$. 
See Figure 12.4.

\vfill
\centerline{
\includegraphics[height=5.cm]{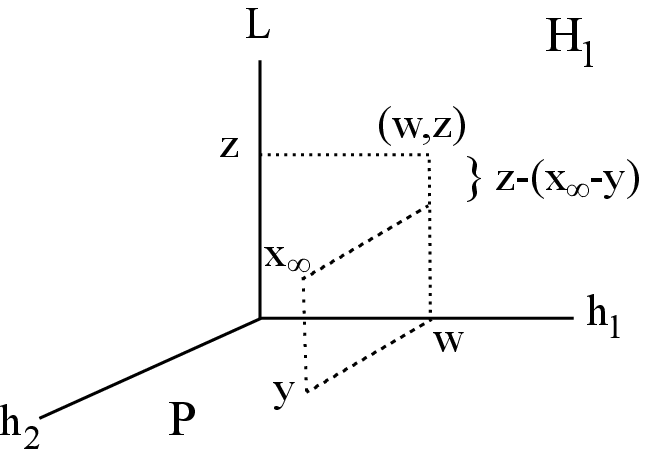}
}
\par\centerline{
{\bf Figure 12.6.} Notation for the slicing argument ($h_2$ is not orthogonal to
$h_1$).
} 

\ms
Denote by $P$ the $2$-plane that contains $e_1$ and $e_2$ and by $y$ the orthogonal projection of $x_{\infty}$ on $P$. If $e_2 = -e_1$, pick any plane
that contains $e_1$, or notice that \eqref{12.5} is easy to prove directly
(since $E_\infty$ is a $d$-plane through $L$).
Then fix $i \in \{1,2\}$, and write the current point of
$H_i$ as $(w,z)$, with $w\in h_i$ and $z\in L$. 
For $z\in L$, the slice of $H_i \cap B(x_\infty,r)$ at height $z$
is the possibly empty set
\begin{equation}\label{12.12}
W_i(z) = \big\{ w\in h_i \, ; \, (w,z) \in B(x_\infty,r) \big\}.
\end{equation}
Write $(w,z)-x_\infty$ as the sum of
$w-y \in P$ and $z-x_\infty+y \in P^\perp$ (recall that
$z\in L \subset P^\perp$); then
$|(w,z)-x_\infty|^2 = |w-y|^2+|z-(x_\infty-y)|^2$,
the condition $|(w,z)-x_\infty|^2 < r^2$ becomes
$|w-y|^2 < r_z^2$, with
\begin{equation}\label{12.13}
r_z^2 = \max(0, r^2 - |z-(x_\infty-y)|^2),
\end{equation}
and then 
\begin{equation} \label{12.14}
W_i(z) = h_i \cap B(y,r_z).
\end{equation}
Then we apply Fubini's theorem on the $d$-plane that contains $H_i$ and get that
\begin{equation} \label{12.15}
\H^d(H_i \cap B(x_\infty,r))
= \int_{z \in L} \H^1(W_i(z)) dz
= \int_{z \in L} \H^1(h_i \cap B(y,r_z)) dz
\end{equation}
where for this  computation we have assumed that the various Hausdorff measures
were normalized so that they coincide with the corresponding Lebesgue measures
on vector spaces. (After this, even if we chose different normalizations, \eqref{12.5}
will follow because the result of the computation is exact when $E_\infty$ 
is a plane through $x_\infty$.)

Denote by $s$ the shade of $y$ seen from $0$; the same computation, with
$H_i$ replaced by the half $d$-space $S_{x_\infty}$, shows that
\begin{equation} \label{12.16}
\H^d(S_{x_\infty} \cap B(x_\infty,r))
= \int_{z \in L} \H^1(s \cap B(y,r_z)) dz.
\end{equation}
Thus, setting 
$I = \H^d(E_\infty \cap B(x_\infty,r)) + \H^d(S_{x_\infty} \cap B(x,r))$,
\eqref{12.15} and \eqref{12.16} yield
\begin{eqnarray} \label{12.17}
I &=& \int_{z \in L} \big[\H^1(h_1 \cap B(y,r_z))
+\H^1(h_2 \cap B(y,r_z))+\H^1(s \cap B(y,r_z))\big] dz
\nn\\
&=& \int_{z \in L} \big[\H^1(v \cap B(y,r_z))+\H^1(s \cap B(y,r_z))\big] dz
\leq \int_{z \in L} 3 r_z dz
\end{eqnarray}
by \eqref{12.9} and because the angle of $h_1$ and $h_2$
is the same as the (smallest) angle of $H_1$ and $H_2$. 
Denote by $z_0$ the orthogonal projection of $x_{\infty}$ on $L$;
then $|z-(x_\infty-y)|^2 \geq |z-z_0|^2$ (because $z_0$ is also the orthogonal
projection of $x_\infty-y$), and hence $r_z^2  \leq \max(0,r^2-|z-z_0|^2)$. Thus
\eqref{12.17} yields
\begin{equation}\label{12.18}
I \leq 3 \int_{z \in L \cap B(z_0,r)}  \big(r^2-|z-z_0|^2\big)^{1/2}dz
= 3 \int_{w \in \R^{d-1} \cap B(0,r)} \big(r^2-|w|^2\big)^{1/2}dw
= {3 \omega_d r^d \over 2},
\end{equation}
because we recognize the way to compute the measure of a (half) ball of $\R^d$
by vertical slicing and induction. 
This completes our proof of \eqref{12.5}; Lemmas~\ref{t12.3} and \ref{t12.2} follow.
\qed\end{proof}

\ms
Let $x\in E \cap B(0,1) \sm L$ be given. Since 
$F_x(r) = r^{-d} \H^d(E\cap B(x,r))$ for $r$ small, 
Lemma~\ref{t12.2} implies that 
\begin{equation}\label{12.19}
\theta_x(0) = \lim_{r \to 0} r^{-d} \H^d(E\cap B(x,r))
\leq {3 \omega_d \over 2}+2\tau_1 \leq {3 \omega_d \over 2}+\tau 
\end{equation}
if we choose $2\tau_1 \leq \tau$, and so \eqref{1.44} holds.

\ms
From now on, we assume in addition that $\theta_x(0) > \omega_d$, and set
$\delta(x) = \dist(x,L) > 0$, as in the statement of Corollary \ref{t1.8}.
Let $X$ be any blow-up limit of $E$ at $x$; standard arguments show that
such things exist.
By Proposition 7.31 in \cite{Holder} (for instance), $X$ is a minimal cone 
with constant density $d(X) = \theta_x(0)$ (see the notation \eqref{10.1}). 
We assumed that $\theta_x(0) > \omega_d$, so $d(X) > \omega_d$
and $X$ is not a plane. By \eqref{10.9}, $d(X) \geq {3 \omega_d \over 2}$
(because $d(X) = {3 \omega_d \over 2}$ when $X\in \bY_0(n,d)$).
Hence 
\begin{equation} \label{12.20}
\lim_{r \to 0} F_x(r) = \theta_x(0) \geq {3 \omega_d \over 2},
\end{equation}
by the definitions \eqref{12.1} and \eqref{1.44}, and because $x\in E \sm L$.
We now deduce from \eqref{12.2} that for $0 < r \leq 19/10$,
\begin{equation} \label{12.21}
F_x(r) \geq e^{-a A(r)} \theta_x(0)
\geq e^{-a \varepsilon } \theta_x(0) 
\geq {3 \omega_d \over 2} - \tau_1
\end{equation}
by \eqref{1.42} and if $\varepsilon$ is small enough (depending on $\tau_1$).
Because of \eqref{12.3}, we thus get that
\begin{equation} \label{12.22}
{3 \omega_d \over 2} - \tau_1 \leq F_x(r) \leq {3 \omega_d \over 2} + 2 \tau_1
\ \text{ for } 0 < r \leq {19 \over 10}.
\end{equation}

Our next task is to use \eqref{12.22}, in conjunction with 
Corollary \ref{t9.3}, to get a local control of $E$ in balls $B(x,r)$,
where $x\in E \sm L$ is as above.  
We start with the small radii, for which the boundary $L$ does not
really interfere.

\begin{lem} \label{t12.4}
Let $n$, $d$, $E$, and $L$ satisfy the assumptions of Corollary \ref{t1.8} and let
$x\in E \cap B(0,1) \sm L$ be such that $\theta_x(0) > \omega_d$.
In particular, suppose that $\varepsilon$ in \eqref{1.42} and \eqref{1.43}
is small enough, depending on $n$, $d$, and $\tau >0$.
Then for $0 < r < {1 \over2} \dist(y,L)$ there is a minimal cone $Y\in \bY_x(n,d)$ 
(i.e., of type $\bY$ and centered at $x$) such that
\begin{equation} \label{12.23}
d_{x,r}(E,Y) \leq \tau
\end{equation}
and 
\begin{eqnarray} \label{12.24}
&\,&\big|\H^d(E \cap B(y,t)) - \H^d(Y\cap B(y,t))\big| \leq \tau r^d
\nn\\
&\,&\hskip2cm
\text{ for $y\in \R^n$ and $t>0$ such that } B(y,t) \i B(x,r).
\end{eqnarray}
\end{lem}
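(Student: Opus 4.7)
The plan is to argue by contradiction and compactness, following the template of Theorem~\ref{t9.1} and its proof of \eqref{9.17}, except that here the blow-up takes place around $x$ rather than around the origin. The key reduction is that since $r < \tfrac{1}{2}\dist(x,L)$ and $0 \in L$ (so $\dist(x,L) \leq |x| \leq 1$), the ball $B(x,2r)$ avoids both $L$ and the shade $S_x$, hence $E$ is a plain almost minimal set there and $F_x(s) = \theta_x(s)$ for all $0 < s \leq 2r$. Since $2r < 1 \leq 19/10$, \eqref{12.22} yields
$$\theta_x(s) \in \bigl[\tfrac{3\omega_d}{2} - \tau_1,\ \tfrac{3\omega_d}{2} + 2\tau_1\bigr]\qquad \text{for } 0 < s \leq 2r,$$
and $\tau_1$ can be taken arbitrarily small by shrinking $\varepsilon$.

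Suppose the lemma fails for some $\tau > 0$. For each $k \geq 1$ choose $\tau_1 = 2^{-k}$ and pick the corresponding $\varepsilon_k$ small enough; this produces a counterexample $(L_k, E_k, x_k, r_k)$ with $\int_0^3 h_k(t)\,dt/t < 2^{-k}$ and no $Y \in \bY_{x_k}(n,d)$ satisfying both \eqref{12.23} and \eqref{12.24}. By rotation invariance we fix $L_k = L$. Set $\widetilde E_k = r_k^{-1}(E_k - x_k)$ in the open set $U_k = r_k^{-1}(B(0,3) - x_k) \supset B(0,2)$, with rescaled boundary $\widetilde L_k = r_k^{-1}(L - x_k)$. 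Since $\dist(0, \widetilde L_k) > 2$, both $\widetilde L_k$ and its shade from the origin avoid $B(0,2)$, so $\widetilde E_k$ is a plain almost minimal set in $B(0,2)$ with rescaled gauge $\widetilde h_k(t) = h_k(r_k t) \to 0$, and the displayed bound rescales to
$$\theta_0(\widetilde E_k; s) \in \bigl[\tfrac{3\omega_d}{2} - 2^{-k},\ \tfrac{3\omega_d}{2} + 2^{1-k}\bigr]\qquad \text{for } 0 < s \leq 2.$$
Extract a subsequence along which $\widetilde E_k \to E_\infty$ locally in $B(0,2)$. By Theorem~10.8, Theorem~10.97, and Lemma~22.3 of \cite{Sliding} (applied exactly as in the derivations of \eqref{9.9} and \eqref{9.14}), $E_\infty$ is a coral plain minimal set in $B(0,2)$ with $\theta_0(E_\infty;s) = \tfrac{3\omega_d}{2}$ for every $s \in (0,2)$. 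The equality case of the monotonicity of density (Theorem~\ref{t8.1} with empty boundary, or Theorem~6.2 of \cite{Holder}) then forces $E_\infty$ to be a cone centered at $0$; since $d(E_\infty) = \tfrac{3\omega_d}{2} > \omega_d$, assumption \eqref{10.9} yields $E_\infty \in \bY_0(n,d)$.

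Set $Y_k = x_k + E_\infty \in \bY_{x_k}(n,d)$. The Hausdorff convergence $\widetilde E_k \to E_\infty$ rescales to $d_{x_k, r_k}(E_k, Y_k) \to 0$, so \eqref{12.23} holds for $k$ large. For the measure bound \eqref{12.24}, apply Lemma~\ref{t9.2} to $\{\widetilde E_k\}$ in $B(0,2)$ with empty boundary and with $X = E_\infty$ (a closed rectifiable cone with locally finite $\H^d$-measure, as a member of $\bY_0(n,d)$): this gives a uniform measure comparison $|\H^d(\widetilde E_k \cap B(y,t)) - \H^d(E_\infty \cap B(y,t))| \leq \tau'$ on all balls $B(y,t) \subset B(0,1)$ for $k$ large, which rescales directly to \eqref{12.24}, contradicting the choice of $(E_k, x_k, r_k)$. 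The main obstacle is pinning down the blow-up limit as an exact $\bY$-cone rather than merely a minimal cone close to one: this requires both the scale-invariant nature of the bound on $F_x$ (so that the limit density is exactly $\tfrac{3\omega_d}{2}$ at every scale, allowing the equality case of monotonicity) and the hypothesis \eqref{10.9}, which forbids any minimal cone of density $\tfrac{3\omega_d}{2}$ other than planes and $\bY$-cones.
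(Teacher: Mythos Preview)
Your argument is correct, and it takes a somewhat different route from the paper. The paper proceeds in three stages: it first invokes Proposition~7.24 of \cite{Holder} as a black box to produce an approximating minimal cone $X$ centered at $x$ (the ``nearly constant density $\Rightarrow$ close to a minimal cone'' result), then applies Lemma~\ref{t10.3} to find $Y \in \bY_x(n,d)$ with $d_{x,1}(X,Y) \leq \tau_3$ (since $d(X)$ is close to $\tfrac{3\omega_d}{2}$), and finally runs a separate compactness argument via Lemma~\ref{t9.2} to transfer the measure estimate from $X$ to $Y$. You instead run a single contradiction/compactness argument from scratch: rescale, pass to a limit $E_\infty$, and use the \emph{exact} constant density of $E_\infty$ together with the equality case of monotonicity and \eqref{10.9} to identify $E_\infty$ directly as an element of $\bY_0(n,d)$. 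Your version is more self-contained and avoids the intermediate cone $X$ and Lemma~\ref{t10.3} entirely; the paper's version is more modular, reusing existing machinery at the cost of an extra passage. Both buy the same conclusion, and the key ingredients (the scale-invariant density bound \eqref{12.22}, the hypothesis \eqref{10.9}, and Lemma~\ref{t9.2} for the uniform measure comparison) are the same.
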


\begin{proof}
First observe that $r < {1 \over2}\dist(y,L) \leq {1 \over2}$
because $x\in B(0,1)$, so $E$ is almost minimal in $B(x,2r)$, with
no boundary condition, and we may try to apply 
Proposition 7.24 in \cite{Holder} in $B(x,2r)$, and with some constant
$\tau_2$ that will be chosen soon, to find an approximating cone.
The condition that $h(4r)$ be small enough comes from \eqref{1.42},
and the almost constant density condition (7.25), which requires that
$\theta_x(2r) \leq \int_{0 < t < r/50} \theta_x(t) + \varepsilon'$,
for some small $\varepsilon'$ that depends on $\tau_2$, is a consequence
of \eqref{12.22} (choose $\tau_1$ small, depending on $\tau_2$, and then
$\varepsilon$ even smaller), because
$\theta_x(t) = F_x(t)$ for $t < {1 \over2}\dist(y,L)$.
So Proposition 7.24 in \cite{Holder} yields the existence of a coral minimal
cone $X$, centered at $x$, such that 
\begin{equation} \label{12.25}
\dist(y,X) \leq 2\tau_2 r 
\ \text{ for } y\in E \cap B(x,2(1-\tau_2)r),
\end{equation}
\begin{equation} \label{12.26}
\dist(y,E) \leq 2\tau_2 r 
\ \text{ for } y\in X \cap B(x,2(1-\tau_2)r),
\end{equation}
and 
\begin{eqnarray} \label{12.27}
&\,&\big|\H^d(E \cap B(y,t)) - \H^d(X\cap B(y,t))\big| \leq 2^d \tau_2 r^d 
\nn\\
&\,&\hskip2cm
\text{ for $y\in \R^n$ and $t>0$ such that } B(y,t) \i B(x,2(1-\tau_2)r).
\end{eqnarray}
Let us first apply \eqref{12.27} with $B(y,t) = B(x,r)$; we get that
\begin{equation} \label{12.28}
|d(X)-\theta_x(r)| = r^{-d}\big| \H^d(X\cap B(x,r))-\H^d(E \cap B(x,r)) \big|
\leq 2^d \tau_2.
\end{equation}
Since $\theta_x(r) = F_x(r)$ because $B(x,r)$ does not meet $L$,
we deduce from \eqref{12.22} that
\begin{equation} \label{12.29}
\big|d(X)-{3 \omega_d \over 2}\big| \leq 2^d \tau_2 + 2\tau_1.
\end{equation}
Now we use our assumption \eqref{10.9}, and its consequence
in Lemma \ref{t10.3}, which we apply with some small constant $\tau_3$ 
that will be chosen soon. We get that if $\tau_1$ and $\tau_2$ are small enough,
depending on $\tau_3$, we can find a minimal cone $Y \in \bY_x(n,d)$
such that
\begin{equation} \label{12.30}
d_{x,1}(X,Y) \leq \tau_3
\end{equation}
(the case when $X$ is a plane is excluded by \eqref{12.29}). We just need 
to check that if $\tau_3$ is chosen small enough, $Y$ satisfies our conditions 
\eqref{12.23} and \eqref{12.24}.

For \eqref{12.23}, this is a simple consequence of \eqref{12.25}, \eqref{12.26},
\eqref{12.30}, and the triangle inequality, so we skip the details. 
For \eqref{12.24} we proceed, as usual, by contradiction and compactness: 
we suppose that \eqref{12.25}, \eqref{12.26}, and \eqref{12.30} do not 
imply \eqref{12.23} and \eqref{12.24} (for a same $Y$),
take a sequence $\{ E_k \}$ for which they are satisfied with 
$\varepsilon + \tau_1 + \tau_3 < 2^{-k}$ 
(some center $x_k$, and some radius $r_k$), but
\eqref{12.24} fails whenever \eqref{12.23} holds.
By \eqref{12.25} and \eqref{12.26}, the sets $E'_k = r_k^{-1}(E_k-x_x)$
converge, modulo extraction of a subsequence, to a cone $Y_0 \in \bY_0(n,d)$,
and then we apply Lemma \ref{t9.2} (with $2^{-d-1}\tau$, say) in
$B(0,3/2)$, and get that \eqref{12.24} holds for $k$ large, because
of \eqref{9.22}. Notice that we can take the cone $x+Y_0$ to check this,
and then \eqref{12.23} still holds with the cone $x+Y_0$; then we get the 
desired contradiction. This completes the proof of Lemma~\ref{t12.4}.
\end{proof}

Let $N$ be the large number that shows up in the statement of
Corollary \ref{t1.8}.
A simple consequence of Lemma \ref{t12.4} (applied with a small enough 
$\tau$) is that, if $x$ is as in Corollary~\ref{t1.8} or Lemma \ref{t12.4}, then
\begin{equation} \label{12.31}
\dist(x,L) \leq 10 N^{-1},
\end{equation}
because otherwise the good approximation of $E$ by a cone of type
$\bY$ in $B(x,{1\over 3}\dist(L))$ would contradict its good approximation
in $B(0,3)$ by a set of type $\bV$, given by \eqref{1.43}.
Notice that \eqref{12.31} is slightly better than what we announced in
Corollary \ref{t1.8}. 

We continue our local description of $E$ in balls $B(x,r)$ with the case of
intermediate radii $r$, for which we shall combine \eqref{12.22} with 
Corollary \ref{t9.3}.

\begin{lem} \label{t12.5}
Let $n$, $d$, $E$, and $L$ satisfy the assumptions of Corollary \ref{t1.8} and let
$x\in E \cap B(0,1) \sm L$ be such that $\theta_x(0) > \omega_d$.
In particular, suppose that $\varepsilon$ in \eqref{1.42} and \eqref{1.43}
is small enough, depending on $n$, $d$, and $\tau >0$.
Then let $r > 0$ be such that
\begin{equation} \label{12.32}
\delta(x) \leq r  \leq 10N \delta(x), 
\end{equation}
where $\delta(x) = \dist(x,L)$ as before. Let $Y$ be
the (unique) minimal cone $Y\in \bY_x(n,d)$ such that
\begin{equation} \label{12.33}
L \subset Y,
\end{equation}
denote by $S_x$ the shade of $L$ seen from $x$
and set $W = \overline{Y \sm S_x}$ (as in \eqref{1.45}); then
\begin{equation} \label{12.34}
d_{x,r}(E,W) \leq \tau
\end{equation}
and
\begin{eqnarray} \label{12.35}
\av{\H^d(E \cap B(y,t)) - \H^d(W \cap B(y,t))} \leq \tau r^d &&
\nonumber \\
&&\hskip-7cm
\ \text{ for all $y\in \R^n$ and $t>0$ such that } B(y,t) \subset B(x,r).
\end{eqnarray}
\end{lem}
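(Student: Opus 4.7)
The plan is to apply Corollary \ref{t9.3} to a rescaling of $E$ centered at $x$, identify the resulting minimal cone as the unique $\bY$-cone in $\bY_x(n,d)$ containing $L$ using the density classification \eqref{10.9}, and transport the approximation back. Specifically, I would fix a scale $r_0$ comparable to $r$ (say $r_0 = \min(2r, 19/10)$, so that $r_0 \leq 19/10$ and $r \leq (1-\tau_5) r_0$ for the auxiliary parameter $\tau_5$ below), set $E' = r_0^{-1}(E-x)$ with boundary $L^{(r_0)} = r_0^{-1}(L-x)$, and verify the hypotheses of Corollary \ref{t9.3}. The bilipschitz condition \eqref{9.27} is trivial since $L^{(r_0)}$ is already an affine $(d-1)$-plane (take $\xi$ to be the appropriate translation). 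The distance $\delta(x)/r_0$ lies in $[1/(20N), 1]$ thanks to $\delta(x) \leq r \leq 10N\delta(x)$ and $r_0 \leq 2r$. The gauge bound comes from \eqref{1.42}, and the near-constancy hypothesis \eqref{9.28} follows directly from \eqref{12.22}, which gives $F_{E'}(s) = F_x(r_0 s) \in [\tfrac{3\omega_d}{2} - \tau_1,\, \tfrac{3\omega_d}{2} + 2\tau_1]$ for $0 < s \leq 19/(10 r_0)$.

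Applying Corollary \ref{t9.3} with a small parameter $\tau_5 \leq \min(\tau, 1/(20N))$ produces a coral minimal cone $X_0 \subset \R^n$ and the affine plane $L' = L^{(r_0)}$ (by the remark following the corollary, since $L$ is already affine) such that $L' \cap B(0,99/100) \subset X_0$, and the set $E_0 = \overline{X_0 \setminus S}$, where $S$ is the shade of $L'$ seen from $0$, satisfies the estimates \eqref{9.32}--\eqref{9.34}. I would then compute the density of $X_0$: by Remark \ref{t7.3} one has $H(s) = \H^d(S \cap B(0,s))$, and by Theorem \ref{t8.1} applied to the coral sliding minimal set $E_0$ (with $0 < R_0 < R_1 < 1$, exploiting $\dist(0,L') > 0$) we get $S \subset X_0$ modulo $\H^d$-null sets, hence $\H^d(X_0 \cap B(0,s)) = \H^d(E_0 \cap B(0,s)) + \H^d(S \cap B(0,s))$. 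Combining this with \eqref{9.34} and the near-constancy of $F_{E'}$ yields $d(X_0) = \tfrac{3\omega_d}{2} + O(\tau_1 + \tau_5)$. By \eqref{10.9}, the density of any coral minimal cone is either $\omega_d$ (a $d$-plane), exactly $\tfrac{3\omega_d}{2}$ (an element of $\bY_0(n,d)$), or strictly greater than $\tfrac{3\omega_d}{2}$. For $\tau_1 + \tau_5$ small enough the computed value leaves no option but $X_0 \in \bY_0(n,d)$, with $d(X_0) = \tfrac{3\omega_d}{2}$ exactly.

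To identify $X_0$ precisely, I would use the following uniqueness fact: a cone in $\bY_0(n,d)$ containing a given affine $(d-1)$-plane $L'$ not through the origin is unique. Its spine, a $(d-1)$-dimensional vector subspace, must be parallel to $L'$; the sheet containing $L'$ must lie in the unique $d$-plane through $0$ and $L'$ on the side containing $L'$; and the remaining two sheets are then forced by the $2\pi/3$ angular condition. Translating back via $y \mapsto x + r_0 y$, the cone $Y := x + r_0 X_0$ is exactly the unique element of $\bY_x(n,d)$ containing $L$, and $x + r_0 E_0 = \overline{Y \setminus S_x} = W$. The estimates \eqref{12.34} and \eqref{12.35} then follow by rescaling \eqref{9.32}--\eqref{9.34} and absorbing the bounded factor $r_0/r \leq 2$ into the choice of $\tau_5$.

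The main obstacle is a coordinated choice of small parameters: $\tau_5$ must be small enough that the approximation is better than $\tau$, smaller than $1/(20N)$ so that Corollary \ref{t9.3} applies, and small enough that the density calculation exploits the gap in \eqref{10.9} to pin down $X_0 \in \bY_0(n,d)$ rather than merely locating it near a $\bY$-cone; then $\tau_1$ is chosen via Lemma \ref{t12.2} to make the near-constancy of $F_x$ compatible with $\tau_5$; finally $\varepsilon$ is taken smaller still to run Lemma \ref{t12.2} itself. Beyond this bookkeeping (of the same flavor as in Lemma \ref{t12.4}), the only point requiring separate attention is the uniqueness of $\bY$-cones containing a prescribed affine subspace, which is a direct geometric verification.
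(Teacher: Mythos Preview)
Your argument has a genuine gap at the step where you conclude $X_0\in\bY_0(n,d)$. The hypothesis \eqref{10.9} says only that every coral minimal cone outside $\bP_0\cup\bY_0$ has density \emph{strictly greater} than $3\omega_d/2$; it does not give a uniform gap above $3\omega_d/2$. From $d(X_0)=\tfrac{3\omega_d}{2}+O(\tau_1+\tau_5)$ you therefore cannot rule out that $X_0$ is a non-$\bY$ minimal cone with density, say, $\tfrac{3\omega_d}{2}+\tfrac{\tau_5}{2}$. The trichotomy you invoke (``either $\omega_d$, or exactly $3\omega_d/2$, or strictly greater'') does not pin down $X_0$ once the computed density is allowed to overshoot. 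The stronger statement you would need is \eqref{10.10}, which is known for $d=2$ but \emph{not} known for $d=3$, $n=4$---and that case is explicitly covered by Corollary~\ref{t1.8}.

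The paper confronts exactly this issue. It applies Corollary~\ref{t9.3} as you do and obtains the same density estimate \eqref{12.39}, but then, instead of claiming $X_0\in\bY_0$, it invokes Lemma~\ref{t10.3} to find $Y_0\in\bY_0(n,d)$ with $d_{0,1}(X_0,Y_0)\le\tau_5$. A short geometric argument (using that $X_0$ contains $L_x\cap B(0,99/100)$) produces a nearby $Y_1\in\bY_0(n,d)$ that actually contains $L_x$. The remaining work is then to pass from closeness of $X_0$ and $Y_1$ to closeness of the truncated sets $E_0=\overline{X_0\setminus S}$ and $E_1=\overline{Y_1\setminus S}$, both in Hausdorff distance and in measure; this is the content of Lemma~\ref{t12.6}, and it is not automatic because removing the shade $S$ can in principle disturb the approximation near $S$. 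Your proposal is correct under the stronger hypothesis \eqref{10.10} (and the paper remarks that in that case one can indeed take $Y_1=Y_0=X_0$), but under \eqref{10.9} alone you are missing this entire second stage.
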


\ms\noindent{\it Proof.}
We want to apply Corollary \ref{t9.3}, with a small constant $\tau_4$
that will be chosen later, to control $E$ in $B(x,r)$. 
Set $r_1 = {10 r \over 9}$ to have some room to play.
Because of the normalization in the corollary, 
we apply it to the set $E_x = r_1^{-1}(E-x)$, which is sliding almost minimal in an
open set that contains $B(0,1)$ (because $B(x,r_1) \i B(0,3)$ since $x\in B(0,1)$
and $r  \leq 10N \delta(x) \leq 1$ by \eqref{12.32} and \eqref{12.31}),
and with a boundary condition that comes from the set $L_x = r_1^{-1}(L-x)$.

The distance requirement \eqref{9.26} is that
$\tau_4 \leq \dist(0,L_x) \leq {9\over 10}$, and since 
\begin{equation} \label{12.36}
\dist(0,L_x) = r_1^{-1} \delta(x) = {9 \delta(x) \over 10r}
\end{equation}
we see that \eqref{9.26} follows from \eqref{12.32} as soon as we take 
$\tau_4 < {9 \over 100N}$. 

The bilipschitz condition on $L_x$ is trivially satisfied, because $L_x$ is an affine subspace; the first half of \eqref{9.28} 
(i.e., the fact that $h(r)$ is very small) follows from \eqref{1.42} if 
$\varepsilon$ is small enough; and the more important second half follows 
from \eqref{12.22}.

Then Corollary \ref{t9.3} yields the existence of a coral minimal cone $X_0$,
centered at the origin and with no boundary condition,
with the following properties. First,
\begin{equation} \label{12.37}
L_x \cap B(x,99/100) \i  X_0,
\end{equation}
by \eqref{9.29} and because the comment below 
Corollary \ref{t9.3} says that we can take $L'=L_x$.
Next denote by $S$ the shade of $L_x$ seen from the origin,
and set $E_0 = \overline{X_0 \sm S}$; then by \eqref{9.31}
$E_0$ is a coral minimal set in $B(0,1-\tau_4)$, with 
sliding boundary condition defined by $L_x$, and is $\tau_4$-close
to $E_x$ in $B(0,1-\tau_4)$, in the sense of \eqref{9.32}-\eqref{9.34}.
By \eqref{9.32} and \eqref{9.33},
\begin{equation} \label{12.38}
d_{0,1-\tau_4}(E_x,E_0) \leq \tau_4.
\end{equation}
Let us also apply \eqref{9.34} to the ball $B = B(0,t)$, with 
$t= (20N)^{-1}$. Notice that $t \leq {\delta(x) \over 2 r} < \dist(0,L_x)$
by \eqref{12.32} and \eqref{12.36}, so $B$ does not meet $L_x$; then
\begin{eqnarray} \label{12.39}
d(X_0) &=& \H^d(X_0 \cap B(0,1)) = t^{-d} \H^d(X_0 \cap B)
= t^{-d} \H^d(\overline{X_0 \sm S} \cap B)
= t^{-d} \H^d(E_0 \cap B)
\nn\\
&\leq&  t^{-d} \H^d(E_x \cap B)+ t^{-d}\tau_4 
= t^{-d} r_1^{-d} \H^d(E \cap B(x,tr_1)) + (20N)^d \tau_4
\nn\\
&=& F(tr_1) + (20N)^d \tau_4 
\leq {3 \omega_d \over 2} + 2 \tau_1 + (20N)^d\tau_4 
\end{eqnarray}
by the definition \eqref{10.1} of $d(X_0)$, where $S$ still denotes the shade of 
$L_x$, because $S \cap B = \emptyset$, by definition of $E_0$, by \eqref{9.34}
and because $t = (20N)^{-1}$, then by \eqref{12.1} and because 
$B(x,tr_1) \cap S_x = \emptyset$, and finally by \eqref{12.22}.
Thus $d(X_0)$ is as close to ${3 \omega_d \over 2}$ as we want.

When $d=2$, and more generally when there is a constant 
$d_{n,d} > {3 \omega_d \over 2}$ such that \eqref{10.10} holds,
we can deduce from this that $X_0 \in \bY_0(n,d)$, and this will simplify
our life. In the general case when we only have \eqref{10.9}, we 
can still use Lemma \ref{t10.3}, as for \eqref{12.30} above, to show
that if $\tau_1$ and $\tau_4$ are small enough, we can find a minimal cone 
$Y_0 \in \bY_0(n,d)$ such that
\begin{equation} \label{12.40}
d_{0,1}(X_0,Y_0) \leq \tau_5
\end{equation}
where $\tau_5 > 0$ is a new small constant, that will be chosen soon, 
depending on $\tau$. 

Pick a point $z_0 \in L_x \cap \overline B(0,9/10)$; such a point
exists because $\dist(0,L_x) = {9 \delta(x) \over 10r} \leq {9\over10}$.
Set $D = z_0 + [L \cap B(0,1/20)]$; notice that 
$D \i L_x \cap B(99/100) \subset X_0$ because $L$ is the vector
space parallel to $L_x$, and by \eqref{12.37}. By \eqref{12.40}, 
\begin{equation} \label{12.41}
\dist(z,Y_0) \leq \tau_5 \ \text{ for } z\in D.
\end{equation}
Also set $D' = z_0 + [L \cap B(0,1/40)]$; if $\tau_5$ is small enough,
it follows from \eqref{12.41} and the elementary geometry of 
$Y_0 \in \bY_0(n,d)$ that there is a single face $F$ of $Y_0$ such that
\begin{equation} \label{12.42}
\dist(z,F) \leq \tau_5 \ \text{ for } z\in D'.
\end{equation}
Finally denote by $Y_1$ the cone of  $\bY_0(n,d)$ that contains 
$L_x$ (or equivalently $D'$). The face of $Y_1$ that contains $L_x$
is quite close to $F$, by \eqref{12.42}, and hence
\begin{equation} \label{12.43}
d_{0,1}(Y_1,Y_0) \leq C\tau_5
\end{equation}
where $C$ depends only on $n$ and $d$. For the record, let us mention
that we can take $Y_1 = Y_0 = X_0$ when \eqref{10.10} holds for some
$d_{n,d} > {3 \omega_d \over 2}$, rather than the weaker \eqref{10.9}.

Now set $Y = x + Y_1 \in \bY_x(n,d)$, notice that $L \subset Y$
because $L_x \i Y_1$; we just need to check that $Y$ satisfies the
conditions \eqref{12.34} and \eqref{12.35}. For the general case
we will need the following lemma.

\begin{lem} \label{t12.6} 
Recall from below \eqref{12.37} that $S$ is the shade of $L_x$ 
seen from the origin, and that $E_0 = \overline{X_0 \sm S}$.
Also set $E_1= \overline{Y_1 \sm S}$. 
For each small $\tau_6 > 0$, we can choose
$\tau_5$ so small that in the present situation,
\begin{equation} \label{12.44}
d_{0,95/100}(E_0, E_1) \leq \tau_6
\end{equation} 
and 
\begin{eqnarray} \label{12.45}
&\,&\big|\H^d(E_0 \cap B(y,t)) 
- \H^d(E_1\cap B(y,t))\big| 
\leq \tau_6 
\nn\\
&\,&\hskip2cm
\text{ for $y\in \R^n$ and $t>0$ such that } B(y,t) \i B(0,96/100).
\end{eqnarray}
\end{lem}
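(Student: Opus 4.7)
The plan is to prove \eqref{12.44} by a direct Hausdorff-distance argument based on the triangle inequality, and to prove \eqref{12.45} by a compactness argument in the spirit of Lemma \ref{t9.2}. I first combine the hypotheses via the triangle inequality to obtain $d_{0,1}(X_0, Y_1) \le (C+1)\tau_5$. Since $X_0$, $Y_0$, and $Y_1$ are all cones centered at the origin, this extends by homogeneity to $d_{0,R}(X_0, Y_1) \le (C+1)\tau_5$ for any bounded $R$. Moreover, $S$ is a half $d$-plane (the shade of the affine $(d-1)$-plane $L_x$ from the origin), and the structural hypothesis $L_x \i Y_1$ places $L_x$ inside one of the three faces of $Y_1$, so the geometry of $S$ relative to $Y_1$ is completely explicit.

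For \eqref{12.44}, given $z \in E_0 \cap B(0, 95/100)$, I would approximate $z$ by $z' \in X_0 \sm S$ (possible since $E_0 = \overline{X_0 \sm S}$). By Hausdorff closeness there is $w' \in Y_1$ with $|w' - z'| \le (C+2)\tau_5$. If $w' \notin S$, then $w' \in Y_1 \sm S \i E_1$. If instead $w' \in S$, then $z'$ lies within $(C+2)\tau_5$ of $S$ while $z' \notin S$, and a small transverse perturbation of $w'$ within $Y_1$ produces a nearby point of $Y_1 \sm S$, using that $Y_1$ has positive transverse dimension to $S$ away from the spine. Near the spine the issue is trivial since $L_x \i Y_1 \cap S \i E_1$. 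The reverse inclusion is symmetric, so \eqref{12.44} holds with a bound proportional to $\tau_5$, which we can make $\le \tau_6$.

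For \eqref{12.45}, I would argue by contradiction and compactness. If the conclusion fails for some $\tau_6 > 0$, there are sequences $X_0^{(k)}$, $Y_0^{(k)}$, $Y_1^{(k)}$, $L_x^{(k)}$, and balls $B_k = B(y_k, t_k) \i B(0, 96/100)$ satisfying the hypotheses with $\tau_5^{(k)} \to 0$, but for which the measure inequality fails with threshold $\tau_6$. Extract subsequences so that $Y_0^{(k)}$, $Y_1^{(k)}$, and $X_0^{(k)}$ all converge locally (in $d_{0,R}$) to a common limit $Y_\infty$; indeed $\bY_0(n,d)$ is closed under Hausdorff limits and the three sequences become mutually $\tau_5^{(k)}$-close, so they share their limit, and Theorem 4.1 of \cite{limits} together with density uppersemicontinuity guarantees $Y_\infty \in \bY_0(n,d)$. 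Similarly $L_x^{(k)} \to L_\infty \i Y_\infty$, so the shades converge $S^{(k)} \to S_\infty$. Both $E_0^{(k)}$ and $E_1^{(k)}$ then converge locally to $E_\infty := \overline{Y_\infty \sm S_\infty}$. Applying the semicontinuity results used in \eqref{9.11}--\eqref{9.14}, and the fact that $\H^d(E_\infty \cap \d B(y,t)) = 0$ for every ball (Lemma 7.34 of \cite{Holder}, since $E_\infty$ is rectifiable with locally finite $\H^d$-measure), together with the uniformity trick of Lemma \ref{t9.2} to pass from fixed test balls to a continuous family, yields $\H^d(E_0^{(k)} \cap B_k)$ and $\H^d(E_1^{(k)} \cap B_k) \to \H^d(E_\infty \cap B(y_\infty, t_\infty))$. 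These two limits coincide, contradicting the assumed failure.

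The main obstacle is the continuity of the shade-removal operation $Y \mapsto \overline{Y \sm S}$ at the limit, both in Hausdorff distance and in $\H^d$-measure on balls. The delicate points are the possible accumulation of $\H^d$-mass along $\d S \cap Y$ and the correct behavior of the closure near $L_x = \d S$; both are controlled by the explicit finite-dimensional structure of $\bY_0(n,d)$-cones and by the fact that $L_x$ sits inside a single face of $Y_1$ as a codimension-$1$ affine plane.
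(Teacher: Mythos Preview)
Your argument for the direction $E_1\to E_0$ of \eqref{12.44} is essentially the paper's proof of (12.58); that direction works because $X_0$ is a cone containing $L_x\cap B(0,99/100)$, so points of $E_1$ near $S$ are near $L_x$, and $L_x$ is in the closure of $X_0\sm S$.

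The gap is in the other direction, $E_0\to E_1$, which you call ``symmetric''. It is not. Given $z'\in X_0\sm S$ and $w'\in Y_1$ with $|w'-z'|\le C\tau_5$, you argue that if $w'\in S$ one perturbs $w'$ within $Y_1$ to leave $S$. But $S$ is a half $d$-plane in $\R^n$ with empty interior, so ``$z'\notin S$'' gives no control on $\dist(z',L_x)$ or $\dist(z',\Sigma)$ (the spine of $Y_1$); the point $w'$ can sit deep inside $S$, far from both $L_x$ and $\Sigma$, and then the nearest point of $\overline{Y_1\sm S}$ to $w'$ is at distance of order $1$, not $\tau_5$. The obstruction is precisely that $X_0$ has no known structure beyond being a minimal cone close to $Y_1$, so you cannot rule out that $E_0=\overline{X_0\sm S}$ has points whose only nearby $Y_1$-points lie in $S$.

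The paper avoids this by reversing the order: it proves \eqref{12.45} first, then uses it together with the Ahlfors regularity of $E_0$ to get the hard direction of \eqref{12.44} (see (12.60)--(12.62)). For \eqref{12.45} the paper does not run your full compactness on the truncated sets. Instead it uses the decomposition
\[
\H^d(X_0\cap B')=\H^d(E_0\cap B')+\H^d(S\cap B')-\H^d(S\cap B'\cap E_0),
\qquad
\H^d(Y_1\cap B')=\H^d(E_1\cap B')+\H^d(S\cap B'),
\]
so that $|\H^d(E_0\cap B')-\H^d(E_1\cap B')|\le|\H^d(X_0\cap B')-\H^d(Y_1\cap B')|+\H^d(S\cap B\cap E_0)$. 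The first term is handled by compactness on the \emph{untruncated} cones $X_0,Y_1$ (Lemma~\ref{t9.2}), and the second is shown to be $\le\tau_4+2\tau_1$ via the near-constancy of $F_x$ (equations (12.49)--(12.52)). Your compactness scheme for \eqref{12.45} would also have to identify the Hausdorff limit of $E_0^{(k)}=\overline{X_0^{(k)}\sm S^{(k)}}$ as $\overline{Y_\infty\sm S_\infty}$, but that identification is exactly the continuity-of-shade-removal question that you flag as the main obstacle and do not resolve; the paper's decomposition sidesteps it entirely.
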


Before we prove this, let us mention that in the simpler situation
when we have \eqref{10.10}, the lemma holds trivially because
$Y_1 = X_0$. Also, let us first see how Lemma \ref{t12.5} follows from
Lemma \ref{t12.6}, and prove Lemma \ref{t12.6} afterwards.

Recall that we just need to check that $Y=x+Y_1$ satisfies
\eqref{12.34} and \eqref{12.35}.
We deduce from \eqref{12.38} and \eqref{12.44} that
$d_{0,94/100}(E_x,E_1) \leq 2\tau_4 + 2 \tau_6$. Since
$E = x + r_1 E_x$ and $W = \overline{Y \sm S_x} = x + r_1 E_1$
(see above \eqref{12.34}), we also get that
\begin{equation} \label{12.46}
d_{x,94r_1/100}(E,W) \leq 2\tau_4 + 2 \tau_6 \, ;
\end{equation}
\eqref{12.34} follows, because 
$94r_1/100= 94r/90 > r$, and if $\tau_4$ and $\tau_6$ are small
enough. As for \eqref{12.34}, let $B = B(y,t) \subset B(x,r)$ be given,
set $B' = r_1^{-1}(B-x)$; then $B' \subset B(0,95/100)$ and
\begin{equation} \label{12.47}\begin{aligned}
\big|\H^d(E \cap B) &- \H^d(W \cap B)\big|
= r_1^d \av{\H^d(E_x \cap B') - \H^d(E_1 \cap B')}
\\
&= r_1^d \av{\H^d(E_x \cap B') - \H^d(E_0 \cap B')} 
+ r_1^d \av{\H^d(E_0 \cap B') - \H^d(E_1 \cap B')} 
\\
&\leq \tau_4 r_1^d + \tau_6 r_1^d
= (\tau_4 + \tau_6) (10 r /9)^d
\end{aligned}
\end{equation}
by \eqref{9.34} (for $E_x$ and with the constant $\tau_4$; see below
\eqref{12.37}) and \eqref{12.45} (recall that $E_0 = \overline{X_0 \sm S}$
and $E_1 = \overline{Y_1 \sm S}$). This proves \eqref{12.35}; thus
Lemma \ref{t12.5} will follow as soon as we prove Lemma \ref{t12.6}.

\ms\noindent{\it Proof of Lemma \ref{t12.6}.}
The reason why we need to prove something is that although 
\begin{equation} \label{12.48}
d_{0,1}(Y_1,X_0) \leq C \tau_5
\end{equation}
by \eqref{12.40} and \eqref{12.43},
removing $S$ could change the situation. At least, both cones contain
$L_x \cap B(0,99/100)$ by \eqref{12.37} and the definition of $Y_1$, 
hence they both contain $S\cap B(0,99/100)$. Even that way, we still
need to check that $X_0$ does not contain a piece that lies close to
$S\cap B(0,99/100)$ (hence, to $Y_1$), but not close to $E_1$,
and similarly with $X_0$ and $Y_1$ exchanged.

Set $\rho = 98/100$ and $B= B(0,\rho)$; we want to evaluate the size of 
$E_0 \cap S \cap B$.
In fact, because of the way $E_0$ was obtained in the proof of Corollary \ref{t9.3},
it is such that the functional $F$ is constant on $(0,1)$
(see below \eqref{9.64}), and we could deduce from \eqref{1.12} that
$\H^d(E_0 \cap B(0,1) \cap S) = 0$. But let us pretend we did not notice
and check that $\H^d(E_0 \cap B \cap S)$ is small in a way which is more 
complicated, but easier to track. As we just said,
\begin{equation} \label{12.49}
S \cap B \subset X_0,
\end{equation}
because $X_0$ is a cone that contains $L_x \cap B$
(by \eqref{12.37}). Let us check that
\begin{equation} \label{12.50}
S \cap B \sm E_0 = X_0 \cap B \sm E_0.
\end{equation}
The direct inclusion follows from \eqref{12.49}, and the converse from
the fact that $E_0 = \overline{X_0 \sm S} \supset X_0 \sm S$. Then
\begin{eqnarray} \label{12.51}
\H^d(X_0 \cap B) &=& \H^d(E_0 \cap B) + \H^d(X_0 \cap B \sm E_0)
= \H^d(E_0 \cap B) + \H^d(S \cap B \sm E_0)
\nn\\
&=& \H^d(E_0 \cap B) + \H^d(S \cap B) - \H^d(S \cap B \cap E_0).
\end{eqnarray}
But $\H^d(E_0 \cap B) \leq \H^d(E_x \cap B)+ \tau_4$ by \eqref{9.34}
and $\H^d(X_0 \cap B) = \rho^d d(X_0) 
\geq {3 \omega_d \rho^d \over 2}$
by \eqref{10.9} and because $X_0$ is not a plane, so
\begin{eqnarray} \label{12.52}
\H^d(S \cap B \cap E_0) 
&\leq& \H^d(E_x \cap B)+ \tau_4 + \H^d(S \cap B) 
- {3 \omega_d \rho^d\over 2} 
\nn\\
&=& r_1^{-d} \H^d(E \cap B(x,\rho r_1))+ \tau_4 
+ r_1^{-d}\H^d(S_x \cap B(x,\rho r_1)) - {3 \omega_d \rho^d\over 2} 
\nn\\
&=& \rho^d F_x(\rho r_1) + \tau_4  - {3 \omega_d  \rho^d\over 2}
\leq \tau_4 + 2 \rho^d \tau_1  \leq \tau_4 + 2 \tau_1
\end{eqnarray}
by \eqref{12.1} (and because $E_x = r_1^{-1}(E-x)$ and 
$L_x = r_1^{-1}(L-x)$), then by \eqref{12.22}.

For $B' = B(y,t) \i B$, the proof of \eqref{12.51} also yields
\begin{equation} \label{12.53}
\H^d(X_0 \cap B') 
= \H^d(E_0 \cap B') + \H^d(S \cap B') - \H^d(S \cap B' \cap E_0),
\end{equation}
which implies that
\begin{equation} \label{12.54}
|\H^d(X_0 \cap B') - \H^d(E_0 \cap B') - \H^d(S \cap B')|
\leq \H^d(S \cap B \cap E_0) \leq \tau_4 + 2 \tau_1.
\end{equation}
For the other cone $Y_1$, we have the simpler formula
\begin{equation} \label{12.55}
\H^d(Y_1 \cap B') = \H^d(\overline{Y_1 \sm S}\cap B') + \H^d(S \cap B')
= \H^d(E_1 \cap B') + \H^d(S \cap B')
\end{equation}
because $Y_1$ is a cone in $\bY_0(n,d)$ that contains $L_x$, and hence $S$.

By \eqref{12.48} and the same compactess argument using Lemma \ref{t9.2} as below \eqref{11.51} or \eqref{12.30},
\begin{equation} \label{12.56}
|\H^d(X_0 \cap B') - \H^d(Y_1\cap B')| \leq {\tau_6 \over 2}
\end{equation}
for every $B'$ as above, where $\tau_6$ is the small constant
that is given for Lemma \ref{t12.6}, provided that we take $\tau_5$
accordingly small. Thus
\begin{eqnarray} \label{12.57}
|\H^d(E_0 \cap B') - \H^d(E_1\cap B')| 
&\leq& \big|[\H^d(X_0 \cap B') - \H^d(S\cap B')]- \H^d(E_1\cap B')\big|
+\tau_4 + 2 \tau_1
\nn\\
&=&  \big|\H^d(X_0 \cap B') - \H^d(Y_1\cap B')\big| +\tau_4 + 2 \tau_1
\leq {\tau_6 \over 2}+\tau_4 + 2 \tau_1,
\end{eqnarray}
by \eqref{12.54}, \eqref{12.55}, and \eqref{12.56};
this implies \eqref{12.45} if $\tau_1$ and $\tau_4$ are small enough.

Now we check \eqref{12.44}. First we show that
\begin{equation} \label{12.58}
\dist(y, E_0) \leq C \tau_5
\ \text{ for } y \in B(0,96/100) \cap E_1. 
\end{equation}
Let $y \in B(0,96/100) \cap E_1$ be given; 
since $y\in Y_1$, \eqref{12.48} says that we can find $z\in X_0$ 
such that $|z-y| \leq C \tau_5$. If $z\in X_0 \sm S$, then $z\in E_0$ and 
$\dist(y, E_0) \leq |z-y| \leq C \tau_5$, as needed. So we may assume
that $z\in S$.

Notice that $\dist(y,S) = \dist(y,L_x)$ by elementary geometry
(because $y\in E_1 = \overline{Y_1 \sm S}$ and $Y_1$ is the 
cone of $\bY_0(n,d)$ that contains $L_x$). Then
\begin{equation} \label{12.59}
\dist(y,L_x) = \dist(y,S) \leq |y-z| \leq C \tau_5,
\end{equation}
and then $\dist(y, E_0) \leq C \tau_5$ too, because
$X_0$ is a cone that contains $B \cap L_x$, and then the points
$\lambda \xi$, $\xi\in B \cap L_x$ and $\lambda \in (0,1)$, lie in $E_0$.
This completes our proof of \eqref{12.58}.

Now we want to show that 
\begin{equation} \label{12.60}
\dist(y,E_1) \leq \tau_7 \ \text{ for } y \in E_0 \cap B(0,95/100),
\end{equation}
with a constant $\tau_7$ that is as small as we want;
\eqref{12.44} will follow at once from this and \eqref{12.58},
except that formally we would need to replace $\tau_6$ with $\tau_7$
in the statement. Suppose this fails; then of course
\begin{equation} \label{12.61}
\H^d(E_1 \cap B(z,\tau_7)) = 0.
\end{equation}
Recall from the lines below \eqref{12.37} that $E_0$ is a coral
minimal set in $B(0,1-\tau_4)$; hence, by the local Ahlfors-regularity
property \eqref{2.9}, 
\begin{equation} \label{12.62}
\H^d(E_0 \cap B(z,\tau_7)) \geq C^{-1} \tau_7^d,
\end{equation}
where $C$ depends only on $n$ and $d$. But 
$B(z,\tau_7) \i B(0,96/100)$, and \eqref{12.45} says that
$|\H^d(E_0 \cap B(z,\tau_7)) - \H^d(E_1\cap B(z,\tau_7))| \leq \tau_6$.
We can prove this with any $\tau_6 > 0$, and if we choose $\tau_6$
small enough, we get a contradiction with \eqref{12.62} or \eqref{12.61}
which proves \eqref{12.60}. This concludes our proof of \eqref{12.44};
Lemma \ref{t12.6} follows, and as was checked earlier, so does 
Lemma \ref{t12.5}.
\qed$\quad\Box$

We easily deduce Proposition \ref{t12.1} and Corollary \ref{t1.8}
from Lemma \ref{t12.5}: the assumptions (for the more general
Proposition \ref{t12.1}) are the same, the fact that $\delta(x) \leq N^{-1}$
follows from \eqref{12.31}, and the description of $E$ and $W$ in
\eqref{1.45}-\eqref{1.47} follows by applying Lemma \ref{t12.5}
with $r = 3\delta(x)$ and the constant $3^{-d}\tau$.

\ms
We complete this section with a rapid description of $E$ at the large scales 
that are not covered by Corollary \ref{t1.8}. 

\begin{pro} \label{t12.7}
Let the dimensions $n$ and $d$ satisfy \eqref{10.9} (thus $d=2$, or
$d=n-1$ and $n \leq 6$ work). For each choice of constants $\tau > 0$
and $A \geq 10$, we can find $\varepsilon > 0$ and $N \geq 100+ \tau^{-1}$,
with the following properties. 
Let $L$ be a vector $(d-1)$-plane and let $E$ be a coral sliding almost 
minimal set in $B(0,3)$, with boundary condition coming from $L$ 
and a gauge function $h$ that satisfies \eqref{1.42},
such that $d_{0,3}(E,V) \leq \varepsilon$ (as in \eqref{1.43}) 
for some $V \in \bV(L)$. Then let $x\in E \cap B(0,1) \sm L$
be a singular point, in the sense that $\theta_x(0) > \omega_d$
(see the definition in \eqref{1.44}), and denote by $y$ 
the orthogonal projection of $x$ on $L$. Then 
\begin{equation} \label{12.63}
\dist(x,L) \leq N^{-1}
\end{equation}
and, for every radius $r$ such that
\begin{equation} \label{12.64}
N \dist(x,L)\leq r \leq (2A)^{-1} 
\end{equation}
there is a cone $X$ centered at $y$, which is a coral sliding minimal set in $\R^n$, 
with the boundary condition coming from $L$, such that
\begin{equation} \label{12.65}
\dist(z,X) \leq \tau r \ \text{ for } z\in E \cap B(y,Ar) \sm B(y,r),
\end{equation}
\begin{equation} \label{12.66}
\dist(z,E) \leq \tau r \ \text{ for } z\in X \cap B(y,Ar) \sm B(y,r),
\end{equation}
\begin{eqnarray} \label{12.67}
&\,&\big|\H^d(E\cap B(z,t)) - \H^d(X\cap B(z,t))\big| 
\leq \tau r^d
\nn\\
&\,&\hskip2cm
\text{ for $z\in \R^n$ and $t>0$ such that } B(z,t) \i B(y,Ar)\sm B(y,r),
\end{eqnarray}
and
\begin{equation} \label{12.68}
|\H^d(E\cap B(y,t)) - \H^d(X\cap B(y,t))| \leq \tau r^d
\ \text{ for } r \leq t \leq Ar.
\end{equation}
Moreover, 
\begin{equation} \label{12.69}
|\H^d(X \cap B(y,1)) - \omega_d| \leq 2\tau.
\end{equation}
\end{pro}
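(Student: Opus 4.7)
The strategy is to exploit the pinching estimate \eqref{12.22} already obtained at $x$ in the proof of Proposition~\ref{t12.1}, together with the explicit geometry of the shade $S_x$, in order to show that $\theta_x(\rho) \approx \omega_d$ for $\rho$ much larger than $\delta(x)$. Since $|y-x|=\delta(x) \leq \rho/N$, this will transfer to an analogous pinching for $\theta_y$ in an interval of scales, and we will feed that into Proposition~30.3 of \cite{Sliding} to produce the approximating sliding minimal cone $X$ centered at $y$ with boundary $L$.

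\textit{Steps 1--2 (density at $x$).} First apply Corollary \ref{t1.8} (i.e.\ Proposition \ref{t12.1}, already established) with an auxiliary small constant $\tau_0 \ll \tau$ and with the large $N$ of the statement: provided $\varepsilon \leq \varepsilon_0(N,\tau_0,n,d)$, this gives \eqref{12.63} and, as a by-product, the pinching
\[
F_x(\rho) \in [{\textstyle\frac{3\omega_d}{2}} - \tau_1,\ {\textstyle\frac{3\omega_d}{2}} + 2\tau_1] \quad \text{for } 0 < \rho \leq \tfrac{19}{10},
\]
with $\tau_1$ as small as needed. A direct geometric computation shows that $S_x$ is the half $d$-plane, lying in the unique $d$-plane $P \supset L \cup \{x\}$, bounded by $L$ and opposite to $x$; hence for $\rho \geq \delta(x)$,
\[
\H^d(S_x \cap B(x,\rho)) = \frac{\omega_d \rho^d}{2} + O(\delta(x)\rho^{d-1}).
\]
Subtracting this from $F_x(\rho)$ and using $\delta(x)/\rho \leq N^{-1}$ on the range $N\delta(x) \leq \rho \leq \tfrac{19}{10}$ yields $\theta_x(\rho) = \omega_d + O(\tau_1 + N^{-1})$ there.

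\textit{Steps 3--4 (density at $y$ and approximating cone).} Since $|y-x|=\delta(x)$, the inclusions $B(x,\rho-\delta(x)) \subset B(y,\rho) \subset B(x,\rho+\delta(x))$ and the bound on $\theta_x$ from Step~2 give
\[
\theta_y(\rho) = \omega_d + O(\tau_1 + N^{-1}), \quad N\delta(x) \leq \rho \leq \tfrac{19}{10},
\]
where we used $y \in L$ to note that the shade $S_y$ reduces to $L$, so $F_y(\rho) = \theta_y(\rho)$. Now fix $r$ satisfying \eqref{12.64}, choose $\eta>0$ small (depending on $\tau$ and $A$), and set $r_1' = r/(1+\eta)$, $r_2' = A r(1+\eta)$; both lie in $[N\delta(x)(1+o(1)), \tfrac{19}{10}]$ provided $N$ is large enough, so $|\theta_y(r_2') - \theta_y(r_1')| = O(\tau_1+N^{-1})$, which can be made as small as we wish. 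Apply Proposition~30.3 of \cite{Sliding} at $x_0 = y \in L$ (with $L$ an exact affine subspace, gauge small by \eqref{1.42}, and the pinching just obtained as the main input): this produces a coral sliding minimal cone $X$ centered at $y$, with boundary $L$, satisfying the annular approximation \eqref{12.65} and \eqref{12.66}, together with the single-scale measure estimate analogous to (30.10) of \cite{Sliding}.

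\textit{Step 5 and main obstacle.} The uniform measure estimate \eqref{12.67} across all $B(z,t)\subset B(y,Ar)\setminus B(y,r)$ follows from \eqref{12.65}--\eqref{12.66} combined with a compactness argument of the same type as Lemma~\ref{t9.2}, applied on the fixed annular region (rescaled to unit size); the same argument, or a direct use of the cone structure of $X$ together with $\theta_y(t) \approx \omega_d$, delivers the inner-ball estimate \eqref{12.68}. Finally, \eqref{12.69} follows by taking $t=r$ in \eqref{12.68} and using homogeneity of $X$: $\H^d(X \cap B(y,1)) = r^{-d}\H^d(X \cap B(y,r)) = \theta_y(r) + O(\tau) = \omega_d + O(\tau)$. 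The main technical obstacle is Step~4: carefully matching the parameters of Proposition~30.3 of \cite{Sliding} so that the interval $[r_1', r_2']$ sits inside the valid range, and upgrading the single-scale estimate (30.10) to the uniform annular estimate \eqref{12.67} through the compactness scheme; tracking how $\varepsilon$, $N$, $\eta$, and $\tau_1$ must be chosen in terms of the prescribed $\tau$ and $A$ is where all the care is required.
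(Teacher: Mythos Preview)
Your approach is essentially the same as the paper's: both start from the pinching \eqref{12.22} on $F_x$, subtract the shade contribution to obtain $\theta_x(\rho)\approx\omega_d$ for $\rho\gg\delta(x)$, transfer this to $\theta_y$ via the inclusions $B(x,\rho\pm\delta(x))$, and then invoke Proposition~30.3 of \cite{Sliding} centered at $y\in L$ to produce the approximating sliding minimal cone $X$; the final density bound \eqref{12.69} is obtained in both cases from \eqref{12.68} at $t=r$ combined with $\theta_y(r)\approx\omega_d$.

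The one point where you do more work than necessary is in Step~5: you propose to upgrade a single-scale measure estimate to the uniform annular estimate \eqref{12.67} via a compactness argument of Lemma~\ref{t9.2} type, whereas the paper simply cites \eqref{12.67} and \eqref{12.68} as direct outputs (30.9) and (30.10) of Proposition~30.3 in \cite{Sliding}. So your ``main obstacle'' is in fact already handled by the black box you are invoking, and no separate compactness step is needed. (Your choice of radii $r_1'=r/(1+\eta)$, $r_2'=Ar(1+\eta)$ versus the paper's $r_1=r/2$, $r_2=3Ar/2$ is immaterial.)
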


\ms
Notice that since $N$ is larger than $\tau^{-1}$ (maybe much larger),
\eqref{12.65}-\eqref{12.67} do not give information at the scale
of $\dist(x,L)$; this is fair, because $E$ does not look like a cone at that scale.

Our measure estimate \eqref{12.67} only works outside of $B(y,r)$,
which is why we needed to add \eqref{12.68}. 

In the good cases, \eqref{12.69} should help us determine the type of $X$,
but we shall not try to do this here. See Remark \ref{t12.8}. 

\begin{proof}
Let $E$ and $x$ be as in the statement. Notice that the assumptions
of Proposition \ref{t12.1} are satisfied (if $\varepsilon$ is small enough,
depending on $N$), so we may use the previous results of this section.

Set $\delta(x) = \dist(x,L)$; the fact that $\delta(x) \leq N^{-1}$
(i.e., that \eqref{12.63} holds) as soon as $\varepsilon$ is small enough
follows from Proposition \ref{t11.1}, or directly \eqref{12.31}.
The main point is thus the existence of $X$.

Again we shall use \eqref{12.22}, but since we want to apply a near monotonicity
result for balls centered at $y$, we will translate it in terms of the functional $F_y$
defined as $F_x$ in \eqref{12.1}, but with $x$ replaced by $y$. 
Notice that for $r > 2|y-x| = 2\delta(x)$,
\begin{equation} \label{12.70}
\theta_y(r) = r^{-d} \H^d(E \cap B(y,r))
\geq r^{-d} \H^d(E \cap B(y,r-\delta(x)))
= \Big({r - \delta(x)\over r}\Big)^d \theta_x(r-\delta(x)).
\end{equation}
Similarly,
\begin{equation} \label{12.71}
\theta_y(r) \leq \Big({r + \delta(x)\over r}\Big)^d \theta_x(r+\delta(x)).
\end{equation}
We also deduce from the definition of $S_x$ (see \eqref{1.45}) that
\begin{equation} \label{12.72}
{\omega_d \over 2} - C\,{\delta(x) \over r} \leq \rho^{-d} \H^d(S_x\cap B(x,\rho))
\leq {\omega_d \over 2}
\end{equation}
for $\rho \geq \delta(x)$. Let us use this to show that
\begin{equation} \label{12.73}
|\theta_y(r) - \omega_d| \leq C \,{\delta(x) \over r} + 3\tau_1
\ \text{ for } 2\delta(x) < r < {18 \over 10}.
\end{equation}
We first apply \eqref{12.70}, set $\rho = r - \delta(x)$, and then
apply \eqref{12.1}, \eqref{12.72} and \eqref{12.22} to get that
\begin{eqnarray} \label{12.74}
\theta_y(r) &\geq& \rho^d r^{-d} \theta_x(\rho)
= \rho^d r^{-d} \big(F_x(\rho) - \rho^{-d} \H^d(S_x\cap B(x,\rho))\big)
\nn\\ 
&\geq& \rho^d r^{-d} \big(F_x(\rho) - {\omega_d \over 2}\big)
\geq \rho^d r^{-d} \big(\omega_d - \tau_1 \big)
\nn\\
&\geq& \Big(1-C\,{\delta(x) \over r}\Big)\big(\omega_d - \tau_1 \big)
\geq \omega_d - C\omega_d \,{\delta(x) \over r} - \tau_1,
\end{eqnarray}
which gives the lower bound in \eqref{12.73}. Similarly, we apply 
\eqref{12.71}, set $\rho = r + \delta(x)$, and continue as above to get that
\begin{eqnarray} \label{12.75}
\theta_y(r) &\leq& \rho^d r^{-d} \theta_x(\rho)
= \rho^d r^{-d} \big(F_x(\rho) - \rho^{-d} \H^d(S_x\cap B(x,\rho))\big)
\nn\\ 
&\leq& \rho^d r^{-d} \Big(F_x(\rho) - {\omega_d \over 2} 
+C \,{\delta(x) \over r}\Big)
\geq \rho^d r^{-d} \Big(\omega_d + C\,{\delta(x) \over r} + 2\tau_1 \Big)
\nn\\
&\leq& \Big(1 + C\,{\delta(x) \over r}\Big)
\Big(\omega_d  + C\,{\delta(x) \over r} +2\tau_1 \Big)
\leq \omega_d + C \,{\delta(x) \over r} +3 \tau_1;
\end{eqnarray}
\eqref{12.73} follows, and now we are ready to apply an almost-constant
density result from \cite{Sliding}.

Now let $r$ satisfy \eqref{12.64}. We want to apply Proposition 30.3
of \cite{Sliding} to the set $E$, with the radii
$r_1 = r/2$, $r_0 = r_2 = {3Ar \over 2}$, and the small constant 
$\tau_8 = (3A/2)^{-d} \tau$. So we check the various assumptions.

We take $U = B(y,r_0)$, a single $L_j$ equal to $L$,
and we do not need to straighten things here, i.e., we can take the
bilipschitz mapping $\xi$ of (30.2) to be the identity.
Since we took $r \leq (2A)^{-1}$ in \eqref{12.64}, we get that
$2r_0 \leq {3 \over 2}$ and $E$ is almost minimal in $B(y,2r_0)$,
with $h(2r_0)$ as small as we want (by \eqref{1.42}). This takes care
of (30.5) in \cite{Sliding} (where the small $\varepsilon$ depends on 
$\tau_8$, but this is all right); similarly, \eqref{12.73} says that
\begin{equation} \label{12.76}
|\theta_y(r_2)-\theta_y(r_1)|
\leq C r_1^{-1}\delta(x) + 6 \tau_1
\leq C N^{-1} + 6 \tau_1
\end{equation}
because $2 \delta(x) < r_1 < r_2 < {18 \over 10}$ and by \eqref{12.64}.
If $N^{-1}$ and $\tau_1$ are small enough, this implies 
(30.6) in \cite{Sliding}, and we can apply Proposition 30.3 there.
We get the existence of a coral minimal cone $X$ (with a boundary condition
coming from $L$), which satisfies \eqref{12.65} and \eqref{12.66} 
(by (30.7) and (30.8) there), and \eqref{12.67} and \eqref{12.68}
(by (30.9) and (30.10) there). 
Finally, \eqref{12.69} follows from \eqref{12.68} (applied with $t=r$)
and the fact that $|\theta_y(r)-\omega_d| \leq \tau$, by \eqref{12.73}
(and if $N^{-1}$ and $\tau_1$ are small enough).
This completes our proof of Proposition~\ref{t12.7}.
\end{proof}

\begin{rem} \label{t12.8}
When $d=2$, the author believes that any sliding minimal cone $X$ that satisfies 
\eqref{12.69} must lie in $\bV(L)$. The proof would follow, for instance, the proof
of the description of the minimal cones that was given in \cite{Holder}, 
and show that $X \cap \d B(y,1)$ is composed of arcs of great circles 
with constraints on how they meet $L$ or each other. 
But he did not check the details. Possibly this is also true in 
some higher dimensions too. Then the description in Proposition \ref{t12.7}
becomes a little better. The author also expects that when $d=2$, we should
be able to get a better local description of $E$, possibly even with a local $C^1$
parameterization.
\end{rem}

\begin{rem} \label{t12.9}
We did not try to state Corollary \ref{t1.8} or the results of this section when
$L$ is a smooth $(d-1)$-dimensional surface. The main ingredients, namely
the almost monotonicity formula (Theorem \ref{t7.1}), and the approximation
results (Corollary \ref{t9.3} and its earlier analogues in \cite{Holder} and 
\cite{Sliding}) are valid in this context, but the author did not check
that the rest of the proofs in this section goes through too.
\end{rem}

\bigskip
\vfill \vfill \vfill\vfill
\noindent Guy David,  
\par\noindent 
Math\'{e}matiques, B\^atiment 425,
\par\noindent 
Universit\'{e} de Paris-Sud, 
\par\noindent 
91405 Orsay Cedex, France
\par\noindent 
guy.david@math.u-psud.fr
\par\noindent 
http://www.math.u-psud.fr/$\sim$gdavid/

\end{document}